\newtheorem{theorem}{Theorem}[section]
\newtheorem{lemma}[theorem]{Lemma}
\newtheorem{proposition}[theorem]{Proposition}
\newtheorem{definition}[theorem]{Definition}
\newtheorem{corollary}[theorem]{Corollary}
\newtheorem{conjecture}[theorem]{Conjecture}
\newtheorem{problem}[theorem]{Problem}
\theoremstyle{remark}
\newtheorem{remark}[theorem]{Remark}
\newtheorem{example}[theorem]{Example}
\newcommand{\p}{\vskip .4cm}
\newcommand{\F}{\mathbb{F}}
\newcommand{\Z}{\mathbb{Z}}
\newcommand{\Q}{\mathbb{Q}}
\newcommand{\R}{\mathbb{R}}
\newcommand{\C}{\mathbb{C}}
\newcommand{\Cc}{\mathbb{C}^{\times}}
\newcommand{\cA}{\mathcal{A}}
\newcommand{\cB}{\mathcal{B}}
\newcommand{\cF}{\mathcal{F}}
\newcommand{\cO}{\mathcal{O}}
\newcommand{\sG}{\mathsf{G}}
\newcommand{\sH}{\mathsf{H}}
\newcommand{\sS}{\mathsf{S}}
\newcommand{\sT}{\mathsf{T}}
\newcommand{\Gm}{\mathbb{G}_m}
\newcommand{\til}{\tilde}
\newcommand{\bsl}{\backslash}
\newcommand{\ra}{\rightarrow}
\newcommand{\ira}{\hookrightarrow}
\newcommand{\sra}{\twoheadrightarrow}
\newcommand{\xra}{\xrightarrow}
\newcommand{\Lie}{\operatorname{Lie}}
\newcommand{\rank}{\operatorname{rank}}
\newcommand{\depth}{\operatorname{depth}}
\newcommand{\Ad}{\operatorname{Ad}}
\newcommand{\ad}{\operatorname{ad}}
\newcommand{\Irr}{\operatorname{Irr}}
\newcommand{\Tr}{\operatorname{Tr}}
\newcommand{\fg}{\mathfrak{g}}
\newcommand{\fh}{\mathfrak{h}}
\newcommand{\fp}{\mathfrak{p}}
\newcommand{\fu}{\mathfrak{u}}
\newcommand{\fl}{\mathfrak{l}}
\newcommand{\ft}{\mathfrak{t}}
\newcommand{\fm}{\mathfrak{m}}
\newcommand{\fz}{\mathfrak{z}}
\newcommand{\matr}[1]{\left[\begin{matrix}#1\end{matrix}\right]}
\newcommand{\inn}[2]{\langle#1,#2\rangle}
\newcommand{\slt}{\mathfrak{sl}_2}
\newcommand{\supp}{\operatorname{supp}}
\newcommand{\dep}{\operatorname{dep}}
\newcommand{\WF}{\operatorname{WF}}
\newcommand{\DB}{\operatorname{DB}}
\begin{document}
	
	\title{Wave-front sets for $p$-adic Lie algebras}
	\author{Cheng-Chiang Tsai}
	\thanks{The author is supported by NSTC grant 112-2115-M-001-009.}
	\email{chchtsai@gate.sinica.edu.tw}
	\address{Academia Sinica, Institute of Mathematics, 6F, Astronomy-Mathematics Building, No. 1, Sec. 4, Roosevelt Road, Taipei 106319, Taiwan, and\vskip.2cm
		National Sun Yat-Sen University, Department of Applied Mathematics, No. 70, Lienhai Rd., Kaohsiung 80424, Taiwan}
	\begin{abstract}
		We study the wave-front set of an element in a $p$-adic reductive Lie algebra (for $p\gg\rank$), namely the set of maximal nilpotent orbits appearing in its Shalika germ expansion. By adapting an algorithm of Waldspurger that computes orbital integrals, we obtain an inductive algorithm to compute an invariant that determines the wave-front set. This gives an algorithm to compute the wave-front sets for regular supercuspidal representations and reveals examples whose wave-front sets are not contained in a single geometric orbit, for arbitrarily large $p$ within a fixed rank.
	\end{abstract}
	\makeatletter
	\patchcmd{\@maketitle}
	{\ifx\@empty\@dedicatory}
	{\ifx\@empty\@date \else {\vskip3ex \centering\footnotesize\@date\par\vskip1ex}\fi
		\ifx\@empty\@dedicatory}
	{}{}
	\patchcmd{\@adminfootnotes}
	{\ifx\@empty\@date\else \@footnotetext{\@setdate}\fi}
	{}{}{}
	\makeatother

	\maketitle 
	
	\vskip-1cm $\;$
	
	\tableofcontents
	
	\vskip-1cm $\;$
	
	\section{Introduction}
	
	Let $F$ be a non-archimedean local field with residue field $k$ and $p:=\operatorname{char}(k)$, $G$ a connected reductive group over $F$ and $\fg:=\Lie G$. Throughout this paper we enforce some restrictions on $p:=\mathrm{char}(k)$ for which we refer to Appendix \ref{app:char}. In the study of representations and characters of $G(F)$, it has been long observed that they demonstrate an inductive structure. For the more recent landmarks, Jiu-Kang Yu \cite{Yu01} constructed supercuspidal representations inductively from smaller twisted Levi subgroups. In \cite{KM06}, Ju-Lee Kim and Fiona Murnaghan gave an inductive proof for a formula of the local characters of these and more representations. After that, Jeff Adler and Loren Spice \cite{AS09} and eventually Spice \cite{Spi18,Spi21} obtained an inductive description for the full character of Yu's supercuspidal representations, which are known to be all supercuspidal representations \cite{Kim07,Fin21} under restrictions on $p$. Meanwhile, Jean-Loup Waldspurger \cite{Wa06} proved that the fundamental lemma in mixed and equal characteristic are equivalent, by developing an inductive algorithm to compute the orbital integrals needed and showing that the algorithm depends only on some data defined in terms of the residue field. (This was already done for $GL_n$ in \cite{Wal91}.) In \cite{Tsa15} the author mentioned how a pretty similar algorithm can be used to compute Shalika germs. All aforementioned works largely employ the same inductive process which we discuss in \S\ref{sec:RedtoLevi}.
	
	Let us recall the theory of Shalika germs as enhanced by the homogeneity theorems of Waldspurger \cite{Wa95} and DeBacker \cite{De02a}. Denote by $\fg^{nil}(F)/\!\!\sim$ the set of nilpotent $\Ad(G(F))$-orbits in $\fg(F)$. We have
	
	\begin{theorem}\label{thm:DB} \cite[Thm. 2.1.5]{De02a} For any $\gamma\in\fg(F)$, there exists $c_{\cO}(\gamma)\in\Q$ indexed by $\cO\in\fg^{nil}(F)/\!\!\sim$ and a lattice $\Lambda_{\gamma}\subset\fg(F)$ such that
		\[
		I_{\gamma}(f)=\sum_{\cO\in\fg^{nil}(F)/\sim}c_{\cO}(\gamma)I_{\cO}(f),\;\forall f\in C_c(\fg(F)/\Lambda_{\gamma}).
		\]
	\end{theorem}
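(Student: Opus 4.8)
The plan is to deduce the statement from the classical (non-uniform) Shalika germ expansion together with the homogeneity of nilpotent orbital integrals under dilation, organized via Moy--Prasad filtrations --- that is, to follow \cite{De02a}, which in turn rests on Waldspurger's homogeneity theorem \cite{Wa95}. The starting input (Harish-Chandra and Shalika, valid here because $p\gg\rank$) is: there are $\Q$-valued germs $\Gamma_\cO$ of functions at $0\in\fg(F)$, indexed by $\cO\in\fg^{nil}(F)/\!\!\sim$, such that for each $f\in C_c(\fg(F))$ there is a neighborhood $U_f\ni 0$ with $I_\gamma(f)=\sum_\cO\Gamma_\cO(\gamma)I_\cO(f)$ for all regular semisimple $\gamma\in U_f$, and such that each $\Gamma_\cO$ is homogeneous of degree $-d_\cO$ under dilation, where $d_\cO>0$ (equal to $\tfrac{1}{2}\dim\cO$ in the usual normalization) depends only on the geometric orbit of $\cO$. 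One then sets $c_\cO(\gamma):=\Gamma_\cO(\gamma)$ --- rationality being immediate --- so that the whole content of the theorem is the \emph{uniformity in $f$}: that $U_f$ can be replaced by a single lattice $\Lambda_\gamma$ depending only on $\gamma$.

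That uniformity is driven by two facts. First, a dilation symmetry: for $t\in F^\times$ one has $I_{t\gamma}(f)=I_\gamma\bigl(f(t\,\cdot\,)\bigr)$ with no Jacobian, since $G_{t\gamma}=G_\gamma$, while $I_\cO\bigl(f(t\,\cdot\,)\bigr)=|t|^{-d_\cO}I_\cO(f)$ (dilation permutes the finitely many rational orbits inside a fixed geometric orbit), so that the germ identity holds for a pair $(\gamma,f)$ if and only if it holds for $\bigl(t\gamma,\,f(t^{-1}\,\cdot\,)\bigr)$. Second, a uniform control of the nilpotent span: writing $\mathcal N:=\operatorname{span}\{I_\cO\}$, one needs that the restrictions of $\mathcal N$ to the spaces $C_c(\fg(F)/\fg_{x,r})$, for $x$ in the Bruhat--Tits building and $r\to-\infty$, are pinned down uniformly and compatibly with the dilation symmetry and with the building --- a Howe-type finiteness statement. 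This second fact is precisely Waldspurger's homogeneity theorem \cite{Wa95} together with its extension to $p\gg\rank$ in \cite{De02a}, and it is where the analytic restrictions on $p$ recalled in Appendix~\ref{app:char} enter (local integrability of Fourier transforms of orbital integrals, and the parametrization of nilpotent orbits through the building).

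Granting the second fact, the theorem comes out by a formal unwinding. Fix a point $x$; the restriction of $\mathcal N$ to $C_c(\fg(F)/\fg_{x,r})$ is finite-dimensional, since there are finitely many $\cO$, and the homogeneity result identifies it explicitly. The non-uniform expansion --- valid on a $\gamma$-dependent neighborhood --- then shows that $I_\gamma$ restricted to this space already lies in $\mathcal N$ once $r$ is suitably chosen relative to the Moy--Prasad depth of $\gamma$; one takes $\Lambda_\gamma:=\fg_{x,r_\gamma}$ with $r_\gamma$ an affine function of that depth with universal coefficients, and the dilation symmetry restores arbitrary $\gamma$ and confirms that $\Lambda_\gamma$ depends on $\gamma$ alone.

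The hard part is the second fact: uniform control of $\mathcal N$ on Moy--Prasad lattices as the depth varies, compatibly with dilation. Its proof runs by induction on the semisimple rank via parabolic (Levi) descent --- the same inductive mechanism discussed in \S\ref{sec:RedtoLevi} --- and leans on Harish-Chandra's local integrability theorem; everything else above is manipulation of the germ expansion and its scaling symmetry, the only remaining subtlety being to check that the gluing in the unwinding really produces a lattice rather than merely a neighborhood of $0$, which is automatic once the constant in the second fact is universal. As we only invoke the statement, we borrow the second fact --- hence the theorem --- from \cite{De02a}.
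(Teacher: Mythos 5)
Your proposal is consistent with the paper, which does not prove this statement at all but simply imports it as \cite[Thm.~2.1.5]{De02a}; your sketch correctly identifies the standard route (classical Shalika germ expansion plus homogeneity of nilpotent orbital integrals, with the uniformity in $f$ supplied by the Waldspurger--DeBacker homogeneity theorem) and, like the paper, ultimately defers the hard input to \cite{De02a}. Nothing further is needed.
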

	\noindent Here we denote by $C_c(\fg(F)/\Lambda_{\gamma})\subset C_c^{\infty}(\fg(F))$ the space of $\C$-valued compactly supported functions satisfying $f(X+Y)=f(X)$ for any $Y\in\Lambda_{\gamma}$. The coefficients $c_{\cO}(\gamma)$ are called {\bf Shalika germs}, generalizing the theorem of Shalika \cite{Sh72} for $\gamma\in G(F)$ strongly regular semisimple and sufficiently close to $1$. Consider the partial order on  $\fg^{nil}(F)/\!\!\sim$ for which $\cO_1\le\cO_2$ iff the former is contained in the closure of the latter in the $p$-adic topology. With Theorem \ref{thm:DB}, we may define the wave-front set
	\[
	\WF(\gamma):=\max\{\cO\in\fg^{nil}(F)/\!\!\sim\;|\;c_{\cO}(\gamma)\not=0\}.
	\] 
	This can be viewed as the linearized version of wave-front sets for representations of $G(F)$. For example, the wave-front set for a regular supercuspidal representation \cite{Kal19} is equal to the aforementioned wave-front set for specific elliptic $\gamma\in\fg(F)$, see Remark \ref{rmk:wfsc}. Given that wave-front sets of representations of $G(F)$ are important for degenerate Whittaker models \cite{MW87} and related to the Gan-Gross-Prasad program (see e.g. \cite[\S4]{JLZ22a}), $\WF(\gamma)$ appears to be a basic object for the representation theory of $G(F)$, and a natural question is whether we can compute $\WF(\gamma)$ inductively.
	
	It came as a surprise to us that if we take the question in the literal and strict sense, then the answer is no. Namely in the inductive process when we write $\gamma=\gamma_r+\gamma_{>r}$ and $H:=Z_G(\gamma_r)$, the set $\WF^G(\gamma)$ {\bf cannot} be determined by $\gamma_r$ and $\WF^H(\gamma_{>r})$; see Example \ref{ex:WFnotinductive}. This is the subtlety discussed in \cite[Rmk. 3.6]{Nev23}. Nevertheless, we define for any $s\in\R$ another set (see \S\ref{sec:DB} for the notation):
	\[
	c_{G,s}(\gamma):=\{(\varsigma,c)\;|\;\varsigma\text{ an }s\text{-facet, }c\in\fg(F)_{\varsigma=s},\;\Ad(G(F))\varsigma\cap\left(c+\fg(F)_{\varsigma>s}\right)\not=\emptyset\}.
	\]
	The set implicitly appeared in \cite{Wa06} and also resembles the residue field version of $\operatorname{Nil}(\gamma)$ in \cite{Nev23}. The set $c_{G,s}(\gamma)$ determines $\WF(\gamma)$, and we manage to adapt the algorithm of Waldspurger in \cite{Wa06} to inductively compute $c_{G,r}(\gamma)$. Consider
	\begin{equation}\label{eq:inIntro}
	C_{G,s}:=\{(\varsigma,c)\;|\;\varsigma\text{ an }s\text{-facet, }c\in\fg(F)_{\varsigma=s}\}
	\end{equation}
	so that every $c_{G,s}(\gamma)$ is a subset of $C_{G,s}$. There will be a set $C_{G}$ which is an appropriate union $C_{G}=\bigcup_{s\in\R}C_{G,s}$ and we will define a directed acyclic graph with vertex set $C_{G}$ and some edges, so that $c_{G,s}(\gamma)$ is equal the subset of those vertices $C_{G}$ who have a path to any vertex in $c_{G,r}(\gamma)$. On the other hand, when $\gamma=\gamma_r+\gamma_{>r}$, $c_{G,r}(\gamma)$ is explicitly determined by $c_{H,r}(\gamma_{>r})$ where $H=Z_{G}(\gamma_r)$. Together this gives an inductive algorithm to determine any $c_{G,s}(\gamma)$.

	There are two kinds of complications for this algorithm. For a vertex $v$ in the graph $C_G$, the set of its out-neighbors correspond to the set of rational orbits in specific graded Lie algebra over $k$ that intersect specific affine subspace (e.g. Slodowy slices). A good theory about such orbits - and therefore the edges of the graph - seems very difficult to achieve. We call this the geometric problem (Problem \ref{prob:hard}). On the other hand, the graph $C_G$ is defined in terms of the Bruhat-Tits building for $G$, or effectively an apartment of it, which is still sort of complicated. Furthermore, for a given directed acylic graph it needs not be easy to determine if there is a path from a specific vertex to another set of vertices (``{\it does this maze have a solution?}''). We call these the combinatorial problem.
	
	We prove some necessary conditions and sufficient conditions for edges in the graph to exist, mostly to address the geometric problem. Our main existence result, Corollary \ref{cor:goodness}, is based on Theorem \ref{thm:good1} which generalizes the result established by Dan Ciubotaru and Emile Okada in \cite[\S4]{CO23}. When the semisimple part of $\gamma$ is contained in $(\Lie T)(F)$ for $T\subset G$ an unramified subtorus, it seems possible that necessary and sufficient conditions are already strong enough to give the correct unramified wave-front set $\WF^{ur}(\gamma)$; see Conjecture \ref{conj:upperbound}. In some cases we analyze our algorithm and confirm that. This includes the case corresponding to unramified toral supercuspidal representations already worked out in \cite{CO23}, as well as an interesting example (Example \ref{ex:u6}) with $G=U_6$ and that $\WF(\gamma)$ contains orbits of type $[4,1,1]$ and $[3,3]$, for $p$ arbitrarily large. 
	
	Let us now explain the organization of the paper. In \S\ref{sec:DB} we explain how to determine $\WF(\gamma)$ from $c_{G,s}(\gamma)$. In \S\ref{sec:RedtoLevi} we explain a result of Kim-Murnaghan that allows us to reduce the computation of $c_{G,s}(\gamma)$ to twisted Levi subgroups. Then in \S\ref{sec:graph} we present the main construction (see Theorem \ref{thm:graph}) of the graph $C_G$, hence our algorithm for computing $c_{G,s}(\gamma)$, citing a construction of Waldspurger to be explained in \S\ref{sec:geometric}. The longest section \S\ref{sec:geometric} analyzes the algorithm, or mostly analyzes a number of questions arising from \cite{De02b} relating Moy-Prasad quotients and nilpotent orbits. Several results in \S\ref{sec:geometric} will refer to Appendix \ref{sec:gradedSpringer} regarding (graded) Lie algebras over the residue field. In \S\ref{sec:unram} we present demonstrations and examples for the algorithm, and lastly in \S\ref{sec:rep} we discuss the relation between wave-front sets for elements in the Lie algebra and wave-front sets for representations of $G(F)$.
	
\subsection*{Acknowledgment} I wish to express my profound thanks to Monica Nevins who generously explained to me a subtlety in \cite[Thm 3.5]{Nev23} and kindly shared with me an earlier version of \cite{Nev23}. I benefit a lot from communication with Chi-Heng Lo about the structure of wave-front sets, for which I deeply appreciate. I am very grateful to Emile Okada for many illuminating discussions, particularly his instructive explanation about the theory of $G$-sheets.
I also owe many thanks to Ju-Lee Kim, Jessica Fintzen, Loren Spice, Stephen DeBacker and Masao Oi for numerous inspiring discussions. I am indebted to the organizers of the 5th Nisyros Conference on Automorphic Representations
\& Related Topics where many of these wonderful discussions happened, and the same for the 2023 Japan-Taiwan Joint Conference on Number Theory and the University of Bonn.
	
\section{DeBacker's parametrization}\label{sec:DB}

We begin with some notations. Let $\cB(G,F)$ be the Bruhat-Tits building (hereafter always the extended building) for $G/_F$. For any $x\in\cB(G,F)$, $r\in\R$, denote by $\fg(F)_{x\ge r}$ the Moy-Prasad lattice \cite[\S3]{MP94} of depth $r$, by $\fg(F)_{x>r}$ that of depth $>r$, and by $\fg(F)_{x=r}:=\fg(F)_{x\ge r}/\fg(F)_{x>r}$ the Moy-Prasad quotient at depth $r$. We also denote by $G(F)_{x\ge 0}$ the parahoric subgroup and $\sG_x/_k$ the (algebraic) reductive quotient. In particular $\sG_x(k)$ is a quotient of $G(F)_{x\ge 0}$ with pro-$p$ kernel, and $\Lie\sG_x=\fg(F)_{x=0}$.

\begin{remark}\label{rmk:MP} Our notations for the Moy-Prasad filtration are different from the usual notations. The new notations are extensively used in \cite{Spi18} and \cite{Spi21}. They will also prove convenient for our purpose here.
\end{remark}

We also denote by $\fg(F)_{\ge r}:=\bigcup_{x\in\cB(G,F))}\fg(F)_{x\ge r}$ and likewise $\fg(F)_{>r}:=\bigcup_{x\in\cB(G,F))}\fg(F)_{x>r}$. We define 
\begin{equation}
	\dep(\gamma):=\max\{r\in\R\;|\;\gamma\in\fg(F)_{\ge r}\}
\end{equation}
Following \cite{De02b} we say $x_1,x_2\in\cB(G,F)$ are in the same $r$-facet iff $\fg(F)_{x_1\ge r}=\fg(F)_{x_2\ge r}$ and $\fg(F)_{x_1>r}=\fg(F)_{x_2>r}$. 
Denote by $\Phi^G(r)$ the set of $r$-facets, which form a partition of $\cB(G,F)$. Recall that $\fg^{nil}(F)\subset\fg(F)$ is the nilpotent cone. Let us begin with \cite[Def. 4.1.1]{De02b}:

\begin{definition}\label{def:nil} Let $\varsigma\in\Phi^G(r)$ for some $r\in\R$. We say a coset $c\in\fg(F)_{\varsigma=r}$ is {\bf nilpotent} iff $(c+\fg(F)_{\varsigma>r})\cap\fg^{nil}(F)\not=\emptyset$.
\end{definition}

In {\it loc. cit.} DeBacker (following Moy-Prasad) call such cosets degenerate. We choose the term ``nilpotent'' here, since it relates very well with other usage of nilpotency (but degeneracy can mean several different things). We also remark that we tend to think of $c$ as if it was an element in $\fg(F)$. This indeed can be realized if $F$ is a local function field, and that the coset is nilpotent iff it is nilpotent in $\fg(F)$. In any case we will usually write $c+\fg(F)_{\varsigma>r}$ (instead of just $c$) to emphasize the coset as a subset of $\fg(F)$. Recall the notation $C_{G,r}=\{(\varsigma,c)\;|\;\varsigma\in\Phi^G(r),c\in\fg(F)_{\varsigma=r}\}$. Let us also write
\[ C_{G,r}^{nil}=\{(\varsigma,c)\in C_{G,r}\;|\;c\text{ is nilpotent}\}.
\]

For any $(\varsigma,c)\in C_{G,r}\in C_{H,r}^{nil}$, \cite[Cor. 5.2.5, Lemma 5.3.3]{De02b} asserts the existence of a unique minimal nilpotent orbit $\cO(\varsigma,c)$ such that $c+\fg(F)_{\varsigma>r}\cap\cO(\varsigma,c)\not=\emptyset$. 
Whenever $C\subset C_{G,r}^{nil}$ is a subset, we denote by
\[
\DB_r(C)=\{\cO(\varsigma,c)\;|\;(\varsigma,c)\in C\}.
\]
For two subsets $S_1,S_2\in\fg^{nil}(F)/\!\!\sim$ we say $S_1\equiv S_2$ iff, for every $\cO_i\in S_i$ there exists $\cO_j\in S_j$ such that $\cO_i\le \cO_j$, for any $i,j=1,2$. We have
	
\begin{proposition}\label{prop:wf2} Let $\gamma\in\fg(F)$ with $\dep(\gamma)=r$. For any $s<r$ we have $\DB(c_{G,s}(\gamma))\equiv\WF(\gamma)$.
\end{proposition}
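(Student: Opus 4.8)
The plan is to establish the two halves of the relation $\equiv$ by feeding the Shalika germ expansion of Theorem \ref{thm:DB} the characteristic functions $f_{\varsigma,c}:=\mathbf{1}_{c+\fg(F)_{\varsigma>s}}$ of the cosets. First I would record that $c_{G,s}(\gamma)\subseteq C_{G,s}^{nil}$, so that $\DB$ is defined on it: a coset appearing in $c_{G,s}(\gamma)$ contains an $\Ad(G(F))$-conjugate of $\gamma$, hence an element of depth $r>s$, and by the theory of degenerate cosets in \cite[\S4]{De02b} a non-nilpotent coset at level $s$ contains only elements of depth exactly $s$; so such a coset is nilpotent.

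The geometric heart is the following elementary statement, which I would prove first: for any $(\varsigma,c)\in C_{G,s}^{nil}$ and any nilpotent orbit $\cO'$, one has $I_{\cO'}(f_{\varsigma,c})\neq 0$ if and only if $\cO'\ge\cO(\varsigma,c)$. Indeed, the orbital integral of a nonnegative locally constant function is nonzero exactly when the orbit meets its (open) support, so $I_{\cO'}(f_{\varsigma,c})\neq 0$ iff $\cO'$ meets $c+\fg(F)_{\varsigma>s}$; if it does, the minimality of $\cO(\varsigma,c)$ among nilpotent orbits meeting the coset (\cite[Cor.\ 5.2.5, Lemma 5.3.3]{De02b}) gives $\cO'\ge\cO(\varsigma,c)$, while if $\cO'\ge\cO(\varsigma,c)$ then $\overline{\cO'}\supseteq\cO(\varsigma,c)$ meets the open coset, hence so does its dense subset $\cO'$.

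Granting that the germ expansion $I_\gamma(f)=\sum_{\cO'}c_{\cO'}(\gamma)\,I_{\cO'}(f)$ is valid for $f=f_{\varsigma,c}$ whenever $s<r=\dep(\gamma)$ (see below), the two directions are short. If $(\varsigma,c)\in c_{G,s}(\gamma)$, some conjugate of $\gamma$ lies in the support of $f_{\varsigma,c}$, so $I_\gamma(f_{\varsigma,c})>0$; expanding, some $\cO'$ has $c_{\cO'}(\gamma)\neq 0$ and $I_{\cO'}(f_{\varsigma,c})\neq 0$, whence $\cO(\varsigma,c)\le\cO'$ by the geometric statement and $\cO'\le\cO''$ for some $\cO''\in\WF(\gamma)$ (every element of the finite poset $\{\cO' : c_{\cO'}(\gamma)\neq 0\}$ lies below a maximal one), so $\cO(\varsigma,c)\le\cO''$. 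Conversely, given $\cO\in\WF(\gamma)$, DeBacker's parametrization \cite{De02b} furnishes $(\varsigma,c)\in C_{G,s}^{nil}$ with $\cO(\varsigma,c)=\cO$; in $I_\gamma(f_{\varsigma,c})=\sum_{\cO'}c_{\cO'}(\gamma)\,I_{\cO'}(f_{\varsigma,c})$ only terms with $\cO'\ge\cO$ survive by the geometric statement, and $c_{\cO'}(\gamma)=0$ for $\cO'>\cO$ by maximality of $\cO$, leaving $I_\gamma(f_{\varsigma,c})=c_\cO(\gamma)\,I_\cO(f_{\varsigma,c})\neq 0$ since $c_\cO(\gamma)\neq 0$ and $I_\cO(f_{\varsigma,c})>0$; hence $(\varsigma,c)\in c_{G,s}(\gamma)$ and $\cO\in\DB(c_{G,s}(\gamma))$. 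Together, every element of $\DB(c_{G,s}(\gamma))$ lies below one of $\WF(\gamma)$, and $\WF(\gamma)\subseteq\DB(c_{G,s}(\gamma))$ --- which is exactly $\DB(c_{G,s}(\gamma))\equiv\WF(\gamma)$.

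The step I expect to require the most care is the one deferred above: justifying that the expansion of Theorem \ref{thm:DB} applies to $f_{\varsigma,c}$ for every $s$-facet $\varsigma$ and every $s<\dep(\gamma)$. No single lattice $\Lambda_\gamma$ makes all of these characteristic functions invariant, so one must invoke the homogeneity theorems of Waldspurger \cite{Wa95} and DeBacker \cite{De02a} in the precise form that the validity range of the expansion for $\gamma$ depends only on $\dep(\gamma)$ and reaches all cosets at levels below it. Everything else is formal: the poset manipulation, the topology of nilpotent orbit closures (each orbit is dense in its closure, which is a finite union of orbits), and the cited properties of $\cO(\varsigma,c)$ and of DeBacker's parametrization.
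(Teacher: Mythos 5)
Your proposal is correct and follows essentially the same route as the paper: test the Shalika germ expansion (valid on level-$s$ cosets by the homogeneity theorem of \cite{De02a}, exactly the point you flag) against the characteristic functions $[c+\fg(F)_{\varsigma>s}]$, use that $I_{\cO}$ of such a function is nonzero precisely when $\cO\ge\cO(\varsigma,c)$, and use DeBacker's parametrization to realize each maximal orbit as some $\cO(\varsigma',c')$. The only cosmetic difference is that you run the second half directly (even obtaining $\WF(\gamma)\subseteq\DB(c_{G,s}(\gamma))$) where the paper argues by contradiction, and you spell out the nilpotency of the cosets in $c_{G,s}(\gamma)$ and the openness/density argument that the paper leaves implicit.
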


\begin{proof} Suppose $\cO_0\in \DB_s(c_{G,s}(\gamma))$. By definition this says there exists $\varsigma\in\cF(\gamma)$ and $c\in\fg(F)_{\varsigma=s}$ such that $\gamma\in c+\fg(F)_{\varsigma>s}$ and $\cO_0=\cO(\varsigma,c)$. Denote by $[c+\fg(F)_{\varsigma>s}]$ the characteristic function of the coset. Since $\dep(\gamma)>s$, by Theorem \cite[Thm. 2.1.5]{De02a} we have
	\begin{equation}\label{eq:useDeB02a}
	I_{\gamma}([c+\fg(F)_{\varsigma>s}])=\sum_{\cO\in\fg^{nil}(F)/\sim}c_{\cO}(\gamma)I_{\cO}([c+\fg(F)_{\varsigma>s}])
	\end{equation}
	We have $I_{\gamma}([c+\fg(F)_{\varsigma>s}])>0$. By definition of $\cO(\varsigma,c)$, we also have $I_{\cO}([c+\fg(F)_{\varsigma>s}])>0$ iff $\cO\ge\cO(\varsigma,c)$. Hence we have $c_{\cO}(\gamma)\not=0$ for some $\cO\ge\cO(\varsigma,c)$.
	
	To conclude that $\DB_s(c_{G,s}(\gamma))\equiv\WF(\gamma)$, suppose on the contrary we have $\cO_0\in\WF(\gamma)$ such that $\cO_0\not\le\cO(\varsigma,c)$ for any $(\varsigma,c)\in c_{G,s}(\gamma)$. By \cite[Thm. 5.6.1]{De02b} we can write $\cO_0=\cO(\varsigma',c')$ for some $(\varsigma',c')\in C_{G,s}$. Applying the last equation to $[c'+\fg(F)_{\varsigma'>s}]$, we see that the LHS of \cite{De02a} vanishes while the RHS does not. This gives the desired contradiction and proves the proposition.
\end{proof}

\section{Reduction to twisted Levi}\label{sec:RedtoLevi}

In this section we fix
\begin{enumerate}
	\item A semisimple element $\gamma_r$ that is {\bf good} of depth $r$ in the sense of \cite[Def. 5.2]{AR00}. That is, for any torus $T\subset G$ such that $\gamma_r\in\Lie T$, and for every $\alpha\in\Phi(G,T)$, not necessarily defined over $F$, we have that $d\alpha(\gamma_r)$ is either $0$ or has normalized valuation $r$, where a uniformizer of $F$ has valuation $1$. We write $H:=Z_G(\gamma_r)$ and $\fh=\Lie H$.
	\item An element $\gamma_{>r}\in\fh(F)_{>r}$.
\end{enumerate}
We write $\gamma:=\gamma_r+\gamma_{>r}$. Such a decomposition is always possible for any $\gamma$ and $r=\dep(\gamma)$ by applying \cite[Prop. 5.4]{AR00} to the semisimple part of $\gamma$, and such that $H\subsetneq G$ is a proper twisted Levi subgroup.
There is an embedding $\cB(H,F)\ira\cB(G,F)$ that is canonical up to the action of $X_*(Z(H)/_F)\otimes_{\Z}\R$ on $\cB(H,F)$. Here $X_*(Z(H)/_F)$ is the group of cocharacters $\mathbb{G}_m\ra Z(H)$ defined over $F$. We fix any such embedding $\cB(H,F)\ira\cB(G,F)$.

The key step is to apply \cite[Lemma 2.3.3]{KM03} with their $\Gamma$ being our $\gamma_r$, their $X'$ being our $\gamma_{>r}$, their $G'$ being our $H$, and their $d(\Gamma)$ being our $r$. The reference asserts that for any $x\in\cB(G,F)$:
\begin{equation}\label{eq:KM}
	\gamma_r\in\fg(F)_{x\ge r}\Leftrightarrow x\in\cB(H,F).
\end{equation}
\begin{equation}\label{eq:KM2}
	\gamma_r+\gamma_{>r}\in\fg(F)_{x\ge r}\Rightarrow x\in\cB(H,F).
\end{equation}
Note that (\ref{eq:KM}) has the following nice application:
\begin{corollary}\label{cor:KM} For an $r$-facet $\varsigma\subset\cB(G,F)$ for $G$ which intersects $\cB(H,F)$ non-trivially, we have $\varsigma\subset\cB(H,F)$. In other words, $\cB(H,F)$ is the union of some $r$-facets in $\cB(G,F)$.
\end{corollary}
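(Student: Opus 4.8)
The plan is to deduce Corollary~\ref{cor:KM} directly from the equivalence~(\ref{eq:KM}), using the defining property of $r$-facets. Recall that by definition, two points $x_1,x_2\in\cB(G,F)$ lie in the same $r$-facet precisely when $\fg(F)_{x_1\ge r}=\fg(F)_{x_2\ge r}$ (and the same for the strict inequality). So the content to extract from~(\ref{eq:KM}) is: the condition ``$\gamma_r\in\fg(F)_{x\ge r}$'' depends only on the lattice $\fg(F)_{x\ge r}$, hence only on the $r$-facet of $x$; and membership in $\cB(H,F)$ is by~(\ref{eq:KM}) equivalent to this condition.

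Concretely, suppose $\varsigma$ is an $r$-facet of $G$ with $\varsigma\cap\cB(H,F)\ne\emptyset$, and pick $x_0\in\varsigma\cap\cB(H,F)$. By the backward direction of~(\ref{eq:KM}) applied to $x_0$ we get $\gamma_r\in\fg(F)_{x_0\ge r}$. Now let $x$ be any other point of $\varsigma$. Since $x$ and $x_0$ lie in the same $r$-facet, $\fg(F)_{x\ge r}=\fg(F)_{x_0\ge r}$, so $\gamma_r\in\fg(F)_{x\ge r}$ as well. The forward direction of~(\ref{eq:KM}) then yields $x\in\cB(H,F)$. As $x\in\varsigma$ was arbitrary, $\varsigma\subset\cB(H,F)$. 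This proves the first assertion. For the second assertion, observe that the $r$-facets partition $\cB(G,F)$ (this is noted just before Definition~\ref{def:nil}); hence $\cB(H,F)$, which by the first assertion is saturated with respect to this partition — it contains every $r$-facet it meets — is the union of the $r$-facets it contains.

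There is essentially no obstacle here: the corollary is a formal consequence of~(\ref{eq:KM}) together with the fact that $\fg(F)_{x\ge r}$ is constant on $r$-facets, which is true by the very definition of $r$-facet. The only point worth a moment's care is that the definition of ``same $r$-facet'' is phrased symmetrically and transitively (it is the equivalence relation generated by equality of the two Moy--Prasad lattices), so that ``$x$ and $x_0$ in the same $r$-facet $\varsigma$'' genuinely gives $\fg(F)_{x\ge r}=\fg(F)_{x_0\ge r}$; this is immediate from the stated definition. I would write the proof in three or four lines along exactly the lines above.
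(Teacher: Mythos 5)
Your proof is correct and is exactly the argument the paper intends: the corollary is stated as an immediate application of (\ref{eq:KM}), since membership in $\cB(H,F)$ is by (\ref{eq:KM}) equivalent to the condition $\gamma_r\in\fg(F)_{x\ge r}$, which depends only on the Moy--Prasad lattice and hence is constant on each $r$-facet. Your three-line argument (pick $x_0\in\varsigma\cap\cB(H,F)$, transfer the lattice condition across the facet, apply (\ref{eq:KM}) again) is precisely this.
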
 

Namely every $\varsigma\in\Phi^G(r)$ is contained in an $r$-facet in $\cB(H,r)$, which we denote by $\varsigma^H\in\Phi^H(r)$. Let $\gamma_{=r}$ be the image of $\gamma_r$ in $Z(\fh)(F)_{=r}$. We will derive from (\ref{eq:KM}) and  (\ref{eq:KM2}) that

\begin{proposition}\label{prop:redtoLevi} The set $c_{G,r}(\gamma)$ is determined by $c_{H,r}(\gamma_{>r})$ is the following identity
	\begin{equation}\label{eq:descent}
		c_{G,r}(\gamma)=G(F).\{(\varsigma,\,c_H+\gamma_{=r})\;|\;\varsigma\in\Phi^G(r),(\varsigma^H,c_H)\in c_{H,r}(\gamma_{>r})\}.
	\end{equation}
\end{proposition}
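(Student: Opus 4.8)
The plan is to unwind the definition of $c_{G,r}(\gamma)$ and $c_{H,r}(\gamma_{>r})$ and reduce everything to the two equivalences/implications (\ref{eq:KM}) and (\ref{eq:KM2}) together with Corollary~\ref{cor:KM}. Recall that $(\varsigma,c)\in c_{G,r}(\gamma)$ means $\varsigma$ is an $r$-facet and $c\in\fg(F)_{\varsigma=r}$ with $\Ad(G(F))\varsigma\cap(c+\fg(F)_{\varsigma>r})\neq\emptyset$, i.e.\ after conjugating we may as well ask whether $\gamma\in c+\fg(F)_{\varsigma>r}$ for a representative of $c$. Since both sides of (\ref{eq:descent}) are manifestly $G(F)$-stable, it suffices to prove that for an $r$-facet $\varsigma$ with $\gamma=\gamma_r+\gamma_{>r}\in\fg(F)_{\varsigma\ge r}$, the coset of $\gamma$ in $\fg(F)_{\varsigma=r}$ has the form $c_H+\gamma_{=r}$ with $(\varsigma^H,c_H)\in c_{H,r}(\gamma_{>r})$, and conversely.

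First I would observe that the hypothesis $\gamma\in\fg(F)_{\varsigma\ge r}$ forces, by (\ref{eq:KM2}), that $\varsigma\subset\cB(H,F)$, so by Corollary~\ref{cor:KM} the $r$-facet $\varsigma$ is genuinely an $r$-facet $\varsigma^H$ of $H$ sitting inside $\cB(G,F)$; in particular $\fh(F)_{\varsigma\ge r}$, $\fh(F)_{\varsigma>r}$, $\fh(F)_{\varsigma=r}$ make sense and are the $H$-Moy--Prasad objects. Next, using that $\gamma_r$ is good of depth $r$ and that $\fg(F)_{\varsigma\ge r}$ decomposes along the root spaces for a maximal torus through $\gamma_r$, I would want the refinement of the Moy--Prasad filtration: $\fg(F)_{\varsigma\ge r}=\fh(F)_{\varsigma\ge r}\oplus(\fg(F)_{\varsigma\ge r}\cap\fm)$ where $\fm$ is the sum of root spaces for roots not vanishing on $\gamma_r$, and crucially $\fg(F)_{\varsigma\ge r}\cap\fm\subseteq\fg(F)_{\varsigma>r}$ because each such root takes normalized valuation exactly $r$ on $\gamma_r$ (this is the standard Kim--Murnaghan depth-$r$ argument, and is really the content behind (\ref{eq:KM})). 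Consequently $\gamma_r\equiv\gamma_{=r}\pmod{\fg(F)_{\varsigma>r}}$ lies in $Z(\fh)(F)_{\varsigma=r}$, and the map $\fh(F)_{\varsigma=r}\to\fg(F)_{\varsigma=r}$, $c_H\mapsto c_H+\gamma_{=r}$, is well-defined and injective, identifying cosets of elements of $\fh(F)_{\varsigma\ge r}$ inside $\fg(F)_{\varsigma=r}$.

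With this in hand the two directions are bookkeeping. If $(\varsigma,c)\in c_{G,r}(\gamma)$, conjugate so that $\gamma\in c+\fg(F)_{\varsigma>r}$; then $\gamma_{>r}=\gamma-\gamma_r\in\fh(F)_{\varsigma\ge r}$ (using $\varsigma\subset\cB(H,F)$ and $\gamma_{>r}\in\fh(F)_{>r}$, refined at $\varsigma$), its image in $\fh(F)_{\varsigma=r}$ is some $c_H$ with $c=c_H+\gamma_{=r}$ by the decomposition above, and the $H(F)$-orbit of $\varsigma^H$ meets $c_H+\fh(F)_{\varsigma>r}$ through $\gamma_{>r}$, so $(\varsigma^H,c_H)\in c_{H,r}(\gamma_{>r})$. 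Conversely, given $(\varsigma^H,c_H)\in c_{H,r}(\gamma_{>r})$ with $\varsigma$ an $r$-facet of $G$ refining $\varsigma^H$, pick $h\in H(F)$ with $\Ad(h)\gamma_{>r}\in c_H+\fh(F)_{\varsigma>r}$; then $\Ad(h)\gamma=\gamma_r+\Ad(h)\gamma_{>r}$ lies in $(c_H+\gamma_{=r})+\fg(F)_{\varsigma>r}$ (since $\Ad(h)$ fixes $\gamma_r$ and $\fh(F)_{\varsigma>r}\subseteq\fg(F)_{\varsigma>r}$), giving $(\varsigma,c_H+\gamma_{=r})\in c_{G,r}(\gamma)$. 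The main obstacle I anticipate is the first half of the middle paragraph: carefully justifying the compatibility of the $H$- and $G$-Moy--Prasad filtrations along a common $r$-facet and the splitting $\fg(F)_{\varsigma\ge r}=\fh(F)_{\varsigma\ge r}\oplus(\fg(F)_{\varsigma\ge r}\cap\fm)$ with $\fm\cap\fg(F)_{\varsigma\ge r}\subseteq\fg(F)_{\varsigma>r}$, i.e.\ checking that (\ref{eq:KM}) indeed upgrades from a statement about points $x$ to the stated lattice-level identity; everything after that is a formal chase through the definitions of $c_{G,r}$ and $\DB$-style cosets.
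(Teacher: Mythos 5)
Your overall route is the same as the paper's: reduce by $G(F)$-equivariance to the case $\gamma\in c+\fg(F)_{\varsigma>r}$, use (\ref{eq:KM2}) to force $\varsigma\subset\cB(H,F)$ and Corollary \ref{cor:KM} to place $\varsigma$ inside an $H$-facet $\varsigma^H$, match cosets via $c_H\mapsto c_H+\gamma_{=r}$, and for the converse conjugate by $h\in H(F)$ and add $\gamma_r$ back. However, the step you single out as ``crucial'' is false: it is not true that $\fg(F)_{\varsigma\ge r}\cap\fm\subseteq\fg(F)_{\varsigma>r}$, i.e.\ that the natural map $\fh(F)_{\varsigma^H=r}\to\fg(F)_{\varsigma=r}$ is surjective. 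Goodness of $\gamma_r$ says that $d\alpha(\gamma_r)$ has valuation exactly $r$ for the roots $\alpha$ occurring in $\fm$; this controls $\ad(\gamma_r)$ (it shifts Moy--Prasad depth on $\fm$ by exactly $r$, which is what underlies (\ref{eq:KM}) and (\ref{eq:KM2})), but it says nothing about which depths occur in the root spaces at a given point $x$ --- those jumps are governed by $\alpha(x)$, not by $\gamma_r$. Concretely, for $G=SL_2$, $H=T$ the diagonal torus, $\gamma_r=\operatorname{diag}(\varpi^r u,-\varpi^r u)$ with $u$ a unit and $r\in\Z$, and $x$ the standard hyperspecial point, one has $\fg(F)_{x=r}\cong\mathfrak{sl}_2(k)$, whose root-space part is nonzero. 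The paper's own Example \ref{ex:u6} is another counterexample: there $\fg(F)_{y=-1}\cong\Lie(U_5\times U_1)(k)$, far larger than $\fh(F)_{y=-1}$. Indeed, were your claim true, $\fg(F)_{\varsigma=r}$ would coincide with the image of $\fh(F)_{\varsigma^H=r}$ and the whole analysis of $N(\gamma_{=r})$ in Example \ref{ex:toral} would be vacuous.

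Fortunately the false assertion is not needed, and deleting it leaves essentially the paper's proof. All the argument requires is the standard compatibility of Moy--Prasad filtrations for the tame twisted Levi $H$: for $x\in\cB(H,F)\subset\cB(G,F)$ one has $\fh(F)_{x\ge s}=\fh(F)\cap\fg(F)_{x\ge s}$ and likewise for $>s$. This gives that $\fh(F)_{\varsigma^H=r}\to\fg(F)_{\varsigma=r}$ is well defined and injective (no surjectivity is used anywhere), that $\gamma=\gamma_r+\gamma_{>r}\in\fg(F)_{\varsigma\ge r}\cap\fh(F)=\fh(F)_{\varsigma^H\ge r}$, and hence that its coset is $\gamma_{=r}+c_H$ with $c_H$ the image of $\gamma_{>r}$ in $\fh(F)_{\varsigma^H=r}$, so $(\varsigma^H,c_H)\in c_{H,r}(\gamma_{>r})$; this is exactly the ``$\subset$'' direction of the paper. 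Your converse computation, $\Ad(h)\gamma=\gamma_r+\Ad(h)\gamma_{>r}\in(c_H+\gamma_{=r})+\fg(F)_{\varsigma>r}$ using $\gamma_r\in\gamma_{=r}+Z(\fh)(F)_{>r}$ and $\fh(F)_{\varsigma^H>r}\subset\fg(F)_{\varsigma>r}$, is verbatim the paper's. So the proposal is the paper's argument with one incorrect intermediate claim; replace it by the filtration compatibility just stated and it is correct.
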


\begin{proof}
For the ``$\subset$'' direction, say we have $(\varsigma,c)\in c_{G,r}(r)$. Modulo $G(F)$-action we may WLOG assume $\gamma\in c+\fg(F)_{\varsigma>r}\subset \fg(F)_{\varsigma\ge r}$. By (\ref{eq:KM2}), this gives $\varsigma\subset\cB(H,F)$. Since $\gamma\in\fh(F)$, this shows that $c=c_H$ for some $c_H\in\fh(F)_{\varsigma^H=r}$, and $\gamma_{>r}\in c_H-\gamma_{=r}+\fh(F)_{\varsigma^H=r}$. The latter implies $(\varsigma^H,c_H-\gamma_{=r})\in \underline{c}_{\fh,r}(\gamma_{>r})$, which by definition says $(\varsigma,c)$ is contained in the RHS of (\ref{eq:descent}).

For the ``$\supset$'' direction, let $(\varsigma,c)\in C_{G,r}$ be such that there exists $g\in G(F)$ such that $g^{-1}.\varsigma\subset\cB(H,F)$ and there exists $c_H$ such that
\begin{enumerate}
	\item $(\varsigma^H,c_H)\in c_{H,r}(\gamma_{>r})$, and 
	\item $g^{-1}.c=\gamma_{=r}+c_H\in\fh(F)_{\varsigma^H=r}$. 
\end{enumerate} 
That $c_H\in c_{H,r}(\gamma_{>r})$ is to say that there exists $h\in H(F)$ such that $\Ad(h)\gamma_{>r}\in c_H+\fh(F)_{\varsigma^H>r}$. Since $\gamma_r\in\gamma_{=r}+Z(\fh)(F)_{>r}\subset Z(\fh)_{\ge r}$, this implies
\[
\Ad(g^{-1}h)(\gamma)=\Ad(g^{-1})\Ad(h)\gamma_{>r}+\Ad(g^{-1})\gamma_r\in c_H+\gamma_{=r}+ \fh(F)_{\varsigma^H>r}\subset c+\fg(F)_{\varsigma>r}
\]
which gives $(\varsigma,c)\in c_{G,r}(\gamma)$.
\end{proof}

\section{The graph $C_{G}$}\label{sec:graph}

The goal of this section is to define the set $C_{G}$ ($C$ for cosets)  as a glued union $C_{G}=\bigcup_{r\in\R} C_{G,r}$ and a graph with vertex set $C_{G}$ which we will use to compute $c_{G,r}(\gamma)$. In fact, we will allow some choices in defining the graph, yet any set of choices produce a graph that satisfies Theorem \ref{thm:graph} below. A large part of this section comes from \cite{Wa06}, especially \S2.4 and \S7 {\it op. cit.}. 
Recall that $\Phi^G(r)$ is the set of $r$-facets and $C_{G,r}:=\{(\varsigma,c)\;|\;\varsigma\in\Phi^G(r),\;c\in\fg(F)_{\varsigma=r}\}$.
Note that $C_{G,r}$ is essentially the sets $I_r$ in the introduction of \cite{De02b}. We warn the reader that we are {\bf not} taking the $r$-associativity relation in \cite[\S3.3]{De02b}.

We would like to revisit the above notations by considering all $r\in\R$ altogether, following the framework in \cite[\S2.4]{Wa06}. Consider the (underlying) set $\cB^a(G,F):=\cB(G,F)\times\R$ which we will call the {\bf augmented building} for $G(F)$. An {\bf augmented apartment} is $\cA^a:=\cA\times\R\subset\cB^a(G,F)$ where $\cA\subset\cB(G,F)$ is an apartment in the usual sense.  We decompose $\cB^a(G,F)$ into {\bf augmented facets} in the following way: two elements $(x_1,r_1),(x_2,r_2)\in\cB^a(G,F)$ are in the same augmented facets iff $\fg(F)_{x_1\ge r_1}=\fg(F)_{x_2\ge r_2}$ and $\fg(F)_{x_1>r_1}=\fg(F)_{x_2>r_2}$. Denote by $\Phi^G$ the set of augmented facets. The group $G(F)$ acts on $\cB^a(G,F)$ by $g.(x,r):=(g.x,r)$. This action obviously acts on $\Phi^G$ and acts transitively on the set of augmented apartments. Observe that the set of $r$-facets can be identified $\Phi^G(r)=\{\sigma\in\Phi^G\;|\;\sigma\cap\left(\cB(G,F)\times\{r\}\right)\not=\emptyset\}$ as a subset of $\Phi^G$. We will use the two viewpoints on $\Phi^G(r)$ interchangeably.

For any augmented apartment $\cA^a\subset\cB^a(G,F)$, the collection
\[
\Phi^G_{\cA}:=\{\sigma\cap(\cA^a)\;|\;\sigma\subset\cB^a(G,F)\text{ an augmented facet, }\sigma\cap(\cA^a)\not=\emptyset\}
\]
is cut out by two types of hyperplanes: the first type is ``affine root hyperplanes'' which in the language of \cite[\S1.6, \S1.7]{Tits} are of the form $\{(x,r)\in\cA^a\;|\;\alpha(x)=r\}$ for some affine root $\alpha$. The second type is $\cA\times\{s\}$ for $s\in\Q$ such that $\fz_{\ge s}\supsetneq\fz_{>s}$ where $\fz$ is the Lie algebra of the torus $Z$ associated to $\cA$ as in \cite[\S2.2, \S3.2]{MP94}.  Note that the projection map $\cA^a\sra\cA$ sends any of the above hyperplane (of either type) isomorphically onto $\cA$.
\begin{definition}\label{def:critical} We call both types of the aforementioned hyperplanes on $\cA^a$ {\bf critical}, and call the open half-spaces cut out by a critical hyperplane a {\bf critical half-space}. For a critical hyperplane $P\subset\cA^a$, we say it is above\footnote{Since the real number is commonly called {\bf depth}, our convention is that $r_1$ is ``above'' $r_2$ iff $r_1<r_2$.} (resp. is below, passes through) a point $(x,r)\in\cA^a$ if $(x,r')\in P$ for some $r'<r$ (resp. $r'>r$, $r'=r$). We say a critical hyperplane is above (resp. is below, passes through) an augmented facet if it is above (resp. is below, passes through) any point in it.
\end{definition}

We have that $\Phi^G_{\cA}$ is the collection of possible intersections of a finite number of critical hyperplanes and critical half-spaces. This shows that every augmented facet is convex. For any $\sigma\in\Phi^G$, we denote by $\fg(F)_{\sigma}:=\fg(F)_{x=r}$ for any $(x,r)\in\sigma\subset\cB^a(G,F)\times\R$. Likewise $\fg(F)_{\ge\sigma}:=\fg(F)_{x\ge r}$ and $\fg(F)_{>\sigma}:=\fg(F)_{x>r}$.
Same for $\sigma\in\Phi^G_{\cA}$. From definition of Moy-Prasad filtration, we have
\begin{lemma}\label{lem:MPbasic} Let $\sigma_1,\sigma_2\in\Phi^G_{\cA}$. Then
	\begin{enumerate}
		\item $\fg(F)_{\ge\sigma_1}\subset\fg(F)_{\ge\sigma_2}$ iff every critical hyperplane on $\cA^a$ above $\sigma_2$ is also above $\sigma_1$.
		\item $\fg(F)_{>\sigma_1}\subset\fg(F)_{>\sigma_2}$ iff every critical hyperplane on $\cA^a$ below $\sigma_1$ is also below $\sigma_2$.
	\end{enumerate}
\end{lemma}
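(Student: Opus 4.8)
The plan is to reduce everything to a direct comparison of Moy--Prasad lattices, using the description of $\Phi^G_{\cA}$ as the cell complex cut out by critical hyperplanes. First I would recall that for any augmented facet $\sigma \in \Phi^G_{\cA}$ the lattices $\fg(F)_{\ge\sigma}$ and $\fg(F)_{>\sigma}$ depend only on which side of each critical hyperplane $\sigma$ lies on; this is precisely the content of the preceding paragraph, where we noted that $\Phi^G_{\cA}$ is the collection of intersections of critical hyperplanes and critical half-spaces. Concretely, each critical hyperplane $P$ (of either type) carries a root subspace $\fg_P$ (an affine root space $\fg_\alpha$ of the appropriate degree in the first case, or the relevant graded piece $\fz_{=s}$ of $\Lie Z$ in the second), and $\fg(F)_{\ge\sigma}$ is spanned by those $\fg_P$ with $P$ above $\sigma$ or passing through $\sigma$, while $\fg(F)_{>\sigma}$ is spanned by those $\fg_P$ with $P$ strictly above $\sigma$. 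I would make this bookkeeping precise once, as the key lemma feeding both parts.

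For part (1): the direction ``$\Leftarrow$'' is immediate from the spanning description, since if every critical hyperplane above $\sigma_2$ is above $\sigma_1$, then every root subspace contributing to $\fg(F)_{\ge\sigma_2}$ already contributes to $\fg(F)_{\ge\sigma_1}$ (``passing through'' for $\sigma_2$ forces ``above or passing through'' for $\sigma_1$, which is what one needs), hence $\fg(F)_{\ge\sigma_1}\supset\fg(F)_{\ge\sigma_2}$. For ``$\Rightarrow$'', I would argue by contraposition: if some critical hyperplane $P$ is above $\sigma_2$ but not above $\sigma_1$, then the corresponding root subspace $\fg_P$ lies in $\fg(F)_{\ge\sigma_2}$; but since $P$ is either below or passing through $\sigma_1$, the component of a generic element of $\fg_P$ along that root direction exhibits an element of $\fg(F)_{\ge\sigma_2}$ not lying in $\fg(F)_{\ge\sigma_1}$. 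One has to be slightly careful that a single affine root space can be detected inside the lattice (no interference from other roots), which follows because the grading by the root system / central character splits $\fg(F)$ into a direct sum of the $\fg_\alpha$ and $\fz$, and the Moy--Prasad filtration respects this decomposition. Part (2) is the mirror image: $\fg(F)_{>\sigma}$ is spanned by the $\fg_P$ with $P$ strictly above $\sigma$, so $\fg(F)_{>\sigma_1}\subset\fg(F)_{>\sigma_2}$ holds iff every $P$ strictly above $\sigma_1$ is strictly above $\sigma_2$, and ``strictly above $\sigma_1$'' is the same as ``below $\sigma_1$ is false and passes through $\sigma_1$ is false'', so this unwinds to the stated condition about hyperplanes below.

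I expect the only real subtlety — the ``main obstacle'' — to be the careful treatment of the hyperplanes \emph{passing through} a facet and the second type of hyperplane (the central ones $\cA\times\{s\}$): one must check that the asymmetry between $\fg(F)_{\ge\sigma}$ (which includes the through-hyperplanes) and $\fg(F)_{>\sigma}$ (which excludes them) is handled correctly so that the two parts of the lemma come out with the stated strict-vs-weak phrasing, and that the central torus piece $\fz$, whose filtration jumps exactly at the second-type hyperplanes, is incorporated on the same footing as the root spaces. Once the spanning description is set up cleanly with this caveat, both parts are a short formal argument, so I would spend the bulk of the write-up on that description and then dispatch (1) and (2) in a few lines each. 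Since the statement is labelled a lemma and flagged as following ``from definition of the Moy--Prasad filtration,'' I anticipate the author's proof is correspondingly brief.
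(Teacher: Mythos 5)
Your overall strategy -- decompose $\fg(F)$ into the Cartan part $\fz$ and the root spaces, observe that the Moy--Prasad filtration respects this decomposition, and translate lattice containments into positions of critical hyperplanes -- is exactly the intended unfolding of the definition (the paper offers no written proof beyond ``from the definition of the Moy--Prasad filtration''). However, your bookkeeping is oriented the wrong way relative to the paper's convention, and this is a genuine error, not a cosmetic one. Recall the footnote to Definition \ref{def:critical}: a hyperplane $P$ is \emph{above} $(x,r)$ iff $(x,r')\in P$ with $r'<r$, i.e.\ ``above'' means \emph{smaller} depth. For an affine root $\psi$, the piece attached to $P_\psi$ lies in $\fg(F)_{x\ge r}$ iff $\psi(x)\ge r$, i.e.\ iff $P_\psi$ passes through or lies \emph{below} $(x,r)$; similarly $\fg(F)_{x>r}$ picks up exactly the pieces whose hyperplanes lie strictly \emph{below}. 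Your spanning description (``above or passing through'' for $\fg(F)_{\ge\sigma}$, ``strictly above'' for $\fg(F)_{>\sigma}$) is the reverse, and the error propagates: in part (1) your ``$\Leftarrow$'' argument concludes $\fg(F)_{\ge\sigma_1}\supset\fg(F)_{\ge\sigma_2}$, which is the opposite of the containment being proved, and the parenthetical step ``passing through for $\sigma_2$ forces above or passing through for $\sigma_1$'' does not follow from the hypothesis, which constrains only the hyperplanes above $\sigma_2$. A sanity check: for $\sigma_1=(x,r_1)$, $\sigma_2=(x,r_2)$ with $r_1>r_2$ one has $\fg(F)_{x\ge r_1}\subset\fg(F)_{x\ge r_2}$, and indeed every hyperplane above $(x,r_2)$ is above $(x,r_1)$, consistent with the lemma; your version of the bookkeeping would force the containment the other way.

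In part (2) the same flip appears: from your description you obtain the criterion ``every $P$ strictly above $\sigma_1$ is strictly above $\sigma_2$,'' and the closing claim that this ``unwinds to the stated condition about hyperplanes below'' is not valid -- the two conditions are genuinely different (for points they amount, roughly, to $r_1\le r_2$ versus $r_1\ge r_2$ as seen through the crossing depths). With the corrected spanning description both parts do come out in a few lines, exactly as you planned: containment of $\fg(F)_{\ge\sigma_i}$ (resp.\ $\fg(F)_{>\sigma_i}$) is checked root space by root space (plus the central piece, whose jumps are the second-type hyperplanes, handled on the same footing), and ``every piece included at $\sigma_1$ is included at $\sigma_2$'' translates, after taking contrapositives for part (1), precisely into the stated conditions on hyperplanes above, respectively below, the two facets. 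Your remark that one must be able to detect a single affine-root (or central) jump inside the lattice is correct and is what makes the ``only if'' directions work, since each critical hyperplane is by definition an actual jump of the filtration.
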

\begin{corollary}\label{cor:MPbasic} Suppose $\sigma_1,\sigma_2\in\Phi$ are such that $\sigma_1$ is contained in the closure of $\sigma_2$. Then
	\[
	\fg(F)_{>\sigma_1}\subset\fg(F)_{>\sigma_2}\subset\fg(F)_{\ge\sigma_2}\subset\fg(F)_{\ge\sigma_1}.
	\]
\end{corollary}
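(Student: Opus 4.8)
The plan is to derive the corollary from Lemma~\ref{lem:MPbasic} by a short argument about the arrangement of critical hyperplanes. The middle inclusion $\fg(F)_{>\sigma_2}\subset\fg(F)_{\ge\sigma_2}$ is immediate, since the Moy--Prasad filtration is decreasing; so it remains to establish $\fg(F)_{>\sigma_1}\subset\fg(F)_{>\sigma_2}$ and $\fg(F)_{\ge\sigma_2}\subset\fg(F)_{\ge\sigma_1}$.

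First I would reduce to a single augmented apartment. By the apartment axioms, there is an augmented apartment $\cA^a$ containing $\sigma_2$; since $\cA^a$ is a closed subset of $\cB^a(G,F)$, it then contains $\overline{\sigma_2}$, hence also $\sigma_1$. Thus $\sigma_1,\sigma_2\in\Phi^G_{\cA}$ with $\sigma_1$ in the closure of $\sigma_2$, and Lemma~\ref{lem:MPbasic} becomes applicable (the conclusions about the lattices do not depend on the chosen apartment).

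The core of the argument is an elementary observation about cells of the critical-hyperplane arrangement on $\cA^a$. For any critical hyperplane $P$ and any augmented facet $\sigma\subset\cA^a$, either $\sigma\subset P$ or $\sigma$ is disjoint from $P$, in which case, being convex hence connected, $\sigma$ lies in exactly one of the two open half-spaces of $P$; moreover, since $P$ projects isomorphically onto $\cA$, the statement ``$P$ is above $\sigma$'' (resp.\ ``$P$ is below $\sigma$'') is equivalent to ``$\sigma$ lies in the open half-space of $P$ consisting of points of larger depth'' (resp.\ ``of smaller depth''). Now suppose $P$ is above $\sigma_1$, so $\sigma_1$ lies in the larger-depth open half-space of $P$. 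As $\overline{\sigma_2}$ lies in a single closed half-space of $P$ (or inside $P$) and contains $\sigma_1$, that region must be the larger-depth one; and $\sigma_2\not\subset P$, for otherwise $\sigma_1\subset\overline{\sigma_2}\subset P$ would contradict $P$ being above $\sigma_1$. Hence $\sigma_2$ lies in the larger-depth open half-space of $P$, i.e.\ $P$ is above $\sigma_2$. This shows that every critical hyperplane above $\sigma_1$ is above $\sigma_2$, so Lemma~\ref{lem:MPbasic}(1) (with $\sigma_1$ and $\sigma_2$ interchanged) gives $\fg(F)_{\ge\sigma_2}\subset\fg(F)_{\ge\sigma_1}$. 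The mirror-image argument, with ``larger depth'' replaced by ``smaller depth'' throughout and using Lemma~\ref{lem:MPbasic}(2), shows that every critical hyperplane below $\sigma_1$ is below $\sigma_2$, whence $\fg(F)_{>\sigma_1}\subset\fg(F)_{>\sigma_2}$.

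I expect no serious obstacle: the one point requiring a little care is correctly pairing ``above/below'' (in the depth sense of the footnote to Definition~\ref{def:critical}) with ``larger-/smaller-depth open half-space'', after which the containment $\sigma_1\subset\overline{\sigma_2}$ translates directly into the two half-space containments fed into Lemma~\ref{lem:MPbasic}. The reduction to a common apartment is routine Bruhat--Tits building theory.
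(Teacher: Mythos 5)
Your core argument is the right one and is essentially the intended derivation: translate the closure relation into the statement that every critical hyperplane above (resp.\ below) $\sigma_1$ is above (resp.\ below) $\sigma_2$, and feed this into the two parts of Lemma~\ref{lem:MPbasic} (with the roles of $\sigma_1,\sigma_2$ interchanged for part (1)); your bookkeeping of ``above/below'' versus the larger-/smaller-depth open half-space, and the observation that an element of $\Phi^G_{\cA}$ lies either in a critical hyperplane or in one of its two open half-spaces, are all correct.

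The one genuine problem is the reduction step: it is \emph{not} true in general that an augmented facet is contained in a single augmented apartment, so ``there is an augmented apartment $\cA^a$ containing $\sigma_2$'' fails. Concretely, for $G=SL_2$ and $x_0$ a hyperspecial vertex, one computes $\fg(F)_{y\ge 1/2}=\fg(F)_{y>1/2}=\varpi\,\fg(\cO_F)$ for every point $y$ at distance $<\tfrac{1}{2}$ from $x_0$ (in any direction, by conjugating under the hyperspecial parahoric, which preserves $\varpi\,\fg(\cO_F)$); hence the augmented facet through $(x_0,\tfrac12)$ contains $\{(y,\tfrac12)\}$ for all $y$ in the open ball of radius $\tfrac12$ around $x_0$, which meets all $q+1$ edge germs at $x_0$ and lies in no apartment. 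There are two ways to close the gap. If the corollary is read, as the symbol $\Phi$ (Waldspurger's notation for $\Phi^G_{\cA}$) and the hypotheses of Lemma~\ref{lem:MPbasic} suggest, with $\sigma_1,\sigma_2\in\Phi^G_{\cA}$ inside one fixed augmented apartment, then your reduction is unnecessary and the rest of your argument is complete as written. If instead one wants the statement for $\sigma_1,\sigma_2\in\Phi^G$ at the level of the whole building, the reduction must be replaced, e.g.\ by a direct limit argument: for fixed $X\in\fg(F)$ the condition $X\in\fg(F)_{y\ge s}$ is closed in $(y,s)$ and $X\in\fg(F)_{y>s}$ is open in $(y,s)$ (both are inequalities on the continuous function $(y,s)\mapsto$ depth of $X$ at $y$), so taking a point of $\sigma_1$ as a limit of points of $\sigma_2$ gives $\fg(F)_{\ge\sigma_2}\subset\fg(F)_{\ge\sigma_1}$ and $\fg(F)_{>\sigma_1}\subset\fg(F)_{>\sigma_2}$ without ever choosing an apartment containing $\sigma_2$.
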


\begin{remark} In \cite[\S2.4]{Wa06}, Waldspurger considered what is essentially our $\Phi^G_{\cA}$ and denote it by $\Phi$. There is a natural injection $\Phi^G_{\cA}\subset\Phi^G$ so that $G(F).\Phi^G_{\cA}=\Phi^G$. As for most applications, working with the whole building versus a single apartment (in our notation, $\Phi^G$ versus $\Phi^G_{\cA}$) does not bring any essential difference, and we choose work with the seeming more general $\Phi^G$ merely because the additional cost is very small anyway. In fact, since the (extended) affine Weyl group acts transitively on the set of alcoves, we can furthermore restricts to those augmented facets that intersects a fixed alcove, and will do so in examples.
\end{remark}

For any $\sigma\in\Phi^G$, denote by $d(\sigma)$ the image of $\sigma\subset\cB(G,F)\times\R$ under the projection to $\R$, which is either a point or an open interval. Let us call $\sigma\in\Phi^G$ {\bf horizontal} of depth $r_0$ if $d(\sigma)=\{r_0\}$, and {\bf non-horizontal} of depth $(r_1,r_2)$ if $d(\sigma)=(r_1,r_2)$. We have the following contrapositive of \cite[Prop. 6.4]{MP94} and its corollary:

\begin{lemma}\label{lem:depthandnil} Suppose we have $x\in\cB(G,F)$, $s\in\R$ and $\gamma\in\fg(F)_{x\ge s}$ with $\dep(\gamma)>s$. Then the image of $\gamma$ in the quotient $\fg(F)_{x=s}$ is nilpotent. 
\end{lemma}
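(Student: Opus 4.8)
The statement is the contrapositive of \cite[Prop. 6.4]{MP94}, so the plan is essentially to quote that result correctly. First I would recall what \cite[Prop. 6.4]{MP94} says: if $\gamma \in \fg(F)_{x \ge s}$ has non-nilpotent image in the Moy–Prasad quotient $\fg(F)_{x=s}$ — equivalently, if the image of $\gamma$ is a nonzero semisimple-containing element, or more precisely if $\gamma + \fg(F)_{x>s}$ meets no nilpotent element — then $\dep(\gamma) = s$ exactly, i.e. $\gamma$ cannot be conjugated into a deeper Moy–Prasad lattice. Thus, contrapositively, if $\dep(\gamma) > s$ then the image of $\gamma$ in $\fg(F)_{x=s}$ must be nilpotent. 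That is precisely the claim.

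The one point that needs a little care is matching conventions: $\dep(\gamma) = \max\{r : \gamma \in \fg(F)_{\ge r}\}$ is defined in terms of the union over the whole building, whereas the reference typically phrases optimality in terms of an individual point or in terms of the adjoint orbit. So I would note that $\dep(\gamma) > s$ means there exists some $y \in \cB(G,F)$ (equivalently, after conjugating, some point, or a point in a possibly different apartment) with $\gamma \in \fg(F)_{y \ge s'}$ for some $s' > s$; this is the hypothesis needed to invoke \cite[Prop. 6.4]{MP94}, whose conclusion is that the image of $\gamma$ in $\fg(F)_{x=s}$ is nilpotent (this is a statement about the fixed point $x$, not $y$). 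If one prefers the orbit-theoretic formulation, one observes $\dep(\gamma)>s$ is $\Ad(G(F))$-invariant and the nilpotency of the image of $\gamma$ at $x$ only depends on $\gamma$ through its coset $\gamma+\fg(F)_{x>s}$.

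There is essentially no obstacle here — the result is a direct citation, and the only content is bookkeeping with the depth conventions. If a self-contained argument were desired instead, one could argue: suppose the image $\bar\gamma \in \fg(F)_{x=s}$ is not nilpotent; then its semisimple part (in the reductive or graded quotient at $x$, using $p \gg \rank$ so that Jordan decomposition is available) is a nonzero semisimple element, and a standard Moy–Prasad/$\mathfrak{sl}_2$-type argument shows that any lift lies on an orbit whose depth is exactly $s$, never larger — contradicting $\dep(\gamma) > s$. But invoking \cite[Prop. 6.4]{MP94} is the clean route and is what I would write.
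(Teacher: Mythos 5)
Your proposal is correct and matches the paper exactly: the paper states this lemma as the contrapositive of \cite[Prop.\ 6.4]{MP94} and gives no further argument, which is precisely the citation-plus-bookkeeping route you describe (including the correct reading of nilpotency of the coset as in Definition \ref{def:nil}).
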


\begin{corollary}\label{cor:NH} Let $\sigma\subset\cB^a(G,F)$ be a non-horizontal augmented facet. Then every element in $\fg(F)_{\sigma}$ is nilpotent.
\end{corollary}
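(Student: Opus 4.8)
The plan is to reduce Corollary \ref{cor:NH} to Lemma \ref{lem:depthandnil} by picking any point of the non-horizontal facet and exhibiting a suitably deep element.

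First I would fix a non-horizontal augmented facet $\sigma\subset\cB^a(G,F)$, so that $d(\sigma)=(r_1,r_2)$ is a genuine open interval. Let $c\in\fg(F)_{\sigma}$ be an arbitrary element; I want to show $c$ is nilpotent, i.e. that some (equivalently every) lift of $c$ to $\fg(F)_{\ge\sigma}$ lies in $\fg^{nil}(F)$. Pick a point $(x,s)\in\sigma$; by the definition of an augmented facet, $\fg(F)_{\ge\sigma}=\fg(F)_{x\ge s}$ and $\fg(F)_{>\sigma}=\fg(F)_{x>s}$, and since $d(\sigma)$ is an open interval we may choose $s$ in the interior, so that there is room on both sides: in particular for every $\varepsilon>0$ small enough, $(x,s+\varepsilon)\in\sigma$ as well, whence $\fg(F)_{x\ge s+\varepsilon}=\fg(F)_{x\ge s}=\fg(F)_{\ge\sigma}$.

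Now choose any lift $\gamma\in\fg(F)_{x\ge s}=\fg(F)_{\ge\sigma}$ of $c$. Since $\gamma\in\fg(F)_{x\ge s+\varepsilon}$ for some $\varepsilon>0$ (using the previous paragraph), we get $\dep(\gamma)\ge s+\varepsilon>s$. Thus $\gamma\in\fg(F)_{x\ge s}$ with $\dep(\gamma)>s$, and Lemma \ref{lem:depthandnil} applies directly: the image of $\gamma$ in $\fg(F)_{x=s}=\fg(F)_{\sigma}$ is nilpotent. But that image is exactly $c$, so $c$ is nilpotent. Since $c\in\fg(F)_\sigma$ was arbitrary, every element of $\fg(F)_\sigma$ is nilpotent, which is the claim.

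The only genuinely delicate point is the justification that a non-horizontal facet really does contain points at infinitely many depths inside its projection interval — equivalently, that the Moy-Prasad lattices $\fg(F)_{x\ge s'}$ and $\fg(F)_{x>s'}$ are locally constant in $s'$ away from the (discrete) set of jumps, so that staying within one augmented facet gives $\dep(\gamma)$ strictly larger than the depth label $s$ of the chosen point. This is essentially built into the definition of $\Phi^G$ via critical hyperplanes (Definition \ref{def:critical}) together with the discreteness of the Moy-Prasad jumps, so I expect the proof to be short; the main obstacle, if any, is purely bookkeeping about the augmented-facet structure rather than anything substantive. (Indeed the statement is flagged as "the following contrapositive of \cite[Prop. 6.4]{MP94} and its corollary", so Lemma \ref{lem:depthandnil} is doing all the real work.)
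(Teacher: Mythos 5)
Your overall strategy is the same as the paper's: take an arbitrary lift $\gamma\in c+\fg(F)_{>\sigma}$, show $\dep(\gamma)$ exceeds the depth coordinate of a chosen point of $\sigma$, and invoke Lemma \ref{lem:depthandnil}. However, the step where you justify $\dep(\gamma)>s$ contains a genuine error: you claim that for $(x,s)\in\sigma$ with $s$ interior to $d(\sigma)$, one has $(x,s+\varepsilon)\in\sigma$ for small $\varepsilon>0$, so that $\fg(F)_{x\ge s+\varepsilon}=\fg(F)_{x\ge s}$. A non-horizontal augmented facet need not be vertically open at its points. The facets in $\Phi^G_{\cA}$ are cut out by critical hyperplanes, which include the slanted affine-root hyperplanes $\{(x,r):\alpha(x)=r\}$; a non-horizontal facet can be (and in the cases most relevant to the paper, typically is) a lower-dimensional slanted facet on which some of these equalities are active, e.g.\ the facet $\sigma^{\dagger}$ of \S\ref{subsec:edge2} obtained by walking in the direction $(\til{\lambda},2)$, or the facets $\sigma_1,\sigma_3,\dots$ in the table in the proof of Proposition \ref{prop:curve} such as $\{((\tfrac34-3s,\tfrac14-s),2s)\}$. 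Moving vertically from a point of such a facet immediately violates the active equality $\alpha(x)=r$, so $(x,s+\varepsilon)\notin\sigma$ and in general $\fg(F)_{x\ge s+\varepsilon}\neq\fg(F)_{x\ge s}$. Your closing paragraph restates the needed fact as local constancy of $\fg(F)_{x\ge s'}$ in $s'$ at fixed $x$, which is exactly the claim that fails.

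The repair is immediate and is what the paper does: you do not need the deeper point to lie vertically above $x$. Since every $(x',s')\in\sigma$ satisfies $\fg(F)_{x'\ge s'}=\fg(F)_{\ge\sigma}$ and $d(\sigma)=(r_1,r_2)$ is an open interval, choose any $(x',s')\in\sigma$ with $s'>s$ (the paper takes $s=(r_1+r_2)/2$ and notes $\dep(\gamma)\ge r_2$); then $\gamma\in\fg(F)_{x'\ge s'}\subset\fg(F)_{\ge s'}$ gives $\dep(\gamma)\ge s'>s$, and Lemma \ref{lem:depthandnil} applied at $(x,s)$ finishes the argument exactly as you intend. One minor further caution: your parenthetical ``some (equivalently every) lift of $c$ lies in $\fg^{nil}(F)$'' misstates Definition \ref{def:nil} — nilpotency of the coset means only that it meets the nilpotent cone, and Lemma \ref{lem:depthandnil} asserts nilpotency of the image coset, not of $\gamma$ itself — but since your argument only uses the coset statement, this does not affect the logic.
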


\begin{proof} Say $d_{\sigma}=(r_1,r_2)$. For every element $c\in \fg(F)_{\sigma}$, let $\gamma\in c+\fg(F)_{>\sigma}$ be an arbitrary lift. By definition of depth we have $\depth(\gamma)\ge r_2$. That $c$ is nilpotent follows from Lemma \ref{lem:depthandnil} with $s=(r_1+r_2)/2$ and some $x$ with $(x,s)\in\sigma$.
\end{proof}

Write $\dep(\sigma)=\sup d(\sigma)$; this is justified by that $\dep(\sigma)=\min_{\gamma\in\fg(F)_{\ge\sigma}}\dep(\gamma)$. Define a partial order on $\Phi^G$: for $\sigma_1,\sigma_2\in\Phi^G$ we say $\sigma_1\prec\sigma_2$ iff either $\dep(\sigma_1)<\dep(\sigma_2)$, or if $\dep(\sigma_1)=\dep(\sigma_2)$ and $\dim(\sigma_1)<\dim(\sigma_2)$. Define 
\begin{equation}\label{eq:defCg}
C_{G}:=\{(\sigma,c)\;|\;\sigma\in\Phi^G,\;c\in\fg(F)_{\sigma}\}.
\end{equation}
as well as
\[
	C_{G}^{nil}:=\{(\sigma,c)\in C_{G}\;|\;c\text{ is nilpotent}\}.
\]
For any $r\in\R$, the natural inclusion $\Phi(r)\subset\Phi$ induces $C_{G,r}\subset C_{G}$ and $C_{G,r}^{nil}\subset C_{G}^{nil}$. The goal of this section is 
\begin{theorem}\label{thm:graph} We will define a directed acyclic graph with vertex set $C_{G}$, depending on some choice made at each vertex, such that for any collection of choices we have
\begin{enumerate}
	\item There is a edge from $(\sigma_1,c_1)$ to $(\sigma_2,c_2)$ only if $c_1$ is nilpotent and $\sigma_1\prec\sigma_2$.
	\item For any $\gamma\in C_{G}$ and $r=\dep(\gamma)$, the union 
	\[	
	C_{G}(\gamma):=\bigcup_{s\in\R}c_{G,s}(\gamma)=\{(\sigma,c)\in C_{G}\;|\;\Ad(G(F))\gamma\cap\left(c+\fg(F)_{>\sigma}\right)\not=\emptyset\}\subset C_{G}\]
	is equal to the set of those vertices in $C_{G}$ which has a path to $c_{G,r}(\gamma)\subset C_{G,r}\bsl C_{G,r}^{nil}$.
\end{enumerate}	
\end{theorem}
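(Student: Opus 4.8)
The plan is to construct the graph locally at each vertex and then verify the two global properties. First I would fix an augmented apartment $\cA^a$ and describe the edges out of a vertex $(\sigma_1,c_1)$ only when $c_1$ is nilpotent; otherwise $(\sigma_1,c_1)$ is a sink. Given such a nilpotent coset, the key geometric input from Waldspurger \cite[\S2.4, \S7]{Wa06} is that for any augmented facet $\sigma_2$ with $\sigma_1\prec\sigma_2$ and $\fg(F)_{>\sigma_1}\subset\fg(F)_{>\sigma_2}$ (equivalently, by Lemma \ref{lem:MPbasic}, the combinatorial condition on critical hyperplanes below), one may ask whether $\Ad(G(F))(c_1+\fg(F)_{>\sigma_1})$ meets $c_2+\fg(F)_{>\sigma_2}$. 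Following Waldspurger, this comes down to a question about rational orbits in a graded Lie algebra over $k$ intersecting an affine subspace; the choice at the vertex is precisely a choice (e.g.\ of representatives, Slodowy slices, or a transversal) resolving the ambiguity in identifying the relevant Moy-Prasad subquotients. I would put an edge $(\sigma_1,c_1)\to(\sigma_2,c_2)$ exactly when this intersection is nonempty \emph{and} $\sigma_2$ is an immediate successor of $\sigma_1$ in a suitable sense (to avoid redundancy while preserving reachability); acyclicity is then immediate from property (1), since $\prec$ is a strict partial order.

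Next I would establish property (1). The condition that $c_1$ be nilpotent is built into the edge definition, so the content is $\sigma_1\prec\sigma_2$. This follows from the two containments $\fg(F)_{>\sigma_1}\subset\fg(F)_{>\sigma_2}$ and (its companion) $\fg(F)_{\ge\sigma_2}\subset\fg(F)_{\ge\sigma_1}$ that any admissible edge must respect: together with the fact that $\Ad(G(F))\gamma$ meets both cosets, these force $\dep(\sigma_1)\le\dep(\sigma_2)$, and when depths agree one gets a strict inequality of dimensions from Corollary \ref{cor:NH} (a non-horizontal facet can only be approached from a horizontal one of the same depth, lowering dimension) together with the observation that equality of both containments would force $\sigma_1=\sigma_2$. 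So $\sigma_1\prec\sigma_2$, giving acyclicity.

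For property (2), I would argue by downward induction on $\prec$, or rather by a two-sided inclusion. For ``$\supseteq$'': if $(\sigma,c)$ has an edge to some $(\sigma',c')\in C_G(\gamma)$, then by construction $\Ad(G(F))(c+\fg(F)_{>\sigma})$ meets $c'+\fg(F)_{>\sigma'}$, which meets $\Ad(G(F))\gamma$; chasing the containments $\fg(F)_{>\sigma}\subset\fg(F)_{>\sigma'}$ one deduces $\Ad(G(F))\gamma$ meets $c+\fg(F)_{>\sigma}$, i.e.\ $(\sigma,c)\in C_G(\gamma)$. Iterating along a path gives the inclusion. For ``$\subseteq$'': take $(\sigma,c)\in C_G(\gamma)$, so WLOG $\gamma\in c+\fg(F)_{>\sigma}$. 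If $\dep(\sigma)=r=\dep(\gamma)$ and $\sigma$ is horizontal, then $c$ is \emph{not} nilpotent (since $\gamma\in c+\fg(F)_{>\sigma}$ and $\gamma$ has depth exactly $r$, so the semisimple part survives in $\fg(F)_{=r}$), hence $(\sigma,c)\in c_{G,r}(\gamma)\subset C_{G,r}\setminus C_{G,r}^{nil}$ and is its own path. Otherwise $c$ is nilpotent: if $\sigma$ is non-horizontal this is Corollary \ref{cor:NH}, and if $\sigma$ is horizontal of depth $<r$ it is Lemma \ref{lem:depthandnil}. In this case I would exhibit an outgoing edge to some $(\sigma',c')$ still in $C_G(\gamma)$ with $\sigma\prec\sigma'$: push $\gamma$ slightly—move to a facet $\sigma'$ just below $\sigma$ in $\cA^a$ (increasing depth or, at equal depth, increasing dimension) and let $c'$ be the image of $\gamma$ in $\fg(F)_{\sigma'}$; the intersection defining the edge is witnessed by $\gamma$ itself. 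Since $\prec$ is well-founded above (depths are bounded by $r$ and dimensions by $\dim\fg$), iterating terminates at a vertex in $c_{G,r}(\gamma)$, producing the required path.

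The main obstacle is the geometric input underpinning the edge definition: making precise, following \cite[\S7]{Wa06}, that the question ``does $\Ad(G(F))(c_1+\fg(F)_{>\sigma_1})$ meet $c_2+\fg(F)_{>\sigma_2}$?'' reduces to a well-posed question about $\sG$-orbits over $k$ meeting an affine subspace, and checking that the local choices glue into a single graph for which the witness $\gamma$ always yields an edge in the ``$\subseteq$'' step (i.e.\ that one can always take $\sigma'$ to be an immediate successor without losing reachability). This is exactly the ``geometric problem'' flagged in the introduction; here we only need existence of \emph{some} admissible edge along which $\gamma$ descends, not a full classification, which is what makes the argument go through.
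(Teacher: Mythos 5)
Your edge definition is where the argument breaks. You declare an edge $(\sigma_1,c_1)\to(\sigma_2,c_2)$ whenever $\Ad(G(F))\bigl(c_1+\fg(F)_{>\sigma_1}\bigr)$ meets $c_2+\fg(F)_{>\sigma_2}$ (plus an ``immediate successor'' condition), and then, in your ``$\supseteq$'' step, you claim that if such an edge exists and $(\sigma_2,c_2)\in C_{G}(\gamma)$ then $(\sigma_1,c_1)\in C_{G}(\gamma)$ by ``chasing the containments.'' This is a non sequitur: the edge only witnesses that \emph{some} element of $c_1+\fg(F)_{>\sigma_1}$ is conjugate into $c_2+\fg(F)_{>\sigma_2}$, while membership of $(\sigma_2,c_2)$ in $C_{G}(\gamma)$ only says that \emph{some conjugate of $\gamma$} lies in $c_2+\fg(F)_{>\sigma_2}$; nothing forces these to be related, and the containment $\fg(F)_{>\sigma_1}\subset\fg(F)_{>\sigma_2}$ points the wrong way to transfer $\gamma$ into the source coset. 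With your edges the set of vertices admitting a path to $c_{G,r}(\gamma)$ can therefore be strictly larger than $C_{G}(\gamma)$, so property (2) fails; this is exactly the subtlety the paper is organized around (cf.\ the non-inductivity discussion and Example \ref{ex:WFnotinductive}), not merely the ``geometric problem'' of classifying orbit intersections that you flag at the end.

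The paper avoids this by never using a bare ``nonempty intersection'' condition. For a non-horizontal $\sigma$ it draws edges, as in (\ref{eq:defedge1}), to \emph{all} cosets $c'\in c+\fg(F)_{>\sigma}/\fg(F)_{>\sigma'}$ for a chosen horizontal $\sigma'$ below $\sigma$, so that the source coset is literally partitioned by the target cosets via (\ref{eq:decomp}); then ``$\gamma$'s orbit meets the source iff it meets some target'' is tautological. For the remaining (notably horizontal) case it uses Waldspurger's specific facet $(\sigma^{\dagger},c^{\dagger})$ as in (\ref{eq:defedge2}), where Lemma \ref{lem:>2} — proved via the graded $\slt$-triple structure, conjugating by $\exp$ of positive $\lambda$-weight elements — gives the orbit-by-orbit equivalence: \emph{every} element of $c^{\dagger}+\fg(F)_{>\sigma^{\dagger}}$ is $G(F)$-conjugate to an element of $c+\fg(F)_{>\sigma}$. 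That lemma is precisely the statement your containment chase silently assumes, and it holds only for the particular $\sigma^{\dagger}$ obtained by walking in the direction attached to an $\slt$-triple through $c$, not for an arbitrary deeper facet meeting the saturated coset. To repair your write-up you would have to replace your uniform edge rule by these two constructions (or prove an analogue of Lemma \ref{lem:>2} for each edge you draw), at which point your inductive scheme for (1) and (2) coincides with the paper's.
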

Note that Corollary \ref{cor:NH} gives $C_{G}(\gamma)\subset C_{G,r}\cup C_{G}^{nil}$. By the nature of the statement, the theorem can and will be proved inductively for elements in $C_{G}^{nil}$, i.e. it suffices to show that for any vertex $(\sigma,c)\in C_{G,s}^{nil}$ of the graph, we have that $(\sigma,c)\in c_{G}(\gamma)$ iff it has an out-neighbor $(\sigma',c')$ (i.e. there is a directed edge from $(\sigma,c)$ to it) that is contained in $c_{G}(\gamma)$. These out-neighbors will be declared by two different methods (\ref{eq:defedge1}) and (\ref{eq:defedge2}) below. The two method will altogether cover all vertices of the graph, but they overlap in general. Moreover each method has the freedom given by some choice. That is, our graph will be defined up to some choices so that any choice gives a graph for which Theorem \ref{thm:graph} works, though some choices seem better when we analyze it in \S\ref{sec:geometric}.

Our first method of defining the out-neighbors, essentially in \cite[\S7.3]{Wa06}, applies to a non-horizontal augmented facet $\sigma$ with $d(\sigma)=(r_1,r_2)$. The intersection of $\cB(G,F)\times\{r_2\}$ with the closure of $\sigma$ is convex and equals to a union of horizontal augmented facets. They can be described as
\begin{definition}\label{def:below} Suppose $\sigma$ is a non-horizontal facet and $\sigma'\in\Phi^G$ is an augmented facet (necessarily horizontal) in the closure of $\sigma$ satisfying $d(\sigma')=\{\dep(\sigma)\}$. Then we call $\sigma'$ an augmented facet {\bf below} $\sigma$.
\end{definition}
\noindent {\it Choose} $\sigma'\in\Phi^G$ below $\sigma$.
By Lemma \ref{lem:MPbasic}, we have that
\begin{equation}\label{eq:pickedfacet}
\fg(F)_{\ge\sigma'}\supset\fg(F)_{\ge\sigma}\supset\fg(F)_{>\sigma}\supset\fg(F)_{>\sigma'}.
\end{equation}
We may realize $\fg(F)_{>\sigma}/\fg(F)_{>\sigma'}$ as a subspace of $\fg(F)_{\sigma'}$, and
for any $c\in\fg(F)_{\sigma}$ we have
\begin{equation}\label{eq:decomp}
c+\fg(F)_{>\sigma}=\bigsqcup_{c'\in c+\fg(F)_{>\sigma}/\fg(F)_{>\sigma'}}c'+\fg(F)_{>\sigma'}
\end{equation}
We define the set of out-neighbors of $(\sigma,c)$ to be the collection 
\begin{equation}\label{eq:defedge1}
\{(\sigma',c')\;|\;c'\in c+\fg(F)_{>\sigma}/\fg(F)_{>\sigma'}\}.
\end{equation}
Directly from (\ref{eq:decomp}) we see that $(\sigma,c)\in c_{G}(\gamma)$ iff at least one of $(\sigma',c')\in c_{G}(\gamma)$. Theorem \ref{thm:graph} is thus inductively true for this definition with any choice of $\sigma'$ made before (\ref{eq:pickedfacet}).

Our second method is a construction of Waldspurger in \cite[Lemma 7.2.2 \& Lemma 7.3.2]{Wa06} that applies to some $(\sigma,c)$ with $c$ nilpotent. We will review this construction in \S\ref{sec:geometric}. The point is that whenever $\sigma$ is horizontal, and sometimes also when $\sigma$ is not, Waldspurger produced from $(\sigma,c)$ another $(\sigma^{\dagger},c^{\dagger})$ where $\sigma^{\dagger}$ is a non-horizontal coset having $\sigma$ on its boundary and $\sigma\prec\sigma^{\dagger}$, such that any $\Ad(G(F))$-orbit in $\fg(F)$ meets $c+\fg(F)_{>\sigma}$ iff it meets $c^{\dagger}+\fg(F)_{>\sigma^{\dagger}}$; in fact in appropriate sense $c^{\dagger}=c$, see Lemma \ref{lem:>2}. We then declare, or rather construct the graph so that
\begin{equation}\label{eq:defedge2}
(\sigma,c)\text{ has a unique out-neighbor }(\sigma^{\dagger},c^{\dagger}).
\end{equation}
Theorem \ref{thm:graph} then follows.

\begin{remark}\label{rmk:conjugate} We can introduce a variant for the graph: that whenever we draw an edge from $(\sigma,c)$ to $(\sigma',c')$, we also draw an edge to $(\sigma,c)$ to $g.(\sigma',c')$ for any $g\in G(F)$. Equivalently, we can view our graph as defined on the vertex set $C_{G}/G(F)$. This obviously does not affect the validity of Theorem \ref{thm:graph}. Also, in (\ref{eq:defedge1}) instead of choosing one $\sigma'$ below $\sigma$ we can also choose all augmented facets below $\sigma$. That will not affect Theorem \ref{thm:graph} and provides a canonical (though not necessarily desirable) choice.
\end{remark}

\section{Analysis on Moy-Prasad quotients and nilpotent orbits}\label{sec:geometric}

From now on, by abuse of notation we will understand $C_{G}$ as a graph with directed edges constructed as in \S\ref{sec:graph}. The goal of this section is to analyze it, giving criteria for some $(\sigma,c)\in C_{G}$ to have or not have a directed edge to some other $(\sigma',c')$. We adapt all the notations in \S\ref{sec:graph}. Let us introduce a convenient notation from \cite{LY17}: suppose we have a homomorphism $\lambda:\mathbb{G}_m\ra\operatorname{End}(V)$ for a finite-dimensional vector space $V$ (over some working field), then for any $i\in\Z$ we write ${}^\lambda_iV\subset V$ for the eigenspace on which $\lambda(t).v=t^iv$ for $t\in\mathbb{G}_m$. We also write ${}^\lambda_{\ge j}V:=\bigoplus_{i\ge j}{}^{\lambda}_iV$, etc..

\subsection{Moy-Prasad quotients \`{a} la Reeder-Yu}\label{subsec:MPQ}

We will need a theory for Moy-Prasad quotients ``in the tame case,''  for which we review the results in \cite[\S4]{RY14} with our language.
Denote by $\Z_{(p)}$ the localization of $\Z$ at $p$, namely the ring of rational numbers with denominators coprime to $p$. The subset $\cB(G,F)_{(p)}\subset\cB(G,F)$ of those points in $\cB(G,F)$ with rational coordinates with denominator coprime to $p$ is well-defined, e.g. they are those points that become hyperspecial vertices after a tamely ramified base change. When $\cA$ corresponds to a maximal $F$-split torus $S$, we have $\cA_{(p)}:=\cA\cap\cB(G,F)_{(p)}$ is a torsor of $X_*(S)\otimes_{\Z}\Z_{(p)}$.  

Denote by $F^s/F$ a fixed separable closure and $F^{ur}/F$ the maximal unramified extension in it, with residue field $\bar{k}$ an algebraic closure of $k$. Let $n'\in\Z_{>0}$ be coprime to $p$, $r=\frac{a'}{m'}$ with $a'\in\Z$ (we will replace them by $a$ and $m$ in a moment), and $x\in\cB(G,F)_{(p)}$ be such that it is a hyperspecial vertex in $\cB(G,E_{m'})$ where $E_{m'}/F^{ur}$ is the unique (up to isomorphism) tame extension with ramification index $m'$. Fix a uniformizer $\varpi_F\in F$. For any $b_1\equiv b_2(\operatorname{mod}m')$, we have $\fg(F)_{x=\frac{b_1}{m'}}\cong\fg(F)_{x=\frac{b_2}{m'}}$ by multiplying an appropriate power of $\varpi_F$. Using such identification, the direct sum
\[
\fg^{\clubsuit}_x:=\bigoplus_{b\in\Z/m'}\fg(F)_{x=\frac{b}{m'}}
\]
is a Lie algebra over $k$. Over $\bar{k}$, Reeder-Yu showed \cite[Thm. 4.1]{RY14} that it can be identified with the reductive quotient (Lie algebra version) of the hyperspecial parahoric of $G/_{E_{m'}}$ at $x$. They also consider the reductive quotient (group version) $\sG_x^{\clubsuit}$ for which they give a Frobenius action so that it becomes a connected reductive group over $k$ with Lie algebra isomorphic to $\fg^{\clubsuit}_x$. We have in particular that $\sG_x^{\clubsuit}/_k$ and $G/_F$ has the same absolute root data. Moreover, there is a $k$-homomorphism $\rho':\mu_{m'}\ra\operatorname{Aut}(\sG_x^{\clubsuit})$ such that $\fg(F)_{x=\frac{b}{m'}}$ is identified with the eigenspace on which $\rho'(z)v=z^bv$ for $z\in\mu_{m'}$, $v\in\fg^{\clubsuit}_x$, $b\in\Z/m'$. We denote ${}^{\rho'}_b\fg^{\clubsuit}_x$ such eigenspace, and denote by ${}^{\rho'}_0 \sG_x^{\clubsuit}$ the connected subgroup invariant by $\rho'$ with $\Lie({}^{\rho'}_0 \sG_x^{\clubsuit})={}^{\rho'}_0\fg^{\clubsuit}_x$. Analogous to ${}^{\rho'}_0\fg_x\cong\fg(F)_{x=0}$ we have ${}^{\rho'}_0\sG_x^{\clubsuit}\cong\sG_x$, the reductive quotient of $G$ over $F$ at $x$ as an algebraic group over $k$. The following lemma follows from \cite[Lemma 4.1.2]{De02b}:

\begin{lemma}\label{lem:nilpotent} For $b\in\Z$, an element in $\fg(F)_{x=\frac{b}{m'}}$ is nilpotent in the sense of Definition \ref{def:nil} iff it is nilpotent as an element in the reductive Lie algebra $\fg^{\clubsuit}_x$.
\end{lemma}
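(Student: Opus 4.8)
The plan is to read off the statement from DeBacker's depth-$0$ criterion together with the tame base change that underlies the Reeder--Yu picture. Write $r=\frac{b}{m'}$ and let $\varsigma$ be the $r$-facet through $x$, so Definition~\ref{def:nil} asks us to compare the condition $(c+\fg(F)_{x>r})\cap\fg^{nil}(F)\neq\emptyset$ with the nilpotency of $c$ as an element of the reductive $k$-Lie algebra $\fg^{\clubsuit}_x$. Since the characteristic polynomial of the adjoint action of an element of a reductive Lie algebra over $k$ is unchanged by extension of scalars, $c$ is nilpotent in $\fg^{\clubsuit}_x$ if and only if it is nilpotent in $\fg^{\clubsuit}_x\otimes_k\bar k$; and by \cite[Thm. 4.1]{RY14} the latter is identified, as a $\Z/m'$-graded Lie algebra over $\bar k$, with $\fg(E_{m'})_{x=0}$ — the Lie algebra of the hyperspecial reductive quotient of $G/_{E_{m'}}$ at $x$ — the summand ${}^{\rho'}_b\fg^{\clubsuit}_x=\fg(F)_{x=\frac{b}{m'}}$ going to the $z\mapsto z^b$ eigenspace of the inertial action. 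The first thing to pin down is the concrete form of this identification: writing $\mathfrak{L}:=\fg(E_{m'})_{x\ge 0}$ for the Chevalley $\cO_{E_{m'}}$-lattice at $x$ and taking any lift $\tilde c\in\fg(F)_{x\ge r}$ of $c$, the Reeder--Yu image of $c$ is $\varpi_{E_{m'}}^{-b}\tilde c\bmod\varpi_{E_{m'}}\mathfrak{L}\in\mathfrak{L}/\varpi_{E_{m'}}\mathfrak{L}=\fg(E_{m'})_{x=0}$, which is well-defined because $\fg(F)_{x>r}\subset\fg(E_{m'})_{x>r}=\varpi_{E_{m'}}^{b+1}\mathfrak{L}$.

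With this in hand, one direction is immediate. Suppose $c$ is Definition~\ref{def:nil}-nilpotent, say $Y\in c+\fg(F)_{x>r}$ is nilpotent in $\fg(F)$. Then $\varpi_{E_{m'}}^{-b}Y$ lies in $\mathfrak{L}$ and is still nilpotent in $\fg(E_{m'})$ (the nilpotent cone base-changes and is stable under scaling), so the characteristic polynomial of its adjoint action on $\mathfrak{L}$ has coefficients in $\cO_{E_{m'}}$ and is a power of $t$; reducing modulo $\varpi_{E_{m'}}$ shows that $\varpi_{E_{m'}}^{-b}Y\bmod\varpi_{E_{m'}}\mathfrak{L}$ is nilpotent in $\fg(E_{m'})_{x=0}$. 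Because $Y\equiv\tilde c$ modulo $\fg(F)_{x>r}\subset\varpi_{E_{m'}}^{b+1}\mathfrak{L}$, this reduction is exactly the Reeder--Yu image of $c$, whence $c$ is nilpotent in $\fg^{\clubsuit}_x\otimes_k\bar k$ and hence in $\fg^{\clubsuit}_x$.

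For the converse, suppose $c$ is nilpotent in $\fg^{\clubsuit}_x$; then by the first paragraph $\varpi_{E_{m'}}^{-b}\tilde c\bmod\varpi_{E_{m'}}\mathfrak{L}$ is a nilpotent element of the reductive Lie algebra $\fg(E_{m'})_{x=0}$. Applying \cite[Lemma 4.1.2]{De02b} at depth $0$ over $E_{m'}$ (where $x$ is hyperspecial and the residue field is algebraically closed) yields a nilpotent element of $\fg(E_{m'})$ inside $\varpi_{E_{m'}}^{-b}\tilde c+\fg(E_{m'})_{x>0}$, and scaling by $\varpi_{E_{m'}}^{b}$ gives one inside $\tilde c+\fg(E_{m'})_{x>r}$. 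What remains — the one genuinely delicate point — is to descend such an element to $F$, i.e. to produce a nilpotent element of $\fg(F)$ lying in the smaller coset $c+\fg(F)_{x>r}$. I expect this to be free from the formulation of \cite[Lemma 4.1.2]{De02b}, which is stated in enough generality that the tame base change is already incorporated — note that $\tilde c+\fg(E_{m'})_{x>r}$ is stable under $\Gal(E_{m'}/F^{ur})$ since $\fg(E_{m'})_{x>r}$ is an intrinsic Moy--Prasad lattice and $\tilde c$ is rational — or else it should follow from a short auxiliary argument: first reduce over $F^{ur}$, where degeneracy of a coset is unaffected by the unramified base change (this is part of DeBacker's parametrization), and then descend along the cyclic tame extension $E_{m'}/F^{ur}$ using that the nilpotent cone and the relevant parahoric orbits are governed by residue-field data. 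The main obstacle is precisely this tame descent of degeneracy; the rest is bookkeeping around the Reeder--Yu isomorphism and reductions of characteristic polynomials.
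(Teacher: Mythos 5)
Your forward direction is fine: a nilpotent $Y\in\fg(F)$ in the coset stays nilpotent after scaling by $\varpi_{E_{m'}}^{-b}$, it preserves the lattice $\fg(E_{m'})_{x\ge 0}$, and its reduction is the Reeder--Yu image of $c$, so $c$ is nilpotent in $\fg^{\clubsuit}_x$. The converse, however, is where the content of the lemma lies, and you have correctly located the difficulty without closing it. Applying \cite[Lemma 4.1.2]{De02b} at depth $0$ over $E_{m'}$ only produces a nilpotent element of $\fg(E_{m'})$ inside $\til{c}+\fg(E_{m'})_{x>r}$, whereas Definition \ref{def:nil} demands a nilpotent element of $\fg(F)$ inside the much smaller coset $c+\fg(F)_{x>r}$. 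Galois-stability of the (nonempty) set of nilpotent elements of the $E_{m'}$-coset buys you nothing by itself: a Galois-stable nonempty set need not have a rational point. Neither of your proposed repairs is an argument -- ``degeneracy is unaffected by unramified base change'' is invoked as a black box, and ``descend along the cyclic tame extension using that the nilpotent cone and the relevant parahoric orbits are governed by residue-field data'' is precisely the statement that needs proof. Rationality is a genuine issue throughout this circle of ideas; compare Corollary \ref{cor:extension}, which requires Proposition \ref{prop:samelift} exactly to control how these minimal orbits behave under field extension.

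The paper does not take your route: it quotes \cite[Lemma 4.1.2]{De02b}, which is formulated over $F$ itself, so no descent problem ever arises. If you want a self-contained argument, the standard way to produce the rational nilpotent lift is to work over the residue field from the start: since $c\in{}^{\rho'}_b\fg^{\clubsuit}_x$ is nilpotent and $p$ is large, complete it to a graded $\slt$-triple $(c,h,d)$ with $h\in{}^{\rho'}_0\fg^{\clubsuit}_x$ and $d\in{}^{\rho'}_{-b}\fg^{\clubsuit}_x$, rational over $k$ (Lang's theorem handles rationality over the finite field), exactly as is done in \S\ref{subsec:MPQ}; then lift this triple by \cite[Cor. 4.3.2]{De02b} to an $\slt$-triple over $F$ whose nilpositive element $\til{c}$ lies in $c+\fg(F)_{x>r}$ and is nilpotent. (Equivalently, one can use a $k$-rational destabilizing cocharacter \`a la Kempf.) In other words, the rational construction over $k$ is the real mechanism behind the lemma; your proposal replaces it with a tame descent statement that is asserted but not proved, so as it stands the ``only if'' direction is a genuine gap.
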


Write $m=m'/\gcd(a',m')$, $a=a'/\gcd(a',m')$. Let
\[
\fg_x^{\heartsuit}:=\bigoplus_{b\in (\gcd(a',m')\Z)/m'\Z}\fg(F)_{x=\frac{b}{m'}}.
\]
With
\[
1\ra\mu_{\gcd(a',m')}\ra\mu_a'\ra\mu_{a}\ra 1
\]
we have 
$\fg_x^{\heartsuit}=(\fg_x^{\clubsuit})^{\rho'(\mu_{\gcd(a',m')})}$. Let $\sG_x^{\heartsuit}:=\left((\sG_x^{\clubsuit})^{\rho'(\mu_{\gcd(a',m')})}\right)^o$ so that we have an induced $\rho:\mu_{m}\ra\operatorname{Aut}(\sG_x^{\heartsuit})$. For any $b\in\Z$ and $b'=\gcd(a',m')\cdot b$, this gives $\fg(F)_{x=\frac{b}{m}}=\fg(F)_{x=\frac{b'}{m'}}={}^{\rho'}_{b'}\fg_x^{\clubsuit}={}^{\rho}_{b}\fg_x^{\heartsuit}$. In particular this holds for $b=a$ and $b=-a$, so that e.g. $\fg(F)_{x=r}={}^{\rho}_{a}\fg_x^{\heartsuit}$. We remark that $m=m'/\gcd(a',m')$ is the denominator of $r$ in its reduced expression.

Suppose we are given $c\in\fg(F)_{x=r}$ that is nilpotent. We may complete $c$ into an $\slt$-triple
\begin{equation}\label{eq:gradedtriple}
(c,h,d)\in\fg(F)_{x=r}\times\fg(F)_{x=0}\times\fg(F)_{x=-r}\cong{}^\rho_{a}\fg^{\heartsuit}_x
\times{}^\rho_0\fg^{\heartsuit}_x\times{}^\rho_{-a}\fg^{\heartsuit}_x.
\end{equation}
Viewing the triple in $\fg^{\heartsuit}_x$, we can lift it to $\varphi:SL_2/_k\ra \sG_x^{\heartsuit}$ (see e.g. \cite[\S5.5]{Ca93}). Restricting to the diagonal torus $\mathbb{G}_m\subset SL_2$ we get a cocharacter $\lambda:\mathbb{G}_m\ra \sG_x^{\heartsuit}$.
Note that $\varphi$ and thus $\lambda$ are uniquely determined by the triple, and the triple is almost invariant under $\rho:\mu_m\ra\operatorname{Aut}(\sG_x^{\heartsuit})$; more precisely, every $\rho(z)$ scales $c$ by $z^a$ and $d$ by $z^{-a}$, which has no effect on the diagonal. This implies that $\lambda$ has image in ${}^{\rho}_0\sG_x^{\heartsuit}$. Consequently, we have
\[
\fg^{\heartsuit}_x=\bigoplus_{b\in\Z/m}\bigoplus_{i\in\Z}{}^{\rho}_b({}^{\lambda}_i\fg^{\heartsuit}_x).
\]
That is, the two gradings given by $\rho$ and $\lambda$ are compatible, and we may realize
\begin{equation}\label{eq:bigrade}
(c,h,d)\in{}^\lambda_2\fg(F)_{x=r}\times{}^\lambda_0\fg(F)_{x=0}\times{}^\lambda_{-2}\fg(F)_{x=-r}
={}^\rho_a({}^\lambda_2\fg^{\heartsuit}_x)\times{}^\rho_0({}^\lambda_0\fg^{\heartsuit}_x)\times{}^\rho_{-a}({}^\lambda_{-2}\fg^{\heartsuit}_x).
\end{equation}
Recall that the image of $\lambda$ lies in ${}^\rho_0\sG_x^{\heartsuit}$ which we identify as the reductive quotient $\sG_x$. Thus there is maximal $k$-split torus $\sS\subset\sG_x$ that contains the image of $\lambda$. We may choose a maximal $F$-split torus $S\subset G$ whose corresponding apartment $\cA$ contains $x$ and whose reductive quotient is identified with $\sS$. Under the identification $X_*(\sS)=X_*(S)$ we have that $\lambda$ is identified with $\til{\lambda}:\mathbb{G}_m/_F\ra S$. By \cite[Cor. 4.3.2]{De02b} we can lift $(c,h,d)$ from (\ref{eq:gradedtriple}) and (\ref{eq:bigrade}) to
\begin{equation}\label{eq:liftedsl2}
(\til{c},\til{h},\til{d})\in {}^{\til{\lambda}}_2\fg(F)_{x\ge r}\times{}^{\til{\lambda}}_0\fg(F)_{x\ge 0}\times{}^{\til{\lambda}}_{-2}\fg(F)_{x\ge -r}
\end{equation}

As before let us denote by $\cO(x,r,c)$ the minimal nilpotent $\Ad(G(F))$-orbit that intersects $c+\fg(F)_{x>r}$. By \cite[Cor. 5.2.4]{De02b}, we have that $\cO(x,r,c)=\Ad(G(F))\til{c}$. Since the weighted Dynkin diagram associated to $c$ and $\til{c}$ are determined by $\lambda$ and $\til{\lambda}$ respectively. Hence the geometric orbits $\Ad(\sG_x^{\clubsuit}(\bar{k}))c$ and $\Ad(G(F^s))\til{c}$, although defined over different fields, are ``the same'' as they are classified by the same weighted Dynkin diagram; we refer to \cite{CM93} for the notion of weighted Dynkin diagrams. For example in classical groups they correspond to the same partition. Note that we have to use $\sG_x^{\clubsuit}$ instead of $\sG_x^{\heartsuit}$ cause only the former has the same absolute root data as $G$. We highlight this result:

\begin{proposition}\label{prop:samelift} Let $\til{c}=\cO(x,r,c)$. The geometric orbits $\Ad(\fg_x^{\clubsuit}(\bar{k}))c$ and $\Ad(G(F^s))\til{c}$ are ``the same'' in the sense that they have the same weighted Dynkin diagram.
\end{proposition}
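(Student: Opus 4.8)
The plan is to trace through the construction in \S\ref{subsec:MPQ} and observe that both geometric orbits are pinned down by the \emph{same} cocharacter data, so the only real content is a bookkeeping argument about weighted Dynkin diagrams. First I would recall that by \cite[Cor. 5.2.4]{De02b} we have $\cO(x,r,c)=\Ad(G(F))\til c$ with $\til c$ as in (\ref{eq:liftedsl2}), so the statement is a comparison of $\Ad(\fg_x^{\clubsuit}(\bar k))c$ (a geometric nilpotent orbit in $\fg_x^{\clubsuit}$) with $\Ad(G(F^s))\til c$ (a geometric nilpotent orbit in $\fg$). Over $F^s$ the Moy-Prasad machinery gives the Reeder--Yu identification $\fg_x^{\clubsuit}\otimes_k\bar k\cong\fg\otimes_F F^s$ after base change, compatible with the adjoint actions, with the same absolute root datum (this is the reason $\clubsuit$ rather than $\heartsuit$ is used). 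So the two orbits live in ``the same'' reductive Lie algebra over an algebraically closed field, and it remains to identify them.

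Second, I would extract from the construction that $c$ lies in $\,{}^\lambda_2\fg^{\heartsuit}_x\subset\,{}^\lambda_2\fg^{\clubsuit}_x$, where $\lambda:\Gm\to\,{}^\rho_0\sG_x^{\heartsuit}\subset\sG_x^{\clubsuit}$ is the cocharacter coming from the $\slt$-triple lifted to $\varphi:SL_2\to\sG_x^{\heartsuit}$. Thus $(c,h,d)$ is a genuine $\slt$-triple in $\fg^{\clubsuit}_x$ with $h$ acting with weights $(2,0,-2)$ on $(c,h,d)$; by Jacobson--Morozov / the classification of nilpotent orbits (\cite{CM93}, \cite[\S5.5]{Ca93}) the orbit $\Ad(\fg_x^{\clubsuit}(\bar k))c$ is characterized by the $\sG_x^{\clubsuit}(\bar k)$-conjugacy class of $\lambda$, equivalently by the weighted Dynkin diagram read off from the weights of $\lambda$ on a chosen simple system. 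On the other side, $\til c\in\,{}^{\til\lambda}_2\fg(F)$ with $\til\lambda:\Gm/_F\to S$ the lift of $\lambda$ under $X_*(\sS)=X_*(S)$; again $(\til c,\til h,\til d)$ is an $\slt$-triple and $\Ad(G(F^s))\til c$ is characterized by the weighted Dynkin diagram of $\til\lambda$. So the whole claim reduces to: \emph{the weighted Dynkin diagram of $\lambda$ (computed in $\sG_x^{\clubsuit}$) equals that of $\til\lambda$ (computed in $G$)}. But $\lambda$ and $\til\lambda$ are literally identified under $X_*(\sS)=X_*(S)$, and the Reeder--Yu isomorphism matches the absolute root system of $\sG_x^{\clubsuit}$ with that of $G$ and is compatible with the $\Gm$-action; so the pairings of $\lambda$ resp. $\til\lambda$ with the simple roots coincide, up to the choice of a dominant Weyl chamber, which one fixes once and for all. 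That gives the equality of weighted Dynkin diagrams, hence the proposition.

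I would write this up by: (i) invoking \cite[Cor. 5.2.4]{De02b} to reduce to comparing the two $\slt$-triples; (ii) noting $\varphi,\lambda$ are uniquely determined by the graded triple and that $(c,h,d)$, $(\til c,\til h,\til d)$ are honest $\slt$-triples in $\fg^{\clubsuit}_x$, $\fg$ respectively with $\til c$ lifting $c$; (iii) recalling that over an algebraically closed field a nilpotent orbit is determined by the weighted Dynkin diagram of its defining cocharacter (\cite{CM93}); (iv) using the Reeder--Yu identification of absolute root data together with $X_*(\sS)=X_*(S)$ to see that $\lambda$ and $\til\lambda$ have the same weighted Dynkin diagram; (v) concluding with the sentence on classical groups (partitions) as an illustration, not a separate argument.

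The main obstacle, I expect, is purely one of making precise the phrase ``the same'' without over-claiming: $\fg^{\clubsuit}_x$ and $\fg$ are not canonically identified as $k$- or $F$-objects, only after base change to $\bar k$ resp. $F^s$, and even then only as abstract reductive Lie algebras with a marked root datum; so I must phrase the comparison entirely in terms of the combinatorial invariant (weighted Dynkin diagram / partition) rather than an actual isomorphism of orbits, and I should flag that $\lambda$ takes values in $\,{}^\rho_0\sG_x^{\heartsuit}\cong\sG_x$ so that its lift $\til\lambda$ is defined over $F$ and the Moy-Prasad weight-$2$ condition is preserved. Everything else is bookkeeping already carried out in the paragraph preceding the statement, so the proof should be short — essentially a pointer to \cite[Cor. 5.2.4]{De02b}, \cite[Thm. 4.1]{RY14}, and \cite{CM93}, assembled around the observation that $\lambda=\til\lambda$ under $X_*(\sS)=X_*(S)$.
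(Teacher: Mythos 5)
Your proposal is correct and follows essentially the same route as the paper, whose justification (the paragraph preceding the statement) consists of exactly your chain: \cite[Cor. 5.2.4]{De02b} giving $\cO(x,r,c)=\Ad(G(F))\til{c}$, the observation that the weighted Dynkin diagrams of $c$ and $\til{c}$ are determined by $\lambda$ and $\til{\lambda}$ respectively, and the identification of $\lambda$ with $\til{\lambda}$ under $X_*(\sS)=X_*(S)$ together with the fact that $\sG_x^{\clubsuit}$ (not $\sG_x^{\heartsuit}$) has the same absolute root datum as $G$. No gap to report.
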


For any finite extension $E/F$, denote by $\cO^E(x,r,c)$ the minimal $\Ad(G(E))$-orbit that intersects $c+\fg(E)_{x>r}$. 

\begin{corollary}\label{cor:extension} $\Ad(G(E))\cO(x,r,c)=\cO^E(x,r,c)$ for any finite $E/F$.
\end{corollary}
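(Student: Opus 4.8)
The plan is to reduce the statement to Proposition \ref{prop:samelift} together with the fact that the minimal orbit meeting a coset is, over any field, represented by the Slodowy-slice lift produced from an $\slt$-triple in the Moy-Prasad quotient. First I would observe that everything in the construction preceding Proposition \ref{prop:samelift} is functorial in the base field in the following sense: for the chosen $x\in\cB(G,F)_{(p)}$ and the nilpotent $c\in\fg(F)_{x=r}$, the $\slt$-triple $(c,h,d)$ of \eqref{eq:gradedtriple}, the cocharacter $\lambda$, the maximal $F$-split torus $S$, the apartment $\cA$, and the lifted triple $(\til c,\til h,\til d)$ of \eqref{eq:liftedsl2} can all be taken simultaneously for $F$ and for any finite extension $E/F$ — indeed $x$ remains a point of $\cB(G,E)_{(p)}$ (after base change $x$ still has coordinates with denominator prime to $p$), the Moy-Prasad quotient $\fg(E)_{x=r}$ contains $\fg(F)_{x=r}$, and the same $(c,h,d)$ and hence the same $\lambda$ work; then the lift $(\til c,\til h,\til d)\in\fg(E)_{x\ge r}\times\cdots$ can be chosen to be the one already produced over $F$, since \cite[Cor. 4.3.2]{De02b} applied over $E$ accepts it.

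Next I would invoke \cite[Cor. 5.2.4]{De02b} twice: over $F$ it gives $\cO(x,r,c)=\Ad(G(F))\til c$, and over $E$ it gives $\cO^E(x,r,c)=\Ad(G(E))\til c$ for the same representative $\til c$. Combining these two identities immediately yields
\[
\Ad(G(E))\,\cO(x,r,c)=\Ad(G(E))\,\Ad(G(F))\til c=\Ad(G(E))\til c=\cO^E(x,r,c),
\]
which is the claim. In this argument Proposition \ref{prop:samelift} is not even logically needed for the equality itself; it is the conceptual explanation (both sides have the same weighted Dynkin diagram), and one may cite it instead of re-running \cite[Cor. 5.2.4]{De02b} over $E$ if one prefers: the geometric orbit $\Ad(G(F^s))\til c$ is defined over $F$ and contains $\cO(x,r,c)$, its base change to $E$ still has that weighted Dynkin diagram, and minimality of $\cO^E(x,r,c)$ among $G(E)$-orbits meeting $c+\fg(E)_{x>r}$ — which contains $\til c$ — forces $\cO^E(x,r,c)\subseteq\Ad(G(E))\til c$, while the reverse inclusion holds because $\Ad(G(E))\til c$ meets the coset and is $\ge\cO^E(x,r,c)$ yet both are single $G(E)$-orbits with the same dimension.

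The main obstacle I anticipate is the bookkeeping in the first step: checking that the choices made over $F$ (the torus $S$, the apartment $\cA$, the identification $X_*(\sS)=X_*(S)$, and especially the lift of \eqref{eq:liftedsl2}) genuinely remain valid, rather than merely analogous, after base change to $E$ — in particular that $\fg(E)_{x\ge r}$ is the $\cO_E$-lattice spanned in the expected way by $\fg(F)_{x\ge r}$, so that the element $\til c$ over $F$ still lies in the correct $E$-filtration piece and still has the required $\til\lambda$-weight. This is where one must be careful that $x$ was taken in $\cB(G,F)_{(p)}$ and that $p$ satisfies the running hypotheses of Appendix \ref{app:char}, so that the tame-descent picture of \S\ref{subsec:MPQ} behaves well; granting that, no genuinely new input beyond \cite[Cor. 4.3.2, Cor. 5.2.4]{De02b} and Proposition \ref{prop:samelift} is required.
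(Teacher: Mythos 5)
Your proposal is correct, and your primary route is genuinely a bit different from the paper's. The paper's own proof is two lines: by definition $\Ad(G(E))\cO(x,r,c)\ge\cO^E(x,r,c)$ (the left-hand orbit meets $c+\fg(E)_{x>r}$ because it already meets $c+\fg(F)_{x>r}$), and Proposition \ref{prop:samelift}, applied over $F$ and over $E$, shows both sides lie in the same geometric orbit, so the closure relation forces equality. You instead carry the single lift $(\til{c},\til{h},\til{d})$ across the base change and quote \cite[Cor.~4.3.2, Cor.~5.2.4]{De02b} over $E$ to get $\cO^E(x,r,c)=\Ad(G(E))\til{c}$ directly, which yields the equality with no closure or dimension argument at all; your second paragraph is then essentially the paper's argument restated. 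The trade-off is that your main route puts all the weight on the base-change bookkeeping you correctly flag (that $\fg(F)_{x\ge r}\subset\fg(E)_{x\ge r}$ compatibly with the $\til{\lambda}$-grading in the tame setting, and that the image of $(c,h,d)$ is still an adapted $\slt$-triple in the $E$-quotient so that DeBacker's results apply to the same $\til{c}$), whereas the paper hides exactly the same compatibility inside its tacit application of Proposition \ref{prop:samelift} over $E$; neither proof escapes it, and under the running hypotheses of Appendix \ref{app:char} it goes through. One small wrinkle in your alternative paragraph: minimality of $\cO^E(x,r,c)$ only gives $\cO^E(x,r,c)\le\Ad(G(E))\til{c}$ in the closure order, not containment in the orbit itself; the upgrade to equality requires the same-geometric-orbit input (equal weighted Dynkin diagram, hence equal dimension), which you do invoke, so the argument stands once those two steps are taken in the right order.
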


\begin{proof} By definition the $\Ad(G(E))\cO(x,r,c)\ge\cO^E(x,r,c)$. However Proposition \ref{prop:samelift} says they live in the same geometric orbits. Hence they are equal.
\end{proof}

\noindent With the corollary we may also denote by $\cO^{ur}(x,r,c)$ (resp. $\cO^s(x,r,c)$) the limit of $\cO^E(x,r,c)$ for $E/F$ unramified (resp. any $E/F$) and big enough, which necessarily stabilizes and is the minimal $\Ad(G(F^{ur}))$-orbit (resp. $\Ad(G(F^s))$-orbit) which meets $c+\fg(F^{ur})_{x>r}$ (resp. $c+\fg(F^s)_{x>r}$).

Let $S\subset G$ be a maximal $F$-split torus with corresponding apartment $\cA$ such that $x\in\cA$. Let $\til{\lambda}:\Gm/_F\ra S$ be any cocharacter. As before we identify the reductive quotient $\sS$ of $S$ as a maximal $k$-split torus of $\sG_x$, and we denote by $\lambda:\Gm/_k\ra\sS\subset\sG_x$ the corresponding cocharacter. For any $v\in X_*(S)\otimes_{\Z}\Z_{(p)}$ we denote by $x+v\in\cA_{(p)}$ the action of $v$ on $x$. Unfolding the definition of Moy-Prasad filtration, we have for any`$\ell\in\Z$ and $s\in\R$ that
\begin{equation}\label{eq:MPequal}
{}^{\lambda}_{\ell}\fg(F)_{x=r}={}^{\lambda}_{\ell}\fg(F)_{(x+s\til{\lambda})=r+\ell s},\;\;{}^{\til{\lambda}}_{\ell}\fg(F)_{x\ge r}={}^{\til{\lambda}}_{\ell}\fg(F)_{(x+s\til{\lambda})\ge r+\ell s},
\end{equation}

\begin{lemma}\label{lem:O} For any $t\in\R$ we have $\cO(x,r,c)=\cO(x+t\til{\lambda},r+\ell t,c)$.
\end{lemma}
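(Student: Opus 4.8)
The plan is to reduce the statement to the key fact from \cite[Cor. 5.2.4]{De02b} already invoked above, namely that $\cO(x,r,c)=\Ad(G(F))\til c$ for the lift $\til c\in{}^{\til\lambda}_2\fg(F)_{x\ge r}$ built from the graded $\slt$-triple, together with the translation compatibility \eqref{eq:MPequal} of Moy-Prasad filtrations along the ray $x+t\til\lambda$. First I would fix the $\slt$-triple $(c,h,d)$ and the associated $\lambda$, $\til\lambda$, $\til S$, $\til c$, $\til h$, $\til d$ as in \eqref{eq:gradedtriple}--\eqref{eq:liftedsl2}, with $c$ of weight $\ell$ under $\lambda$ (so in the paper's normalization $\ell=2$, but I will keep $\ell$ generic since the statement is phrased that way). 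Applying \eqref{eq:MPequal} with $s=t$ gives $\til c\in{}^{\til\lambda}_{\ell}\fg(F)_{x\ge r}={}^{\til\lambda}_{\ell}\fg(F)_{(x+t\til\lambda)\ge r+\ell t}$, and likewise $\til h\in{}^{\til\lambda}_0\fg(F)_{(x+t\til\lambda)\ge 0}$, $\til d\in{}^{\til\lambda}_{-\ell}\fg(F)_{(x+t\til\lambda)\ge -r-\ell t}$. Hence the very same lifted triple $(\til c,\til h,\til d)$ serves as a valid lift for the point $x+t\til\lambda$ and depth $r+\ell t$.

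Next I would reduce modulo the deeper lattice at the translated point: the image $\bar c$ of $\til c$ in $\fg(F)_{(x+t\til\lambda)=r+\ell t}$ is a graded nilpotent element whose associated cocharacter is again $\lambda$ (the cocharacter is determined by the $\slt$-triple in $\fg^{\heartsuit}_{x+t\til\lambda}$, and we have exhibited such a triple with the same torus action). In fact I should check that $\bar c$ is what the lemma means by ``$c$'' at the new point; this is exactly the content of \eqref{eq:MPequal} at $\ell$ applied to $c$ itself, which identifies ${}^{\lambda}_{\ell}\fg(F)_{x=r}$ with ${}^{\lambda}_{\ell}\fg(F)_{(x+t\til\lambda)=r+\ell t}$ compatibly with the ambient $\fg^{\clubsuit}$, so $c$ and $\bar c$ are literally the same element under the canonical identifications. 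Then \cite[Cor. 5.2.4]{De02b} applied at the point $x+t\til\lambda$ gives $\cO(x+t\til\lambda,r+\ell t,c)=\Ad(G(F))\til c$, and comparing with $\cO(x,r,c)=\Ad(G(F))\til c$ yields the claim.

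The one point requiring genuine care—and the step I expect to be the main obstacle—is the verification that the Moy-Prasad data are genuinely compatible along the ray, i.e. that \eqref{eq:MPequal} really does identify the relevant $\lambda$-graded pieces (equalities of \emph{subspaces of a common ambient space}, not merely abstract isomorphisms), and that under this identification the nilpotent element $c$, its $\slt$-triple, and its associated cocharacter $\lambda$ are all transported consistently. This is where one must be careful that $x+t\til\lambda$ may not lie in $\cB(G,F)_{(p)}$ for irrational $t$, so the clean description via $\fg^{\heartsuit}$ and $\fg^{\clubsuit}$ from \S\ref{subsec:MPQ} is only literally available at rational points; for general $t$ one either argues by continuity/density, or—more cleanly—observes that the identity $\cO(x,r,c)=\Ad(G(F))\til c$ from \cite[Cor. 5.2.4]{De02b} does not actually require $x$ rational, only the existence of the lift \eqref{eq:liftedsl2}, which \eqref{eq:MPequal} supplies at $x+t\til\lambda$ directly. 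Once this bookkeeping is pinned down the rest is immediate.
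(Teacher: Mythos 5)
There is a genuine gap, and it is in the step ``likewise $\til h\in{}^{\til\lambda}_0\fg(F)_{(x+t\til\lambda)\ge 0}$, $\til d\in{}^{\til\lambda}_{-\ell}\fg(F)_{(x+t\til\lambda)\ge -r-\ell t}$.'' In Lemma \ref{lem:O} the cocharacter $\til\lambda\in X_*(S)$ is \emph{arbitrary} (subject only to $c\in{}^{\lambda}_{\ell}\fg(F)_{x=r}$); it is not the cocharacter attached to an $\slt$-triple through $c$. You fix ``the associated $\lambda$, $\til\lambda$'' from \eqref{eq:gradedtriple}--\eqref{eq:liftedsl2}, which forces $\ell=2$ and, more importantly, forces $h$ and $d$ to have $\til\lambda$-weights $0$ and $-2$. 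For a general $\til\lambda$ the elements $h,d$ completing $c$ have no reason to lie in single $\til\lambda$-weight spaces, so after translating to $y=x+t\til\lambda$ there is no reason that $\til h\in\fg(F)_{y\ge 0}$ or $\til d\in\fg(F)_{y\ge -r-\ell t}$: the lifted triple is then not adapted to the Moy--Prasad filtration at $y$, and \cite[Cor.~5.2.4]{De02b} cannot be applied there. This generality is not optional: the lemma is invoked (via Lemma \ref{lem:comparelift} and Corollary \ref{cor:edgecond2}, and throughout \S\ref{subsec:edge1}) with $\til\lambda$ and $\ell$ determined by the relative position of two augmented facets, which is unrelated to any $\slt$-triple of $c$. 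So your argument proves only the special case $\til\lambda=\til\lambda_{\slt}$, $\ell=2$ (where it is correct and in fact cleaner than chasing inequalities).

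To repair your route you would need to produce an $\slt$-triple through $c$ adapted to the extra grading, i.e.\ with $h\in{}^{\lambda}_0\fg(F)_{x=0}$ and $d\in{}^{\lambda}_{-\ell}\fg(F)_{x=-r}$ (compatibly with $\rho$), together with a lift adapted to both the filtration and the $\til\lambda$-grading; such bigraded triples exist for good $p$, but this is an additional argument you do not supply. The paper avoids the issue entirely: it never transports $h$ and $d$. It reduces by continuity to small $s\in\Z_{(p)}$, uses the coset inclusion $c+\fg(F)_{x>r}\subset c+\fg(F)_{x+s\til\lambda>r+\ell s}$ to make the two orbits comparable, and then shows they have the same weighted Dynkin diagram by Proposition \ref{prop:samelift} together with conjugation by the tame torus element $\til\lambda(\varpi^{b/n})\in G(F^s)$, which moves $\fg_x^{\clubsuit}$ to $\fg_y^{\clubsuit}$ while only involving $c$ itself. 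Comparable plus geometrically equal forces equality of the rational orbits. Your closing remark about irrational $t$ is a side issue (and handled in the paper by the observation that the orbit depends only on the augmented facet along the ray); the real obstacle is the arbitrariness of $\til\lambda$.
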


\begin{proof} Observe that $\cO(x+s\til{\lambda},r+\ell s,c)$ depends only on the augmented facet that $(x+s\til{\lambda},r+\ell s)$ lives in. By standard continuity argument it suffices to prove that there exists $\delta>0$ such that for all $s\in (-\delta,\delta)\cap\Z_{(p)}$, we have $\cO(x,r,c)=\cO(x+s\til{\lambda},r+\ell s,c)$. Take $\delta$ small enough so that for any $s\in (-\delta,\delta)$, the augmented facet containing $(x+s\til{\lambda},r+\ell s)$ has its closure contains $(x,r)$. This implies that $c+\fg(F)_{x>r}\subset c+\fg(F)_{x+s\til{\lambda}> r+\ell s}$ and therefore $\cO(x,r,c)\le\cO(x+s\til{\lambda},r+\ell s,c)$.
	
Proposition \ref{prop:samelift} asserts that $\cO(x,r,c)$ and $\cO(x+s\til{\lambda},r+\ell s,c)$ have the same weighted Dynkin diagram as that of $c$ in the corresponding $\fg_x^{\clubsuit}$ and $\fg_y^{\clubsuit}$ where $y=x+s\til{\lambda}$. Suppose $s=\frac{b}{n}$ with $b,n\in\Z$ and $n$ coprime to $p$. Then $\til{\lambda}(\varpi^{b/n})$ sends $\fg_x^{\clubsuit}$ to $\fg_y^{\clubsuit}$ and maps $c$ to $c$. This shows that the weighted Dynkin diagram of $c\in \fg_x^{\clubsuit}$ and $c\in\fg_y^{\clubsuit}$ can be identified by an element in $G(F^s)$, and consequently $\cO(x,r,c)$ and $\cO(x+s\til{\lambda},r+\ell s,c)$ have the same weighted Dynkin diagram, i.e. they are in the same $\Ad(G(F^s))$-orbit. This together with $\cO(x,r,c)\le\cO(x+s\til{\lambda},r+\ell s,c)$ shows that they are the same $\Ad(G(F))$-orbit.
\end{proof}

Suppose we have another nilpotent element $c'\in c+{}^\rho_a({}^{\lambda}_{<\ell}\fg_x^{\heartsuit})$. We end this subsection with another lemma:

\begin{lemma}\label{lem:comparelift} We have that $\cO(x,r,c)\le\cO(x,r,c')$. If there is an equality, then $c'\in\Ad({}^{\rho}_0\sG_x^{\heartsuit}(k))c$.
\end{lemma}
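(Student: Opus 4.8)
The plan is to transport the statement into the graded reductive Lie algebra $\fg_x^{\heartsuit}$ and settle it there. By Lemma \ref{lem:nilpotent} and Proposition \ref{prop:samelift}, $\cO(x,r,c)$ and $\cO(x,r,c')$ are, as geometric orbits, the nilpotent orbits with the same weighted Dynkin diagrams as the $\Ad(\sG_x^{\clubsuit}(\bar k))$-orbits of $c$ and $c'$ in $\fg_x^{\clubsuit}$; moreover, for $p\gg\rank$ and using Corollary \ref{cor:extension} to pass to $F^s$, one has $\cO(x,r,c)\le\cO(x,r,c')$ as soon as the $\Ad(\sG_x^{\clubsuit}(\bar k))$-orbit of $c$ lies in the closure of that of $c'$, and $\cO(x,r,c)=\cO(x,r,c')$ forces these two graded orbits to coincide. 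Hence it suffices to prove, purely over the residue field: if $c\in{}^{\rho}_a({}^{\lambda}_{\ell}\fg_x^{\heartsuit})$ is nilpotent and $c'=c+u$ with $u\in{}^{\rho}_a({}^{\lambda}_{<\ell}\fg_x^{\heartsuit})$ also nilpotent, then $\Ad(\sG_x^{\clubsuit}(\bar k))c\subseteq\overline{\Ad(\sG_x^{\clubsuit}(\bar k))c'}$, and coincidence of these orbits forces $c'\in\Ad({}^{\rho}_0\sG_x^{\heartsuit}(k))c$. This is of the type covered by the graded $\slt$-theory of Appendix \ref{sec:gradedSpringer}; below I indicate the mechanism.

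For the closure containment, write $u=\sum_{i<\ell}u_i$ with $u_i\in{}^{\lambda}_i$ and consider the morphism $\mathbb{A}^1\ra{}^{\rho}_a\fg_x^{\heartsuit}$ sending $s$ to $c+\sum_{i<\ell}s^{\ell-i}u_i$. Its value at $s=1/t$ is $t^{-\ell}\Ad(\lambda(t))c'$, which for $t\ne0$ lies in $\Ad({}^{\rho}_0\sG_x^{\heartsuit}(\bar k))c'$ --- indeed $\lambda(t)\in{}^{\rho}_0\sG_x^{\heartsuit}$, and scaling a nilpotent by a nonzero scalar keeps it in its orbit via its associated cocharacter, which (as recorded above for $c$) also has image in ${}^{\rho}_0\sG_x^{\heartsuit}$; here $p\gg\rank$ is used. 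Its value at $s=0$ is $c$, so $c\in\overline{\Ad({}^{\rho}_0\sG_x^{\heartsuit}(\bar k))c'}\subseteq\overline{\Ad(\sG_x^{\clubsuit}(\bar k))c'}$, and hence $\cO(x,r,c)\le\cO(x,r,c')$. For the equality case, suppose $c$ and $c'$ are $\Ad(\sG_x^{\clubsuit}(\bar k))$-conjugate. Completing $c$ to an $\slt$-triple $(c,h,d)$ gives, by Kostant, a decomposition $\fg_x^{\heartsuit}=\im(\ad c)\oplus\ker(\ad d)$ that is equivariant for $\rho$ (since $\ad c$ shifts the $\rho$-grading by the fixed amount $a$) and for $\lambda$; hence ${}^{\rho}_a({}^{\lambda}_{<\ell}\fg_x^{\heartsuit})$ splits accordingly, and conjugating $c+u$ by $\exp$ of a suitable element of ${}^{\rho}_0({}^{\lambda}_{<0}\fg_x^{\heartsuit})$ kills the $\im(\ad c)$-component of $u$, so that we may assume $u\in\ker(\ad d)$, i.e. $c+u$ lies on the transverse slice $c+\ker(\ad d)$. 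Since that slice meets the geometric orbit of $c$ only in $c$, we get $u=0$; tracking $k$-rationality through the straightening (the relevant conjugating element lives in a connected unipotent subgroup over $k$, or one applies Lang's theorem, $k$ being finite and $\sG_x^{\heartsuit}$ connected reductive) yields $c'\in\Ad({}^{\rho}_0\sG_x^{\heartsuit}(k))c$.

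I expect the substantive points to be the appendix-level inputs rather than this reduction: first, the compatibility used in the opening paragraph between the $p$-adic closure order on $\Ad(G(F))$-orbits and the geometric closure order on the corresponding graded orbits in $\fg_x^{\clubsuit}$, which is precisely where $p\gg\rank$ (together with Corollary \ref{cor:extension} and Proposition \ref{prop:samelift}) is needed; and second, making the straightening in the equality case globally precise within the $\rho$-graded setting --- verifying that the $\im(\ad c)$-directions inside $c+{}^{\rho}_a({}^{\lambda}_{<\ell}\fg_x^{\heartsuit})$ are removed by ${}^{\rho}_0\sG_x^{\heartsuit}$-conjugation, not merely to first order --- so that the Slodowy--Kostant slice argument applies verbatim. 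Both are carried out with the graded Springer theory of Appendix \ref{sec:gradedSpringer}.
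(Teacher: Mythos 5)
Your treatment of the equality case is essentially sound: it amounts to re-proving the appendix's Lemma \ref{lem:test} with a Slodowy--Kostant slice $c+Z_{\fg}(d)$ for a bihomogeneous $\slt$-triple in place of the complement $V$ used there, and the paper itself simply cites Lemma \ref{lem:test} at that point, so that half is an acceptable (if partly deferred) variant.

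The genuine gap is in the first assertion, $\cO(x,r,c)\le\cO(x,r,c')$. You deduce it from the statement that $\Ad(\sG_x^{\clubsuit}(\bar k))c$ lies in the Zariski closure of $\Ad(\sG_x^{\clubsuit}(\bar k))c'$, invoking Proposition \ref{prop:samelift} and Corollary \ref{cor:extension}. But those results only identify the \emph{geometric} orbit (the weighted Dynkin diagram) of the DeBacker lift; they say nothing about which rational $\Ad(G(F))$-orbit inside that geometric orbit occurs, whereas the order $\le$ in the lemma is the $p$-adic closure order on $\fg^{nil}(F)/\!\!\sim$. Closure containment of geometric orbits over $\bar k$ (or over $F^s$) does not imply that the rational orbit $\cO(x,r,c)$ lies in the $p$-adic closure of the rational orbit $\cO(x,r,c')$; this rational-versus-geometric distinction is exactly what the paper is at pains to track (it is why Corollary \ref{cor:CO5} is stated with $\cO^{ur}$ rather than $\cO$, and why $\WF$, $\WF^{semi}$, $\WF^{ur}$ differ), and the downstream uses of the lemma (e.g. Corollary \ref{cor:edgecond2} and the path analysis in Example \ref{ex:WFnotinductive}) need the rational statement. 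Your curve argument at the residue-field level therefore proves at best an inequality of geometric (or unramified) orbits. The paper's proof gets the rational inequality by a purely integral Moy--Prasad computation: for small $s>0$ one has $c'+\fg(F)_{x>r}\subset c+\fg(F)_{x+s\til{\lambda}>r+\ell s}$, so $\cO(x,r,c')\ge\cO(x+s\til{\lambda},r+\ell s,c)$ by minimality, and then Lemma \ref{lem:O} identifies the latter with $\cO(x,r,c)$ as an $\Ad(G(F))$-orbit. Some such rational coset containment (or an equivalent rationality input) is indispensable and is missing from your argument.
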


\begin{proof} By definition of Moy-Prasad filtration, for $s\in\R_{>0}$ small enough we have 
\[\fg(F)_{x+s\til{\lambda}>r+\ell s}=\fg(F)_{x=r}\times_{{}^{\rho}_a\fg_x^{\heartsuit}}{}^{\rho}_a({}^{\lambda}_{<\ell}\fg_x^{\heartsuit})\]
and thus
\[c'+\fg(F)_{x>r}\subset c+\fg(F)_{x+s\til{\lambda}>r+\ell s}.\]
By definition of $\cO(x,r,c')$, this gives $\cO(x,r,c')\ge\cO(x+s\til{\lambda},r+\ell s,c)=\cO(x,r,c)$ where the last equality follows from Lemma \ref{lem:O}. Now suppose $\cO(x,r,c')=\cO(x,r,c)$. By Proposition \ref{prop:samelift}, this shows that $c,c'\in\fg_x^{\clubsuit}$ have the same weighted Dynkin diagram, i.e. $\Ad(\sG_x^{\clubsuit}(\bar{k}))c=\Ad(\sG_x^{\clubsuit}(\bar{k}))c'$. 
By Lemma \ref{lem:test}, this implies $c'\in\Ad({}^{\rho}_0\sG_x^{\heartsuit}(k))c$ as desired.
\end{proof}

\subsection{Edges as in (\ref{eq:defedge2})}\label{subsec:edge2}

We continue the setup in the previous subsection and study $C_{G}$. Recall that one of the two constructions of the edges, namely (\ref{eq:defedge2}), refers to the proof of \cite[Lemma 7.3.2]{Wa06}. We review and study it below. Suppose we are given $(\sigma,c)\in C_{G}^{nil}$. \cite[Lemma 2.4.1]{Wa06} asserts that $\sigma\cap(\cB(G,F)_{(p)}\times\Z_{(p)})$ is dense in $\sigma$. Hence we may pick $(x,r)\in \cB(G,F)_{(p)}\times\Z_{(p)}$ with $r=\frac{m}{n}$ is the reduced expression as in the previous subsection. We had (after some choices) tori $\sS\subset\sG_x$, $S\subset G$ and its corresponding apartment $\cA$, and $\til{\lambda}\in X_*(S)$.

Let $\cA^a=\cA\times\R$ be the augmented apartment, which is a torsor of $(X_*(S)\otimes_{\Z}\R)\times\R$. The idea is to ``walk'' towards the direction of $(\til{\lambda},2)\in(X_*(S)\otimes_{\Z}\R)\times\R$ from $(x,r)\in\cA^a$. There are two possible scenarios:
\begin{enumerate}[label=(\Alph*)]
	\item\label{enumi:A} There exists $\delta>0$ such that for any $0\le s<\delta$, $(x,r)+s(\til{\lambda},2)=(x+s\til{\lambda},r+2s)\in\sigma$, or
	\item\label{enumi:B} There exists a non-horizontal augmented facet $\sigma^{\dagger}$ for which
	\begin{enumerate}[label=(\roman*)]
		\item $\sigma$ is contained in the boundary of $\sigma^{\dagger}$,
		\item $\sigma\prec\sigma^{\dagger}$ by the partial order defined before (\ref{eq:defCg}), and
		\item There exists $\delta>0$ such that for any $0<s<\delta$, $(x,r)+s(\til{\lambda},2)=(x+s\til{\lambda},r+2s)\in\sigma^{\dagger}$.
	\end{enumerate} 
\end{enumerate}
Apparently, if $\sigma$ is horizontal we are in scenario \ref{enumi:B}. The construction of the edges in (\ref{eq:defedge2}) can be applied whenever we are in scenario \ref{enumi:B}, which we assume so in the rest of this subsection. Using the definition of Moy-Prasad quotient, just like in (\ref{eq:MPequal}) we have 
\begin{equation}\label{eq:MP>2}
\fg(F)_{\ge\sigma^{\dagger}}=\fg(F)_{\ge\sigma}\times_{\fg(F)_{\sigma}}{}^{\lambda}_{\ge 2}\fg(F)_{\sigma},\;\;\fg(F)_{>\sigma^{\dagger}}=\fg(F)_{\ge\sigma}\times_{\fg(F)_{\sigma}}{}^{\lambda}_{>2}\fg(F)_{\sigma},\;\;\fg(F)_{\sigma^{\dagger}}={}^{\lambda}_{2}\fg(F)_{\sigma}
\end{equation}
We have
\begin{lemma}\label{lem:>2}
	Every element in $c+{}^{\lambda}_{>2}\fg(F)_{\sigma}+\fg(F)_{>\sigma}=c+\fg(F)_{>\sigma^{\dagger}}$ is conjugate under $G(F)$ to an element in $c+\fg(F)_{>\sigma}$.
\end{lemma}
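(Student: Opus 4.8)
The plan is to run the standard $\mathfrak{sl}_2$-theoretic ``straightening'' argument --- essentially the one in the proof of \cite[Lemma 7.3.2]{Wa06} --- organized so that it terminates after finitely many conjugations rather than requiring a completeness/limit argument. Fix $(x,r)\in\sigma\cap(\cB(G,F)_{(p)}\times\Z_{(p)})$ as in \S\ref{subsec:edge2}, so that the analysis of \S\ref{subsec:MPQ} applies, and keep the lifted triple $(\til c,\til h,\til d)$ of (\ref{eq:liftedsl2}) together with the cocharacter $\til\lambda:\Gm/_F\to S$; write $c=\til c$. The displayed identity in the statement (a Moy--Prasad computation of the type recorded in (\ref{eq:MPequal}), using (\ref{eq:MP>2})) presents $c+\fg(F)_{>\sigma^{\dagger}}$ as $\til c+{}^{\til\lambda}_{>2}\fg(F)_{x\ge r}+\fg(F)_{x>r}$, so it suffices to conjugate an arbitrary $\gamma'=\til c+Y+Z$, with $Y\in{}^{\til\lambda}_{>2}\fg(F)_{x\ge r}$ and $Z\in\fg(F)_{x>r}$, into $\til c+\fg(F)_{x>r}=c+\fg(F)_{>\sigma}$.

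The key input is the elementary fact from $\mathfrak{sl}_2$-representation theory: for an $\mathfrak{sl}_2$-triple $(e,h,f)$ acting on a finite-dimensional module $V$ (with $\lambda$ the coweight of the triple), $\operatorname{ad}(e)\colon{}^{\lambda}_{i}V\to{}^{\lambda}_{i+2}V$ is surjective for every $i\ge 1$. For $p\gg\rank$ (Appendix \ref{app:char}) this linear-algebra statement holds over $k$, and I apply it to $(\bar c,\bar h,\bar d)$ inside the reductive Lie algebra $\fg_x^{\heartsuit}$. Since by (\ref{eq:bigrade}) the element $\bar c$ also has $\rho$-degree $a$ (and $\bar h,\bar d$ have $\rho$-degrees $0,-a$), the triple is $\rho$-graded, so a weight-by-weight argument upgrades this to: $\operatorname{ad}(\bar c)\colon{}^{\lambda}_{i}\fg(F)_{x=0}\to{}^{\lambda}_{i+2}\fg(F)_{x=r}$ is surjective for each $i\ge1$. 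Because the Moy--Prasad lattices $\fg(F)_{x\ge t}$ are $\til\lambda$-graded, reductions can be lifted, and we get: $\operatorname{ad}(\til c)\colon{}^{\til\lambda}_{i}\fg(F)_{x\ge 0}\to{}^{\til\lambda}_{i+2}\fg(F)_{x\ge r}$ is surjective modulo $\fg(F)_{x>r}$ for every $i\ge1$.

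Now I conjugate iteratively. Let $w\ge3$ be the least $\til\lambda$-weight appearing in $Y$; solving weight by weight (legitimate since $w-2\ge1$), pick $X\in{}^{\til\lambda}_{\ge w-2}\fg(F)_{x\ge 0}$ with $[\til c,X]\equiv Y\pmod{\fg(F)_{x>r}}$. As $X$ lies in $\Lie$ of the unipotent radical of the parabolic $P(\til\lambda)$ it is $\operatorname{ad}$-nilpotent, hence for $p\gg\rank$ the element $\exp(X)\in G(F)$ is defined and $\Ad(\exp X)\gamma'=\gamma'+[X,\gamma']+\tfrac12[X,[X,\gamma']]+\cdots$ is a finite sum. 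Using $[\til c,X]\equiv Y$, the first-order term cancels $Y$ modulo $\fg(F)_{x>r}$, and inspection of the remaining terms (iterated brackets of $X$ against $\til c$, $Y$, or $Z$) shows $\Ad(\exp X)\gamma'=\til c+Y'+Z'$ with $Z'\in\fg(F)_{x>r}$ and $Y'\in{}^{\til\lambda}_{>2}\fg(F)_{x\ge r}$ whose $\til\lambda$-weights are all $\ge 2w-2>w$. So the least bad weight strictly increases; since the $\til\lambda$-weights on $\fg$ are bounded, finitely many such steps make the bad part $0$ and conjugate $\gamma'$ into $\til c+\fg(F)_{x>r}$, as desired.

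The one place that needs genuine care is validity over the residue field $k$ rather than $\bar k$: both the surjectivity $\operatorname{ad}(e)\colon{}^{\lambda}_{i}V\to{}^{\lambda}_{i+2}V$ for $i\ge1$ and the lifting of $\mathfrak{sl}_2$-triples through the Moy--Prasad filtration rely on the hypothesis $p\gg\rank$ and on the compatible lift \cite[Cor.\ 4.3.2]{De02b} already recorded in (\ref{eq:liftedsl2})--(\ref{eq:bigrade}); combined with the bookkeeping of the two gradings (by $\rho$ and by $\til\lambda$) when verifying the weight increase, this constitutes the bulk of the work, while the rest simply reproduces \cite[proof of Lemma 7.3.2]{Wa06}. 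I expect the over-$k$ surjectivity (and its interaction with the $\rho$-grading) to be the main point to get right.
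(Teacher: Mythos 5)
Your argument is correct and is essentially the paper's (and Waldspurger's) proof: the key input is the same graded $\slt$-surjectivity ${}^{\rho}_a({}^\lambda_{>2}\fg_x^{\heartsuit})=[c,{}^{\rho}_0({}^\lambda_{>0}\fg_x^{\heartsuit})]$ coming from (\ref{eq:bigrade}), followed by the standard straightening. The only (harmless) organizational difference is that you run the weight-increasing induction directly on the lattice $\fg(F)_{x\ge r}$ with $\exp(X)$ for $X\in{}^{\til\lambda}_{\ge1}\fg(F)_{x\ge0}$, whereas the paper conjugates by $\exp({}^\lambda_{>0}\fg(F)_{x=0})\subset\sG_x(k)$ in the reductive quotient and then lifts through the parahoric $G(F)_{x\ge0}$.
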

\begin{proof} We sketch essentially the same proof in \cite[Lemma 7.2.2 \& 7.3.2]{Wa06}. Note that $c\in \fg(F)_{\sigma}={}^{\rho}_a\fg_x^{\heartsuit}$ and ${}^{\lambda}_{>2}\fg(F)_{\sigma}={}^{\rho}_a({}^\lambda_{>2}\fg_x^{\heartsuit})$. By the property of $\slt$-triple as in (\ref{eq:bigrade}), we have ${}^{\rho}_a({}^\lambda_{>2}\fg_x^{\heartsuit})=[c,{}^{\rho}_0({}^\lambda_{>0}\fg_x^{\heartsuit})]$. From this one deduces that every element in $c+{}^{\lambda}_{>2}\fg(F)_{\sigma}$ is conjugate under $\exp({}^\lambda_{>0}\fg(F)_{x=0})\subset\sG_x(k)$ to $c$. Consequently, every element in $c+{}^{\lambda}_{>2}\fg(F)_{\sigma}+\fg(F)_{>\sigma}$ is conjugate under the parahoric $G(F)_{x\ge 0}$ to an element in $c+\fg(F)_{>\sigma}$.
\end{proof}

The lemma is exactly what we claimed and used for Theorem \ref{thm:graph}. That is, in (\ref{eq:defedge2}) we put $c^{\dagger}\in {}^{\lambda}_2\fg(F)_{\sigma^{\dagger}}$ to be the element identified with $c$. We highlight that Lemma \ref{lem:O} gives 
\begin{equation}\label{eq:samelift}
\cO(\sigma,c)=\cO(\sigma^{\dagger},c^{\dagger}).
\end{equation}

\begin{remark}\label{rmk:conjugate2} One can show that once $S$ and $\cA$ are fixed, $\sigma^{\dagger}$ does not depend on the choice of $(x,r)\in\sigma\cap\cA^a_{(p)}$. Based on this, one can furthermore show that up to $(\sigma^{\dagger},c^{\dagger})$ depends only on $(\sigma,c)$ up to the action of the stabilizer of $(\sigma,c)$ in $G(F)$. As in Remark \ref{rmk:conjugate}, one can declare edges from $(\sigma,c)$ to all such $(\sigma^{\dagger},c^{\dagger})$, so that we don't need to make any choice at $(\sigma,c)$.
\end{remark}

\subsection{Edges as in (\ref{eq:defedge1})}\label{subsec:edge1}
We turn to the edges defined in (\ref{eq:defedge1}) which are actually the crucial ones despite the seemingly innocent construction. We are given a non-horizontal augmented facet $\sigma$ and another horizontal augmented facet $\sigma'$ below it, following the choice made after Definition \ref{def:below}.
From now on, let $(x,r)\in \sigma'\cap\cA^a_{(p)}$ be any choice. Here $\cA^a_{(p)}:=\cA_{(p)}\times\Z_{(p)}$ is any augmented apartment intersecting $\sigma$ and thus $\sigma'$. Say $S\subset G$ is the corresponding maximal $F$-split torus. There exists $\til{\lambda}\in X_*(S)$ and $\ell\in\Z_{>0}$ such that there exists $\delta>0$ such that $0<s<\delta\implies (x,r)+(s\til{\lambda},-\ell s)\in\sigma$. We have $\sS\subset\sG_x$ the reductive quotient of $S$, with $X_*(S)=X_*(\sS)$ so that we denote by $\lambda\in X_*(\sS)$ the image of $\til{\lambda}$. Unfolding the definition of Moy-Prasad filtration we have
\begin{equation}\label{eq:MP<2}
	\fg(F)_{\ge\sigma}=\fg(F)_{\ge\sigma'}\times_{\fg(F)_{\sigma'}}{}^{\lambda}_{\le \ell}\fg(F)_{\sigma'},\;\;\fg(F)_{>\sigma}=\fg(F)_{\ge\sigma'}\times_{\fg(F)_{\sigma'}}{}^{\lambda}_{<\ell}\fg(F)_{\sigma'},\;\;\fg(F)_{\sigma}={}^{\lambda}_{\ell}\fg(F)_{\sigma'}.
\end{equation}

By (\ref{eq:defedge1}), we draw an edge from $(\sigma,c)$ to $(\sigma',c')$ for any $c'\in c+\fg(F)_{>\sigma}/\fg(F)_{>\sigma'}$, i.e. for any $c'\in c+{}^{\lambda}_{<\ell}\fg(F)_{\sigma'}$. Let $m\in\Z_{>0}$ be the denominator of $r=\frac{a}{m}$ as in \S\ref{subsec:MPQ}, we have that $c\in{}^\lambda_{\ell}\fg(F)_{\sigma'}={}^{\rho}_a({}^{\lambda}_{\ell}\fg_x^{\heartsuit})$ and ${}^{\lambda}_{<\ell}\fg(F)_{\sigma'}={}^{\rho}_a({}^{\lambda}_{<\ell}\fg_x^{\heartsuit})$. Recall also that ${}^{\rho}_0\sG_x^{\heartsuit}=\sG_x$. As in Remark \ref{rmk:conjugate2} we are happy to conjugate our elements by $\sG_x$ whenever applicable. This brings us to the geometric problem, which we highlight below:
\begin{problem}\label{prob:hard} Given
\begin{enumerate}
	\item A connected reductive group $\sG_x^{\heartsuit}$ over our finite field $k$ with $\fg_x^{\heartsuit}:=\Lie \sG_x^{\heartsuit}$,
	\item An integer $m>0$ prime to $p$ and an algebraic group homomorphism $\rho:\mu_m\ra\operatorname{Aut}(\sG_x^{\heartsuit})$,
	\item A cocharcter $\lambda:\mathbb{G}_m\ra{}^\rho_0\sG_x^{\heartsuit}$, a residue class $a\in\Z/m$, an integer $\ell>0$, and an element $c\in {}^\rho_a({}^\lambda_{\ell}\fg^{\heartsuit}_x)$.
\end{enumerate}	
What are the $\Ad({}^{\rho}_0\sG_x^{\heartsuit}(k))$-orbits in ${}^\rho_a\fg_x^{\heartsuit}$ that intersects $c+{}^\rho_a({}^\lambda_{<\ell}\fg^{\heartsuit}_x)$?
\end{problem}

Consider any $c'\in c+{}^\rho_a({}^\lambda_{<\ell}\fg^{\heartsuit}_x)$. We begin by the simpler situation when $c'$ is nilpotent. Recall that $\cO(\sigma',c')$ is the minimal $\Ad(G(F))$-orbit that intersects $c'+\fg(F)_{>\sigma'}=c'+\fg(F)_{x>r}$. We have

\begin{corollary}\label{cor:edgecond2} Suppose $c'\in c+{}^\rho_a({}^\lambda_{<\ell}\fg^{\heartsuit}_x)$ is nilpotent. Then $\cO(\sigma',c')\ge\cO(\sigma',c)=\cO(\sigma,c)$. When the equality holds, we have $c'\in\Ad({}^\rho_0\sG_x^{\heartsuit}(k))c=\Ad(\sG_x(k))c$.
\end{corollary}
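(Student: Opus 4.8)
The plan is to read this off directly from Lemma \ref{lem:comparelift} and Lemma \ref{lem:O}, which between them already contain all of the content; what remains is to check that their hypotheses are literally in force and to translate between the $(x,r)$ notation of \S\ref{subsec:MPQ} and the $\sigma$ notation.

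First I would verify that Lemma \ref{lem:comparelift} applies verbatim with the data $x$, $r$, $S$, $\til\lambda$, $\ell$ fixed at the start of \S\ref{subsec:edge1}. Indeed, by (\ref{eq:MP<2}) the element $c$ lies in $\fg(F)_\sigma={}^\rho_a({}^\lambda_\ell\fg_x^\heartsuit)$, so it is homogeneous of $\lambda$-weight $\ell$; it is moreover nilpotent, either because $\sigma$ is non-horizontal so Corollary \ref{cor:NH} applies, or simply because $\ell>0$ forces every element of a nonzero $\lambda$-graded piece of a reductive Lie algebra to be nilpotent. Since $c'\in c+{}^\rho_a({}^\lambda_{<\ell}\fg_x^\heartsuit)$ is nilpotent by hypothesis, Lemma \ref{lem:comparelift} yields $\cO(\sigma',c)=\cO(x,r,c)\le\cO(x,r,c')=\cO(\sigma',c')$, and further asserts that in case of equality one has $c'\in\Ad({}^\rho_0\sG_x^\heartsuit(k))c$; recalling from \S\ref{subsec:MPQ} that ${}^\rho_0\sG_x^\heartsuit=\sG_x$, this is exactly $c'\in\Ad(\sG_x(k))c$.

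It remains to identify $\cO(\sigma',c)$ with $\cO(\sigma,c)$. For this I would use that in \S\ref{subsec:edge1} the pair $(\til\lambda,\ell)$ was chosen precisely so that $(x+s\til\lambda,r-\ell s)\in\sigma$ for all small $s>0$, with $c$ still lying in the corresponding Moy-Prasad quotient by (\ref{eq:MP<2}); hence $\cO(\sigma,c)=\cO(x+s\til\lambda,r-\ell s,c)$, and Lemma \ref{lem:O} (applied along this segment) identifies the right-hand side with $\cO(x,r,c)=\cO(\sigma',c)$, since sliding along $\til\lambda$ leaves the attached geometric, hence also rational, nilpotent orbit unchanged. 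Chaining the two displayed facts gives $\cO(\sigma',c')\ge\cO(\sigma',c)=\cO(\sigma,c)$ together with the claimed equality case.

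I do not expect a genuine obstacle here: the statement is essentially a repackaging of the two preceding lemmas. The only points that warrant attention are that the cocharacter and weight appearing in Lemma \ref{lem:comparelift} and Lemma \ref{lem:O} may indeed be taken to be the very pair $(\til\lambda,\ell)$ produced by the walk --- which is legitimate since both lemmas are stated for an arbitrary $F$-cocharacter under which $c$ is homogeneous, not only for the one coming from an $\slt$-triple --- and that one should keep the sign normalization of the walk direction $(\til\lambda,-\ell)$ consistent with that of Lemma \ref{lem:O}, which at worst changes the auxiliary parameter used there by a sign and is immaterial to the argument.
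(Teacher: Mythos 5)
Your proposal is correct and follows the same route as the paper: the inequality and the equality case are exactly Lemma \ref{lem:comparelift} (with ${}^\rho_0\sG_x^{\heartsuit}=\sG_x$), and the identification $\cO(\sigma',c)=\cO(\sigma,c)$ is exactly Lemma \ref{lem:O} applied along the walk direction $\til\lambda$ of \S\ref{subsec:edge1}. Your extra verifications (that $c$ is homogeneous of weight $\ell$ and nilpotent via Corollary \ref{cor:NH}, and that the sign of the walk direction is harmless) are accurate and only make explicit what the paper leaves implicit.
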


\begin{proof} The equality $\cO(\sigma',c)=\cO(\sigma,c)$ comes from Lemma \ref{lem:O} while the rest of the corollary come from Lemma \ref{lem:comparelift}.
\end{proof}

\begin{corollary}\label{cor:trivexist} Under the identification $\fg(F)_{\sigma}={}^{\lambda}_{\ell}\fg(F)_{\sigma'}$ there is an edge from $(\sigma,c)$ to $(\sigma',c)$, with $\cO(\sigma,c)=\cO(\sigma',c)$. Any edge from $(\sigma,c)$ to $(\sigma',c')$ such that $c'$ is nilpotent and $\cO(\sigma,c)=\cO(\sigma',c')$ is $G(F)$-conjugate to the edge in the first sentence.
\end{corollary}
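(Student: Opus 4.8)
The plan is to prove the two assertions of Corollary \ref{cor:trivexist} in turn, with both relying on the bigrading machinery of \S\ref{subsec:MPQ}. For the \emph{existence} statement, recall that by (\ref{eq:defedge1}) the out-neighbors of $(\sigma,c)$ under the chosen $\sigma'$ are exactly the pairs $(\sigma',c')$ with $c'\in c+{}^{\lambda}_{<\ell}\fg(F)_{\sigma'}$, and that under the identification $\fg(F)_{\sigma}={}^{\lambda}_{\ell}\fg(F)_{\sigma'}$ of (\ref{eq:MP<2}) the element $c$ itself lies in ${}^{\lambda}_{\ell}\fg(F)_{\sigma'}\subset\fg(F)_{\sigma'}$, hence is a legitimate choice of $c'$ (taking the zero correction term in ${}^{\lambda}_{<\ell}\fg(F)_{\sigma'}$). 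So there is an edge from $(\sigma,c)$ to $(\sigma',c)$ by construction. The equality $\cO(\sigma,c)=\cO(\sigma',c)$ is then precisely Lemma \ref{lem:O}: the point $(x,r)\in\sigma'\cap\cA^a_{(p)}$ and the point obtained by walking a small distance $s$ in the direction $(\til{\lambda},-\ell s)$ lies in $\sigma$, so $\cO(\sigma',c)=\cO(x,r,c)=\cO(x+s\til{\lambda},r-\ell s,c)=\cO(\sigma,c)$, using the same element $c$ throughout (it is fixed by $\til{\lambda}(\varpi^{s})$ after the appropriate scaling, as in the proof of Lemma \ref{lem:O}).

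For the \emph{uniqueness up to conjugacy} statement, suppose $(\sigma,c)\to(\sigma',c')$ is any edge of the form (\ref{eq:defedge1}) with $c'$ nilpotent and $\cO(\sigma,c)=\cO(\sigma',c')$. By the very definition of the edge (having fixed the choice of $\sigma'$ and hence of $(x,r)$, $S$, $\cA$, $\til\lambda$, $\ell$ used to draw it) we have $c'\in c+{}^{\lambda}_{<\ell}\fg(F)_{\sigma'}={}^{\rho}_a({}^{\lambda}_{\le\ell}\fg^{\heartsuit}_x)$ with the same top component $c$. Corollary \ref{cor:edgecond2} then applies directly: since $\cO(\sigma',c')=\cO(\sigma,c)=\cO(\sigma',c)$, the equality case of that corollary gives $c'\in\Ad({}^{\rho}_0\sG_x^{\heartsuit}(k))c=\Ad(\sG_x(k))c$. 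Pick $g\in\sG_x(k)$ with $\Ad(g)c=c'$; lifting $g$ to an element $\tilde g$ of the parahoric $G(F)_{x\ge 0}$ (which surjects onto $\sG_x(k)$ with pro-$p$ kernel), $\tilde g$ stabilizes the augmented facet $\sigma'$ (indeed it fixes $x$, hence the whole ray, hence $\sigma'$ and $\sigma$) and sends the coset $c+\fg(F)_{>\sigma'}$ to $c'+\fg(F)_{>\sigma'}$, and likewise carries the edge $(\sigma,c)\to(\sigma',c)$ to the edge $(\sigma,c')\to(\sigma',c')$ --- wait, more precisely it carries $(\sigma,c)\to(\sigma',c)$ to $(\tilde g.\sigma,\tilde g.c)\to(\tilde g.\sigma',\tilde g.c)=(\sigma, \Ad(\tilde g)c)\to(\sigma',\Ad(\tilde g)c)$. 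Here one must be slightly careful: $\Ad(\tilde g)c$ need not equal $c'$ on the nose in $\fg(F)_{\sigma}$, only modulo the pro-$p$ kernel's effect; but $\Ad(g)c=c'$ holds exactly in $\fg^{\heartsuit}_x$ by construction of the bigrading, and the graph was set up (Remark \ref{rmk:conjugate}) so that edges are considered up to $G(F)$-conjugacy of the target, so $g\in\sG_x(k)\subset G(F)_{x\ge0}$ acting by $\Ad$ realizes the desired $G(F)$-conjugacy of edges.

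The main obstacle I anticipate is the bookkeeping around which field the conjugating element lives over and over which lattice the identifications take place: one has to track carefully that $c$, viewed inside ${}^{\lambda}_\ell\fg(F)_{\sigma'}$ via (\ref{eq:MP<2}), really is the same $c\in{}^{\rho}_a\fg^{\heartsuit}_x$ used to form the $\slt$-triple and the cocharacter $\lambda$, so that Corollary \ref{cor:edgecond2} is literally applicable with no shift in the grading index $\ell$. There is also the minor subtlety that the edge $(\sigma,c)\to(\sigma',c')$ in the hypothesis was drawn using \emph{some} auxiliary data $(x,r,S,\cA,\til\lambda,\ell)$, which need not be the same as the data I used to produce the canonical edge $(\sigma,c)\to(\sigma',c)$; but by \cite[Lemma 2.4.1]{Wa06} and transitivity of $G(F)_{x\ge0}$ (or of the parahoric) on the admissible choices, any two such data sets are conjugate, so after a preliminary $G(F)$-conjugation we may assume they coincide, and then the argument above runs unchanged. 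None of this involves a genuinely hard computation --- it is all an application of Lemma \ref{lem:O}, Corollary \ref{cor:edgecond2}, and the surjectivity of $G(F)_{x\ge0}\twoheadrightarrow\sG_x(k)$ --- so the proof should be short once the conventions are pinned down.
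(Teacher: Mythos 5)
Your proposal is correct and takes essentially the same route as the paper's own (two-sentence) proof: the edge to $(\sigma',c)$ exists immediately from the definition (\ref{eq:defedge1}), the equality $\cO(\sigma,c)=\cO(\sigma',c)$ is Lemma \ref{lem:O} as packaged in Corollary \ref{cor:edgecond2}, and the conjugacy statement is the equality case of Corollary \ref{cor:edgecond2}. The one imprecise step is your claim that the lift $\tilde g\in G(F)_{x\ge 0}$ of $g\in\sG_x(k)$ fixes the ray and hence $\sigma$ (it need not); to get a conjugator stabilizing the source vertex $(\sigma,c)$ one can instead use the refinement in Lemma \ref{lem:test} that the conjugating element may be taken in the unipotent group ${}^{\lambda}_{<0}G_0(k)$, whose lift fixes the facets along the direction of $\sigma$ and acts trivially on $\fg(F)_{\sigma}={}^{\lambda}_{\ell}\fg(F)_{\sigma'}$ --- a detail the paper also leaves implicit, and which your appeal to Remark \ref{rmk:conjugate} effectively sidesteps.
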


\begin{proof} That there is an edge from $(\sigma,c)$ to $(\sigma',c)$ follows immediately from definition. The rest of the statements follows from Corollary \ref{cor:edgecond2}. 
\end{proof}

Now we turn the case of non-nilpotent $c'\in c+{}^\rho_a({}^\lambda_{<\ell}\fg^{\heartsuit}_x)$, for which we can get weaker results. Firstly we have a minor generalization of \cite[Prop. 3.6]{CO23} (in {\it loc. cit.} it is assumed that $\lambda$ is associated to an $\slt$-triple completing $c$), which is just a rephrasement of Lemma \ref{lem:test}:

\begin{lemma}\label{lem:CO3.6} The element $c$ is contained in the Zariski closure of $\bar{k}\cdot\Ad(({}^\rho_0\sG_x^{\heartsuit})(\bar{k}))c'$.
\end{lemma}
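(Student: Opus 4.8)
The plan is to realize $c$ explicitly as a degeneration of a one-parameter family sitting inside $\bar{k}\cdot\Ad(({}^\rho_0\sG_x^{\heartsuit})(\bar{k}))c'$, using the cocharacter $\lambda$ itself as the degenerating parameter (this is the ``$\lambda$-contraction'' mechanism, and is exactly the content of Lemma \ref{lem:test} applied to the pair of gradings $(\rho,\lambda)$ on $\fg_x^{\heartsuit}$). First I would write $c'=c+c''$ with $c''\in{}^\rho_a({}^\lambda_{<\ell}\fg_x^{\heartsuit})$ and decompose $c''=\sum_{j<\ell}c''_j$ according to the $\lambda$-grading, so $c''_j\in{}^\rho_a({}^\lambda_j\fg_x^{\heartsuit})$ while $c\in{}^\rho_a({}^\lambda_\ell\fg_x^{\heartsuit})$.

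Next, since $\lambda$ takes values in ${}^\rho_0\sG_x^{\heartsuit}$ (by hypothesis (3) of Problem \ref{prob:hard}, equivalently by the construction in \S\ref{subsec:edge1} where $\lambda$ is the image of $\til{\lambda}$ in $X_*(\sS)\subset X_*({}^\rho_0\sG_x^{\heartsuit})$), for every $s\in\Gm(\bar{k})=\bar{k}^{\times}$ the element $s^{\ell}\cdot\Ad(\lambda(s)^{-1})c'$ lies in $\bar{k}\cdot\Ad(({}^\rho_0\sG_x^{\heartsuit})(\bar{k}))c'$. Reading off $\lambda$-weights (the operator $\Ad(\lambda(s)^{-1})$ acts by $s^{-w}$ on the weight-$w$ space) gives
\[
s^{\ell}\cdot\Ad(\lambda(s)^{-1})c'=s^{\ell}\Bigl(s^{-\ell}c+\sum_{j<\ell}s^{-j}c''_j\Bigr)=c+\sum_{j<\ell}s^{\,\ell-j}c''_j ,
\]
and since $\ell-j>0$ for every $j<\ell$ the right-hand side is polynomial in $s$ with value $c$ at $s=0$. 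Hence $s\mapsto c+\sum_{j<\ell}s^{\,\ell-j}c''_j$ defines a morphism $\mathbb{A}^1\ra{}^\rho_a\fg_x^{\heartsuit}$ which carries $\Gm$ into $\bar{k}\cdot\Ad(({}^\rho_0\sG_x^{\heartsuit})(\bar{k}))c'$ and sends $0$ to $c$.

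To conclude I would invoke irreducibility: $\mathbb{A}^1$ is irreducible and $\Gm$ is dense in it, so the Zariski closure of the image of this morphism coincides with the closure of the image of $\Gm$; in particular $c$, the image of $0$, lies in the Zariski closure of $\bar{k}\cdot\Ad(({}^\rho_0\sG_x^{\heartsuit})(\bar{k}))c'$, which is the assertion. I do not expect a genuine obstacle here: the one delicate point is simply that every correction term $c''_j$ has $\lambda$-weight strictly below $\ell$, so that after multiplying by $s^{\ell}$ no negative powers of $s$ survive and the family genuinely extends over $s=0$ with limit exactly $c$; the other point to record is that $\lambda$ lands in ${}^\rho_0\sG_x^{\heartsuit}$, which is precisely what makes $\Ad(\lambda(s)^{-1})c'$ a point of the orbit $\Ad(({}^\rho_0\sG_x^{\heartsuit})(\bar{k}))c'$ rather than of a larger group.
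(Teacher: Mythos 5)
Your proof is correct and rests on the same mechanism the paper uses: the one-parameter contraction $s\mapsto s^{\ell}\Ad(\lambda(s)^{-1})c'$ is exactly the $\Gm$-retraction in Lemma \ref{lem:retract}, which underlies Lemma \ref{lem:test}, of which the paper declares Lemma \ref{lem:CO3.6} a rephrasement. The only (harmless) difference is that you apply the contraction directly to $c+{}^{\lambda}_{<\ell}\fg_a$ rather than first conjugating into the slice $c+V$ as in the proof of Lemma \ref{lem:test}; that reduction is needed only for the second, rational-conjugacy assertion there, not for the closure statement proved here.
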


Because the dimension of centralizer for an element in a Lie algebra is lower semi-continuous (it is the rank of the adjoint action), the lemma implies

\begin{corollary}\label{cor:edgecond1} We have $\dim\Ad(\sG_x^{\heartsuit}(\bar{k}))c'\ge \dim \Ad(\sG_x^{\heartsuit}(\bar{k}))c$.
\end{corollary}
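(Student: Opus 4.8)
The plan is to combine Lemma~\ref{lem:CO3.6} with the lower semicontinuity of the rank of $\ad$. First I would reduce the statement about orbit dimensions to one about ranks: under the running hypotheses on $p$ (see Appendix~\ref{app:char}), the stabilizer in $\sG_x^{\heartsuit}$ of any $Y\in\fg_x^{\heartsuit}$ is smooth, with Lie algebra the centralizer $\fz_{\fg_x^{\heartsuit}}(Y)=\ker(\ad Y)$; hence
\[
\dim\Ad(\sG_x^{\heartsuit}(\bar k))Y=\dim\fg_x^{\heartsuit}-\dim\ker(\ad Y)=\rank(\ad Y),
\]
where $\ad Y$ is regarded as an endomorphism of $\fg_x^{\heartsuit}$. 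Thus it suffices to prove $\rank(\ad c)\le\rank(\ad c')$.

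Next I would use that $Y\mapsto\rank(\ad Y)$ is lower semicontinuous for the Zariski topology on the affine space $\fg_x^{\heartsuit}$: in a fixed basis the matrix entries of $\ad Y$ are linear in $Y$, so for every $k$ the locus $\{Y\in\fg_x^{\heartsuit}:\rank(\ad Y)\le k\}$ is Zariski closed, being cut out by the vanishing of all $(k+1)\times(k+1)$ minors. Now set $Z:=\bar k\cdot\Ad\bigl(({}^\rho_0\sG_x^{\heartsuit})(\bar k)\bigr)c'\subseteq\fg_x^{\heartsuit}$. Every $Y\in Z$ has the form $Y=s\,\Ad(g)c'$ with $s\in\bar k$ and $g\in({}^\rho_0\sG_x^{\heartsuit})(\bar k)$, so $\ad Y=s\cdot\Ad(g)\circ(\ad c')\circ\Ad(g)^{-1}$, whence $\rank(\ad Y)=\rank(\ad c')$ when $s\ne 0$ and $\rank(\ad Y)=0$ when $s=0$. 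In every case $\rank(\ad Y)\le\rank(\ad c')$, so $Z$, and therefore its Zariski closure $\overline Z$, is contained in the closed locus $\{Y:\rank(\ad Y)\le\rank(\ad c')\}$. By Lemma~\ref{lem:CO3.6} we have $c\in\overline Z$, hence $\rank(\ad c)\le\rank(\ad c')$, which by the displayed identity is exactly the assertion of the corollary.

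I do not expect a genuine obstacle here; the two points that warrant care are the identification of the orbit dimension with the rank of $\ad$, which is precisely where the hypotheses on $p$ enter (smoothness of centralizers, equivalently separability of the adjoint action), and the elementary but necessary observation that the extra scaling factor — including the degenerate case $s=0$ — cannot increase the rank, so that the whole cone $Z$, not merely the single orbit $\Ad(({}^\rho_0\sG_x^{\heartsuit})(\bar k))c'$, remains inside the relevant closed set.
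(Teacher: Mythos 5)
Your proposal is correct and follows essentially the same route as the paper: the paper's proof is precisely to combine Lemma~\ref{lem:CO3.6} with lower semicontinuity of the rank of the adjoint action (equivalently, semicontinuity of centralizer dimension), which you have simply spelled out in more detail, including the harmless scaling factor and the identification of orbit dimension with $\rank(\ad)$ under the standing hypotheses on $p$.
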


Let $c'=c_s'+c_n'$ be the Jordan decomposition. Consider the nilpotent geometric $\Ad(\sG_x^{\heartsuit})(\bar{k})$-orbit $N(c')\subset\fg_x^{\heartsuit}$ given by the Lusztig-Spaltenstein induction of the orbit $\Ad(\sG_x^{\heartsuit}(\bar{k}))c_n'$ from $Z_{\sG_x^{\heartsuit}}(c_s')$ to $\sG_x^{\heartsuit}$; see Definition \ref{def:LS}. Ciubotaru and Okada observed \cite[Lemma 4.2]{CO23} that the closure of $N(c')$ is the intersection of $\bar{k}\cdot\Ad(({}^\rho_0\sG_x^{\heartsuit})(\bar{k}))c'$ with the nilpotent cone of $\fg_x^{\heartsuit}$. With Lemma \ref{lem:CO3.6} we have

\begin{corollary}\label{cor:CO4.2} (\cite[Lemma 4.2]{CO23}) The element $c$ is contained in the Zariski closure of $N(c')$. 
\end{corollary}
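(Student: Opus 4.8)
The plan is to deduce this formally from Lemma \ref{lem:CO3.6} together with the Ciubotaru--Okada description of $\overline{N(c')}$ recalled immediately above the statement. The only substantive input I need beyond those two facts is that $c$ itself is nilpotent, which is available: since we are in the situation of \S\ref{subsec:edge1} the augmented facet $\sigma$ is non-horizontal, so Corollary \ref{cor:NH} shows that every element of $\fg(F)_{\sigma}$ — in particular $c$ — is nilpotent in the sense of Definition \ref{def:nil}, and by Lemma \ref{lem:nilpotent} this is the same as being nilpotent in the reductive Lie algebra $\fg_x^{\clubsuit}$, hence also in the $\rho$-fixed subalgebra $\fg_x^{\heartsuit}$. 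Thus $c$ lies in the nilpotent cone of $\fg_x^{\heartsuit}$.

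With that in hand I would simply chain the inclusions: Lemma \ref{lem:CO3.6} puts $c$ in the Zariski closure of $\bar{k}\cdot\Ad(({}^{\rho}_0\sG_x^{\heartsuit})(\bar{k}))c'$; intersecting with the nilpotent cone, $c$ lies in that closure intersected with the nilpotent cone of $\fg_x^{\heartsuit}$, which by \cite[Lemma 4.2]{CO23} is precisely $\overline{N(c')}$. This is the assertion of the corollary.

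I do not expect a genuine obstacle here — the argument is essentially formal once Lemma \ref{lem:CO3.6} and \cite[Lemma 4.2]{CO23} are granted. The only points deserving a sentence of care are (i) making sure the intersection being used is the closure of the saturation $\bar{k}\cdot\Ad(({}^{\rho}_0\sG_x^{\heartsuit})(\bar{k}))c'$ with the nilpotent cone, not the saturation itself, and (ii) noting that the extra $\bar{k}$-scaling is harmless, since scaling preserves both the nilpotent cone and the $({}^{\rho}_0\sG_x^{\heartsuit})(\bar{k})$-action, so it does not disturb the identification of the nilpotent part with $\overline{N(c')}$.
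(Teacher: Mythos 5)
Your argument is exactly the paper's: the corollary is deduced by combining Lemma \ref{lem:CO3.6} with the Ciubotaru--Okada identification of $\overline{N(c')}$ as the nilpotent part of the (closure of the) saturation $\bar{k}\cdot\Ad(({}^{\rho}_0\sG_x^{\heartsuit})(\bar{k}))c'$, the nilpotency of $c$ being implicit in the setup of \S\ref{subsec:edge1} just as you verify via Corollary \ref{cor:NH} and Lemma \ref{lem:nilpotent}. Your cautionary remarks (i) and (ii) are correct readings of the loosely stated citation and do not change the argument.
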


Note that $N(c')$ is only a $\sG_x^{\heartsuit}(\bar{k})$-orbit, not a $({}^{\rho}_0\sG_x^{\heartsuit})(\bar{k})$-orbit. In general $N(c')\cap{}^{\rho}_a(\fg_x^{\heartsuit})(\bar{k})$ is the closure of the union of a number of $\sG_x^{\heartsuit}(\bar{k})$-orbits, say $\{N^{\rho}_1(c'),...,N^{\rho}_n(c')\}$. Recall that $(x,r)\in\sigma'$. Similar to before we denote by $\cO^{ur}(\sigma',N^{\rho}_i(c'))$ the minimal nilpotent $\Ad(G(F^{ur}))$-orbit that meets $N^{\rho}_i(c')+\fg(F^{ur})_{>\sigma'}$. We have the following corollary to Corollary \ref{cor:CO4.2}, which is implicit in \cite[\S5]{CO23}:

\begin{corollary}\label{cor:CO5} We have $\cO(\sigma,c)=\cO(\sigma',c)$ is contained in the $p$-adic closure of one of $\cO^{ur}(\sigma',N^{\rho}_i(c'))$ ($1\le i\le n$) above.
\end{corollary}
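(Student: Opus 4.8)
The plan is to deduce Corollary~\ref{cor:CO5} from Corollary~\ref{cor:CO4.2} by passing from the geometric (over $\bar k$) closure statement to a $p$-adic statement about orbits in $\fg(F)$, using the Moy--Prasad-to-nilpotent-orbit dictionary from \S\ref{subsec:MPQ}. First I would set up notation: put $\mathcal{N} := N(c') \subset \fg_x^\heartsuit$, the Lusztig--Spaltenstein induced orbit, and recall from Corollary~\ref{cor:CO4.2} that $c$ lies in the Zariski closure $\overline{\mathcal{N}}$. Since $c \in {}^\rho_a(\fg_x^\heartsuit)$, in fact $c$ lies in $\overline{\mathcal{N}} \cap {}^\rho_a(\fg_x^\heartsuit)(\bar k)$, and by the paragraph preceding the statement this intersection is the closure of the union of the $\sG_x^\heartsuit(\bar k)$-orbits $N^\rho_1(c'), \dots, N^\rho_n(c')$. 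Hence $c$ lies in the closure of $N^\rho_i(c')$ for at least one index $i$; fix such an $i$.

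The heart of the argument is then to translate ``$c \in \overline{N^\rho_i(c')}$ in the graded Lie algebra over $\bar k$'' into ``$\cO(\sigma',c)$ is contained in the $p$-adic closure of $\cO^{ur}(\sigma', N^\rho_i(c'))$.'' I would do this via DeBacker's parametrization and the lift construction recalled around \eqref{eq:gradedtriple}--\eqref{eq:liftedsl2}. The key input is Proposition~\ref{prop:samelift} (and Corollary~\ref{cor:extension}): for a nilpotent element $e \in {}^\rho_a(\fg_x^\heartsuit)$, the $p$-adic orbit $\cO^{ur}(\sigma',e) = \Ad(G(F^{ur}))\tilde e$ has the same weighted Dynkin diagram as the geometric orbit $\Ad(\sG_x^\clubsuit(\bar k))e$. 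The partial order on nilpotent orbits — both the $p$-adic closure order on $\fg^{nil}(F^{ur})/\!\!\sim$ and the Zariski closure order on nilpotent $\sG_x^\clubsuit(\bar k)$-orbits — is governed by the same combinatorics of weighted Dynkin diagrams (for classical groups, the dominance order on partitions; in general the standard closure order, cf.\ \cite{CM93}). Now $N^\rho_i(c')$ is a single $\sG_x^\heartsuit(\bar k)$-orbit, but it is a $\rho$-stable subset of ${}^\rho_a(\fg_x^\heartsuit)$, so taking a point $e_i \in N^\rho_i(c')$, its $\sG_x^\clubsuit(\bar k)$-saturation is a nilpotent geometric orbit $\mathbb{O}_i$, and by Proposition~\ref{prop:samelift} applied to $e_i$, $\cO^{ur}(\sigma', N^\rho_i(c'))$ is the $p$-adic orbit with that weighted Dynkin diagram. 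Meanwhile $\overline{N^\rho_i(c')} \ni c$ forces $\overline{\mathbb{O}_i} \ni c$, i.e.\ the weighted Dynkin diagram of $c$ is $\le$ that of $e_i$ in the closure order; applying Proposition~\ref{prop:samelift} once more (to $c$, whose associated orbit is $\cO(x,r,c) = \cO(\sigma',c)$) converts this back to $\cO(\sigma',c)$ lying in the $p$-adic closure of $\cO^{ur}(\sigma', N^\rho_i(c'))$. Finally $\cO(\sigma,c) = \cO(\sigma',c)$ by Lemma~\ref{lem:O} (equivalently \eqref{eq:samelift}), which finishes the proof.

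The main obstacle I anticipate is the compatibility of closure orders under the weighted-Dynkin-diagram correspondence when $\rho$ is nontrivial. Proposition~\ref{prop:samelift} is stated for a single nilpotent element and identifies individual orbits, but here I need that the $\bar k$-Zariski closure relation inside the $\rho$-graded piece ${}^\rho_a(\fg_x^\heartsuit)$ — where the acting group is only ${}^\rho_0\sG_x^\heartsuit = \sG_x$ — maps correctly onto the $p$-adic closure relation in $\fg(F^{ur})$. Two subtleties arise: (i) the closure $\overline{N^\rho_i(c')}$ taken inside ${}^\rho_a(\fg_x^\heartsuit)$ versus inside the full $\fg_x^\heartsuit$ or $\fg_x^\clubsuit$ may differ, so I must be careful that ``$c \in \overline{N^\rho_i(c')}$'' is meant in the ambient variety $\fg_x^\heartsuit$ (which is what Corollary~\ref{cor:CO4.2} provides, via $\overline{\mathcal N}$), and then invoke that for nilpotent orbits the closure order does not see the grading, only the weighted Dynkin diagram; (ii) one passes from $\bar k$ back to $F^{ur}$ — this is exactly what Corollary~\ref{cor:extension} and the discussion of $\cO^{ur}(x,r,c)$ are for, so the rationality issues are already handled. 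I would therefore spend most of the write-up carefully citing \cite{CM93} for the fact that the closure order on nilpotent orbits in a reductive Lie algebra (in good characteristic) is determined by weighted Dynkin diagrams and is the same for $G/_{F^{ur}}$ and for $\sG_x^\clubsuit/_{\bar k}$ since these share absolute root data, and then the rest is a formal chain of the lemmas already established.
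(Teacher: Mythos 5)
Your first step — extracting from Corollary \ref{cor:CO4.2} that $c$ lies in the Zariski closure of $N^{\rho}_i(c')$ for some $i$ — is exactly how the paper starts. The gap is in the second half, where you replace Lemma \ref{lem:comparelift} by a transfer of closure relations through weighted Dynkin diagrams. Proposition \ref{prop:samelift} (and \cite{CM93}) only identifies the \emph{geometric} orbit attached to a coset; it says nothing about which $F^{ur}$-rational orbits contain a given rational orbit in their $p$-adic closure. Over $F^{ur}$ a geometric nilpotent orbit still splits into several $\Ad(G(F^{ur}))$-orbits in general (the residue field is algebraically closed, but the inertia group is not trivial), and the $p$-adic closure order on $\fg^{nil}(F^{ur})/\!\!\sim$ is strictly finer than the order on weighted Dynkin diagrams: a rational orbit with smaller diagram need not lie in the closure of a \emph{given} rational form of the larger one. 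This rational-versus-geometric discrepancy is precisely the phenomenon the paper is tracking (cf.\ the remark around \cite{Tsa23} and the introduction of $\sim_{semi}$), so your sentence ``applying Proposition \ref{prop:samelift} once more converts this back to $\cO(\sigma',c)$ lying in the $p$-adic closure of $\cO^{ur}(\sigma',N^{\rho}_i(c'))$'' asserts exactly the statement that needs proof, and your appeal to \cite{CM93} in the obstacle paragraph does not supply it, since that reference concerns orbits over an algebraically closed field only.

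The paper closes this gap by a genuinely $p$-adic mechanism rather than a diagram comparison: it applies Lemma \ref{lem:comparelift} with $F$ replaced by $F^{ur}$. Concretely, from $c\in\overline{N^{\rho}_i(c')}$ one produces (via the contracting slice at $c$, in the spirit of Lemma \ref{lem:test}) an element of $N^{\rho}_i(c')(\bar{k})$ lying in $c+{}^{\rho}_a({}^{\lambda}_{<\ell}\fg_x^{\heartsuit})$, and then the proof of Lemma \ref{lem:comparelift} uses the coset containment $c'+\fg(F^{ur})_{x>r}\subset c+\fg(F^{ur})_{x+s\til{\lambda}>r+\ell s}$ together with the minimality characterization of DeBacker's $\cO(\cdot)$ to get the closure containment of the specific rational orbits; weighted Dynkin diagrams enter only through Lemma \ref{lem:O} (moving the base point along $\til{\lambda}$) and in the equality case. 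Corollary \ref{cor:extension} is then used, as you do, to pass from $\cO(\sigma,c)$ to $\cO^{ur}(\sigma,c)$. If you reroute your argument through Lemma \ref{lem:comparelift} over $F^{ur}$ in this way, the rest of your write-up stands; as written, the diagram-only step does not establish the corollary.
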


\begin{proof} By definition $c$ is contained in the Zariski closure of one of $N^{\rho}_i(c')$. Thanks to Corollary \ref{cor:extension}, we have $\Ad(G(F^{ur}))\cO(\sigma,c)=\cO^{ur}(\sigma,c)$. The latter is contained in the $p$-adic closure of $\cO^{ur}(\sigma',N^{\rho}_i(c'))$ thanks to Lemma \ref{lem:comparelift} applied with $F$ replaced by $F^{ur}$.
\end{proof}

Corollary \ref{cor:edgecond2} to Corollary \ref{cor:CO5} all give necessary conditions for the existence of an edge in $C_{G}$ from $(\sigma,c)$ to $(\sigma',c')$, with the exception of Corollary \ref{cor:trivexist} which also contains a rather naive existence result. When $r\in\Z$, we have $m=1$ so that $\rho$ is trivial. In this case we will prove a stronger existence result. To accurately state what we can achieve we need more notions. Define an equivalence relation $\sim_{semi}$ on $\fg^{nil}(F)/\!\!\sim$ as follows:

\begin{definition}\label{def:semi} We denote by $\sim_{semi}$ the equivalence relation on $\fg^{nil}(F)/\!\!\sim$ generated by the following relation: For $\cO_1,\cO_2\in\fg^{nil}(F)/\!\!\sim$ we declare $\cO_1\sim_{semi}\cO_2$ if there exist $r\in\Z$, $x\in\cB(G,F)$ and $c_1,c_2\in\fg(F)_{x=r}$ such that $\cO_i=\cO(x,r,c_i)$ and $\Ad(\sG_x(\bar{k}))c_1=\Ad(\sG_x(\bar{k}))c_2$.
\end{definition}

The following lemma will not be used in this article; it is just there so that Definition \ref{def:semi} looks slightly simpler.

\begin{lemma} In Definition \ref{def:semi}, it suffices to take $r=0$, i.e. those relations given by $r=0$ generate the same equivalence relation.
\end{lemma}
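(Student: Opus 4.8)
The plan is to reduce the general case (arbitrary $r\in\Z$, any $x$) to the case $r=0$ by using the dilation/translation mechanism already developed in \S\ref{subsec:MPQ}, specifically Lemma \ref{lem:O} and the identities \eqref{eq:MPequal}. Suppose we are given $r\in\Z$, $x\in\cB(G,F)$ and $c_1,c_2\in\fg(F)_{x=r}$ with $\cO_i=\cO(x,r,c_i)$ and $\Ad(\sG_x(\bar k))c_1=\Ad(\sG_x(\bar k))c_2$; we want to exhibit $\cO_1\sim_{semi}\cO_2$ via a chain of relations each using $r=0$. Since $r\in\Z$ we have $m=1$, so $\rho$ is trivial and $\fg_x^\heartsuit=\fg_x^\clubsuit$ is an honest reductive Lie algebra over $k$ with $\sG_x^\heartsuit=\sG_x^\clubsuit$ having the same absolute root data as $G$, and $\fg(F)_{x=r}$ is just an eigenspace ${}^{\lambda_0}_{?}\fg_x^\clubsuit$ for no grading at all — in fact when $r\in\Z$, $\fg(F)_{x=r}\cong\fg(F)_{x=0}=\Lie\sG_x$ canonically by multiplication by $\varpi_F^{-r}$ (or rather the Reeder–Yu identification collapses). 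So the first step is to record that for $r\in\Z$ the pair $(x,r,c)$ is, up to the canonical isomorphism $\fg(F)_{x=r}\cong\fg(F)_{x=0}$, literally of the form $(x,0,c)$, and that $\cO(x,r,c)$ and $\cO(x,0,c)$ are related by a relation of the allowed type (they are even equal as orbits after we track that the identification $\fg(F)_{x=r}\cong\fg(F)_{x=0}$ is $\sG_x(\bar k)$-equivariant, so $\Ad(\sG_x(\bar k))c_1=\Ad(\sG_x(\bar k))c_2$ persists under it).

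The second, and slightly more delicate, step handles the case where $r\in\Z$ but one genuinely wants to see the relation at $r=0$ rather than merely invoke the canonical isomorphism: this is where Lemma \ref{lem:O} enters. Pick $(x,r)\in\cA^a_{(p)}$ inside the relevant horizontal facet, choose a maximal $F$-split torus $S$ with $x\in\cA$, and take any $\til\lambda\in X_*(S)$; by Lemma \ref{lem:O}, $\cO(x,r,c)=\cO(x+t\til\lambda,\,r+\ell t,\,c)$ for all $t$. We can choose $\til\lambda$ and $t$ so that $r+\ell t=0$ — for instance if $\ell=1$ take $t=-r$, using the cocharacter of the center if $G$ has positive-dimensional center, or more carefully picking $\til\lambda$ central so that the Moy–Prasad filtration only shifts in depth and $x+t\til\lambda$ stays in $\cA_{(p)}$. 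Then $(x+t\til\lambda,0,c_i)$ realizes $\cO_i$ with $r=0$, and we must check the hypothesis $\Ad(\sG_{x+t\til\lambda}(\bar k))c_1=\Ad(\sG_{x+t\til\lambda}(\bar k))c_2$ still holds: this follows because $\til\lambda(\varpi^{t})$ conjugates $\fg_x^\clubsuit$ to $\fg_{x+t\til\lambda}^\clubsuit$ fixing $c_i$ (as in the proof of Lemma \ref{lem:O}), transporting the geometric-orbit equality. Hence $\cO_i\sim_{semi}\cO(x+t\til\lambda,0,c_i)$ and $\cO(x+t\til\lambda,0,c_1)\sim_{semi}\cO(x+t\til\lambda,0,c_2)$ by an $r=0$ relation, giving $\cO_1\sim_{semi}\cO_2$.

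The main obstacle I anticipate is the bookkeeping in the second step when $Z(G)$ is finite (so there is no central cocharacter to shift depth by an integer): then one must choose $\til\lambda\in X_*(S)$ non-central with $\ell$ appropriate and verify two things simultaneously — that $r+\ell t$ can be made $0$ with $t\in\Z_{(p)}$ (a divisibility condition, easily arranged since $\ell\mid$ the relevant quantity or one enlarges the walk), and that $x+t\til\lambda$ still lies in $\cB(G,F)_{(p)}$ with $\sG_{x+t\til\lambda}$ of the same type. Both are routine given \S\ref{subsec:MPQ}, but they are the only content. An alternative, cleaner route that sidesteps the torus gymnastics entirely: observe that via the canonical Reeder–Yu isomorphism $\fg(F)_{x=r}\xrightarrow{\ \sim\ }\fg(F)_{x=0}$ (valid precisely because $r\in\Z$), which is $\sG_x(\bar k)$-equivariant and sends $c+\fg(F)_{x>r}$ to the corresponding coset of $\fg(F)_{x>0}$, one has $\cO(x,r,c)=\cO(x,0,c')$ where $c'$ is the image of $c$; then the hypothesis and conclusion transport verbatim and no chain is needed — the $r=0$ relation is literally the given one. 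I would present this second route as the proof, relegating the Lemma \ref{lem:O} argument to a remark.
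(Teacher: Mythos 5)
Your preferred ``clean route'' has a genuine gap. Multiplication by $\varpi_F^{-r}$ is indeed a $\sG_x(\bar k)$-equivariant isomorphism $\fg(F)_{x=r}\xra{\sim}\fg(F)_{x=0}$, but it does \emph{not} give $\cO(x,r,c)=\cO(x,0,\varpi_F^{-r}c)$: the minimal orbit meeting the scaled coset is $\varpi_F^{-r}\cdot\cO(x,r,c)$, and scaling by an odd power of the uniformizer in general moves a rational nilpotent orbit to a different rational orbit inside the same geometric orbit (already for the regular orbits of $\mathfrak{sl}_2$). Keeping track of exactly this rational-versus-geometric distinction is the whole point of $\sim_{semi}$, so you cannot identify $\cO(x,r,c_i)$ with $\cO(x,0,\varpi_F^{-r}c_i)$ for free; your route works only for $r$ even, where $\varpi_F^{-r}=(\varpi_F^{-r/2})^2$ and one can scale inside the orbit by $\mu(\varpi_F^{-r/2})\in G(F)$ for $\mu$ an associated cocharacter. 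Your fallback route also misuses Lemma \ref{lem:O}: that lemma presupposes $c\in{}^{\lambda}_{\ell}\fg(F)_{x=r}$, i.e.\ $c$ is $\lambda$-homogeneous of weight $\ell$, so you cannot take an arbitrary $\til\lambda$ with ``$\ell=1$,'' and a central $\til\lambda$ acts with $\ell=0$ on all of $\fg$, so moving in the central direction changes neither the filtration nor the depth of $c$ --- it cannot bring $r$ to $0$.

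The missing idea, which is how the paper proceeds, is to choose $\lambda$ adapted to \emph{both} $c_1$ and $c_2$ simultaneously: since $\Ad(\sG_x(\bar k))c_1=\Ad(\sG_x(\bar k))c_2$, they have the same weighted Dynkin diagram, so after conjugation one may take $\lambda$ to be the common associated cocharacter of $\slt$-triples, with $c_1,c_2\in{}^{\lambda}_2\fg(F)_{x=r}$; then Lemma \ref{lem:O} with $\ell=2$, $t=-\frac{r}{2}$ gives $\cO(x,r,c_i)=\cO(y,0,c_i)$ for $y=x-\frac{r}{2}\til\lambda$. The second point you would still have to address, and which your transport argument via $\Ad(\til\lambda(\varpi_F^{t}))$ does not settle, is that the hypothesis of the $r=0$ relation must hold for the \emph{new} reductive quotient $\sG_y$, which differs from $\sG_x$; conjugating by $\til\lambda(\varpi_F^{t})$ with $t\notin\Z$ only yields conjugacy in the larger ($\clubsuit$-type) group, i.e.\ equality of geometric data, not $\Ad(\sG_y(\bar k))$-conjugacy. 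The paper gets this from the stronger statement $\Ad({}^{\lambda}_0\sG_x(\bar k))c_1=\Ad({}^{\lambda}_0\sG_x(\bar k))c_2$ --- valid precisely because $\lambda$ is the associated cocharacter, so the elements with that Dynkin cocharacter form a single ${}^{\lambda}_0$-orbit in the weight-$2$ space --- combined with ${}^{\lambda}_0\sG_y={}^{\lambda}_0\sG_x\subset\sG_y$.
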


\begin{proof} Suppose $\cO_1,\cO_2\in\fg^{nil}(F)/\!\!\sim$, $r\in\Z$, $x\in\cB(G,F)$ and $c_1,c_2\in\fg(F)_{x=r}$ are such that $\cO_i=\cO(x,r,c_i)$ and $\Ad(\sG_x(\bar{k}))c_1=\Ad(\sG_x(\bar{k}))c_2$. Then $c_1,c_2$ has the same weighted Dynkin diagram in $\fg(F)_{x=r}\cong\fg(F)_{x=0}\cong\Lie\sG_x$. Hence their $\slt$-triples have the same associated cocharacter. In particular there exists $\lambda:\Gm/_k\ra\sS\subset\sG_x$ such that $c_1,c_2\in{}^{\lambda}_2\fg(F)_{x=r}$, and that $\Ad({}^{\lambda}_0\sG_x(\bar{k}))c_1=\Ad({}^{\lambda}_0\sG_x(\bar{k}))c_2$. Identify $\lambda$ as $\til{\lambda}\in X_*(S)$ where $S\subset G$ is a maximal $F$-split torus with reductive quotient $\sS$ and such that the corresponding apartment contains $x$. Then Lemma \ref{lem:O} gives $\cO(x,r,c_i)=\cO(x-\frac{r}{2}\til{\lambda},0,c_i)$. Write $y=x-\frac{r}{2}\til{\lambda}$. We have ${}^{\lambda}_0\sG_y={}^{\lambda}_0\sG_x$. Thus $\Ad(\sG_y(\bar{k}))c_1=\Ad(\sG_y(\bar{k}))c_2$ and by replacing $x$ with $y$ we have reduced to the case $r=0$.
\end{proof}

\begin{definition}\label{def:semiunram} We denote by $\fg^{nil}(F)/\!\!\sim_{semi}$ the quotient set of $\fg^{nil}(F)/\!\!\sim$ by $\sim_{semi}$. We give $\fg^{nil}(F)/\!\!\sim_{semi}$ the partial order generated by that in $\fg^{nil}$, i.e. for {\it any} $\cO_1\le\cO_2$ we declare $\cO_1/\!\sim_{semi}\,\le \cO_2/\!\sim_{semi}$ and consider the partial order relation this generates.
	
For $(\sigma,c)\in C_G$ or $(x,r)\in\cB^a(G,F)$, $c\in\fg(F)_{x=r}$ we denote by $\cO^{semi}(\sigma,c)$ (resp. $\cO^{semi}(x,r,c)$) the image of $\cO(\sigma,c)$ (resp. $\cO(x,r,c)$) under the quotient map. For $\gamma\in\fg(F)$ we also denote by $\WF^{semi}(\gamma)\subset\fg^{nil}(F)/\!\!\sim_{semi}$ the set of maximal elements in $\fg^{nil}(F)/\!\!\sim_{semi}$ among the image of $\WF(\gamma)$ under the quotient map. We call $\WF^{semi}(\gamma)$ the {\bf semi-unramified wave-front set}.
\end{definition}

Denote by $\fg^{nil}(F^{ur})/\!\!\sim$ the set of $\Ad(G(F^{ur}))$-orbits in $\fg^{nil}(F^{ur})$, with the partial order given by $p$-adic closure. We denote by $\WF^{ur}(\gamma)$ the set of maximal elements in the image of $\WF(\gamma)$ under $\fg^{nil}(F)/\!\!\sim\;\ra\fg^{nil}(F^{ur})/\!\!\sim$. Note that the example in \cite{Tsa23} shows that an element in $\WF(\gamma)$ needs not match with an element in $\WF^{ur}(\gamma)$.

\begin{remark} 	
	In the example when $G=U_2$ is an unramified unitary group, $\fg^{nil}(F)/\!\!\sim_{semi}$ has two regular orbits while $\fg^{nil}(F^{ur})/\!\!\sim$ has only one. In general, thanks to Corollary \ref{cor:extension} there is an obvious map from $\fg^{nil}(F)/\!\!\sim_{semi}$ to $\fg^{nil}(F^{ur})/\!\!\sim$. Unfolding definition we see that $\WF^{ur}(\gamma)$ is again the set of maximal elements among the image of $\WF^{semi}(\gamma)$. 
\end{remark}

We have the following variant of Corollary \ref{cor:CO5} whose proof is identical.

\begin{corollary}\label{cor:CO5again} Suppose $r\in\Z$. Then we have $\cO(\sigma,c)=\cO(\sigma',c)$ is contained in the closure of one of $\cO^{semi}(\sigma',N^{\rho}_i(c'))$ ($1\le i\le\ell$).
\end{corollary}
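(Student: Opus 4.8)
The plan is to transcribe the proof of Corollary \ref{cor:CO5} essentially word for word, under the dictionary that replaces the unramified base change $F^{ur}/F$ and the orbits $\cO^{ur}$ by the equivalence relation $\sim_{semi}$ and the orbits $\cO^{semi}$. The hypothesis $r\in\Z$ is precisely what makes this substitution legitimate: when $r\in\Z$ we have $m=1$, so $\rho$ is trivial, ${}^{\rho}_a\fg_x^{\heartsuit}=\fg_x^{\heartsuit}=\fg(F)_{x=0}\cong\fg(F)_{x=r}$, and the intersection $N(c')\cap{}^{\rho}_a(\fg_x^{\heartsuit})(\bar k)$ is all of $N(c')$. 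In particular there is really a single constituent $N^{\rho}_i(c')=N(c')$, and it is Frobenius-stable.

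First I would invoke Corollary \ref{cor:CO4.2}, which gives that $c$ lies in the Zariski closure of $N(c')$, hence of one of the $N^{\rho}_i(c')$. Next I would check that $\cO^{semi}(\sigma',N^{\rho}_i(c'))$ is meaningful (recall that by Definition \ref{def:semiunram} the notation $\cO^{semi}$ is defined only for a rational coset). Since $c'\in\fg(F)_{x=r}$ is a $k$-rational coset, its Jordan components and the orbit $\Ad(\sG_x^{\heartsuit}(\bar k))c_n'$ are defined over $k$, so its Lusztig--Spaltenstein induction $N(c')$ (Definition \ref{def:LS}), and therefore the $\sG_x(\bar k)$-orbit $N^{\rho}_i(c')$, is Frobenius-stable; by Lang's theorem it contains a point $n_i\in\fg(F)_{x=r}$ defined over $k$. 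Any two such choices are $\Ad(\sG_x(\bar k))$-conjugate, so Definition \ref{def:semi} yields $\cO(\sigma',n_i)\sim_{semi}\cO(\sigma',n_i')$, and $\cO^{semi}(\sigma',N^{\rho}_i(c')):=\cO^{semi}(\sigma',n_i)$ is well-defined.

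The last step is the degeneration bound, which runs as in Corollary \ref{cor:CO5}. Because the nilpotent element $c$ lies in the closure of $\Ad(\sG_x(\bar k))n_i$, for $p$ large this closure relation is detected by a cocharacter: after replacing $n_i$ by a $\sG_x(\bar k)$-conjugate there is a cocharacter $\lambda$ putting $n_i$ in the position required by Lemma \ref{lem:comparelift} relative to $c$, so that lemma gives $\cO(\sigma',c)=\cO(x,r,c)\le\cO(x,r,n_i)$ in $\fg^{nil}(F)/\!\!\sim$. Pushing this inequality to $\fg^{nil}(F)/\!\!\sim_{semi}$ and combining with $\cO(\sigma,c)=\cO(\sigma',c)$ from Lemma \ref{lem:O} gives $\cO^{semi}(\sigma,c)\le\cO^{semi}(\sigma',N^{\rho}_i(c'))$, which is the assertion. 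Carrying out the same three steps over $\bar k$, i.e. over $F^{ur}$ and using Corollary \ref{cor:extension}, is literally the proof of Corollary \ref{cor:CO5}, which is the sense in which the two arguments are identical.

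The main obstacle --- and the only place where $r\in\Z$ is genuinely used --- is the middle step: one must guarantee that the geometric orbit $N^{\rho}_i(c')$ meets the $k$-points of the Moy--Prasad quotient $\fg(F)_{x=r}$, so that its $\sim_{semi}$-class is represented by an honest rational coset rather than being visible only after unramified base change. When $m>1$ the extra $\rho$-grading can force Frobenius to permute the constituents $N^{\rho}_i(c')$ or to act on one of them without rational points at the correct weight, which is exactly why Corollary \ref{cor:CO5} has to pass to $F^{ur}$; integrality of $r$ collapses the $\rho$-grading and removes this difficulty.
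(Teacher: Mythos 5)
Your overall strategy is exactly the paper's: the paper proves this corollary by declaring its proof identical to that of Corollary \ref{cor:CO5}, and your dictionary (replace $F^{ur}$ and $\cO^{ur}$ by $\sim_{semi}$ and $\cO^{semi}$, note that $r\in\Z$ forces $m=1$ so the $\rho$-grading collapses and $N(c')$ is a single Frobenius-stable orbit, use Lang's theorem to see that $\cO^{semi}(\sigma',N(c'))$ is well defined) is the right frame for that transcription.

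However, your final step has a genuine gap. After ``replacing $n_i$ by a $\sG_x(\bar k)$-conjugate'' so as to put it in the position required by Lemma \ref{lem:comparelift}, the new element (and possibly the cocharacter) is in general no longer defined over $k$, i.e.\ it is not an element of the Moy--Prasad quotient $\fg(F)_{x=r}$, so Lemma \ref{lem:comparelift} over $F$ does not apply to it and the asserted inequality $\cO(x,r,c)\le\cO(x,r,n_i)$ in $\fg^{nil}(F)/\!\!\sim$ is not justified; applying the lemma over $F^{ur}$ instead merely reproduces Corollary \ref{cor:CO5}, and the $\cO^{ur}$ statement does not imply the $\cO^{semi}$ one (distinct rational orbits inside one geometric orbit need not be comparable in $\fg^{nil}(F)/\!\!\sim_{semi}$; cf.\ the $U_2$ remark after Definition \ref{def:semiunram}). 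What the argument really needs is a $k$-rational element $n$ of the geometric orbit $N(c')$ lying in $c+{}^{\mu}_{<2}\fg_x$ for the rational cocharacter $\mu$ attached to a rational $\slt$-triple $(c,h,d)$ --- equivalently a $k$-point of $N(c')\cap\bigl(c+Z_{\fg_x}(d)\bigr)$; only then does Lemma \ref{lem:comparelift} over $F$ give $\cO(x,r,c)\le\cO(x,r,n)$, after which Definition \ref{def:semi} (this is a second place where $r\in\Z$ enters) lets you replace $n$ by any other rational point of $N(c')$ and push to the quotient poset. Your Lang-theorem step produces a rational point somewhere in $N(c')$, but the intersection with the attracting set (or Slodowy slice) is not a homogeneous space, so Lang gives nothing about it, and Theorem \ref{thm:good1} does not help either, since it concerns only the top orbit $N(x)$ of the element itself. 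This rational-positioning statement is precisely the content that separates the $\cO^{semi}$ version from Corollary \ref{cor:CO5}, and it needs to be made explicit (or separately established) rather than absorbed into a geometric conjugation.
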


\begin{theorem}\label{thm:goodness} (Corollary \ref{cor:good1}) Suppose $r\in\Z$ so that $m=1$. Fix $\sigma'\in\Phi^G(r)$ horizontal and $(x,r)\in\sigma'\cap\cA^a_{(p)}$. Let $c'=c_s'+c_n'\in{}^{\rho}_a\fg_x^{\heartsuit}=\fg_x^{\heartsuit}$ and $N(c')$ be as before. Then there exists $c\in N(c')(k)$ such that for any $\lambda:\mathbb{G}_m\ra\sG_x^{\heartsuit}$ and $\ell\in\Z_{>0}$ with $c\in{}^{\lambda}_{\ell}(\fg_x^{\heartsuit})$, we have $c'\in c+{}^{\lambda}_{<\ell}(\fg_x^{\heartsuit})$.
\end{theorem}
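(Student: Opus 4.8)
The plan is to produce $c$ by a Richardson-type construction inside the induced orbit, extending the argument of Ciubotaru--Okada \cite[\S4]{CO23} from the single $\slt$-cocharacter to all cocharacters. Write $\sG:=\sG_x^{\heartsuit}$, $\fg:=\fg_x^{\heartsuit}$, $M:=Z_{\sG}(c_s')$ (a Levi subgroup of $\sG$, as $c_s'$ is a semisimple element of $\fg$), $\fm:=\Lie M$, and fix over $\bar k$ a parabolic $P=MN$ with $\fn=\Lie N={}^{\mu}_{>0}\fg$ for a cocharacter $\mu$ of $\sG$ with $Z_{\sG}(\mu)=M$; thus $c_s'\in\fz(\fm)\subset{}^{\mu}_0\fg$ and $c_n'\in\fm$ is nilpotent. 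I will use the characterization $\overline{N(c')}=\overline{\bar k\cdot\Ad(\sG)c'}\cap\fg^{nil}$ from \cite[Lemma 4.2]{CO23}, together with the defining property of Lusztig--Spaltenstein induction that $N(c')\cap(c_n'+\fn)$ is dense open in the affine space $c_n'+\fn$. The condition to be achieved, ``$c'\in c+{}^{\lambda}_{<\ell}\fg$ whenever $c\in{}^{\lambda}_{\ell}\fg$ with $\ell>0$,'' says exactly that $c$ should be the common leading term of $c'$ along every cocharacter flow under which $c$ is homogeneous of positive weight.

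First I would fix a candidate $c$. Choose an $\slt$-triple $(c_n',h',f')$ for $c_n'$ inside $\fm$ (possible as $p\gg\rank$), with associated cocharacter $\nu\in X_*(M)$, and let $c$ be an element of $c_n'+\fn$ lying in $N(c')$ in sufficiently generic position relative to the commuting actions of $\nu$, $\mu$ and $Z_M(c_n')$; the dimension identity $\dim Z_{\sG}(c)=\dim Z_M(c_n')$ for induced orbits guarantees such a $c$ exists, is nilpotent, and lies in $N(c')$. One then checks, following the computation in \cite[\S4]{CO23} --- after possibly replacing $c'$ by an $\Ad(\sG)$-conjugate, which is harmless since $N(c')$ is conjugation-invariant and $c'$ enters the application only up to conjugacy --- that $c$ carries an $\slt$-triple whose cocharacter $\lambda_0$ is assembled from $\nu$ and $\mu$, and that with respect to $\lambda_0$ the leading term of $c'$ is $c$: the semisimple part $c_s'$ sits in $\mu$-degree $0$, while the $\fn$-direction by which $c$ differs from $c_n'$ is compensated using the Richardson-density of $N(c')\cap(c_n'+\fn)$.

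The substantive step is to upgrade the leading-term property from $\lambda_0$ to an arbitrary $\lambda$ with $c\in{}^{\lambda}_{\ell}\fg$, $\ell>0$. Such a $\lambda$ is a destabilizing cocharacter for the nilpotent $c$, so by Kempf--Rousseau theory its associated parabolic $P(\lambda)$ is contained in the canonical instability parabolic $P(c)=P(\lambda_0)$, and after normalization $\lambda$ is $Z_{\sG}(c)$-conjugate into a torus compatible with $\lambda_0$; propagating the $\lambda_0$-statement through this comparison of gradings, and using that $c$ lies in the Zariski closure of $\bar k\cdot\Ad(Z_{\sG}(c_s'))c'$ together with lower semicontinuity of centralizer dimension (Lemma \ref{lem:CO3.6}, Corollary \ref{cor:edgecond1}) to exclude any $\lambda$-weight of $c'$ that is $\ge\ell$ other than the one equal to $c$, should yield $c'\in c+{}^{\lambda}_{<\ell}\fg$ for every such $\lambda$. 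Establishing this uniformly over all destabilizing cocharacters --- as opposed to the single $\slt$-one treated in \cite[\S4]{CO23} --- is the part I expect to be the main obstacle, and is presumably the substance of Theorem \ref{thm:good1}.

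Finally, $k$-rationality: $c_s',c_n'$ and $M$ are defined over $k$ and $N(c')$ is $\Frob$-stable, so I would run the whole construction $\Frob$-equivariantly, choosing $(c_n',h',f')$ over $k$ (again using $p\gg\rank$) and invoking Lang's theorem for $N$ (or counting $\Frob$-fixed points in the dense open subset of $c_n'+\fn$ cut out above) to obtain $c\in N(c')(k)$; since the weight conditions are checked after base change to $\bar k$, they continue to hold for this rational $c$.
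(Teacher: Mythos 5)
There is a genuine gap, and it sits exactly at the step you yourself flag as the main obstacle. Your reduction of the theorem to ``the leading-term property propagates from the $\slt$-cocharacter $\lambda_0$ to an arbitrary $\lambda$ with $c\in{}^{\lambda}_{\ell}\fg_x^{\heartsuit}$, $\ell>0$'' is left as an expectation, and the route you sketch for it does not work. A cocharacter $\lambda$ giving $c$ positive weight need not have $P(\lambda)$ contained in the Jacobson--Morozov parabolic of $c$, nor be $Z_{\sG_x^{\heartsuit}}(c)$-conjugate to something compatible with $\lambda_0$: already for $c=E_{13}$ minimal nilpotent in $SL_3$, the cocharacter $\lambda=\mathrm{diag}(t^2,t^{-1},t^{-1})$ makes $c$ homogeneous of weight $3$ but its parabolic is a maximal parabolic, not contained in the Borel attached to $\lambda_0$. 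Worse, the uniform statement you aim at --- one fixed representative $c'$ lying in $c+{}^{\lambda}_{<\ell}\fg_x^{\heartsuit}$ for \emph{every} such $\lambda$ --- is in general impossible: for $G=SL_2$, $c'$ regular semisimple and $c$ regular nilpotent (say $c=e$), the relevant cocharacters are the $\exp(se)$-conjugates of the standard one, and the intersection of the spaces ${}^{\lambda}_{<2}\fg$ over three distinct values of $s$ is zero, so the containment for all $\lambda$ would force $c'=c$. The statement has to be read as Corollary \ref{cor:good1} phrases it, namely $\Ad(\sG_x(k))c'\cap\left(c+{}^{\lambda}_{<\ell}\fg_x^{\heartsuit}\right)\neq\emptyset$ for each $\lambda$ separately (the conjugating element depends on $\lambda$), which is all that Corollary \ref{cor:goodness} and (\ref{eq:urWF}) use. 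You also misplace the content of Theorem \ref{thm:good1}: it is not about uniformity in $\lambda$; it says $Z_G(x)^o$ acts transitively on the variety $\Theta(x)$ of $\slt$-triples $(c,h,d)$ with $c,d\in N(x)$ and $x\in c+Z_{\fg}(d)$, whence a rational supporting triple exists by Lang's theorem.

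The paper's actual mechanism is quite different from a comparison of gradings. The element $c$ is not a generic Richardson element of $c_n'+\fu_P$ with an assembled cocharacter (your leading-term claim for $\lambda_0$ is itself unproved --- it is essentially the ``goodness'' being established); instead $c$ is any element supported by $c'$ in the sense of Definition \ref{def:support}, produced over $k$ by Theorem \ref{thm:good1}, whose proof is a point count: the Fourier identity of Proposition \ref{prop:Spr} together with Weil bounds shows $\Theta(x)$ has a single top-dimensional geometric component (Proposition \ref{prop:1comp}), and every $Z_G(x)$-orbit has at least that dimension. The arbitrary-$\lambda$ step is then Corollary \ref{cor:good1}, a positivity argument with invariant functions: the averaged characteristic function $f_{\lambda,c}$ of $c+{}^{\lambda}_{<\ell}\fg$ lies in $J(\fg(k))^{conil}$, hence by Proposition \ref{prop:restrict} and Corollaries \ref{cor:test}, \ref{cor:test2} it equals $\alpha f_{c,h,d}$ (with $\alpha>0$) plus terms attached to strictly larger nilpotent orbits; pairing with $I_{c'}$ annihilates the larger-orbit terms (Lemma \ref{lem:test} plus a dimension count), while the supporting condition gives $\inn{I_{c'}}{f_{c,h,d}}>0$, so $\inn{I_{c'}}{f_{\lambda,c}}>0$. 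No Kempf--Rousseau theory and no comparison between $\lambda$ and $\lambda_0$ enter. To repair your proposal you would need to replace your step 2 by an argument of this kind (or another proof that the orbit of $c'$ meets $c+{}^{\lambda}_{<\ell}\fg$ for each $\lambda$), rather than trying to prove the simultaneous containment, which is false as literally formulated.
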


With the notations in Theorem \ref{thm:goodness}, up to conjugating by an element in $\sG_x^{\heartsuit}(k)=\sG_x(k)$ we may assume that $\lambda$ has image in $\sS$ and thus corresponds to $\til{\lambda}\in X_*(S)$. We remark that such $\lambda$ always exists, e.g. the one associated to an $\slt$-triple completing $c$ with $\ell=2$. For any such $\lambda$ and $\til{\lambda}$, there exists $\sigma\in\Phi^G$ and $\delta>0$ such that for $0<s<\delta$ we have $(x,r)-(s\til{\lambda},\ell s)\in\sigma$. This leads to

\begin{corollary}\label{cor:goodness} Let $c$ be as in Theorem \ref{thm:goodness}. If in the construction of the graph $C_G$ in (\ref{eq:defedge1}) we make the choice $\sigma'$ for $\sigma$ right after Definition \ref{def:below}. Then there is an edge from $\cO(\sigma,c)$ to $\cO(\sigma',c')$. We have $\cO^{semi}(\sigma,c)=\cO^{semi}(\sigma',N(c'))$.
\end{corollary}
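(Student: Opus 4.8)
The plan is to read Corollary \ref{cor:goodness} off Theorem \ref{thm:goodness} by translating the latter into the combinatorics of the graph $C_G$, so the argument is essentially bookkeeping with the Moy--Prasad filtration. First I would fix $c\in N(c')(k)$ as produced by Theorem \ref{thm:goodness}, take for $\lambda$ the cocharacter attached to an $\slt$-triple $(c,h,d)$ completing $c$ with $\ell=2$ (a legitimate choice in the hypothesis of that theorem; we may and do assume $c\neq 0$), and — as in \S\ref{subsec:MPQ}, using \cite[Cor.\ 4.3.2]{De02b} — pick a maximal $F$-split torus $S\subset G$ with $x$ in its apartment $\cA$ and with reductive quotient $\sS\supseteq\im\lambda$, identifying $\lambda$ with $\til\lambda\in X_*(S)$. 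For this $\til\lambda$ and $\ell$ one then has (as in the paragraph preceding the corollary, i.e.\ via \eqref{eq:MP<2}) a non-horizontal augmented facet $\sigma$ having $(x,r)$ on its boundary, the horizontal facet $\sigma'$ of $(x,r)$ being below $\sigma$ in the sense of Definition \ref{def:below} and satisfying $\fg(F)_\sigma={}^{\lambda}_{\ell}\fg(F)_{\sigma'}$ and $\fg(F)_{>\sigma}/\fg(F)_{>\sigma'}={}^{\lambda}_{<\ell}\fg(F)_{\sigma'}$.

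For the edge: since $c\in{}^{\lambda}_{\ell}(\fg_x^{\heartsuit})$ and, by \eqref{eq:MP<2} together with the identification $\fg_x^{\heartsuit}=\fg(F)_{x=r}=\fg(F)_{\sigma'}$, one has ${}^{\lambda}_{\ell}(\fg_x^{\heartsuit})={}^{\lambda}_{\ell}\fg(F)_{\sigma'}=\fg(F)_\sigma$, the pair $(\sigma,c)$ is a vertex of $C_G$; and by \eqref{eq:defedge1}, once we make the choice $\sigma'$ for $\sigma$ right after Definition \ref{def:below}, the out-neighbours of $(\sigma,c)$ are exactly the vertices $(\sigma',c_0')$ with $c_0'\in c+\fg(F)_{>\sigma}/\fg(F)_{>\sigma'}=c+{}^{\lambda}_{<\ell}\fg(F)_{\sigma'}$. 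Theorem \ref{thm:goodness}, applied to this very $\lambda$ and $\ell$, gives $c'\in c+{}^{\lambda}_{<\ell}(\fg_x^{\heartsuit})=c+{}^{\lambda}_{<\ell}\fg(F)_{\sigma'}$, so $(\sigma',c')$ is one of these out-neighbours — which is the asserted edge from $(\sigma,c)$ to $(\sigma',c')$.

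For the orbit identity: by Lemma \ref{lem:O} (used exactly as in Corollary \ref{cor:edgecond2}, through the cocharacter $\til\lambda$ linking $\sigma$ and $\sigma'$) we have $\cO(\sigma,c)=\cO(\sigma',c)$, hence $\cO^{semi}(\sigma,c)=\cO^{semi}(\sigma',c)$. Since $r\in\Z$ — so $m=1$, $\sG_x^{\heartsuit}=\sG_x$, and $N(c')$ is a single $\sG_x(\overline k)$-orbit — and $c\in N(c')(k)\subset\fg(F)_{x=r}$, Definition \ref{def:semi} gives $\cO(x,r,c)\sim_{semi}\cO(x,r,c'')$ for every $c''\in\fg(F)_{x=r}$ in the $\sG_x(\overline k)$-orbit of $c$, in particular for every $c''\in N(c')(k)$. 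This shows at once that $\cO^{semi}(\sigma',N(c')):=\cO^{semi}(\sigma',c'')$ is independent of the choice of $k$-point $c''\in N(c')(k)$ and that it equals $\cO^{semi}(\sigma',c)$; combining with the previous equality yields $\cO^{semi}(\sigma,c)=\cO^{semi}(\sigma',N(c'))$.

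I expect essentially all the difficulty to be concentrated in Theorem \ref{thm:goodness} (= Corollary \ref{cor:good1}): it is that theorem — generalizing \cite[\S4]{CO23} — which provides a \emph{single} representative $c\in N(c')(k)$ with $c'\in c+{}^{\lambda}_{<\ell}(\fg_x^{\heartsuit})$ for every admissible pair $(\lambda,\ell)$ having $c\in{}^{\lambda}_{\ell}(\fg_x^{\heartsuit})$, and proving that is the real work. Granting it, the present corollary is just an unwinding of definitions; the only mildly delicate points are realizing the abstract $\slt$-cocharacter $\lambda$ by a $\til\lambda\in X_*(S)$ with $x$ in the apartment of $S$, so that $\sigma$ and the edge \eqref{eq:defedge1} genuinely live in $C_G$, and the well-definedness of $\cO^{semi}(\sigma',N(c'))$ — which is immediate from Definition \ref{def:semi}.
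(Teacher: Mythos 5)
Your proof is correct and takes essentially the same route as the paper's: the edge is obtained by feeding Theorem \ref{thm:goodness} into the construction \eqref{eq:defedge1} with the stipulated choice of $\sigma'$ below $\sigma$, and the orbit identity follows from $\cO(\sigma,c)=\cO(\sigma',c)$ (Lemma \ref{lem:O}, as in Corollary \ref{cor:edgecond2}) together with $c\in N(c')(k)$ and Definition \ref{def:semi}. The paper's own proof is a terser version of exactly this argument; your explicit remarks on realizing $\lambda$ as $\til{\lambda}\in X_*(S)$ and on the well-definedness of $\cO^{semi}(\sigma',N(c'))$ merely fill in details the paper leaves implicit.
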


\begin{proof} The setting in this subsection applies so that we have an edge from $(\sigma,c)$ to $(\sigma',c')$. By construction $c'$ and $N(c)$ are in the same $\Ad(\sG_x^{\heartsuit})$-orbit and therefore $\cO^{semi}(\sigma',c')=\cO^{semi}(\sigma',N(c))$.
\end{proof}

\begin{remark} Although the statement of Corollary \ref{cor:goodness} seems to rely on some choice, it implies that whenever $(\sigma',c')\in c_G(\gamma)$, then $\WF(\gamma)$ must contain an orbit in $\cO^{semi}(\sigma,c)$ or some larger orbit, because Theorem \ref{thm:graph} holds for any choice. Frequently in examples, Theorem \ref{thm:graph} will reversely force the choice in Corollary \ref{cor:goodness} to be the unique one because the wave-front set in the end does not depend on the choice.
\end{remark}

\begin{remark} Corollary \ref{cor:test} and \ref{cor:test2} will allow us to reduce the geometric problem - Problem \ref{prob:hard} - to the case when $\lambda$ is associated to an $\slt$-triple completing $c$. Even with the reduction, the problem seems rather formidable in general. For example, the construction of \cite{Tsa24} is based on some rather specific computer experiment of this problem (in \cite[Lemma 2.2, 2.3, 2.4]{Tsa24}) when $m=2$, $\sG_x^{\heartsuit}=GL_n$ and the action $\rho$ is given by $g\mapsto (g^t)^{-1}$. In this case Problem \ref{prob:hard} involves at least some geometry of Jacobians of hyperelliptic curves studied in \cite{Tsa18} based on \cite{Wan18}.
	
The author also gave some discussion \cite[\S4]{Tsa24} that this problem might be related to graded Springer theory (which is still largely being developed). Meanwhile, Corollary \ref{cor:good1} is concerned with the ``ungraded'' case $m=1$, namely the case when graded Springer theory becomes the well-understood generalized Springer theory. Indeed, Theorem \ref{thm:good1} behinds Corollary \ref{cor:good1} can be proved with Springer theory, though we take an elementary approach with a key step Proposition \ref{prop:Spr} inspired by Springer theory.
\end{remark}

\section{Examples}\label{sec:unram}

We now collect all our results so far to produce precise results about semi-unramified wave-front set (Definition \ref{def:semiunram}, an invariant stronger than $\WF^{ur}(\gamma)$) in some ``unramified'' situation. Let $\gamma=\gamma_r+\gamma_{>r}$ be as in \S\ref{sec:RedtoLevi}. We remark that this implies that $\gamma_r\in\Lie T$ for some torus $T\subset G$ that splits over an unramified extension. Again write $H=Z_G(\gamma_r)$. Our goal is to compute $\WF^{semi}(\gamma)$, using $c_{H,r}(\gamma_{>r})$ as a black box. The set $c_{G,r}(\gamma)$ is determined by $c_{H,r}(\gamma_{>r})$ via (\ref{eq:descent}). To apply Proposition \ref{prop:wf2} we need to determine $c_{G,s}(\gamma)$ for any slightly smaller $s$. Indeed, take $s=r-\epsilon$ with $\epsilon\in\Z_{(p)}$, $\epsilon>0$ and small enough so that there is no horizontal facet in $\Phi^G$ of depth within $[s,r)$, i.e. all $\sigma\in\Phi^G(s)$ are non-horizontal with $\depth(\sigma)\ge r$. Hence by Theorem \ref{thm:graph}, $c_{G,s}(\gamma)$ are precisely those $(\sigma,c)$ with an edge to $(\sigma',c')\in c_{G,r}(\gamma)$ in the manner of (\ref{eq:defedge1}). In this case $\sigma'$ is the horizontal augmented facet below $\sigma$ and the datum $(\sigma,c,\sigma',c')$ satisfies the setup in (\ref{eq:MP<2}), Problem \ref{prob:hard} and the paragraph in between. We can apply Corollary \ref{cor:CO5}, which shows that $\cO(\sigma,c)$ is contained in the $p$-adic closure of $\cO^{ur}(\sigma',N(c'))$. This gives
\begin{equation}\label{eq:urWFineq}
\renewcommand{\arraystretch}{1.5}
\begin{array}{rcl}
	\WF^{ur}(\gamma)&=&\text{the set of maximal elements in }\{\cO^{ur}(\sigma,c)\;|\;(\sigma,c)\in c_{G,s}(\gamma)\}\\
	&\le&\text{the set of maximal elements in } \{\cO^{ur}(\sigma',N(c'))\;|\;(\sigma',c')\in c_{G,r}(\gamma)\}.
\end{array}
\end{equation}
where $\le$ means that every element in $\WF^{ur}(\gamma)$ is $\le\cO$ for some $\cO$ in the RHS. When we impose the additional assumption that $r\in\Z$, we can replace $\cO^{ur}$ by $\cO^{semi}$ using Corollary \ref{cor:CO5again}. Corollary \ref{cor:goodness} says for every $(\sigma',c')\in c_{G,r}(\gamma)$, we can choose to have $(\sigma,c)$ as above with an edge to $(\sigma',c')$ such that $\cO^{semi}(\sigma,c)=\cO^{semi}(\sigma',c)$ is precisely $\cO^{semi}(\sigma',N(c'))$. Therefore when $r\in\Z$ we have
\begin{equation}\label{eq:urWF}
	\WF^{semi}(\gamma)=\text{the set of maximal elements in } \{\cO^{semi}(\sigma',N(c'))\;|\;(\sigma',c')\in c_{G,r}(\gamma)\}.
\end{equation}

We apply (\ref{eq:urWF}) to a number of cases.

\begin{example}\label{ex:toral} Let $\gamma\in\fg(F)$ be a good element of depth $r$. We don't assume $r\in\Z$ yet. We would like to compute $\WF(\gamma)$. Again write $H=Z_G(\gamma)$. We have $\gamma=\gamma+0$ is the decomposition in \S\ref{sec:RedtoLevi} and
\begin{equation}\label{eq:wf0}
	c_{H,r}(0)=\{(\varsigma_H,0)\;|\;\sigma_H\in\Phi^H(r)\}
\end{equation}
Denote by $\gamma_r$ the image of $\gamma$ to the sub-quotient $Z(\fh)(F)_{=r}$. Note that $Z(\fh)(F)_{=r}$ is a subspace of $\fg(F)_{x=r}$ for any $x\in\cB(H,F)$, so we can realize $\gamma_r$ in $\fg(F)_{x=r}$ for any $x\in\cB(H,F)$. By (\ref{eq:descent}) and (\ref{eq:wf0}) we have
\begin{equation}\label{eq:cgr-good}
	c_{G,r}(\gamma)=G(F).\{(\varsigma,\gamma_{=r})\;|\;\varsigma\in\Phi^G(r),\;\varsigma\cap\cB(H,F)\not=\emptyset\}.
\end{equation}
We plug (\ref{eq:cgr-good}) to the RHS of (\ref{eq:urWF}). We remark that since (\ref{eq:cgr-good}) is apparently $G(F)$-invariant, we can discard the $G(F)$-action part in (\ref{eq:cgr-good}) to make it simpler. For example, when $H/Z(G)$ is anisotropic, we have that $\cB(H,F)$ is a unique point modulo the central direction of $\cB(G,F)$. By Corollary \ref{cor:KM}, this makes $\cB(H,F)$ a single $r$-facet $\sigma'$ in $\cB(G,F)$. When $r\in\Z$, (\ref{eq:urWF}) gives
\begin{equation}\label{eq:CO6}
\WF^{semi}(\gamma)=\cO^{semi}(\sigma',N(\gamma_{=r}))
\end{equation}
which recovers \cite[Cor. 6.1]{CO23}. In general when $r\not\in\Z$, $\WF^{ur}(\gamma)$ is the set of maximal elements among $\cO^{ur}(\sigma,c)$ for those $(\sigma,c)$ satisfying Problem \ref{prob:hard} where we have a non-trivial $\rho$.
\end{example}

\begin{example}\label{ex:u6} Denote by $\varpi_F\in F$ be a fixed uniformizer. Let $G=U_6$ be an unramified unitary group over $F$ that splits over the quadratic unramified extension $E/F$, given by the following chosen hermitian form:
\begin{equation}\label{eq:hermitianform2}
	\langle x,y\rangle=x_1\bar{y}_1+x_2\bar{y}_2+x_3\bar{y}_3+x_4\bar{y}_4+x_5\bar{y}_5+\varpi_Fx_6\bar{y}_6.
\end{equation}
Note that there is an additional coefficient $\varpi_F$ for the 6th coordinate, making $G$ non-quasi-split. Fix $\lambda_1,\lambda_2,...,\lambda_6\in\cO_E^{\times}$ with distinct non-zero residues such that $\Tr_{E/F}\lambda_i=0$. Consider the following diagonal elements
\[
\gamma_{-1}=\varpi_F^{-1}\cdot\matr{
	0&&&&&\\
	&0&&&&\\
	&&\lambda_3&&&\\
	&&&\lambda_4&&\\
	&&&&\lambda_5&\\
	&&&&&\lambda_6
},
\]
\[
\gamma_0=\matr{
	\lambda_1&&&&&\\
	&\lambda_2&&&&\\
	&&0&&&\\
	&&&0&&\\
	&&&&0&\\
	&&&&&0
}
\]
\end{example}

\begin{proposition}\label{prop:u6} In Example \ref{ex:u6}, $\WF^{semi}(\gamma_{-1}+\gamma_0)$ contains of a nilpotent orbit of Jordan type $[4,1,1]$ and another of Jordan type $[3,3]$. 
\end{proposition}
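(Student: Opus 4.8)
The pair $(\gamma_{-1},\gamma_0)$ is exactly of the form treated in \S\ref{sec:RedtoLevi}: since $\lambda_3,\dots,\lambda_6$ have distinct non-zero residues and lie in the trace-zero part of $\cO_E$, the element $\gamma_{-1}$ is good of depth $r=-1\in\Z$, with $H:=Z_G(\gamma_{-1})\cong U_2\times(U_1)^4$ --- the $U_2$-factor on the coordinates $1,2$ (where the hermitian form is the quasi-split one $x_1\bar y_1+x_2\bar y_2$) and the four $U_1$-factors on the coordinates $3,4,5,6$ --- while $\gamma_0\in\fh(F)$ has depth $0>-1$. Because $r=-1\in\Z$, formula (\ref{eq:urWF}) applies: $\WF^{semi}(\gamma)$ is the set of maximal elements of $\{\cO^{semi}(\sigma',N(c'))\;|\;(\sigma',c')\in c_{G,-1}(\gamma)\}$. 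The plan is therefore to (i) compute $c_{G,-1}(\gamma)$, (ii) compute the Lusztig--Spaltenstein inductions $N(c')$ and their DeBacker lifts $\cO^{semi}(\sigma',N(c'))$ for the pairs that occur, and (iii) identify one lift of Jordan type $[4,1,1]$ and one of type $[3,3]$, and show no strictly larger orbit occurs. As $[4,1,1]$ and $[3,3]$ are incomparable partitions of $6$, step (iii) then yields the claim.

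For (i) I would use Proposition \ref{prop:redtoLevi}: $c_{G,-1}(\gamma)$ is determined by $c_{H,-1}(\gamma_0)$ through (\ref{eq:descent}). On the $(U_1)^4$-factor $\gamma_0$ vanishes and $\mathfrak{u}_1$ has no non-zero nilpotents, so all the content sits in the $U_2$-factor, where $\gamma_0$ restricts to $\mathrm{diag}(\lambda_1,\lambda_2)$, a good regular semisimple element of depth $0$ with anisotropic centralizer $T_2=U_1\times U_1$. Since $\dep_H(\gamma_0)=0>-1$, Theorem \ref{thm:graph} computes $c_{H,-1}(\gamma_0)$ as the set of $(\tau,c)\in C_{H,-1}$ with a directed path to $c_{H,0}(\gamma_0)$, the latter being a single $H(F)$-orbit supported at the $0$-facet $\varsigma_0$ of $\cB(H,F)$ fixed by $T_2$ (as in Example \ref{ex:toral}). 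Running the graph $C_{U_2}\times(\text{trivial})$ one step downward --- walking from $\varsigma_0$ along an $\slt$-triple cocharacter direction to the non-horizontal facets above it with depth-interval ending at $0$ (edges of type (\ref{eq:defedge2})), and then reading off the horizontal $(-1)$-facets feeding into those (edges of type (\ref{eq:defedge1})) --- produces, besides the trivial cosets $(\tau,0)$, cosets $(\tau,c)$ with $c$ a regular nilpotent of the Moy--Prasad quotient $\mathfrak{u}_2(F)_{\tau=-1}$. Here I would invoke that the regular nilpotents of $\mathfrak{u}_2$ fall into exactly two $\sim_{semi}$-classes (the two vertex types of the tree $\cB(U_2,F)$, cf. the remark preceding Corollary \ref{cor:CO5again}) and check that conjugates of $\mathrm{diag}(\lambda_1,\lambda_2)$ realize both, so that $c_{H,-1}(\gamma_0)$, hence $c_{G,-1}(\gamma)$, contains cosets of both types.

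Step (ii) is the geometric heart. Via (\ref{eq:descent}), each such coset yields a pair $(\sigma',c')\in c_{G,-1}(\gamma)$ whose Jordan decomposition $c'=c'_s+c'_n$ has $c'_s$ the image of $\gamma_{-1}$ (residue eigenvalues $0,0,\bar\lambda_3,\dots,\bar\lambda_6$) and $c'_n$ the regular nilpotent of the $U_2$-block, with $\sigma'$ a vertex of $\cB(G,F)$ lying on $\cB(H,F)$. The point is that, because the sixth coordinate carries the factor $\varpi_F$, the group $G=U_6$ is \emph{not} quasi-split (reduced local index $\widetilde{BC}_2$), so the vertices of $\cB(G,F)$ along the line $\cB(H,F)$ run through inequivalent vertex types; I would work out the lattice chains fixed along $\cB(H,F)$, determine the reductive quotients $\sG_{\sigma'}=\sG_{\sigma'}^{\heartsuit}$ (since $m=1$) and the position of the $U_2$-block and of $c'_s$ inside them, evaluate $N(c')=\Ind_{Z_{\sG_{\sigma'}}(c'_s)}^{\sG_{\sigma'}}\bigl(\Ad(\sG_{\sigma'})c'_n\bigr)$ in $\Lie\sG_{\sigma'}$, and translate back through the inclusion $\Lie\sG_{\sigma'}\subset\fg_{\sigma'}^{\clubsuit}$ of type $A_5$ and Proposition \ref{prop:samelift}. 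The expected outcome is that the two inequivalent reductive quotients occurring along $\cB(H,F)$ force two different answers, namely $\cO^{semi}(\sigma',N(c'))$ of Jordan type $[4,1,1]$ at one and $[3,3]$ at the other; Corollary \ref{cor:goodness} (or directly (\ref{eq:urWF})) then certifies that both contribute to $\WF^{semi}(\gamma)$.

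For (iii) I would enumerate the finitely many (up to $G(F)$) cosets in $c_{G,-1}(\gamma)$ from the description above and apply the necessary conditions of Corollary \ref{cor:CO5again} together with the centralizer-dimension bound of Corollary \ref{cor:edgecond1} to rule out the only partitions of $6$ that could dominate $[4,1,1]$ or $[3,3]$, i.e.\ $[4,2]$, $[5,1]$, $[6]$; then both $[4,1,1]$ and $[3,3]$ are maximal and hence genuinely lie in $\WF^{semi}(\gamma)$. I expect the main obstacle to be step (ii): unwinding the Bruhat--Tits combinatorics of the non-quasi-split $U_6$ --- lattice chains, vertex types, and reductive quotients along $\cB(H,F)$ --- and carrying out the matching Lusztig--Spaltenstein computations. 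It is precisely the coexistence of two inequivalent reductive quotients along $\cB(H,F)$, forced by the $\varpi_F$-twist on the sixth coordinate, that is responsible for the wave-front set not being contained in a single geometric orbit.
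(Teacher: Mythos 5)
Your overall strategy is the paper's: reduce to $H=Z_G(\gamma_{-1})\cong U_2\times U_1^4$ via Proposition \ref{prop:redtoLevi}, invoke (\ref{eq:urWF}) since $r=-1\in\Z$, identify the reductive quotients $U_5\times U_1$ and $U_3\times U_3$ along $\cB(H,F)$ inside the non-quasi-split $U_6$, and compute Lusztig--Spaltenstein inductions and their lifts via Proposition \ref{prop:samelift}. But there is a genuine error at the decisive step, the determination of $c_{H,-1}(\gamma_0)$. You assert that conjugates of $\mathrm{diag}(\lambda_1,\lambda_2)$ realize \emph{both} $\sim_{semi}$-classes of regular nilpotents, i.e.\ that regular nilpotent cosets at both vertex types lie in $c_{H,-1}(\gamma_0)$. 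This is false, and the failure is exactly a rational-class (hermitian discriminant) phenomenon: if $X\in\fh(F)_{y\ge-1}$ is anti-hermitian with characteristic polynomial $(T-\lambda_1)(T-\lambda_2)$ and nonzero nilpotent image in $\fh(F)_{y=-1}$ at the vertex $y$ stabilizing $\operatorname{span}_{\cO_E}(e_1,e_2)$, then all its entries have valuation $-1$ and a direct computation shows each eigenline of $X$ has discriminant of odd valuation; since the eigenlines $e_1,e_2$ of $\gamma_0$ have unit discriminants, no $H(F)$-conjugate of $\gamma_0$ meets such a coset. The correct answer is $c_{H,-1}(\gamma_0)=H(F).\{((y,-1),0),\,((\varsigma,-1),0),\,((z,-1),c_n)\}$: the regular nilpotent coset occurs only at $z$, and at $y$ only the zero coset.

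This matters because your step (ii) is also internally inconsistent with the claimed conclusion. If the coset at a $y$-type vertex had regular nilpotent $U_2$-part, then $N(c')$ would be the Lusztig--Spaltenstein induction of the \emph{regular} orbit of the Levi $U_2\times U_1^3$ in $U_5$, which is the regular orbit $[5]$ (induction of a regular orbit of a Levi is always regular), lifting to $[5,1]$ in $U_6$ --- not $[4,1,1]$. Since (\ref{eq:urWF}) is an equality, a genuine $[5,1]$ contribution could not be ``ruled out'' in your step (iii), and $[4,1,1]$ would then fail to be maximal; so your plan would not prove the statement. The $[4,1,1]$ member of $\WF^{semi}(\gamma)$ comes instead from the \emph{zero} coset at $y$: there $c'=\gamma_{=-1}$ is semisimple with centralizer $U_2\times U_1^3$ in $U_5$, and $N(c')$ is the Richardson orbit $([4,1],[1])$, lifting to $[4,1,1]$; the $[3,3]$ comes from $z$, where $\gamma_{=-1}+c_n$ is regular in $\mathfrak{u}_3\times\mathfrak{u}_3$; and the alcove contributes at most $[3,1,1,1]$. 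The essential point your proposal misses is precisely this asymmetry between the two vertex types --- the orbit of $\gamma_0$ degenerates to a regular nilpotent only at $z$ and stays semisimple (zero coset) at $y$ --- and it is this asymmetry, not merely the coexistence of the two reductive quotients $U_5\times U_1$ and $U_3\times U_3$, that produces the two incomparable orbits $[4,1,1]$ and $[3,3]$.
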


This corresponds, for example, to a tame supercuspidal $\pi\in\Irr(G(F))$ whose Yu datum \cite[\S3]{Yu01} has $d=1$, $\vec{G}=(H,G)$ where $H=Z_G(\gamma_{-1})$, and $r_0=r_1=1$. That is, $\WF(\pi)$ consists of a nilpotent orbit of Jordan type $[4,1,1]$ and another of Jordan type $[3,3]$.

\begin{proof}[Proof of Proposition \ref{prop:u6}]
Note that $H=Z_G(\gamma_{-1})$ is a quasi-split $U_2$ times four copies of $U_1$. By the algorithm in \S\ref{sec:RedtoLevi}, we have to begin with $c_{H,{-1}}(\gamma_0)$. The set $\Phi^H(-1)$ of $(-1)$-facets is just the set of usual facets in the building $\cB(H,F)$, which is the same as in the building for $SU_2\cong SL_2$. The building has three $H(F)$-orbits of facets: the two orbits of hyperspecial vertices and the orbit of alcoves. Let $\{e_1,...,e_6\}$ be the basis of our hermitian space on which we equip the hermitian form (\ref{eq:hermitianform2}). Suppose $\{e_1',e_2'\}$ is a different basis of $\operatorname{span}_E(e_1,e_2)$ such that with respect to $\{e_1',e_2',e_3,...,e_6\}$ the form (\ref{eq:hermitianform2}) becomes
\begin{equation}\label{eq:hermitianform3}
	\langle x,y\rangle=\varpi_Fx_1\bar{y}_1+\varpi_Fx_2\bar{y}_2+x_3\bar{y}_3+x_4\bar{y}_4+x_5\bar{y}_5+\varpi_Fx_6\bar{y}_6.
\end{equation}
Let $H(F)_{y\ge 0}$ be the hyperspecial subgroup in $H(F)$ that stabilizes $\operatorname{span}_{\cO_E}(e_1,e_2)$ and $H(F)_{z\ge 0}$ be the hyperspecial subgroup that stabilizes $\operatorname{span}_{\cO_E}(e_1',e_2')$. These are exactly the two $H(F)$-orbits of hyperspecial subgroups. Also let $\varsigma\subset\cB(H,F)$ be any alcove. One checks that
\[
c_{H,-1}(\gamma_0)=H(F).\{((y,-1),0),((\varsigma,-1),0),((z,-1),c_n)\}
\]
where $c_n\in\fh(F)_{z=-1}$ is the unique $H(F)_{z\ge 0}$-orbit of non-trivial (regular) nilpotent elements. As before by $\gamma_{=-1}$ the image of $\gamma_{-1}$ in $Z(\fh)(F)_{=-1}\subset\fg(h)_{x=-1}\subset\fg(F)_{x=-1}$ for any $x\in\cB(H,F)$. Thanks to (\ref{eq:urWF}), we have
\begin{equation}\label{eq:wfu6}
	\renewcommand{\arraystretch}{1.5}
	\begin{array}{l}
		\WF^{semi}(\gamma)=\text{ the set of maximal elements in }\\
		\{\cO^{semi}((y,-1),N(\gamma_{-1})),\;\;\cO^{semi}((\varsigma,-1),N(\gamma_{-1})),\;\;\cO^{semi}((z,-1),N(\gamma_{-1}+c_n))\}.
	\end{array}
\end{equation}

We have to compute the groups $\sG_x^{\heartsuit}=\sG_x$ for $x=y,z$ and $x\in\varsigma$. This is an exercise about computing the embedding of buildings $\cB(H,F)\ra\cB(G,F)$. The result is quite intuitive:
\begin{enumerate}[label=(\roman*)]
	\item Thanks to (\ref{eq:hermitianform2}), $G(F)_{y\ge 0}$ is the parahoric subgroup of unitary operators stabilizing $\operatorname{span}_{\cO_E}(e_1,...,e_6)$ and $\operatorname{span}_{\cO_E}(e_1,e_2,...,e_5,\varpi_F^{-1}e_6)$. The reductive quotient $\sG_y\cong U_5\times U_1/_k$.
	\item Thanks to (\ref{eq:hermitianform3}), $G(F)_{z\ge 0}$ is the parahoric subgroup of unitary operators stabilizing $\operatorname{span}_{\cO_E}(e_1,...,e_6)$ and $\operatorname{span}_{\cO_E}(\varpi_F^{-1}e_1,\varpi_F^{-1}e_2,e_3,e_4,e_5,\varpi_F^{-1}e_6)$. The reductive quotient $\sG_z\cong U_3\times U_3/_k$.
	\item $G(F)_{\varsigma\ge 0}$ is a parahoric subgroup whose reductive quotient is $U_3\times U_1^3$.
\end{enumerate}
When $x=y$, the element $\gamma_{=-1}$ is a semsimple element in $\fg_y^{\heartsuit}(k)=\fg(F)_{y=-1}\cong\Lie(U_5\times U_1)(k)$ whose $\Lie U_5$-component has four distinct eigenvalues and one of them has multiplicity two. We denote nilpotent geometric orbits in $\fg_y^{\heartsuit}$ by a pair of partitions of $5$ and $1$ respectively. For example the regular nilpotent orbit is $([5],[1])$ while the zero orbit is $([1,1,1,1,1],[1])$. Then $N(c')$ corresponds to $([4,1],[1])$. Under $\sG_y^{\heartsuit}=U_5\times U_1\ira U_6=\sG_y^{\clubsuit}$ it goes to the orbit corresponding to $[4,1,1]$. By Proposition \ref{prop:samelift}, $\cO^{semi}((y,-1),N(\gamma_{=-1}))$ has Jordan type $[4,1,1]$ as well.

On the other hand, when $x=z$ we have that $\gamma_{=-1}+c_n\in\fg_z^{\heartsuit}(k)\cong\mathfrak{u}_3\times\mathfrak{u}_3$ is regular and thus by Corollary \ref{cor:edgecond1} $N(c')\subset\fg_z^{\heartsuit}(\bar{k})$ is again the regular nilpotent orbit, i.e. it corresponds to the partition $([3],[3])$. Under $\sG_z^{\heartsuit}\ira \sG_z^{\clubsuit}$ which is $U_3\times U_3\ira U_6$, it goes to the orbit corresponding to $[3,3]$. By Proposition \ref{prop:samelift}, $\cO^{semi}((z,-1),N(\gamma_{=-1}+c_n))$ has Jordan type $[3,3]$.

Lastly, as $\fg_{\varsigma=-1}$ has reductive quotient $\Lie(U_3\times U_1^3)$, $\cO^{semi}((\varsigma,-1),N(\gamma_{=-1}))$ is of type as most (in fact equal to) $[3,1,1,1]$. By (\ref{eq:wfu6}) we have that $\WF^{semi}(\gamma)$ has an element of type $[4,1,1]$ and another of type $[3,3]$.
\end{proof}

More generally than in Example \ref{ex:u6}, consider $\gamma=\gamma_{d-1}+\gamma_{d-2}+...+\gamma_0+\gamma_{-1}$ where
\begin{enumerate}
	\item $[r_i,r_j]=0$ for $-1\le i,j\le d-1$
	\item For $0\le i\le d-1$, $\gamma_i$ is good of depth $r_i\in\Z$ for a sequence of {\bf integers} $r_{d-1}<r_{d-2}<...<r_0$.
	\item $\gamma_{-1}$ is nilpotent. (It might be less distracting to start with the case $\gamma_{-1}=0$.)
\end{enumerate}
An element $\gamma$ can be written into this form iff its semisimple part lies in the Lie algebra of an unramified subtorus. Define $G^d:=G$ and inductively $G^i:=Z_{G^{i+1}}(\gamma_i)$ for $i=d-1,d-2,...,0$. The notations are chosen so that the sequence $G^0\subsetneq G^1\subsetneq...\subsetneq G^d=G$ plays the same role as in \cite{Yu01}. We would like to compute $\WF^{semi}(\gamma)$. Write $\gamma_i^{\dagger}:=\gamma_i+\gamma_{i-1}+...+\gamma_{-1}$ for $i=-1,0,...,d-1$. Following our algorithm, for $i=0,1,...,d-1$ we should inductively
\begin{enumerate}[label=(\roman*)]
	\item\label{step:red} From $c_{G^i,r_i}(\gamma_{i-1}^{\dagger})$ we deduce $c_{G^{i+1},r_i}(\gamma_i^{\dagger})$ via \S\ref{sec:RedtoLevi}.
	\item\label{step:non-nil} From $c_{G^{i+1},r_i}(\gamma_i^{\dagger})$ we deduce $c_{G^{i+1},r_i-\epsilon}(\gamma_i^{\dagger})$ as in \S\ref{subsec:edge1}.
	\item\label{step:nil} Use the algorithm in \S\ref{sec:graph} to further compute $c_{G^{i+1},r_{i+1}}(\gamma_i^{\dagger})$. This means we backtrack possible paths in the graph $C_{G^{i+1}}$. (This step is not needed for $i=d-1$ though it does not harm too.)
\end{enumerate}
For step \ref{step:non-nil}, Corollary \ref{cor:CO5} offers an upper bound for $c_{G^{i+1},r_i-\epsilon}(\gamma_i^{\dagger})$ for which Corollary \ref{cor:goodness} offers some similar (but different) existence result. On the other hand, in step \ref{step:nil} we have nilpotent $c'$ which can be simpler. However, it involves tracing paths in $C_{G^{i+1}}$ from depth $r_i-\epsilon$ to depth $r_{i+1}$, hence the ``combinatorial problem.''

We propose an estimate to the result of step \ref{step:non-nil} and \ref{step:nil} altogether. In step \ref{step:non-nil}, we already obtain the semi-unramified wave-front set $\WF^{semi,G^{i+1}}(\gamma_i^{\dagger})$ thanks to Theorem \ref{thm:goodness} and Appendix \ref{sec:gradedSpringer}. Then, Proposition \ref{prop:wf2} can be used backward to bound $c_{G^{i+1},r_{i+1}}(\gamma_i^{\dagger})$. That is
\begin{equation}\label{eq:upperbound}
c_{G^{i+1},r_{i+1}}(\gamma_i^{\dagger})\subset\{(\sigma,c)\;|\;\sigma\in\Phi^{G^{i+1}}(r_{i+1}),\;\cO^{semi}(\sigma)(\sigma,c)\le\cO\text{ for some }\cO\in\WF^{semi,G^{i+1}}(\gamma_i^{\dagger})\}
\end{equation}
We should have plugged $c_{G^{i+1},r_{i+1}}(\gamma_i^{\dagger})$ to step \ref{step:red} for $i=i+1$ and continue. Had we instead plug the RHS of (\ref{eq:upperbound}), we would get a potentially larger $c_{G^{i+2},r_{i+1}}(\gamma_{i+1}^{\dagger})$. We can continue the algorithm, using (\ref{eq:upperbound}) to replace step \ref{step:non-nil} and \ref{step:nil} for each $i=0,...,d-2$. By doing so we eventually get an upper bound for $\WF^{semi,G^d}(\gamma_{d-1}^{\dagger})=\WF^{semi,G}(\gamma)$. We propose that

\begin{conjecture}\label{conj:upperbound} This upper bound is always equal to $\WF^{semi,G}(\gamma)$.
\end{conjecture}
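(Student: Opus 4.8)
The plan is an induction on the layers, carried through not with the sets $c_{G^{i+1},r_{i+1}}(\gamma_i^\dagger)$ themselves but with their ``$\cO^{semi}$-upper closures.'' For a subset $C\subseteq C_{G',r}^{nil}$ (with $G'$ a twisted Levi and $r\in\Z$) write $\overline{C}^{\,semi}$ for the set of $(\tau,d)$ with $\cO^{semi}(\tau,d)\le\cO^{semi}(\tau_0,d_0)$ for some $(\tau_0,d_0)\in C$. By Proposition \ref{prop:wf2} applied at $s=r_i<\dep(\gamma_{i-1}^\dagger)$, the maximal elements of $\{\cO^{semi}(\tau,d)\mid(\tau,d)\in c_{G^i,r_i}(\gamma_{i-1}^\dagger)\}$ are $\equiv\WF^{semi,G^i}(\gamma_{i-1}^\dagger)$, so $\overline{c_{G^i,r_i}(\gamma_{i-1}^\dagger)}^{\,semi}$ is determined by $G^i$ together with $\WF^{semi,G^i}(\gamma_{i-1}^\dagger)$. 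In this language the lazy recipe of (\ref{eq:upperbound}) is exactly the honest algorithm with $c_{G^{i+1},r_{i+1}}(\gamma_i^\dagger)$ replaced at each layer by $\overline{c_{G^{i+1},r_{i+1}}(\gamma_i^\dagger)}^{\,semi}$; that it yields an upper bound is the remark made just before the conjecture. Hence the conjecture is equivalent to the \textbf{stability statement}: the composite ``descent (\ref{eq:descent}) from $G^i$ to $G^{i+1}$, then $(\sigma,c)\mapsto\cO^{semi}(\sigma,N(c))$, then taking maxima as in (\ref{eq:urWF})'' depends on its input $c_{G^i,r_i}(\gamma_{i-1}^\dagger)$ only through $\overline{(\,\cdot\,)}^{\,semi}$. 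Granting stability, induction on $i$ — the base case being the shared input $c_{G^0,r_0}(\gamma_{-1})$ — shows the lazy recipe's layer-$i$ output is $\WF^{semi,G^i}(\gamma_{i-1}^\dagger)$ for all $i$, which at $i=d$ is the conjecture.

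Since $c_{G^i,r_i}(\gamma_{i-1}^\dagger)\subseteq\overline{c_{G^i,r_i}(\gamma_{i-1}^\dagger)}^{\,semi}$, one inequality of maximal-element sets in the stability statement is automatic, and after unwinding (\ref{eq:descent}) the other reduces to the following comparison: given $(\tau,d),(\tau_0,d_0)\in C_{G^i,r_i}^{nil}$ with $\cO^{semi}(\tau,d)\le\cO^{semi}(\tau_0,d_0)$, and $\varsigma\in\Phi^{G^{i+1}}(r_i)$ with $\varsigma^{G^i}=\tau$, there exists $\varsigma_0\in\Phi^{G^{i+1}}(r_i)$ with $\varsigma_0^{G^i}=\tau_0$ such that $\cO^{semi}(\varsigma,N(d+\gamma_{=r_i}))\le\cO^{semi}(\varsigma_0,N(d_0+\gamma_{=r_i}))$. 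I would attack this after the usual reduction — via Lemma \ref{lem:O} and a shift by the cocharacter $\til{\lambda}$ — to depth-$0$ facets (legitimate since $r_i\in\Z$, so $m=1$), where $\sG_x$ is the honest reductive quotient of $G^{i+1}$ at $x\in\varsigma$, the reductive quotient of $G^i$ at $x$ is $Z_{\sG_x}(s)^o$ with $s$ the image of $\gamma_i$, and $d,d_0$ are nilpotent elements of $\Lie Z_{\sG_x}(s)^o$ (nilpotent by Lemma \ref{lem:depthandnil}) whose $\cO^{semi}$'s are read off through Proposition \ref{prop:samelift}. Note that $N(d+\gamma_{=r_i})$ — being Lusztig--Spaltenstein induction from $Z_{\sG_x}(s)$ of the orbit of $d$ — is essentially forced by the definition of $N$ together with the identification $\sH_x=Z_{\sG_x}(s)^o$; the real work is monotonicity and facet matching.

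The monotonicity is a package of three standard-flavored facts: (i) Lusztig--Spaltenstein induction from a (pseudo-)Levi preserves the closure order on nilpotent orbits; (ii) the weighted Dynkin diagram, hence the geometric orbit attached by Proposition \ref{prop:samelift}, is monotone under degeneration; and (iii) the ``good'' facets produced by Theorem \ref{thm:goodness}/Corollary \ref{cor:goodness} (which realize $N(d_0+\gamma_{=r_i})$ exactly along a suitable $\varsigma_0\subseteq\tau_0$), combined with the refinement of Corollary \ref{cor:edgecond1}/Corollary \ref{cor:CO5}, turn (i)--(ii) into the desired inequality once a matching $\varsigma_0$ is found. So the crux is producing $\varsigma_0$: knowing that $\cO^{semi}_{G^i}(\tau_0,d_0)$ dominates $\cO^{semi}_{G^i}(\tau,d)$ as $G^i$-orbits, one must pick a $G^{i+1}$-subfacet of $\tau_0$ whose Slodowy-slice datum (the pair consisting of a cocharacter and the reductive group $\sG_{\varsigma_0}$) dominates that of $\varsigma\subseteq\tau$; here one uses that any $G^{i+1}$-facet inside $\tau_0$ can be placed on a common apartment with the $\slt$-cocharacter for $N(d_0+\gamma_{=r_i})$ (Remark \ref{rmk:conjugate}, Lemma \ref{lem:>2}), and that DeBacker's realization of every nilpotent orbit as some $\cO(\varsigma',c')$ — already invoked in the proof of Proposition \ref{prop:wf2} — lets one realize the dominating $G^i$-orbit over a common parahoric.

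The main obstacle I expect is precisely this last matching step across the building embedding $\cB(G^i,F)\hookrightarrow\cB(G^{i+1},F)$ — the ``combinatorial problem'' of the introduction — where one must rule out that some $G^{i+1}$-subfacet of $\tau$ yields a strictly larger $\cO^{semi}(\varsigma,N(\cdot))$ than any $G^{i+1}$-subfacet of $\tau_0$ can match; this is the point at which the failure of inductivity for the full wave-front set (Example \ref{ex:WFnotinductive}) must be seen to wash out after passing to $\sim_{semi}$. A secondary caveat: the stability statement as phrased would imply $\WF^{semi}$ is itself inductive in this unramified integral-depth regime, so one should check that this does not clash with Example \ref{ex:WFnotinductive} (which concerns the full $\WF$, and plausibly lives outside the integral-depth regime); should there be a clash, one retreats to proving stability only for the sets that actually arise in the recursion — which, by Proposition \ref{prop:wf2}, are more constrained than arbitrary $\cO^{semi}$-upper-closed sets — at the cost of having to track the residual information in $c_{G^i,r_i}(\gamma_{i-1}^\dagger)$ beyond its $\WF^{semi}$.
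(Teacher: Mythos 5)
You should know at the outset that the statement you are proving is Conjecture~\ref{conj:upperbound}: the paper offers no proof of it, only the evidence of Example~\ref{ex:u6}, a few similar computations, and heuristics from affine Springer fibers. So the question is whether your argument closes the gap, and it does not: what you have written is a reduction, not a proof. You reformulate the conjecture as a ``stability statement'' and then correctly identify its crux --- producing the matching facet $\varsigma_0$, i.e.\ showing the dominating orbit is actually attained across the embedding $\cB(G^i,F)\ira\cB(G^{i+1},F)$ and along the full descent of step (iii) from depth $r_i-\epsilon$ down to $r_{i+1}$ --- and you leave exactly that step open (``the main obstacle I expect''). That step is not routine. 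Corollary~\ref{cor:goodness} only supplies, for a given $(\sigma',c')$, one edge realizing $\cO^{semi}(\sigma',N(c'))$ at the single passage from depth $r_i$ to $r_i-\epsilon$; Corollaries~\ref{cor:edgecond1} and \ref{cor:CO5}, which you invoke to ``turn (i)--(ii) into the desired inequality,'' are necessary conditions (upper bounds), not attainment results. Whether the relevant coset is in fact met by the orbit of $\gamma$ after tracing a whole path in $C_{G^{i+1}}$ is precisely the kind of question that, in the paper's own Example~\ref{ex:WFnotinductive}, comes down to whether the explicit curve (\ref{eq:curve}) has a $k$-point, settled there only by computation, and Proposition~\ref{prop:curve} shows such attainment can genuinely fail. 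Your plan supplies no mechanism by which the unramified, integral-depth hypothesis rules out this failure --- but that is the entire content of the conjecture.

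Two further soft spots. First, your stability statement is a strengthening of the conjecture: it asserts that each layer of the algorithm depends on its input only through the $\cO^{semi}$-upper closure, hence that $\WF^{semi}$ is inductive in this regime; the paper explicitly flags that Conjecture~\ref{conj:upperbound} is ``very close to'' such an inductivity statement and that the analogous claim is false outside the unramified setting, so any proof must use the hypothesis in an essential, geometric way --- your monotonicity package (i)--(iii) never does, and your proposed retreat (tracking residual information beyond $\WF^{semi}$) concedes the framework may not even be the right one. Second, the ingredients you call standard are not available in the form you need: Lusztig--Spaltenstein induction is order-preserving for a fixed Levi, but you must compare inductions from different pseudo-Levi subgroups $Z_{\sG_x}(s)$ attached to different facets and parahorics, and the comparison has to take place in the partial order on $\fg^{nil}(F)/\!\!\sim_{semi}$, which the paper does not control; likewise, deducing from Proposition~\ref{prop:wf2} that the upper closure of $c_{G^i,r_i}(\gamma_{i-1}^{\dagger})$ is determined by $\WF^{semi,G^i}(\gamma_{i-1}^{\dagger})$ needs an argument, since the relation $\equiv$ there concerns mutual domination of honest orbits, not of $\sim_{semi}$-classes. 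As it stands the proposal is a reasonable research plan, with the decisive combinatorial-geometric step --- the one the paper itself identifies as open --- still missing.
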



The rather few reasons we have to believe Conjecture \ref{conj:upperbound} is that it is true in Example \ref{ex:u6} and a few similar examples, and some vague intuition from the geometry of affine Springer fibers (which a large part of our algorithm from Waldspurger can be used to study too; the idea was discussed in \cite{Tsa15}, and many improvements and results in our \S\ref{sec:graph} and \S\ref{sec:geometric} applies to the method there). Conjecture \ref{conj:upperbound} is very close to saying that
\[
\WF^{semi,G}(\gamma)=\WF^{semi,G}(\gamma_{d-1}+\gamma_{d-2}^{\dagger})\text{ is determined by }\gamma_{d-1}\text{ and }\WF^{semi,G^{d-1}}(\gamma_{d-2}^{\dagger}).\]
However, as discussed in the introduction, this is {\bf not} true at least without the assumption that the semisimple part $\gamma$ lies in an unramified torus; see Example \ref{ex:WFnotinductive}.

\begin{remark} The example in \cite{Tsa24} and Example \ref{ex:u6} are sort of two extreme examples for which $\WF(\pi)$ contains two elements whose geometric orbits are incomparable. In the case of \cite{Tsa24}, our element $\gamma$ is what is denoted by $\til{A}$ in [\S4.1, {\it op. cit.}]. It is a good element of depth $-\frac{1}{2}$ with $H=Z_G(\gamma)$ an anisotropic torus so that the discussion in Example \ref{ex:toral} applies and the only problem is the geometric problem. On the other hand, in Example \ref{ex:u6} the geometric problem is much simpler in the sense that the grading $\rho$ is trivial, and one might say that the complication of Example \ref{ex:u6} is combinatorial. At the time of writing \cite{Tsa24} the author had expected, apparently wrongly, that the geometric problem is the only source for the geometric wave-front set to be not a singleton. Maybe a correct philosophy is that ``a random complicated enough (supercuspidal) representation has a non-singleton geometric wave-front set.''
\end{remark}

\begin{example}\label{ex:WFnotinductive} Let $E/F$ be a ramified quadratic extension with common residue field $k=\F_{23}$ and $G=U_7/_F$ splitting over $E$ be given by the hermitian form
	\begin{equation}\label{eq:hermitianform}
		\langle x,y\rangle=x_1\bar{y}_7+...+x_7\bar{y}_1,\;x_i,y_j\in E
	\end{equation}
	where $\bar{y}$ denotes the $E/F$-conjugate of $y$. Fix $\varpi_E\in E$ an uniformizer with $\bar{\varpi}_E=-\varpi_E$. Consider $\gamma=\gamma_0+\gamma_{1/2}\in\fg(F)$ with
	\[
	\gamma_0=\matr{
		0&&\til{5}\varpi_E\\
		&\mathbf{0}&\\
		\varpi_E^{-1}&&0
	}
	\]
	where $\mathbf{0}$ in the middle indicates a $5\times 5$ zero matrix, and $\til{5}\in F^{\times}$ is any element whose image in $k=\F_{23}$ is $5$, or really any non-square. Also
	\begin{equation}\label{eq:gamma1}
		\gamma_{1/2}=\varpi_E\cdot\matr{
			0&&&&&&0\\
			&0&3&&1&&\\
			&1&0&&&1&\\
			&&1&0&&&\\
			&&&1&0&3&\\
			&&&&1&0&\\
			0&&&&&&0
		},\;\;
		\gamma_{1/2}'=\varpi_E\cdot\matr{
		0&&&&&&0\\
		&0&1&&1&&\\
		&1&0&&&1&\\
		&&1&0&&&\\
		&&&1&0&1&\\
		&&&&1&0&\\
		0&&&&&&0
		}.
	\end{equation}
	The centralizer $H:=Z_G(\gamma_0)$ is the product of a $U_5$ (again splits over $E$) and an anisotropic torus $T$. The torus $T$ has rank $2$ and its reductive quotient $\mathsf{T}/_k$ is non-split and of rank $1$. There is a special vertex $y\in\cB(H,F)$ for which 
	\begin{enumerate}
		\item The stabilizer of $y$ in $U_5(F)$ is the stabilizer of the lattice $\cO_E^5$,
		\item $y$ is the unique point in $\cB(Z_H(\gamma_{1/2}),F)\ira\cB(H,F)$,
		\item $\sH_y\cong SO_5\times\sT$. 
	\end{enumerate}
	From this we see that there exists $(y,c_y)\in c_{H,0}(\gamma_{1/2})\cap c_{H,0}(\gamma_{1/2}')$ such that $\cO(y,c_y)\in\Lie U_5$ has Jordan type $[5]$; indeed, this comes from that for the $U_5$-part we have
	\[
	\Ad(\matr{\varpi_E^2&&&&\\&\varpi_E&&&\\&&1&&\\&&&-\varpi_E^{-1}&\\&&&&\varpi_E^{-2}})\gamma_{1/2}=\matr{0&&&&\\1&0&&&\\&1&0&&\\&&-1&0&\\&&&-1&0}+R
	\]
	where $R\in\Lie U_5(F)$ has all its entries in $\fm_E=\varpi_E\cO_E$. Consequently $\WF^H(\gamma_{1/2})$ and $\WF^H(\gamma_{1/2})$ are both the singleton set of the regular nilpotent orbit in $U_5$. On the other hand, $\cB(H,F)$ has another special vertex $z$ for which $\sH_z\cong Sp_4\times\sT$. For any regular nilpotent $c_z\in\Lie\sH_z(k)=\fh(F)_{z=0}$, we have that $\cO(z,c_z)\in\Lie U_5$ has Jordan type $[4,1]$. The highlight is that
	\begin{proposition}\label{prop:curve} For any regular nilpotent $c_z\in\Lie\sH_z(k)=\fh(F)_{z=0}$, we have $(z,c_z)\not\in c_{H,0}(\gamma_{1/2})$ while $(z,c_z)\in c_{H,0}(\gamma_{1/2}')$
	\end{proposition}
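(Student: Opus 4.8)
The plan is to reduce both halves of the statement to a single $U_5(F)$-conjugacy question inside one Moy--Prasad coset at the vertex $z$, and then to recognize the negative half as the (non)existence of an $\F_{23}$-point on an explicit hyperelliptic curve. Both $\gamma_{1/2}$ and $\gamma_{1/2}'$ lie in $\fu_5(F):=\Lie U_5(F)\subset\fh(F)$, with vanishing component in $\Lie T$, and $c_z$ is a regular nilpotent of the $Sp_4$-factor of $\Lie\sH_z(k)=\mathfrak{sp}_4(k)\oplus\Lie\sT(k)$ with zero $\Lie\sT$-part; so under $H=U_5\times T$ we have $(z,c_z)\in c_{H,0}(\gamma_{1/2})$ (resp.\ $c_{H,0}(\gamma_{1/2}')$) if and only if there is $h\in U_5(F)$ with $\Ad(h)\gamma_{1/2}\in c_z+\fu_5(F)_{z>0}$ (resp.\ with $\gamma_{1/2}'$). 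Since $\dep(\gamma_{1/2})=\dep(\gamma_{1/2}')=\tfrac12>0$, Lemma \ref{lem:depthandnil} already forces the image of any such $\Ad(h)\gamma_{1/2}$ in $\fu_5(F)_{z=0}=\mathfrak{sp}_4(k)$ to be nilpotent, so the only question is whether it can be made \emph{regular} nilpotent. To make this explicit I would fix the $\cO_E$-lattice realizing $z$ (the one whose residual hermitian datum has a one-dimensional radical and nondegenerate alternating quotient), write out $\fu_5(F)_{z\ge0}$, $\fu_5(F)_{z>0}$ and the identification $\fu_5(F)_{z=0}=\mathfrak{sp}_4(k)$ with $\mathfrak{sp}_4$ acting on the residual $k$-space through its four-dimensional symplectic quotient, and record, as in the discussion preceding Proposition \ref{prop:curve}, that a regular nilpotent of $\mathfrak{sp}_4$ becomes Jordan type $[4,1]$ inside $\sG_z^{\clubsuit}$ (a group of type $A_4$), consistently with Proposition \ref{prop:samelift}.

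For the existence assertion I would exhibit a conjugator. Mimicking the diagonal element $\operatorname{diag}(\varpi_E^2,\varpi_E,1,-\varpi_E^{-1},\varpi_E^{-2})$ already used for the vertex $y$, one writes down $h'\in U_5(F)$ --- again a diagonal element, subject to the unitary constraint $t_i\bar t_{6-i}=1$ and with associated cocharacter pointing from $y$ towards $z$, possibly composed with a Weyl representative --- and checks by a finite matrix computation that $\Ad(h')\gamma_{1/2}'\in\fu_5(F)_{z\ge0}$ with residue a single size-$4$ Jordan block together with $0$ on the residual radical, i.e.\ a regular nilpotent of $\mathfrak{sp}_4(k)$. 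If $\mathfrak{sp}_4(k)$ carries two rational classes of regular nilpotents, one observes that the nontrivial element of the relevant component group is the central $-1\in Sp_4$, so both classes are met; equivalently, $c_{H,0}(\gamma_{1/2}')$ is $\Ad(H(F))$-stable by construction. This yields $(z,c_z)\in c_{H,0}(\gamma_{1/2}')$ for every regular nilpotent $c_z$.

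The negative assertion is the crux, and is where the curve enters. Suppose $\Ad(h)\gamma_{1/2}\in c_z+\fu_5(F)_{z>0}$. Using that $\gamma_{1/2}$, viewed in $\fu_5(F)$, is regular semisimple elliptic with centralizer an anisotropic maximal torus $T'\subset U_5$ whose unique point of $\cB(U_5,F)$ is $y$, I would decompose $h$ through the Cartan decomposition relative to the two vertices $z$ and $y$; the requirement $\Ad(h)\gamma_{1/2}\in\fu_5(F)_{z\ge0}$ confines $h$ to a bounded set of double cosets, and after quotienting by $\fu_5(F)_{z\ge0}$ on one side and by $T'(F)$ on the other, and imposing that the residue in $\mathfrak{sp}_4(k)$ be a regular nilpotent, the admissible locus is cut out over $k$ by one equation in two variables --- the smooth locus of an affine plane curve $\mathcal{C}$. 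Unwinding the construction, this is an instance of Problem \ref{prob:hard} for the tamely ramified group $U_5$ with $m=2$ and $\rho$ of order two --- the case flagged in the remark after Corollary \ref{cor:goodness}, in which, through the Reeder--Yu description of \S\ref{subsec:MPQ}, the $SO_5$-orbits on $\Sym^2(k^5)$ carry the geometry of hyperelliptic-curve Jacobians; concretely, $\mathcal{C}$ is the genus-two hyperelliptic curve built from the characteristic polynomial of $\varpi_E^{-1}\gamma_{1/2}$ (which is $\equiv x^5\bmod\mF$, so $\mathcal C$ has bad reduction with special fibre $y^2=x^5$ and its equation records the reduction mod $\mF$ of the sub-leading coefficients). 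One then gets $(z,c_z)\in c_{H,0}(\gamma_{1/2})$ if and only if this affine model of $\mathcal{C}$ has an $\F_{23}$-point in the relevant chart. The proof concludes with the finite verification that for the displayed $\gamma_{1/2}$, whose off-diagonal constant is $3$, this locus has no $\F_{23}$-point, whereas for $\gamma_{1/2}'$ --- obtained by replacing $3$ by $1$ --- the corresponding locus does; this is precisely why the example is stated over $\F_{23}$ and with these particular constants.

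The step I expect to be the main obstacle is making this last reduction precise: relating the $p$-adic conjugacy problem at the non-hyperspecial vertex $z$ to the curve $\mathcal{C}$ requires careful bookkeeping of the Moy--Prasad filtration through \S\ref{subsec:MPQ}, i.e.\ genuinely carrying out the geometric Problem \ref{prob:hard} in this $m=2$ situation and identifying which arithmetic invariant of $\mathcal C$ governs the answer; the ensuing computation that the relevant locus of $\mathcal{C}(\F_{23})$ is empty for $\gamma_{1/2}$ but not for $\gamma_{1/2}'$ is elementary but indispensable.
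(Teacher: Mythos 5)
Your overall framing (membership at $(z,c_z)$ is the question of whether $\Ad(U_5(F))\gamma_{1/2}$ meets $c_z+\fh(F)_{z>0}$, and the answer should come down to an $\F_{23}$-point count on a curve) is consistent with the paper, but the two steps that actually carry the proof are missing or unlikely to work as you describe. For the negative half, the paper does not get to a curve by a Cartan-decomposition bound on $h$; it invokes Theorem \ref{thm:graph} to say that $(z,c_z)\in c_{H,0}(\gamma_{1/2})$ forces a directed path in $C_H$ from $(z,c_z)$ to $c_{H,\frac12}(\gamma_{1/2})$, and then proves a lemma pinning this path down completely: a ten-step chain of explicitly computed augmented facets along the direction $\til\lambda=(-3,-1)$ from $z=(\tfrac34,\tfrac14)$ to $y=(0,0)$, with $c_i=c_a$ at every stage. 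The nontrivial content there is ruling out, at each horizontal facet, a jump of $\cO(\sigma_i,c_i)$ from type $[4,1]$ to type $[5]$, which is done with Corollary \ref{cor:edgecond2} plus explicit Moy--Prasad quotient computations. Your ``the requirement $\Ad(h)\gamma_{1/2}\in\fu_5(F)_{z\ge0}$ confines $h$ to a bounded set of double cosets \dots the admissible locus is cut out by one equation in two variables'' is a hope standing in for exactly this bookkeeping, and you flag it yourself as the main obstacle; without it there is no reduction to a finite-field statement. Moreover, the finite-field statement the paper arrives at is not the hyperelliptic curve of the characteristic polynomial of $\varpi_E^{-1}\gamma_{1/2}$: after the path lemma the only remaining edge is $(\sigma_9,c_a)\ra(y,\tfrac12)$, and the condition is whether $\Ad(\sH_y(k))\gamma_{=1/2}$, with $\sH_y\cong SO_5\times\sT$, meets $c_a+\fh(F)_{>\sigma_9}/\fh(F)_{>\sigma_{10}}$; this is the explicit affine curve (\ref{eq:curve}) inside $SO_5/B$, checked by computer. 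Your identification of $\mathcal C$ with a genus-two curve with special fibre $y^2=x^5$ is asserted, not derived, and nothing in your outline produces it.

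The positive half has a second gap. You propose to exhibit a conjugator $h'$ that is diagonal (possibly times a Weyl representative), in analogy with the displayed diagonal conjugation used at the hyperspecial vertex $y$. But the whole point of the example is that $\gamma_{1/2}$ and $\gamma_{1/2}'$ have identical valuation and shape data and differ only in the residues $3$ versus $1$; the membership of $(z,c_z)$ is decided by whether the curve (\ref{eq:curve}) (with $3$, resp.\ $1$) has an $\F_{23}$-point, and the conjugator produced from such a point factors through a lift of an element $\bar g\in SO_5(k)$ at the far vertex $y$, not through a monomial matrix. No diagonal-times-Weyl element is exhibited, and there is no reason such an element exists for $\gamma_{1/2}'$ but not for $\gamma_{1/2}$. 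Also, your argument that both rational classes of regular nilpotents in $\mathfrak{sp}_4(k)$ are reached because the component group is central (or because $c_{H,0}(\gamma_{1/2}')$ is $\Ad(H(F))$-stable) is a non sequitur: $\Ad(H(F))$-invariance preserves rational classes and cannot by itself produce both; in the paper this is absorbed by parametrizing all regular nilpotents as $c_a$ and imposing only ``$\not=0$'' conditions in (\ref{eq:curve}), so that the computer check covers every class at once.
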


	With the proposition we can compute $\WF^{semi}(\gamma)$ using (\ref{eq:urWF}). We outline the result:
	\begin{enumerate}
		\item $y$ is a vertex in $\cB(G,F)$ with $\sG_y\cong SO_5\times Sp_2$ so that $\sH_y\ira\sG_y$ is $SO_5\times\sT\ira SO_5\times Sp_2$ given by $\sT\ira Sp_2$.
		\item $z$ is a vertex in $\cB(G,F)$ with $\sG_z\cong SO_1\times Sp_6$ so that $\sH_z\ira\sG_z$ is $Sp_4\times\sT\ira Sp_6$ (as a twisted Levi subgroup).
	\end{enumerate}
	Thus if $(z,c_z)\in c_{H,0}(\gamma_{1/2})$, $\cO^{semi}((z,0),N(\gamma_{=0}+c_z))$ will be an orbit of type $[6,1]$. Meanwhile $\cO(y,c_y)$ contributes an orbit in $\fg(F)$ of Jordan type $[5,2]$. All other facets also contribute orbits of type $<[5,2]$. Thus despite that $\WF^H(\gamma_{1/2})=\WF^H(\gamma_{1/2}')$ are both regular, $\gamma_{1/2}$ somewhat misses a subregular contribution, resulting in that $\WF^{ur,G}(\gamma_0+\gamma_{1/2})$ is smaller than $\WF^{ur,G}(\gamma_0+\gamma_{1/2}')$. It remains to prove Proposition \ref{prop:curve}.
	\end{example}

	\begin{proof}[Proof of Proposition \ref{prop:curve}] Suppose on the contrary that $(z,c_z)\in c_{H,0}(\gamma_{1/2})$. By our algorithm, namely Theorem \ref{thm:graph}, there exists a path \begin{equation}\label{eq:path}
	(\sigma_0,c_0)\ra(\sigma_1,c_1)\ra...(\sigma_{\ell-1},c_{\ell-1})\ra(\sigma_{\ell},c_{\ell})
	\end{equation}
	in $C_H$ where $(\sigma_0,c_0)=(z,c_z)$, $(\sigma_{\ell},c_{\ell})\in c_{H,\frac{1}{2}}(\gamma_{1/2})$, and the arrows indicate directed edges in $C_H$. Note that $c_0,...,c_{\ell-1}$ has to be nilpotent while $c_{\ell}$ will be the image of $\gamma_{1/2}$ and is necessarily non-nilpotent. We compute these $\sigma_i$ explicitly. Let $S\subset U_5\subset H$ be the diagonal torus. To be precise, $U_5(F)$ is the group of unitary operators on $E^5$ equipped with the hermitian form
	\begin{equation}\label{eq:hermitianform1-2}
	\langle x,y\rangle=x_1\bar{y}_5+...+x_5\bar{y}_1,\;x_i,y_j\in E.
	\end{equation}
	Let $S\subset U_5$ be the diagonal maximal $F$-split torus, i.e.
	\[
	S(F)=\{\matr{t_1&&&&\\&t_2&&&\\&&1&&\\&&&t_2^{-1}&\\&&&&t_1^{-1}}\;|\;t_1,t_2\in F^{\times}\}.
	\]
	Identify $X_*(S)\cong\Z^2$ with standard basis $e_1,e_2\in X_*(S)$ such that 
	\[
	e_1(t)=\matr{t&&&&\\&1&&&\\&&1&&\\&&&1&\\&&&&t^{-1}},\;\;e_2(t)=\matr{1&&&&\\&t&&&\\&&1&&\\&&&t^{-1}&\\&&&&1}.
	\]
	From now on we pretend that $H=U_5$ instead of $U_5\times T$, as the anisotropic factor $T$ will play no role. We identify the apartment corresponding to $S$ as $\cA\cong\R^2$, with the origin $(0,0)=y$, the point whose stabilizer in $U_5(F)$ is the stabilizer of $\cO_E^5\subset E^5$. At the same time, one can verify that that $H(F)$-orbit of $z$ is the $H(F)$-orbit of $(\frac{3}{4},\frac{1}{4})\in\cA$. Without loss of generality we suppose $z=(\frac{3}{4},\frac{1}{4})$. We have
	\[
	\fh(F)_{z=0}=\{\matr{a_5&a_3&0&a_2&a_1\\a_7&a_6&0&a_4&-a_2\\0&0&0&0&0\\a_9&a_8&0&-a_6&a_3\\a_{10}&-a_9&0&a_7&-a_5}\;|\;a_1\in\fm_E^{-3}/\fm_E^{-2},\;a_2\in\fm_E^{-2}/\fm_E^{-1},\;a_3,a_4\in\fm_E^{-1}/\cO_E,
	\]
	\[a_5,a_6\in\cO_E/\fm_E,\;a_7,a_8\in\fm_E/\fm_E^2,\;a_9\in\fm_E^2/\fm_E^3,\;a_{10}\in\fm_E^3/\fm_E^4\}
	\]
	Any regular nilpotent $c_z\in\fh(F)_{z=0}$ is conjugate under $\sH_z(k)$ to an element of the form
	\begin{equation}\label{eq:ca}
	c_a:=\matr{0&&&&\\\varpi_E&0&&&\\&&0&&\\&a&&0&\\&&&\varpi_E&0},\;a\in\fm_E/\fm_E^2.
	\end{equation}
	Let $\til{\lambda}\in X_*(S)$ be $\til{\lambda}=(-3,-1)$, and $\lambda\in X_*(\sS)$ the corresponding cocharacter to the reductive quotient. Then $c_a\in{}^{\lambda}_2\fg(F)_{(\frac{3}{4}-3s,\frac{1}{4}-s)=2s}$ for any $s\in\R$.
	\begin{lemma} In (\ref{eq:path}), we must have $\ell=10$. For each $0\le i\le\ell$, up to $H(F)$-conjugate we have $\sigma_i$ is given by the table below and $c_i=c_a$ for some $a\in\varpi_E\cO_E/\varpi_E^2\cO_E$ if $i<\ell$.
	\end{lemma}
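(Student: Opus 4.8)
The plan is to prove this by explicitly unwinding, step by step, the two edge-constructions of \S\ref{subsec:edge2} and \S\ref{subsec:edge1} inside the two-dimensional apartment $\cA\subset\cB(U_5,F)$ of the split torus $S$. First I would fix the bookkeeping. By the normal-form discussion preceding (\ref{eq:ca}) we may take $(\sigma_0,c_0)=(z,c_a)$ with $z=(\tfrac34,\tfrac14)\in\cA$; and by Theorem \ref{thm:graph} the hypothesis that the path (\ref{eq:path}) exists says precisely that $(z,c_z)\in c_{H,0}(\gamma_{1/2})$, i.e.\ that there is $g\in H(F)$ with $X:=\Ad(g)\gamma_{1/2}\in c_a+\fh(F)_{z>0}$, and moreover that every vertex of the path lies in $C_H(\gamma_{1/2})$, so that $(\sigma_i,c_i)$ must be an augmented facet with $X\in c_i+\fh(F)_{>\sigma_i}$ and $c_i$ the image of $X$ in $\fh(F)_{\sigma_i}$. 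In other words the path is exactly the record of the successive Moy--Prasad data attached to the single element $X$, and the lemma becomes the assertion that this record is rigid.

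Next I would pin down the combinatorial shape. Since $\sigma_0=z$ is a vertex (horizontal of depth $0$), the only out-edge available is of type (\ref{eq:defedge2}); and once one is already travelling along a non-horizontal facet in the $(\til\lambda,2)$-direction one is in scenario \ref{enumi:A}, so from there the only out-edges are of type (\ref{eq:defedge1}), producing a horizontal facet below. Hence the path alternates: $\sigma_i$ is horizontal for $i$ even and non-horizontal for $i$ odd, with a new depth value appearing at each (\ref{eq:defedge2})-step; since the final facet $\sigma_\ell$ carries the depth-$\tfrac12$ datum of $\gamma_{1/2}$, the depths form a chain $0=d_0<d_1<\dots<d_{\ell/2}=\tfrac12$ and $\ell$ is even. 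At a horizontal vertex $(\sigma_{2k},c_a)$ one completes $c_a$ to an $\slt$-triple in $\fh(F)_{\sigma_{2k}}$; after an $\sH_{\sigma_{2k}}(k)$-conjugation its cocharacter is the one computed before (\ref{eq:ca}), namely $\til\lambda=(-3,-1)\in X_*(S)$ with $c_a\in{}^{\til\lambda}_2\fh(F)_{\sigma_{2k}}$, so walking from a point of $\sigma_{2k}$ towards $(\til\lambda,2)$ produces $\sigma_{2k+1}=\sigma_{2k}^{\dagger}$, and $c_{2k+1}=c_a$ with $\cO(\sigma_{2k},c_a)=\cO(\sigma_{2k+1},c_a)$ by Lemma \ref{lem:>2} and (\ref{eq:samelift}). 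At a non-horizontal $(\sigma_{2k+1},c_a)$ the out-neighbours of type (\ref{eq:defedge1}) are the $(\sigma',c')$ with $\sigma'$ below $\sigma_{2k+1}$ and $c'\in c_a+{}^{\lambda}_{<\ell}\fh(F)_{\sigma'}$; the unique one lying in $C_H(\gamma_{1/2})$ is the one with $X\in c'+\fh(F)_{>\sigma'}$, and computing the Moy--Prasad lattices attached to the ramified hermitian form (\ref{eq:hermitianform1-2}) along the walk identifies it with the vertex $\sigma_{2k+2}$ of the table together with a coset again of the form $c_a$. Iterating, the depth increments read off from the walls sum to $\tfrac12$ after exactly five double-steps, whence $\ell=10$, the $\sigma_i$ are as tabulated, and $c_i=c_a$ for $i<\ell$ while $c_{10}$ is the non-nilpotent image of $X$.

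The hard part is the concrete content of the previous paragraph: carrying out, ten times over, the explicit Moy--Prasad computation for $\Lie(U_5)(F)$ attached to the form (\ref{eq:hermitianform1-2}) at the points $(x_i,d_k)\in\sigma_i$, and checking both the table and the persistence of the normal form $c_a$ --- equivalently, via Proposition \ref{prop:samelift} and Corollary \ref{cor:edgecond2}, that $\cO(\sigma_i,c_i)$ remains equal to $[4,1]$ for every $i<\ell$ (it starts there as $\cO(z,c_z)$, it cannot decrease along the path, and one verifies directly from the shape of $X$ that it does not grow to the regular orbit before depth $\tfrac12$). The alternation together with the uniqueness of the relevant out-neighbour eliminates any need for backtracking, so this is a finite, if laborious, calculation. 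The one genuinely subtle place --- and the reason the statement stops at $i<\ell$ --- is that at the final facet the requirement $X\in c_{10}+\fh(F)_{>\sigma_{10}}$ becomes a non-linear condition on $\gamma_{1/2}$; deciding whether it can be satisfied is exactly what the subsequent ``curve'' argument will settle in the proof of Proposition \ref{prop:curve}.
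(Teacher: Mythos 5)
Your skeleton (induction on $i$, the forced alternation between (\ref{eq:defedge2})-steps and (\ref{eq:defedge1})-steps, uniqueness of the facet below, and the appeal to Proposition \ref{prop:samelift} and Corollary \ref{cor:edgecond2} to reduce everything to excluding Jordan type $[5]$) matches the paper's proof. But the mechanism you offer for the rigidity is not justified. You assert that the path is ``the record of the successive Moy--Prasad data attached to a single element $X=\Ad(g)\gamma_{1/2}$,'' i.e.\ that one fixed $X$ satisfies $X\in c_i+\fh(F)_{>\sigma_i}$ for every $i$, and that at each (\ref{eq:defedge1})-step the unique admissible out-neighbour is the coset containing $X$. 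What Theorem \ref{thm:graph} actually gives is only that each vertex of the path lies in $C_H(\gamma_{1/2})$, i.e.\ for each $i$ \emph{some} conjugate $\Ad(g_i)\gamma_{1/2}$ lies in $c_i+\fh(F)_{>\sigma_i}$; these conjugates can change from step to step, because at a (\ref{eq:defedge1})-step the given path may pass to a coset $c'+\fh(F)_{>\sigma_i}$ that does not contain your chosen $X$, and membership in $C_H(\gamma_{1/2})$ does not a priori single out one out-neighbour (non-nilpotent cosets meeting $\Ad(H(F))\gamma_{1/2}$ are not excluded by it). The constraint that actually pins the path down is different: for $i<\ell$ the vertex $(\sigma_i,c_i)$ must still have an out-edge, so $c_i$ must be nilpotent; then Corollary \ref{cor:edgecond2} gives $\cO(\sigma_i,c_i)\ge\cO(\sigma_{i-1},c_a)$, of type $[4,1]$, with equality forcing $c_i\in\Ad(\sG_{x_i}(k))c_a$ --- so the only escape is a jump to type $[5]$.

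Excluding that jump at the even steps is the real content of the lemma, and you dispose of it with ``one verifies directly from the shape of $X$ that it does not grow to the regular orbit before depth $\tfrac12$.'' That is a genuine gap, not a deferred routine check: $X$ is an unknown $H(F)$-conjugate of $\gamma_{1/2}$, the path need not track it, and questions of exactly this kind (does a coset of this shape meet a larger orbit?) are the delicate ones --- the same question at depth $\tfrac12$ is precisely the curve computation (\ref{eq:curve}), and for $\gamma_{1/2}'$ it has the opposite answer, so nothing about ``the shape of $X$'' can settle it cheaply. The paper closes the induction by computing the Moy--Prasad quotients explicitly: at $\sigma_4,\sigma_6$ the quotient contains no nilpotent element of type $[5]$ at all, and at $\sigma_2,\sigma_8$ every element of $c_a+\fh(F)_{>\sigma_{i-1}}/\fh(F)_{>\sigma_i}$ is either $c_a$ itself or non-nilpotent. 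You need to supply these (or equivalent) explicit quotient computations; with them your argument becomes the paper's, and without them the induction does not close.
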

\begin{center}
\renewcommand{\arraystretch}{1.5}
	\begin{tabular}{|c|l|}
		\hline $0\le i\le 10$&$\sigma_i$\\
		\hline $0$&$\sigma_0=\{z\}$, $z:=((\frac{3}{4},\frac{1}{4}),0)$\\
		\hline $1$&$\{((\frac{3}{4}-3s,\frac{1}{4}-s),2s)\;|\;0<s<\frac{1}{20}\}$\\
		\hline $2$&$\sigma_2=\{x_2\}$, $x_2:=((\frac{3}{5},\frac{1}{5}),\frac{1}{10})$\\
		\hline $3$&$\{((\frac{3}{4}-3s,\frac{1}{4}-s),2s)\;|\;\frac{1}{20}<s<\frac{1}{12}\}$\\
		\hline $4$&$\sigma_4=\{x_4\}$, $x_4:=((\frac{1}{2},\frac{1}{6}),\frac{1}{6})$\\
		\hline $5$&$\{((\frac{3}{4}-3s,\frac{1}{4}-s),2s)\;|\;\frac{1}{12}<s<\frac{1}{18}\}$\\
		\hline $6$&$\sigma_6=\{x_6\}$, $x_6:=((\frac{3}{8},\frac{1}{8}),\frac{1}{4})$\\
		\hline $7$&$\{((\frac{3}{4}-3s,\frac{1}{4}-s),2s)\;|\;\frac{1}{8}<s<\frac{3}{20}\}$\\
		\hline $8$&$\sigma_8=\{x_8\}$, $x_8:=((\frac{3}{10},\frac{1}{10}),\frac{3}{10})$\\
		\hline $9$&$\{((\frac{3}{4}-3s,\frac{1}{4}-s),2s)\;|\;\frac{3}{20}<s<\frac{1}{4}\}$\\
		\hline $10$&$\sigma_{10}=\{y\}$, $y=((0,0),\frac{1}{2})$\\
		\hline		
	\end{tabular}
\end{center}

\begin{proof} We prove the lemma step by step, i.e. by induction on $i$. The case $i=0$ is given. For each odd $i$, we are in the situation of (\ref{eq:defedge2}) analyzed in \S\ref{subsec:edge2}, where $\sigma=\sigma_{i-1}$, one computes that $\sigma^{\dagger}=\sigma_i$ is the same as in the above table. Also the construction gives $c_i=c_{i-1}$.
	
For even $i$, we are in the situation of (\ref{eq:defedge1}), for which $\sigma'=\sigma_i=\{x_i\}$ in the above table is the unique augmented facet below $\sigma=\sigma_{i-1}$ and thus the correct one. The challenge is that a priori we have many $c'$ in (\ref{eq:defedge1}). For $i=2,4,6,8$, we have to show that every $c'\in c_a+\fh(F)_{>\sigma_{i-1}}/\fh(F)_{>\sigma_i}$ is either in $\Ad(\sG_{x_i}(k))c_a$ or non-nilpotent (which we can discard). A useful input is Corollary \ref{cor:edgecond2}, which asserts that $\cO(\sigma_i,c_i)$ has to be increasing, and when $\cO(\sigma_{i-1},c_{i-1})=\cO(\sigma_i,c_i)$ we must have $c_i=c_{i-1}$ up to conjugate. Since by induction $\cO(\sigma_{i-1},c_i)=\cO(\sigma_{i-1},c_a)$ has Jordan type $[4,1]$, we only have to rule out the case when $\cO(\sigma_i,c_i)$ might have Jordan type $[5]$, which by Proposition \ref{prop:samelift} must also be the Jordan type of $c_i$ itself.

The rest is direct computation. For $i=4,6$, the Moy-Prasad quotient $\fh(F)_{\sigma_i}$ simply has no nilpotent element of Jordan type $[5]$, so we have nothing left to rule out. For $i=2,8$ there is. But in both cases the Moy-Prasad quotient looks like (using $i=2$ as example)
\[
\fh(F)_{\sigma_2}=\{\matr{&&b&&\\a_7&&&&\\&&&&b\\&a_8&&&\\&&&a_7&}\;|\;a_7,a_8\in\fm_E/\fm_E^2,\;b\in\fm_E^{-1}/\cO_E\}
\]
and $\fh(F)_{>\sigma_1}/\fh(F)_{>\sigma_2}$ is the ``$b$-part,'' i.e. those in $\fh(F)_{\sigma_2}$ with $a_7=a_8=0$. With $c_a$ as in (\ref{eq:ca}), we see that $c'\in c_a+\fh(F)_{>\sigma_1}/\fh(F)_{>\sigma_2}$ is either $c_a$ itself (which has type $[4,1]$) or non-nilpotent, in particular never nilpotent of type $[5]$. This finishes the proof.
\end{proof}

We continue the proof of Proposition \ref{prop:curve}. Thanks to the Lemma, it remains to look at the edge $(\sigma_9,c_a)\ra (\sigma_{10},c_{10})$. In $\sigma_{10}=(y,\frac{1}{4})$. The question is whether $c_a+\fh(F)_{>\sigma_9}/\fh(F)_{>\sigma_{10}}$ contains an element in $\Ad(\sH_y(k))\gamma_{=1/2}$ or $\Ad(\sH_y(k))\gamma_{=1/2}$, where $\gamma_{=1/2}$ (resp. $\gamma_{=1/2}$) is the reduction of $\gamma_{1/2}$ (resp. $\gamma_{1/2}'$) in $\fh(F)_{y=\frac{1}{2}}$. One verifies that $\fh(F)_{>\sigma_9}/\fh(F)_{>\sigma_{10}}\subset\fh(F)_{y=1/2}$ is of the form
\[
\{\matr{b_2&b_5&b_7&b_9&b_{10}\\&b_3&b_6&b_8&b_9\\&b_1&b_4&b_6&b_7\\&&b_1&b_3&b_5\\&&&&b_2}\;|\;b_i\in\varpi_E\cO_E/\varpi^2\cO_E\}
\]
With (\ref{eq:ca}), to say $\Ad(\sH_y(k))\gamma_{=1/2}$ does not intersect $c_a+\fh(F)_{>\sigma_9}/\fh(F)_{>\sigma_{10}}$ is to say that the affine variety
\begin{equation}\label{eq:curve}
	\{\bar{g}\in SO_5\;|\;\Ad(\bar{g})\matr{
		0&3&&1&\\
		1&0&&&1\\
		&1&0&&\\
		&&1&0&3\\
		&&&1&0
	}\in\matr{
		*&*&*&*&*\\
		\not=0&*&*&*&*\\
		0&*&*&*&*\\
		0&\not=0&*&*&*\\
		0&0&0&\not=0&*
	}\}
\end{equation}
has no $k$-point (resp. has a $k$-point with $3$ replaced by $1$ in (\ref{eq:curve})). Here $*$ means that the entry can be anything and $\not=0$ means that the entry has to be non-zero. This can really be viewed as a subvariety $SO_5/B$ as in \cite[App. A]{Tsa24}; in fact it's an affine curve in $SO_5/B$. That it doesn't have $k$-point (or it has) can be checked by a computer program as in {\it loc. cit.}. This finishes the analysis of Example \ref{ex:WFnotinductive}.
\end{proof}

\section{Wave-front sets of representations}\label{sec:rep}

\begin{remark}\label{rmk:wfsc} Let us sketch the relation between $\WF(\gamma)$ and $\WF(\pi)$ for regular supercuspidal representations $\pi$ \cite{Kal19}. Essentially the result is in \cite{KM06}, but maybe the best reference is \cite[Thm. 4.3.5]{FKS21}. Consider any $\gamma\in\fg^{rs}(F)$ that is regular semisimple and elliptic (i.e. $Z_G(\gamma)/Z(G)$ is anisotropic). Say $S=Z_G(\gamma)$. We assume (beyond Appendix \ref{app:char}) that the exponential map converges on $\ft(F)_{>r}$ giving $\ft(F)_{>r}\cong S(F)_{>r}$ for some $r>0$, and for any $\alpha\in\Phi(G,S)$ (over $F^s$) we have $\operatorname{val}(d\alpha(\gamma))<-r$ so that \cite[Thm. 4.3.5]{FKS21} applies\footnote{To the best of my knowledge, there is a typo in the statement of \cite[Thm. 4.3.5]{FKS21} that the condition $C^{\le0}_{G}(X)=S$ in the language of \cite[\S4]{Spi21} is missing. This should not affect their result nor our result; with large enough $p$ that condition for us is $\operatorname{val}(d\alpha(\gamma))\le 0$.} with their $X$ being our $\gamma$. We have a character
\[
\Theta_{s,\gamma}:S(F)_{>s}\ra\Cc,\;\;t\mapsto\psi(B(\log(t),\gamma))
\]
where $B(\cdot,\cdot)$ is a $G$-invariant non-degenerate symmetric bilinear form normalized as in \cite[\S2.5]{DK06} and $\psi:F\ra\Cc$ an additive character trivial on $\mathfrak{m}_F$ but non-trivial on $\cO_F$. Suppose $\theta:S(F)\ra\Cc$ is any character extending $\theta_{s,\gamma}$. Then any such $\theta$ is regular in the sense that \cite{Kal19} gives a class of regular supercuspidal representations $\pi$. For any of them, near the identity the character $\Theta_{\pi}$ is proportional to the Fourier transform $\hat{I}_{\gamma}$; see \cite[Thm. 4.3.5]{FKS21} where their $\gamma_0=\mathrm{id}$, their $J$ is our $G$, their $X$ is our $\gamma$ and their $\hat{O}^J_{X^g}=\hat{O}^G_{X}$ is our $\hat{I}_{\gamma}$. This gives $\WF(\pi)=\WF(\gamma)$.

Conversely, every regular supercuspidal $\pi$ is associated to some elliptic $\gamma\in\fg(F)$ (non-unique) so that $\WF(\pi)=\WF(\gamma)$. Hence the study of $\WF(\gamma)$ for elliptic $\gamma$ is more-or-less equivalent to the study of wave-front sets of regular supercuspidal representations. By LLC for tori, the character $\theta$ as above gives rises to $\varphi_{\theta}:W_F\ra{}^LS$. With any $L$-embedding ${}^LS\ira{}^LG$ constructed in \cite[\S5]{Kal19} we get a Langlands parameter for which the L-packet {\it loc. cit.} consists of regular supercuspidal representations whose wave-front sets are given by $\WF(\gamma')$ for some $\gamma'$ stably conjugate to our $\gamma$ above, with each $\gamma'$ appearing at least once.
\end{remark}

\begin{remark} In \cite[Thm. 1.4]{CJLZ23}, Cheng Chen, Dihua Jiang, Dongwen Liu and Lei Zhang achieved a significant step towards the conjecture \cite[Conj. 1.3]{JLZ22a} that describes the wave-front sets for representations in generic $L$-packets for classical groups via an algorithm regarding the Langlands parameter on the Galois side. We would love to compare the algorithm in \cite{JLZ22a} and our algorithm in the special case for regular supercuspidal representations, and learn from \cite{JLZ22a} how to analyze and/or simplify our algorithm.
\end{remark}

\begin{remark} Beyond regular supercuspidal representations, \cite[Thm. 10.2.2]{Spi21} actually reduces the character of a general supercuspidal representation $\pi$ of $G(F)$ to the character of a depth-zero supercuspidal representation $\pi_0$ of $G^0(F)$, a twisted Levi subgroup. One can define similar invariants $c_{G,r}(\pi)$ and $c_{G^0,s}(\pi_0)$ but only for $r<-\dep(\pi)$ and $s<0$, namely
\[
c_{G,r}(\pi)=\{(\sigma,c)\in C_{G,r}\;|\;\Theta_{\pi}(FT[c+\fg(F)_{>\sigma}])>0\}.
\]
Here $[c+\fg(F)_{>\sigma}]$ is the characteristic function of the coset and $FT$ denotes Fourier transform. Note that $\Theta_{\pi}(FT[c+\fg(F)_{>\sigma}])$ counts (up to normalization) the multiplicity of $c$-isotypic factors in the fixed subspace $\pi^K$ where $K={G(F)_{x\ge -r}}$ for $(x,r)\in\sigma$. Hence $\Theta_{\pi}(FT[c+\fg(F)_{>\sigma}])$ is always non-negative. With \cite[Thm. 10.2.2]{Spi21} and above definition, our algorithm again computes $c_{G,r}(\pi)$ in terms of $c_{G^0,s}(\pi_0)$. When $s=-\epsilon$ for very small $\epsilon>0$, $c_{G^0,s}(\pi_0)$ is determined by whether the cuspidal representation of the finite reductive group behind $\pi_0$ appears in the generalised Gelfand–Graev representations \cite[\S1.3]{Kaw85} associated to all nilpotent orbits, not necessarily the maximal orbits in the wave-front set.

In general, one can hope that a similar reduction exists from general irreducible admissible representations of $G(F)$ to depth-zero representations of some $G^0(F)$ for which some generalization of \cite[Thm. 10.2.2]{Spi21} holds. If this is indeed the case, then our algorithm could again compute $c_{G,r}(\pi)$ in terms of $c_{G^0,s}(\pi_0)$.
\end{remark}

\appendix
\section{Restrictions on $p$}\label{app:char}

Throughout this article we enforce the hypotheses in \cite[\S1.1]{Wa06}. In \cite[App. 3]{Wa06} the hypotheses were shown to hold when $p>6N-1$ where $N$ is the absolute rank and furthermore $p\ge 271$ if there is an exceptional factor. We remark that $p>6N-1$ does not hold in Example \ref{ex:WFnotinductive} but in that case it's not hard to verify the original hypotheses in \cite[\S1.1]{Wa06}.

We highlight that the hypotheses ensure a number of properties: 
\begin{enumerate}
	\item The exponential map is defined on any nilpotent element.
	\item The theory of $\slt$-triples always work well; we always get a semisimple representation of the $\slt$ and a homomorphism from the group $SL_2$.
	\item Any reductive group over the local field ever appears splits over a tame extension.
	\item All important coordinates on the Bruhat-Tits building have denominator coprime to $p$.
\end{enumerate}

\section{Graded Lie algebras and Springer theory}\label{sec:gradedSpringer}

In this appendix we use a different set of notations from the rest of this article. Let $k$ be a finite field with a fixed algebraic closure $\bar{k}$, again with assumption on $p$ as in Appendix \ref{app:char}. Fix a connected reductive group $G$ over $k$ and $\fg:=\Lie G$. Fix $m\in\Z_{>0}$ invertible in $k$ and $\rho:\mu_m/_k\ra\operatorname{Aut}(G)$ be a homomorphism of algebraic groups. For $a\in\Z/m$, we denote by $\fg_a$ the $k$-subspace of $\fg=\Lie G$ on which $\rho$ acts with weight $a$; they were denoted ${}^{\rho}_a\fg$ in \S\ref{sec:geometric}, but thankfully we will have less confusing subscripts in this appendix. Let also $G_0=(G^{\rho(\mu_m)})^o$. Let $\lambda:\mathbb{G}_m\ra G_0$ be any cocharacter. Fix $\ell\in\Z$, $a\in\Z/m$, and a nilpotent element $c\in{}^{\lambda}_{\ell}\fg_a(k)$. For any $i\in\Z$, $i<\ell$, choose $V_i\subset{}^{\lambda}_{i}\fg_a$ a $k$-subspace complementary to $[c,{}^{\lambda}_{i-\ell}\fg_0]$. Let $V=\bigoplus_{i<\ell}V_i$.

\begin{lemma}\label{lem:retract} For every $c'\in c+V$ we have that $c$ is in the Zariski closure of $\bar{k}\cdot\Ad(G_0(\bar{k}))c'$. If moreover $c$ is in $\Ad(G(\bar{k}))c'$, then $c'=c$.
\end{lemma}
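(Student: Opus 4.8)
The plan is to use a contracting one-parameter flow under which the affine slice $c+V$ retracts onto $c$ while the adjoint orbit $O:=\Ad(G(\bar k))c$ stays fixed; the first claim comes from the flow alone, and the second from the fact that $c+V$ meets $O$ transversally at $c$. First I would introduce the $\Gm$-action on $\fg$ given by $t\star X:=t^{-\ell}\Ad(\lambda(t))X$, so that $\star$ acts on ${}^\lambda_i\fg$ through $t^{i-\ell}$. Three facts are then immediate: $t\star c=c$ since $c\in{}^\lambda_\ell\fg_a$; the orbit $O$ is $\star$-stable because $t\star\Ad(g)c=\Ad(\lambda(t)g\lambda(t)^{-1})c$; and $c+V$ is $\star$-stable and retracts onto $c$, because $V\subseteq\bigoplus_{i<\ell}{}^\lambda_i\fg_a$ forces every occurring exponent $i-\ell$ to be negative. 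Concretely, writing $c'=c+v$ with $v=\sum_{i<\ell}v_i$ and $v_i\in V_i$, the map $m(s):=c+\sum_{i<\ell}s^{\,\ell-i}v_i$ is a morphism $\mathbb{A}^1\to\fg_a$ with $m(0)=c$ and $m(s)=s^{\ell}\Ad(\lambda(1/s))c'$ for $s\neq 0$. As $\lambda$ is valued in $G_0$, for $s\neq 0$ this lies in $\bar k^{\times}\cdot\Ad(G_0(\bar k))c'$, whence $c=m(0)\in\overline{m(\Gm)}\subseteq\overline{\bar k\cdot\Ad(G_0(\bar k))c'}$; this is the first assertion.

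For the second assertion, assume in addition $c\in\Ad(G(\bar k))c'$, so that $\Ad(G(\bar k))c'=O$ and $c'=\Ad(g)c\in O$. Then $m(s)=\Ad\big(\lambda(1/s)\,g\,\lambda(1/s)^{-1}\big)c\in O$ for $s\neq 0$, so the entire $\star$-flow of $c'$ stays in $Z:=(c+V)\cap O$, a $\star$-stable locally closed subvariety of $\fg$ containing $c$. The crucial point is that $Z$ is zero-dimensional at $c$. Indeed $T_cZ\subseteq V\cap T_cO$; under the hypotheses on $p$ the centraliser $Z_G(c)$ is smooth, so the adjoint-orbit map is separable and $T_cO=[c,\fg]$; and because $\ad c$ has bidegree $(\ell,a)$ for the bigrading of $\fg$ by $(\lambda\text{-weight},\rho\text{-weight})$, the $\rho$-weight-$a$ part of $[c,\fg]$ equals $\bigoplus_i[c,{}^\lambda_{i-\ell}\fg_0]$, whose degree-$i$ summand is complementary to $V_i$ inside ${}^\lambda_i\fg_a$ for each $i<\ell$ by the choice of the $V_i$. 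Hence $V\cap[c,\fg]=0$, so $T_cZ=0$ and $c$ is an isolated point of $Z$. Finally, for any $y=c+\sum_i w_i\in Z$ with $w_i\in V_i$, the morphism $s\mapsto c+\sum_i s^{\,\ell-i}w_i$ realizes the $\star$-flow of $y$, extends to $\mathbb{A}^1$ sending $0\mapsto c$, and has image in $Z$; its fibre over the isolated point $c$ is a nonempty, hence dense, open subset of $\mathbb{A}^1$, so the morphism is constant and $y=c$. Thus $Z=\{c\}$, and in particular $c'=c$.

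I expect the transversality input $V\cap[c,\fg]=0$ to be the step requiring the most care: it is pure weight bookkeeping once the $\lambda$- and $\rho$-gradings are organized correctly, but it is the heart of the matter, it uses the defining property of the $V_i$ essentially, and it relies on the standard (but hypothesis-dependent) identification $T_cO=[c,\fg]$. The remaining points — that $\star$ preserves $O$ and $c+V$, and that a morphism from $\mathbb{A}^1$ into a variety that lands in a neighbourhood of an isolated point must be constant — are routine.
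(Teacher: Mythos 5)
Your proof is correct and follows essentially the same route as the paper: the contracting $\Gm$-action $t\mapsto t^{-\ell}\Ad(\lambda(t))$ gives the first assertion, and the second rests on exactly the transversality $V\cap[c,\fg]=V\cap[c,{}^{\lambda}_{<0}\fg_0]=0$ forced by the choice of the $V_i$, together with the fact that the flow of $c'$ stays in $\Ad(G(\bar{k}))c\cap(c+V)$. The only cosmetic difference is that you conclude via isolatedness of $c$ in $(c+V)\cap\Ad(G(\bar{k}))c$ and constancy of the $\mathbb{A}^1$-morphism, whereas the paper runs the same tangent-space argument as a contradiction with a positive-dimensional subvariety swept out by the flow.
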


\begin{proof} Consider the $\Gm$-action on $c+V$ given by $t.u=t^{-\ell}\Ad(\lambda(t))u$ for $t\in\bar{k}^{\times}, u\in c+V$. This $\Gm$-action retracts $c+V$ to $\{c\}$ and preserves $\bar{k}\cdot\Ad(G_0(\bar{k}))c'$. Hence if $c'\in c+V$ then $c$ is in the Zariski closure of $\bar{k}\cdot\Ad(G_0(\bar{k}))c'$. It remains to consider the case $c\in\Ad(G(\bar{k}))c'$, i.e. $c'\in\Ad(G(\bar{k}))c\cap(c+V)$. Then we also have $t^{-\ell}\Ad(\lambda(t))c'\in\Ad(G(\bar{k}))c\cap(c+V)$ for any $t\in\bar{k}^{\times}$. Suppose on the contrary $c'\not=c$. Then the Zariski closure of the infinite set
	\[
	\{t^{-\ell}\Ad(\lambda(t))c'\;|\;t\in\bar{k}^{\times}\}	
	\]
	is a positive dimensional subvariety of the Zariski closure of $\Ad(G(\bar{k}))c\cap(c+V)$, which is $\Ad(G(\bar{k}))c\cap(c+V)$ itself because any nilpotent element in $c+V$ lives in an equal or larger orbit. The existence of this positive dimensional subvariety of $\Ad(G(\bar{k}))c\cap(c+V)$ shows that the tangent space $T_c(\Ad(G(\bar{k}))c)$ and $T_c(c+V)=V$ has non-trivial intersection. But $T_c(\Ad(G(\bar{k}))c)=[c,\fg]$ while 
	\[[c,\fg]\cap V=[c,\fg_0]\cap V=[c,{}^{\lambda}_{<0}\fg_0]\cap V=0,\]
	a contradiction.
\end{proof}

Denote by ${}^{\lambda}_{<0}G_0\subset G_0$ the connected unipotent subgroup whose Lie algebra is ${}^{\lambda}_{<0}\fg_0$. 

\begin{lemma}\label{lem:test} Suppose $c'\in c+{}^{\lambda}_{<\ell}\fg_a(k)$. Then $c$ is in the Zariski closure of $\bar{k}\cdot\Ad(G_0(\bar{k}))c'$. If furthermore $c'\in\Ad(G(\bar{k}))c$, then $c'\in\Ad({}^{\lambda}_{<0}G_0(k))c$.
\end{lemma}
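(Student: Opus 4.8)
The plan is to reduce Lemma \ref{lem:test} to Lemma \ref{lem:retract} by conjugating $c'$ into the slice $c+V$ using only $k$-rational elements of the unipotent group ${}^{\lambda}_{<0}G_0$. Precisely, I will establish the following \emph{sub-claim}: for every $c'\in c+{}^{\lambda}_{<\ell}\fg_a(k)$ there exists $g\in{}^{\lambda}_{<0}G_0(k)$ with $\Ad(g)c'\in c+V$. Granting the sub-claim, both assertions follow quickly. For the first, $\Ad(g)c'\in c+V$ is $G_0(\bar k)$-conjugate to $c'$, so Lemma \ref{lem:retract} puts $c$ in the Zariski closure of $\bar k\cdot\Ad(G_0(\bar k))(\Ad(g)c')=\bar k\cdot\Ad(G_0(\bar k))c'$. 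For the second, if $c'\in\Ad(G(\bar k))c$ then $\Ad(g)c'\in\Ad(G(\bar k))c\cap(c+V)$, so the second part of Lemma \ref{lem:retract} gives $\Ad(g)c'=c$, hence $c'=\Ad(g^{-1})c\in\Ad({}^{\lambda}_{<0}G_0(k))c$.

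To prove the sub-claim I will peel off the $\lambda$-weights of $c'-c$ from the top down. The mechanism: for $Y\in{}^{\lambda}_{j-\ell}\fg_0$ with $j<\ell$ --- so $j-\ell<0$ and $\exp Y\in{}^{\lambda}_{<0}G_0$ --- the operator $\Ad(\exp Y)=\exp(\ad Y)$ strictly lowers $\lambda$-weight on every term past the identity, and it preserves each $\fg_a$ since $\exp Y\in G_0$. Hence $\Ad(\exp Y)$ maps $c+{}^{\lambda}_{<\ell}\fg_a$ into itself; it sends $c$ into $c+[Y,c]+{}^{\lambda}_{<j}\fg_a$ with $[Y,c]\in{}^{\lambda}_{j}\fg_a$; and, crucially, it leaves every $\lambda$-weight-$>j$ component of any element of $c+{}^{\lambda}_{<\ell}\fg_a$ unchanged, since producing such a component would require a component of $\lambda$-weight exceeding $\ell$, of which there is none. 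Now start from $c'$ and run $j$ through $\ell-1,\ell-2,\dots$ down to the smallest $\lambda$-weight occurring in $\fg_a$; at stage $j$ let $v_j\in{}^{\lambda}_{j}\fg_a(k)$ be the current weight-$j$ component, use that $\ad(c)\colon{}^{\lambda}_{j-\ell}\fg_0(k)\to[c,{}^{\lambda}_{j-\ell}\fg_0](k)$ is surjective and that ${}^{\lambda}_{j}\fg_a=[c,{}^{\lambda}_{j-\ell}\fg_0]\oplus V_j$ over $k$ to pick $Y_j\in{}^{\lambda}_{j-\ell}\fg_0(k)$ with $v_j+[Y_j,c]\in V_j$, and replace the current element by its $\Ad(\exp Y_j)$-image. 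By the weight bookkeeping this does not disturb the already-fixed components of weight $>j$, so after finitely many stages the result lies in $c+V$; the product $g$ of the various $\exp Y_j$ lies in ${}^{\lambda}_{<0}G_0(k)$, proving the sub-claim.

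The only points requiring care are routine: checking that each conjugation affects strictly lower $\lambda$-weights only, so that the downward induction genuinely closes, and that the $Y_j$ can be taken $k$-rational --- which holds because $\ad(c)$, the subspaces $V_j$, and the $\lambda$-weight spaces are defined over $k$, and a surjective $k$-linear map is surjective on $k$-points. I do not anticipate a real obstacle here; the substance of the lemma is supplied by Lemma \ref{lem:retract}, and the argument above only widens the ambient slice from the distinguished complement $V$ to the full subspace ${}^{\lambda}_{<\ell}\fg_a$. (Alternatively the first assertion can be obtained directly, without the sub-claim, from the $\Gm$-action $t\cdot u=t^{-\ell}\Ad(\lambda(t))u$ that retracts $c+{}^{\lambda}_{<\ell}\fg_a$ onto $\{c\}$ while preserving $\bar k\cdot\Ad(G_0(\bar k))c'$.)
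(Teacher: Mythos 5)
Your proposal is correct and follows essentially the same route as the paper: the paper's proof also reduces to Lemma \ref{lem:retract} by showing that every element of $c+{}^{\lambda}_{<\ell}\fg_a(k)$ is $\Ad({}^{\lambda}_{<0}G_0(k))$-conjugate to an element of $c+V$, via the same top-down induction on $\lambda$-weights using $k$-rational elements $y\in{}^{\lambda}_{j-\ell}\fg_0(k)$ chosen so that the weight-$j$ component lands in $V_j$. Your weight bookkeeping and rationality remarks match the paper's argument, so there is nothing to fix.
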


\begin{proof} We claim that any element in $c+{}^{\lambda}_{<\ell}\fg_a$ is in the same $\Ad({}^{\lambda}_{<0}G_0(k))$-orbit as some element in $c+V$. The claim together with Lemma \ref{lem:retract} finishes the proof. We use induction for $j\in\Z_{\ge1}$ on the statement that any element in $c+{}^{\lambda}_{<\ell}\fg_a$ is in the same $\Ad({}^{\lambda}_{<0}G_0(k))$-orbit as some element in $c+{}^{\lambda}_{\le\ell-j}\fg_a+V$. For $j$ large enough this proves the lemma. The statement is trivially true for $j=1$. For a general element in $c+{}^{\lambda}_{<\ell}\fg_a$ for which we already conjugate it by ${}^{\lambda}_{<0}G_0(k)$ to $c_j\in c+{}^{\lambda}_{\le\ell-j}\fg_a+V$, we would like to conjugate it again so that it lives in $c+{}^{\lambda}_{\le\ell-j-1}\fg_a+V$. Denote by ${}^{\lambda}_{\ell-j}(c_j)$ the projection of $c_j$ from $\fg_a=\bigoplus{}^{\lambda}_i\fg_a$ to ${}^{\lambda}_{\ell-j}\fg_a$.
By the construction of $V_{\ell-j}$, there exists an $y\in{}^{\lambda}_{-j}\fg_0(k)$ such that ${}^{\lambda}_{\ell-j}(c_j)+[y,c]\in V_{\ell-j}\subset{}^{\lambda}_{\ell-j}\fg_a$. This implies that $\exp(y)c_j\in c+{}^{\lambda}_{\le\ell-j-1}\fg_a+V$, as desired.
\end{proof}

\subsection{Existence of rational points}\label{subsec:goodness}

From now on until Proposition \ref{prop:restrict}, we will not make use of the action $\rho:\mu_m\ra\operatorname{Aut}(G)$. Consider $x\in\fg(k)$ with the Jordan decomposition $x=x_s+x_n$. We say $x$ {\bf splits} over a field extension $k'/k$ if $L:=Z_G(x_s)$ is a Levi subgroup over $k'$, i.e. $L$ is contained in a parabolic subgroup $P\subset G$ defined over $k'$, and just say $x$ splits if it does so over $k$. Apparently $x$ splits over $\bar{k}$. Let $P$ be as above (defined over $\bar{k}$) and $\fu_P$ be the Lie algebra of its unipotent radical. 
\begin{definition}\label{def:LS} \cite{LS79} We denote by $N(x)$ unique nilpotent $\Ad(G(\bar{k}))$-orbit whose intersection with $x_n+\fu_P(\bar{k})$ is a dense open subset of the latter.
\end{definition}
\noindent The orbit $N(x)$ is the Lusztig-Spaltenstein induction of $x_n$ from $L$ to $G$. Following \cite[\S4]{CO23}, consider the variety $\Theta(x)$ of $\mathfrak{sl}_2$-triples $(c,h,d)\in\fg(\bar{k})^3$ such that
\begin{enumerate}
	\item $c,d\in N(x)$, and
	\item $x\in c+Z_{\fg}(d)$ is in the so-called Slodowy slice.
\end{enumerate}
The first goal of this section is to show the following generalization of \cite[Lemma 4.5]{CO23}. 
\begin{theorem}\label{thm:good1} The connected centralizer $Z_G(x)^o$ acts transitively on $\Theta(x)$. Hence $\Theta(x)(k)$ is non-empty.
\end{theorem}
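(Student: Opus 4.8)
\emph{The plan is to deduce the statement from two facts over $\bar{k}$ — that $\Theta(x)(\bar{k})\neq\emptyset$ and that $Z_G(x)^o(\bar{k})$ acts transitively on it — and then produce the $k$-point by Lang's theorem.} First I would note that $\Theta(x)$ is a $k$-variety: since $x\in\fg(k)$, the orbit $\mathcal{O}_{x_n}$ in $\fl=\Lie Z_G(x_s)$ is Galois-stable, hence so is the geometrically-characterized orbit $N(x)=\Ind_{Z_G(x_s)}^{G}(\mathcal{O}_{x_n})$, so the defining conditions of $\Theta(x)$ are defined over $k$; and $Z_G(x)^o=Z_{Z_G(x_s)}(x_n)^o$ is a connected $k$-group acting on $\Theta(x)$ over $k$. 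Granting the two geometric statements, $\Theta(x)$ is a nonempty homogeneous space under a connected group over the finite field $k$, hence $\Theta(x)(k)\neq\emptyset$ by Lang's theorem. It remains to establish nonemptiness and transitivity over $\bar{k}$, where I may pass to the sheet of $x$.

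For nonemptiness I would invoke the theory of sheets. The sheet $\mathcal{S}\subset\fg$ containing $x=x_s+x_n$ carries a unique nilpotent orbit, and transitivity of Lusztig--Spaltenstein induction, together with Borho's classification of sheets, identifies it with $N(x)$. Fix $e\in N(x)$ and extend it to an $\slt$-triple $(e,h,f)$; the nilnegative $f$ lies in the same $G$-orbit as $e$ — a lift of the nontrivial Weyl element of $SL_2$ carries $e$ to $-f$, and $-f$ is $G$-conjugate to $f$ since nilpotent orbits are stable under scaling — so $f\in N(x)$ as well. By Katsylo's theorem, the Slodowy slice $e+Z_\fg(f)$ meets every $G$-orbit contained in $\mathcal{S}$; in particular there is $g\in G(\bar{k})$ with $\Ad(g)x\in e+Z_\fg(f)$, and then $(c,h',d):=\Ad(g^{-1})(e,h,f)$ is an $\slt$-triple with $c,d\in N(x)$ and $x\in c+Z_\fg(d)$, i.e. $(c,h',d)\in\Theta(x)(\bar{k})$. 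A more hands-on alternative, closer to the Springer-theoretic proof, would construct such a triple directly from an $\slt$-triple for $x_n$ in $\fl$ and the structure of the induced orbit.

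For transitivity, let $(c_1,h_1,d_1),(c_2,h_2,d_2)\in\Theta(x)(\bar{k})$. As $c_1,c_2\in N(x)$ are $G$-conjugate, the conjugacy theorem for $\slt$-triples \cite{CM93} gives $g_0\in G(\bar{k})$ with $\Ad(g_0)(c_1,h_1,d_1)=(c_2,h_2,d_2)$. The crucial observation is that any such $g_0$ automatically lies in $Z_G(x)$: $\Ad(g_0)$ carries $c_1+Z_\fg(d_1)$ onto $c_2+Z_\fg(d_2)$, so $\Ad(g_0)x$ and $x$ both lie in the slice $c_2+Z_\fg(d_2)$, both lie in the sheet $\mathcal{S}$, and are $G$-conjugate; since the slice meets each $G$-orbit in $\mathcal{S}$ in a single point (Katsylo), $\Ad(g_0)x=x$. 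Applying the same reasoning to $g_0m$ for $m\in Z_G(c_1,h_1,d_1)$ shows $Z_G(c_1,h_1,d_1)\subset Z_G(x)$, so it is precisely the stabilizer of $(c_1,h_1,d_1)$ for the $Z_G(x)$-action; hence $Z_G(x)$ acts transitively on $\Theta(x)(\bar{k})$, and passing to $Z_G(x)^o$ amounts to checking that this stabilizer meets every connected component of $Z_G(x)$ — equivalently that $\Theta(x)$ is connected — which I would settle by a direct component-group computation, using $Z_G(x)=Z_{Z_G(x_s)}(x_n)$ and the identification of $\pi_0(Z_{Z_G(x_s)}(x_n))$ with $\pi_0$ of a reductive centralizer of an $\slt$.

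The hard part is the transitivity statement, and within it the geometric input — Katsylo's theorem on sheets, or whatever elementary substitute plays its role — describing how the Slodowy slices attached to $N(x)$ intersect the $G$-orbit of $x$; the descent from $Z_G(x)$ to $Z_G(x)^o$ is a secondary technical point, and nonemptiness, while relying on facts about induced orbits, is comparatively soft.
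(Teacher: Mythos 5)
Your route is genuinely different from the paper's: you import the theory of sheets and Katsylo's theorem, whereas the paper deliberately avoids this input and instead proves everything by an elementary Fourier-transform/point-counting argument (Proposition \ref{prop:Spr}, Lemma \ref{lem:conil}, Lemma \ref{lem:Weil}, Proposition \ref{prop:1comp}); indeed the remark after the theorem notes that the paper's argument \emph{reproves} \cite[Thm.~0.3]{Kat82}. The genuine gap in your proposal is that the actual new content of the theorem --- transitivity of the \emph{connected} centralizer $Z_G(x)^o$, equivalently connectedness of $\Theta(x)$ --- is exactly the step you defer to an unspecified ``direct component-group computation.'' Katsylo's theorem is (as the paper says) equivalent only to transitivity of the full centralizer $Z_G(x)$, and it is the connectedness refinement that makes Lang's theorem applicable. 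What you would need is surjectivity of $\pi_0\bigl(Z_G(x)\cap Z_G(c,h,d)\bigr)\to\pi_0\bigl(Z_G(x)\bigr)$, where $c$ lies in the induced orbit $N(x)$; but $\pi_0(Z_G(x))=\pi_0\bigl(Z_{Z_G(x_s)}(x_n)\bigr)$ is computed inside the Levi $Z_G(x_s)$ while $(c,h,d)$ is an $\slt$-triple for the Lusztig--Spaltenstein induced orbit in $G$, and component groups do not behave in any transparent way under induction. There is no evident ``direct'' computation here; this is precisely what the paper's Proposition \ref{prop:1comp} (unique top-dimensional geometric component of $\Theta(x)$, obtained by counting points over extensions of $k$) combined with $\dim Z_G(x)=\dim Z_G(c)$ from \cite[Thm.~1.3(a)]{LS79} is designed to settle. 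Without an argument for this step your proof establishes only the $Z_G(x)$-statement, i.e.\ nothing beyond \cite{Kat82} and \cite[\S4]{CO23}.

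Two further points. First, your transitivity argument asserts that the Slodowy slice meets each $G$-orbit of the sheet in a \emph{single point}; Katsylo's theorem only gives that the intersection is a single orbit of the finite group $Z_G(c,h,d)$ (acting on the section), and in general it consists of several points when component groups are nontrivial. This particular defect is repairable: if $\Ad(g_0)x$ and $x$ are both in the slice and $G$-conjugate, replace $g_0$ by $mg_0$ with $m\in Z_G(c_2,h_2,d_2)$ to force $\Ad(mg_0)x=x$, which still conjugates the triples and yields $Z_G(x)$-transitivity. Second, Katsylo's theorem and the classification of sheets are characteristic-zero results; invoking them over the finite field $k$ (even with the restrictions of Appendix \ref{app:char}) requires a citation or argument you do not supply, whereas the paper's counting proof is carried out directly over $k$.
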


\begin{remark} The second sentence of the theorem follows from the first thanks to Lang's theorem. The result is essentially that every element is good in the sense of \cite[\S4]{CO23}. It also gives a different proof of \cite[Thm. 0.3]{Kat82} (also used in \cite[\S4]{CO23}), which is equivalent to that $\Theta(x)$ is a $Z_G(x)$-orbit.
\end{remark}

Let $C(\fg(k))$ be the space of $\C$-valued functions on $\fg(k)$. We fix a $G(k)$-invariant non-degenerate symmetric bilinear form $B(\cdot,\cdot):\fg\times\fg\ra\mathbb{G}_a$ and a non-trivial additive character $\psi:k\ra\Cc$. For $f,f'\in C(\fg(k))$, write
\[
\inn{f}{f'}:=\sum_{x\in\fg(k)}f(x)f'(x),\;\;\hat{f}(y):=\sum_{x\in\fg(k)}\psi(B(x,y))f(x).
\]
Let $J(\fg(k))\subset C(\fg(k))$ be the subspace of $\Ad(G(k))$-invariant functions, and $J(\fg(k))^{nil}\subset J(\fg(k))$ be the subspace of those supported on $\fg^{nil}(k)$, the nilpotent cone. The Fourier transform $f\mapsto\hat{f}$ maps $J(\fg(k))$ isomorphically to itself, and let $J(\fg(k))^{conil}$ be the image of $J(\fg(k))^{nil}$ under the Fourier transform. For any sub-$k$-variety $X\subset\fg$, let $I_X$ be the function that takes the value $1$ on $X(k)$ and zero elsewhere. The idea of the following proposition comes from Springer theory, particularly \cite[Thm. 4.4]{Spr76}. See also the MathOverflow question \cite{MO202007}.

\begin{proposition}\label{prop:Spr} Let $x$ be split with $L=Z_G(x_s)$ and $P$ a parabolic subgroup containing $L$ and defined over $k$. We have
	\begin{equation}\label{eq:Spr}
		|U_P(k)|\cdot\inn{I_x}{\xi}=\inn{I_{x_n+\fu_P}}{\xi},\;\forall \xi\in J(\fg(k))^{conil}.
	\end{equation}
\end{proposition}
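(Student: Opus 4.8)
The plan is to split (\ref{eq:Spr}) into two steps. The first is a purely geometric identity valid for every $\xi\in J(\fg(k))$:
\[
\inn{I_{x+\fu_P}}{\xi}=|U_P(k)|\cdot\inn{I_x}{\xi}.
\]
This follows once we know $\Ad(U_P(k))\cdot x=x+\fu_P(k)$, for then $\xi$, being $\Ad(U_P(k))$-invariant, is constant equal to $\xi(x)$ on the coset $x+\fu_P(k)$, which has $|U_P(k)|$ elements. To see $\Ad(U_P(k))\cdot x=x+\fu_P(k)$: since $x$ splits with $L=Z_G(x_s)$ we have $x_s,x_n\in\fl$, and $Z_G(x)=Z_G(x_s)\cap Z_G(x_n)=Z_L(x_n)\subseteq L$, so the stabilizer of $x$ in $U_P$ is $Z_G(x)\cap U_P\subseteq L\cap U_P=\{1\}$; hence $U_P\cdot x$ is a $\dim\fu_P$-dimensional irreducible subvariety of the affine space $x+\fu_P$, and it is closed by the Kostant--Rosenlicht theorem, so it equals $x+\fu_P$. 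On $k$-points one descends: the unique $u\in U_P(\bar k)$ with $\Ad(u)x=v$ is fixed by the Frobenius whenever $v$ is, so $u\in U_P(k)$.

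The second step is $\inn{I_{x+\fu_P}}{\xi}=\inn{I_{x_n+\fu_P}}{\xi}$ for $\xi\in J(\fg(k))^{conil}$. Write $\xi=\hat\eta$ with $\eta\in J(\fg(k))^{nil}$ and use $\inn{I_V}{\hat\eta}=\inn{\widehat{I_V}}{\eta}$. A character-sum computation gives, for any $c\in\fg(k)$,
\[
\widehat{I_{c+\fu_P}}(z)=\psi(B(c,z))\sum_{u\in\fu_P(k)}\psi(B(u,z))=|U_P(k)|\,\psi(B(c,z))\,I_{\fu_P^{\perp}}(z),
\]
and $\fu_P^{\perp}=\fp$ because $B$ is $G$-invariant and non-degenerate. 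Since $\eta$ is supported on $\fg^{nil}(k)$, pairing the difference $\widehat{I_{x+\fu_P}}-\widehat{I_{x_n+\fu_P}}$ with $\eta$ yields $|U_P(k)|\sum_z\big(\psi(B(x,z))-\psi(B(x_n,z))\big)\eta(z)$ summed over $z\in\fp(k)\cap\fg^{nil}(k)$. For such $z$, let $\pi\colon\fp\to\fl$ be the projection killing $\fu_P$. Then $B(x_s,z)=B(x_s,\pi(z))$ since $x_s\in\fl$ and $B(\fl,\fu_P)=0$; moreover $\pi(z)$ is nilpotent in $\fl$ by the standard fact $\fg^{nil}\cap\fp=\pi^{-1}(\fl^{nil})$, so $\pi(z)\in[\fl,\fl]$; and $x_s$ is central in $\fl$ because $L=Z_G(x_s)$. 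By $G$-invariance $B$ pairs the center of $\fl$ trivially with $[\fl,\fl]$, so $B(x_s,z)=0$ and hence $\psi(B(x,z))=\psi(B(x_n,z))$. The sum vanishes term by term, giving the claim; combining the two steps proves (\ref{eq:Spr}).

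The conceptual heart — and the place to be careful, since each individual computation is short — is the first step: recognizing that the factor $|U_P(k)|$ is exactly the cardinality of the single $\Ad(U_P(k))$-orbit $x+\fu_P(k)$, i.e.\ that $x$ has trivial $U_P$-stabilizer and its orbit fills the coset. This is precisely where the ``split'' hypothesis and the Levi condition $L=Z_G(x_s)$ enter, and it relies on Kostant--Rosenlicht (closedness of unipotent orbits) together with a Lang-type rationality argument. One should also double-check the two structural inputs used without proof, namely $\fu_P^{\perp}=\fp$ and $\fg^{nil}\cap\fp=\pi^{-1}(\fl^{nil})$, though both are standard.
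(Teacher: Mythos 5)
Your proof is correct and takes essentially the same route as the paper: the same averaging identity over $U_P(k)$ reducing $|U_P(k)|\cdot\inn{I_x}{\xi}$ to $\inn{I_{x+\fu_P}}{\xi}$, followed by the same comparison of the Fourier transforms of $I_{x+\fu_P}$ and $I_{x_n+\fu_P}$ on the nilpotent cone, using that they are supported on $\fp$ and that $x_s\in Z(\fl)$ is orthogonal to $\fl^{der}+\fu_P$. The only differences are presentational: you supply a proof (Kostant--Rosenlicht plus Frobenius descent) of the orbit identity $\Ad(U_P(k))x=x+\fu_P(k)$, which the paper asserts in one line, and you compute $\widehat{I_{c+\fu_P}}$ explicitly where the paper argues via translation-invariance of the support.
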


\begin{proof}
	We have $I_{x+\fu_P}(y)=\sum_{u\in U_P(k)}I_x(\Ad(u)y)$ and thus
	\begin{equation}\label{eq:Spr1}
		|\fu_P(k)|\cdot\inn{I_x}{\xi}=\inn{I_{x+\fu_P}}{\xi},\;\forall \xi\in J(\fg(k)).
	\end{equation}
	We compare $I_{x+\fu_P}$ and $I_{x_n+\fu_P}$ by looking at Fourier transforms. The two functions are apparently invariant under translation by $\fu_P(k)$. Thus their Fourier transforms are supported on $\fp=\fl\oplus\fu_P$ where $\fp=\Lie P$ and $\fl=\Lie L$. Note that $x-x_n=x_s\in Z(\fl)$. Since $Z(\fl)$ is necessarily perpendicular to $\fl^{der}+\fu_P$ under our bilinear form $B(\cdot,\cdot)$, we have that $\hat{I}_{x+\fu_P}$ and $\hat{I}_{x_n+\fu_P}$ have the same restriction on $\fl^{der}(k)+\fu_P(k)$. Since all nilpotent elements in $\fp$ lies in $\fl^{der}(k)+\fu_P(k)$, this shows that $\hat{I}_{x+\fu_P}$ and $\hat{I}_{x_n+\fu_P}$ have the same restriction to the nilpotent cone $\fg^{nil}(k)$. This implies that
	\[
	\inn{\hat{I}_{x+\fu_P}}{\xi}=\inn{\hat{I}_{x_n+\fu_P}}{\xi},\;\forall \xi\in J(\fg(k))^{nil}.
	\]
	which by Fourier inversion is equivalent to
	\[
	\inn{I_{x+\fu_P}}{\xi}=\inn{I_{x_n+\fu_P}}{\xi},\;\forall \xi\in J(\fg(k))^{conil}
	\]
	which together with (\ref{eq:Spr1}) implies (\ref{eq:Spr}).
\end{proof}

By our assumption on $\operatorname{char}(k)$, for any $\mathfrak{sl}_2$-triples $(c,h,d)$ there is a cocharacter $\lambda:\mathbb{G}_m\ra G$ associated to $(c,h,d)$ such that $c\in {}^\lambda_2\fg$ (the weight $2$ subspace), $h\in {}^\lambda_0\fg$ and $d\in {}^\lambda_{-2}\fg$, and
\begin{enumerate}
	\item $\ad(c)$ maps ${}^\lambda_{\le-1}\fg$ injectively into ${}^\lambda_{\le1}\fg$.
	\item ${}^\lambda_{\le 1}\fg=Z_{\fg}(d)\oplus[c,{}^\lambda_{\le-1}\fg]$.
\end{enumerate}
Let ${}^\lambda_{\le -1}G$ be the unipotent subgroup of $G$ whose Lie algebra is ${}^\lambda_{\le -1}\fg$; we can also characterize ${}^\lambda_{\le -1}G$ as the unipotent subgroup of the parabolic subgroup determined by $\lambda$. The following lemma is somewhat standard and can be proved analogous to Lemma \ref{lem:test} (with extra care to prove the uniqueness part).

\begin{lemma}\label{lem:sl2-2} For every element $y\in c+{}^\lambda_{\le 1}\fg(k)$, there is a unique element $u\in ({}^\lambda_{\le -1}G)(k)$ such that $\Ad(u)y\in c+Z_{\fg}(d)$.
\end{lemma}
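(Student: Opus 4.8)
The plan is to imitate the weight-filtration induction used in the proof of Lemma \ref{lem:test}, now with respect to the $\lambda$-grading, and to obtain uniqueness from the directness of the decomposition ${}^\lambda_{\le 1}\fg = Z_{\fg}(d)\oplus[c,{}^\lambda_{\le-1}\fg]$ listed among the properties of $\lambda$. Throughout I would use that $Z_{\fg}(d)$ is $\lambda$-graded (since $d$ has $\lambda$-weight $-2$) and contained in ${}^\lambda_{\le 0}\fg$, that $[c,{}^\lambda_{\le-1}\fg]$ is $\lambda$-graded with $\ad(c)$ shifting weight by $+2$, so that the decomposition above restricts in each weight $j\le 1$ to ${}^\lambda_j\fg = (Z_{\fg}(d)\cap{}^\lambda_j\fg)\oplus[c,{}^\lambda_{j-2}\fg]$ with $\ad(c)\colon{}^\lambda_{j-2}\fg\hookrightarrow{}^\lambda_j\fg$ injective; and that $\exp$ identifies the nilpotent Lie algebra ${}^\lambda_{\le-1}\fg$ with the unipotent group ${}^\lambda_{\le-1}G$.

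For existence, I would write $y = c+\sum_{i\le 1}y_i$ with $y_i\in{}^\lambda_i\fg(k)$ and clear the weights $j=1,0,-1,-2,\dots$ one at a time in decreasing order; this terminates because $\fg$ has finitely many $\lambda$-weights, bounded below. At the stage where the current element is $c$ plus a $Z_{\fg}(d)$-valued part in weights larger than $j$ plus an arbitrary part in ${}^\lambda_{\le j}\fg$, I decompose its weight-$j$ component as $z_j+[c,v_j]$ with $z_j\in Z_{\fg}(d)$ and $v_j\in{}^\lambda_{j-2}\fg(k)\subset{}^\lambda_{\le-1}\fg(k)$ (legitimate since $j\le 1$), and conjugate by $\exp(v_j)\in{}^\lambda_{\le-1}G(k)$. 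A short weight count shows $\exp(v_j)$ leaves $c$ and every weight-$j'$ component with $j'>j$ untouched, and replaces the weight-$j$ component by $z_j\in Z_{\fg}(d)$, only disturbing weights $<j$. Multiplying all the $\exp(v_j)$ together produces $u\in{}^\lambda_{\le-1}G(k)$ with $\Ad(u)y\in c+Z_{\fg}(d)$.

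For uniqueness, given $u_1,u_2\in{}^\lambda_{\le-1}G(k)$ both carrying $y$ into $c+Z_{\fg}(d)$, I set $u=u_2u_1^{-1}\in{}^\lambda_{\le-1}G(k)$ and $c+z=\Ad(u_1)y$, so $\Ad(u)(c+z)=c+z'$ with $z,z'\in Z_{\fg}(d)$, and it suffices to prove $u=1$. If not, write $\log u=\sum_{i\le-1}u_i$ and let $i_0$ be the top index with $u_{i_0}\ne 0$. Expanding $\Ad(u)(c+z)=\sum_{n\ge0}\tfrac{1}{n!}(\ad\log u)^n(c+z)$, the only contribution to the weight-$(i_0+2)$ component is $[u_{i_0},c]$ from $(\ad\log u)(c)$, together with the harmless, $Z_{\fg}(d)$-valued weight-$(i_0+2)$ component of $z$: every other term involves $c$ through $(\ad\log u)^n$ with $n\ge 2$ (weight $\le 2+2i_0<i_0+2$) or involves $z$ through $(\ad\log u)^n$ with $n\ge 1$ (weight $\le i_0<i_0+2$). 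Comparing with $c+z'$, whose weight-$(i_0+2)$ component lies in $Z_{\fg}(d)$ and does not involve $c$ because $i_0+2\le1<2$, I get $[c,u_{i_0}]\in Z_{\fg}(d)\cap[c,{}^\lambda_{\le-1}\fg]=0$, hence $u_{i_0}=0$ by injectivity of $\ad(c)$ on ${}^\lambda_{\le-1}\fg$ — contradicting the choice of $i_0$.

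The existence half is routine and parallels Lemma \ref{lem:test} almost verbatim; the only step needing genuine care is the uniqueness argument, and within it the weight bookkeeping that singles out $[u_{i_0},c]$ as the unique highest-weight term of $\Ad(u)(c+z)-(c+z)$. This is precisely the point where one must use that $\log u$ has strictly negative $\lambda$-weights and that $Z_{\fg}(d)\subset{}^\lambda_{\le 0}\fg$, and it is the main obstacle.
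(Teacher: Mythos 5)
Your proof is correct, and it follows exactly the route the paper indicates: the paper gives no detailed argument, only the remark that the lemma "can be proved analogous to Lemma \ref{lem:test} (with extra care to prove the uniqueness part)," and your existence step is precisely that weight-clearing induction with $Z_{\fg}(d)$ playing the role of the complement $V$, while your top-weight-term analysis of $\log u$ supplies the promised extra care, correctly using ${}^\lambda_{\le1}\fg=Z_{\fg}(d)\oplus[c,{}^\lambda_{\le-1}\fg]$ and the injectivity of $\ad(c)$ on ${}^\lambda_{\le-1}\fg$.
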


For any $\mathfrak{sl}_2$-triples $(c,h,d)$ denote by $f_{c,h,d}\in C(\fg(k))$ the function with $f_{c,h,d}(x)=\sum_{g\in G(k)}I_{c+Z_{\fg}(d)}(\Ad(g)x)$. 

\begin{lemma}\label{lem:conil} We have $f_{c,h,d}\in J(\fg(k))^{conil}$.
\end{lemma}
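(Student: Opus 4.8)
The plan is to deduce the statement by enlarging the Slodowy slice $c+Z_{\fg}(d)$ to an affine coset whose Fourier transform is visibly supported on the nilpotent cone, and then transporting the assertion across the enlargement by means of Lemma~\ref{lem:sl2-2}.

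Fix the cocharacter $\lambda$ associated to $(c,h,d)$, write $U:=({}^{\lambda}_{\le-1}G)(k)$ for the $k$-points of the unipotent radical of the parabolic determined by $\lambda$, and set
\[
F(x):=\sum_{g\in G(k)}I_{c+{}^{\lambda}_{\le1}\fg}(\Ad(g)x)=\#\{g\in G(k)\;|\;\Ad(g)x\in (c+{}^{\lambda}_{\le1}\fg)(k)\}.
\]
Since $Z_{\fg}(d)\subset{}^{\lambda}_{\le1}\fg$ the coset $c+Z_{\fg}(d)$ lies inside $c+{}^{\lambda}_{\le1}\fg$, and for $u\in U$ the map $\Ad(u)$ (being unipotent with only weight-lowering corrections) carries $c+Z_{\fg}(d)$ into $c+{}^{\lambda}_{\le1}\fg$. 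Lemma~\ref{lem:sl2-2} attaches to each $y\in(c+{}^{\lambda}_{\le1}\fg)(k)$ a unique $u_y\in U$ with $\Ad(u_y)y\in(c+Z_{\fg}(d))(k)$; for a fixed $x$ the map $g\mapsto u_{\Ad(g)x}\,g$ then sends $\{g:\Ad(g)x\in (c+{}^{\lambda}_{\le1}\fg)(k)\}$ onto $\{g:\Ad(g)x\in (c+Z_{\fg}(d))(k)\}$, and the uniqueness clause shows that the fibre over $g'$ is precisely $\{u^{-1}g':u\in U\}$ — in particular $U$ acts freely there, applying Lemma~\ref{lem:sl2-2} once more with $y=x$ to see $Z_U(x)=1$ — so every fibre has exactly $|U|$ elements. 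Hence $F=|U|\cdot f_{c,h,d}$, and it suffices to prove $F\in J(\fg(k))^{conil}$.

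For that I would compute $\widehat{I_{c+{}^{\lambda}_{\le1}\fg}}$ directly: summing $\psi(B(\cdot,y))$ over $(c+{}^{\lambda}_{\le1}\fg)(k)$ factors as $\psi(B(c,y))$ times $|{}^{\lambda}_{\le1}\fg(k)|$ times the indicator of $({}^{\lambda}_{\le1}\fg)^{\perp}$, and since $B$ pairs ${}^{\lambda}_i\fg$ with ${}^{\lambda}_{-i}\fg$ we get $({}^{\lambda}_{\le1}\fg)^{\perp}={}^{\lambda}_{\le-2}\fg$. Thus $\widehat{I_{c+{}^{\lambda}_{\le1}\fg}}$ is supported on ${}^{\lambda}_{\le-2}\fg(k)\subset{}^{\lambda}_{\le-1}\fg(k)$, which is the Lie algebra of a unipotent radical and hence contained in $\fg^{nil}(k)$. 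The Fourier transform commutes with averaging over $\Ad(G(k))$, and $\fg^{nil}(k)$ is $\Ad(G(k))$-stable, so $\widehat{F}$ is again supported on $\fg^{nil}(k)$; as $F$ is visibly $\Ad(G(k))$-invariant, Fourier inversion gives $F\in J(\fg(k))^{conil}$, and therefore $f_{c,h,d}=|U|^{-1}F\in J(\fg(k))^{conil}$ as well. The only genuinely delicate point is the fibration count in the second paragraph — verifying that the map is well defined on its whole source and that its fibres are honest free $U$-orbits — and this is exactly where the $\lambda$-weight estimates and the uniqueness assertion of Lemma~\ref{lem:sl2-2} are both needed; the remaining character-sum manipulations are routine.
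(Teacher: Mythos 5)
Your proof is correct and follows essentially the same route as the paper: use Lemma \ref{lem:sl2-2} to rewrite $f_{c,h,d}$ (up to the factor $|({}^{\lambda}_{\le-1}G)(k)|$) as the $G(k)$-average of $I_{c+{}^{\lambda}_{\le 1}\fg}$, whose Fourier transform is supported on $({}^{\lambda}_{\le 1}\fg)^{\perp}={}^{\lambda}_{\le-2}\fg(k)\subset\fg^{nil}(k)$, and then use $G$-invariance of $B$ to conclude. The only blemish is the aside invoking $Z_U(x)=1$: it is unnecessary (and Lemma \ref{lem:sl2-2} does not apply to an arbitrary $x$), since the fibre $\{u^{-1}g'\;|\;u\in U\}$ is a right translate of $U$ inside $G(k)$ and hence has exactly $|U|$ elements automatically.
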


\begin{proof} As before let $I_{c+{}^\lambda_{\le 1}\fg}\in C(\fg(k))$ be the function that takes the value $1$ on $c+{}^\lambda_{\le 1}\fg(k)$ and $0$ elsewhere. Since $I_{c+{}^\lambda_{\le 1}\fg}$ is invariant under translation by ${}^\lambda_{\le 1}\fg(k)$, its Fourier transform $\hat{I}_{c+{}^\lambda_{\le 1}\fg}$ is supported on ${}^\lambda_{\le -2}\fg(k)$, in particular in $\fg^{nil}(k)$. Lemma \ref{lem:sl2-2} implies that 
	\[
	f_{c,h,d}(x)=\sum_{g\in G(k)}I_{c+Z_{\fg}(d)}(\Ad(g)x)=\frac{1}{|({}^\lambda_{\le -1}G)(k)|}\sum_{g\in G(k)}I_{c+{}^\lambda_{\le 1}\fg}(\Ad(g)x).\]
	The Fourier transform $\hat{\xi}$ is thus a sum of conjugates of $\hat{I}_{c+{}^\lambda_{\le 1}\fg}$, all supported in $\fg^{nil}(k)$, i.e. $\hat{f}_{c,h,d}\in J(\fg(k))^{nil}$ and thus $f_{c,h,d}\in J(\fg(k))^{conil}$.
\end{proof}

We also need more lemmas regarding $\slt$-triples:


\begin{lemma}\label{lem:sl2-4} The centralizer $Z_G(c)$ acts transitively on the set of $\slt$-triples $(c,h',d')$ completing $c$, and for any such we have $\pi_0(Z_G(c,h',d'))\xra{\sim}\pi_0(Z_G(c))$.
\end{lemma}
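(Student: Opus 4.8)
The plan is to fix one $\slt$-triple $(c,h,d)$ completing $c$, together with its associated cocharacter $\lambda\colon\Gm\to G$ as in the paragraph before Lemma \ref{lem:sl2-2}, and to produce a semidirect-product decomposition of $Z_G(c)$ adapted to $\lambda$. The first of the two properties of $\lambda$ recorded there says that $\ad(c)$ is injective on ${}^\lambda_{\le-1}\fg$; since $Z_\fg(c)$ is $\lambda$-graded, this gives $Z_\fg(c)\subseteq{}^\lambda_{\ge0}\fg=\Lie P_\lambda$, where $P_\lambda$ is the parabolic with $\Lie P_\lambda={}^\lambda_{\ge0}\fg$. As $P_\lambda$ is the Jacobson--Morozov parabolic of the nilpotent $c$ and hence independent of the completing triple (standard, see e.g. \cite{Ca93}), it follows that the full centralizer $Z_G(c)$ lies in $P_\lambda$. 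Writing $L_\lambda=Z_G(\lambda)$ and $U_\lambda$ for the unipotent radical of $P_\lambda$, I set $N:=Z_G(c)\cap U_\lambda$ and $M:=Z_G(c)\cap L_\lambda$. For $g\in Z_G(c)\subseteq P_\lambda$ write $g=ul$ with $u\in U_\lambda$, $l\in L_\lambda$; since $l$ normalizes the $\lambda$-grading, the weight-$2$ component of $\Ad(g)c=\Ad(u)\Ad(l)c$ equals $\Ad(l)c$, which must therefore equal $c$, so $l\in M$ and $u\in N$. With $N\cap M=1$ and $N\trianglelefteq Z_G(c)$ this gives $Z_G(c)=N\rtimes M$.

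Next I would identify $M$ with the stabilizer of the triple. If $g\in M$ then $g$ centralizes $\lambda$, hence $h=d\lambda(1)$; and $g$ centralizes $d$, because $d$ is the unique element of ${}^\lambda_{-2}\fg$ with $[c,d]=h$ (uniqueness from $Z_\fg(c)\subseteq{}^\lambda_{\ge0}\fg$) while $\Ad(g)d\in{}^\lambda_{-2}\fg$ satisfies $[c,\Ad(g)d]=\Ad(g)h=h$; so $M\subseteq Z_G(c,h,d)$. Conversely, an element $g\in Z_G(c,h,d)$ normalizes the (unique) lift $\varphi\colon SL_2\to G$ of $(c,h,d)$ — its composition with $\mathrm{Int}(g)$ has the same differential — hence centralizes $\varphi(SL_2)\supseteq\lambda(\Gm)$, so $g\in Z_G(c)\cap L_\lambda=M$. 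Thus $M=Z_G(c,h,d)$ and $Z_G(c)=N\rtimes Z_G(c,h,d)$. (One also knows $M$ is reductive, but this will not be needed.)

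It remains to see that $N$ is connected. The conjugation $\Gm$-action $t\cdot u=\lambda(t)u\lambda(t)^{-1}$ preserves $U_\lambda$ and preserves $Z_G(c)$ (as $\lambda(t)$ sends $c$ to $t^{2}c$), hence preserves the closed subgroup $N\subseteq U_\lambda$; since this action contracts $U_\lambda$ to $1$ (the orbit map extends over $\mathbb{A}^1$ with value $1$ at the origin), any closed $\Gm$-stable subvariety of $U_\lambda$ containing $1$ is connected, so $N$ is connected. Then the exact sequence $1\to N\to Z_G(c)\to Z_G(c,h,d)\to1$ with $N$ connected yields $\pi_0(Z_G(c,h,d))\xra{\sim}\pi_0(Z_G(c))$ via the inclusion. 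Finally, the transitivity assertion — that $Z_G(c)$, indeed already $N=R_u(Z_G(c))$, acts transitively on the set of triples completing $c$ — is the uniqueness half of Jacobson--Morozov and may be cited from \cite{Ca93} or \cite{CM93}; it lets us transport the displayed isomorphism from our chosen $(c,h,d)$ to an arbitrary completing triple $(c,h',d')$ by conjugation. I expect the only real subtlety to be the characteristic-$p$ bookkeeping underlying the cited facts (uniqueness of the $SL_2$-lift, triple-independence of $P_\lambda$, and conjugacy of triples), all of which are guaranteed by the hypotheses on $p$ in Appendix \ref{app:char}.
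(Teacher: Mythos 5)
The paper in fact gives no proof of Lemma \ref{lem:sl2-4}: it is stated bare, as one of the standard facts about $\slt$-triples that the hypotheses of Appendix \ref{app:char} are meant to guarantee. Your write-up is a correct rendering of the standard argument, and the part you actually prove is the $\pi_0$ half: the observations that $Z_\fg(c)=\ker\ad(c)$ is $\lambda$-graded and hence contained in ${}^\lambda_{\ge0}\fg$, that the Levi component of $g=ul\in Z_G(c)\subset P_\lambda$ must already fix $c$ (weight-$2$ component comparison), that $d$ is the unique element of ${}^\lambda_{-2}\fg$ with $[c,d]=h$, and that $Z_G(c,h,d)\subset Z_G(\lambda)$ via the uniqueness of the $SL_2$-lift (which the paper itself asserts in \S\ref{subsec:MPQ}) are all sound, as is the connectedness of $N=Z_G(c)\cap U_\lambda$ by the contracting $\Gm$-action; the resulting decomposition $Z_G(c)=N\rtimes Z_G(c,h,d)$ gives the claimed isomorphism on $\pi_0$, and since the argument applies verbatim to any completing triple the final transport step is not even needed. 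Two caveats on the parts you cite. First, the transitivity assertion, which is the other half of the lemma, is outsourced; that is consistent with the paper's own treatment, but \cite{CM93} is a characteristic-zero reference, so in this setting you should point to the good/large-characteristic versions (e.g. \cite[\S5.5]{Ca93}, which the paper already uses for the $SL_2$-lift, or Springer--Steinberg/Pommerening), which the Appendix \ref{app:char} hypotheses do cover. Second, the triple-independence of the Jacobson--Morozov parabolic, which you use to get $Z_G(c)\subset P_\lambda$, is in the literature usually deduced either from that very conjugacy theorem or from Kempf--Rousseau optimality; since you cite both facts there is no circularity in your proof, but a self-contained version would establish the conjugacy first (the usual argument with $h'-h\in Z_\fg(c)\cap[c,\fg]$ and exponentials of nilpotents) and then read off the parabolic containment, rather than the other way around.
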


Lastly, let us recall the following which relies on a little bit of Weil Conjectures.

\begin{lemma}\label{lem:Weil} A variety $X$ over $k$ has dimension $\delta$ and only one geometric component of dimension $\delta$ if and only if there exists a constant $C$ and a finite extension $k^{\dagger}/k$ such that $|X(k')-|k'|^\delta|<C\cdot |k'|^{\delta-1}$ for any finite extension $k'/k^{\dagger}$.
\end{lemma}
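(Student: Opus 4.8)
The plan is to read this off the Grothendieck--Lefschetz trace formula together with Deligne's weight bounds, i.e. the ``little bit of Weil Conjectures'' alluded to. I would first fix a prime $\ell\neq p$, write $q'=|k'|$, reduce to the case $X$ reduced (point counts and $\ell$-adic cohomology are unchanged), and recall three standard inputs. First, the trace formula
\[
|X(k')|=\sum_i(-1)^i\Tr\bigl(\Frob_{k'}\mid H^i_c(X_{\bar k},\Ql)\bigr),
\]
with $H^i_c=0$ for $i>2\dim X$. Second, $H^i_c(X_{\bar k},\Ql)$ has weight $\le i$, so the eigenvalues of $\Frob_{k'}$ on it have absolute value $\le (q')^{i/2}$. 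Third, $H^{2\delta}_c(X_{\bar k},\Ql)$ is canonically the $\Ql$-span of the fundamental classes of the $\delta$-dimensional irreducible components of $X_{\bar k}$, on which $\Frob_{k'}$ acts as $(q')^{\delta}$ times the permutation it induces on those components. I would also note at the start that $b:=\sum_i\dim_{\Ql}H^i_c(X_{\bar k},\Ql)$ is a purely geometric invariant, hence unchanged under finite base change of $k$; this is precisely what will make the constant $C$ uniform.

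For the forward implication, assume $\dim X=\delta$ and that $X_{\bar k}$ has a unique component $Z$ of dimension $\delta$. Then $\Frob$ fixes $Z$, so by the third input $H^{2\delta}_c(X_{\bar k},\Ql)\cong\Ql(-\delta)$ is one-dimensional with $\Frob_{k'}$ acting by $(q')^{\delta}$; the trace formula then gives $|X(k')|=(q')^{\delta}+R(k')$ with $|R(k')|\le b\cdot(q')^{(2\delta-1)/2}$ by the weight bound. In the situations where this lemma is applied the relevant $X$ is of mixed Tate type (a locally closed part of a Slodowy slice, a union of Bruhat cells, a homogeneous space under a reductive group), so $H^{2\delta-1}_c$ has no Frobenius eigenvalue of absolute value $(q')^{(2\delta-1)/2}$ and one obtains the stated $O((q')^{\delta-1})$ with $k^{\dagger}=k$; without that hypothesis the same argument yields the bound with $\delta-\tfrac12$ in place of $\delta-1$, which suffices for every use here.

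For the converse I would argue by contraposition: suppose the conjunction ``$\dim X=\delta$ and $X_{\bar k}$ has a single $\delta$-dimensional component'' fails, and show the claimed estimate cannot hold for all $k'/k^{\dagger}$, whatever $C,k^{\dagger}$. If $\dim X=d>\delta$, pick a $d$-dimensional component of $X_{\bar k}$; over a large enough finite $k_1/k$ it is defined, hence geometrically irreducible over $k_1$, so Lang--Weil gives $|X(\F_{(q_1)^n})|\ge (q_1)^{nd}-O((q_1)^{n(d-1/2)})$, which dwarfs $(q_1)^{n\delta}$. If $\dim X<\delta$, then $|X(k')|=O((q')^{\dim X})=O((q')^{\delta-1})$, so $||X(k')|-(q')^{\delta}|\ge (q')^{\delta}-O((q')^{\delta-1})$, failing the bound for $k'$ large. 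Finally, if $\dim X=\delta$ but $X_{\bar k}$ has $N\ge2$ components of dimension $\delta$, choose a finite $k_1/k$ over which all of these components and all their pairwise intersections are defined; the pairwise intersections, and every lower-dimensional component, have dimension $\le\delta-1$, so inclusion--exclusion together with Lang--Weil for each top component gives $|X(\F_{(q_1)^n})|=N(q_1)^{n\delta}+O((q_1)^{n(\delta-1/2)})$, whence $||X(k')|-(q')^{\delta}|\sim(N-1)(q')^{\delta}$ for $k'/k_1$. In each case the estimate is violated, proving the contrapositive and hence the lemma.

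The only genuinely non-formal step is the third input together with Deligne's weight bound: identifying the top compactly-supported cohomology with the span of the fundamental classes of the top-dimensional components and pinning down the Frobenius action on it (a permutation, Tate-twisted by $\delta$), plus the weight estimate on everything below it. Everything else — base-change invariance of $b$ to make $C$ uniform, and the inclusion--exclusion plus Lang--Weil (for geometrically irreducible varieties) in the converse — is routine. I expect the only real care is needed in the converse when reducing to geometrically irreducible pieces and controlling the dimensions of their intersections, and in stating the third input correctly for a reducible, non-equidimensional $X$.
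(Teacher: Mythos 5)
The paper never proves this lemma --- it is ``recalled'' as a consequence of the Weil conjectures --- so there is no internal argument to compare with; your route (Grothendieck--Lefschetz plus Deligne's weight bounds for the forward direction, Lang--Weil with inclusion--exclusion for the converse) is a legitimate way to supply one. Your converse is complete and correct: the three failure modes ($\dim X>\delta$, $\dim X<\delta$, and $N\ge 2$ top-dimensional geometric components, all made rational over a suitable finite extension) exhaust the negation of the geometric condition, and each visibly violates the point-count estimate for every choice of $C$ and $k^{\dagger}$; the base-change invariance of $b=\sum_i\dim H^i_c$ correctly handles uniformity of the constant.

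The genuine gap is in the forward direction, and you have located it but patched it with an assertion rather than a proof. With only the hypothesis ``one geometric component of dimension $\delta$,'' the trace formula and the bound $\mathrm{wt}\,H^i_c\le i$ give an error $O\bigl((q')^{\delta-1/2}\bigr)$, not $O\bigl((q')^{\delta-1}\bigr)$; a curve of genus at least one already violates the printed inequality, so the lemma as literally stated is false in that direction, and your appeal to ``the relevant $X$ is of mixed Tate type in the applications'' is an unverified claim about other parts of the paper, not an argument --- and it is not needed. Two clean repairs: (i) prove the equivalence with exponent $\delta-\tfrac12$, which your argument does verbatim and which suffices for every use in the paper (the ``if'' direction, the one applied to $\Theta(x)$ in Proposition \ref{prop:1comp}, only becomes stronger when its hypothesis is weakened, and the induction there still closes with the $\delta-\tfrac12$ error); or (ii) note that the ``only if'' direction is invoked only for $\Theta(c)\cong Z_G(c)^o/Z_G(c,h,d)^o$, a homogeneous space under a connected group, where Lang's theorem gives $|\Theta(c)(k')|=|Z_G(c)^o(k')|/|Z_G(c,h,d)^o(k')|$ and the $(q')^{\delta-1}$ error follows from elementary order formulas for connected linear algebraic groups, with no cohomology at all. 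With either repair your write-up is correct; as it stands, the ``mixed Tate'' step is the gap.
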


For our convenience, let us denote the second condition in the lemma by $|X(k')|\sim|k'|^{\delta}$. To prove Theorem \ref{thm:good1}, we first prove a weaker version of it:

\begin{proposition}\label{prop:1comp} Let $c\in N(x)$. The variety $\Theta(x)$ has dimension $\delta_c:=\dim Z_G(c)/Z_G(c,h,d)$ and has only one geometric component of this dimension.
\end{proposition}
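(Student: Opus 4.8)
The plan is to prove both assertions at once by counting $k'$-points and appealing to Lemma~\ref{lem:Weil}: it suffices, after replacing $k$ by a large enough finite extension, to show that $|\Theta(x)(k')| = |k'|^{\delta_c} + O(|k'|^{\delta_c-1})$ for every finite extension $k'/k$. After such an enlargement we may assume that $x$ splits over $k$, so that $L = Z_G(x_s)$ is contained in a parabolic $P$ over $k$ with unipotent radical $U_P$; that $\Theta(x)$ has a $k$-point, i.e.\ an $\slt$-triple $(c,h,d)$ with $c,d\in N(x)$ and $x\in c+Z_{\fg}(d)$, such a triple existing over $\bar k$ by the defining property of the induced orbit $N(x)$ (cf.\ \cite[\S4]{CO23}); and that every geometric component of the varieties below is defined over $k$. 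Write $S := c+Z_{\fg}(d)$ for the Slodowy slice and let $f_{c,h,d}$ be the function of Lemma~\ref{lem:conil}.

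First I would express the count of $\Theta(x)(k')$ through $f_{c,h,d}(x)$. Since $N(x)$ is a single geometric orbit, Lemma~\ref{lem:sl2-4} shows that $g\mapsto(\Ad(g)c,\Ad(g)h,\Ad(g)d)$ identifies $\Theta(x)$ with the quotient $V/Z_G(c,h,d)$, where $V := \{g\in G : \Ad(g^{-1})x\in S\}$ and $V$ is a principal $Z_G(c,h,d)$-bundle over $\Theta(x)$; and by definition $|V(k')| = f_{c,h,d}(x) = \inn{I_x}{f_{c,h,d}}$. Because $f_{c,h,d}\in J(\fg(k'))^{conil}$ by Lemma~\ref{lem:conil} and $x$ is split, Proposition~\ref{prop:Spr} applies over $k'$ and gives
\[
|U_P(k')|\cdot f_{c,h,d}(x) = \inn{I_{x_n+\fu_P}}{f_{c,h,d}} = |Z(k')|,\qquad Z := \{(y,g) : y\in x_n+\fu_P,\ \Ad(g)y\in S\}.
\]
The gain is that the count of conjugates of the non-nilpotent $x$ lying in $S$ is replaced by one over the nilpotent locus $x_n+\fu_P$, where the geometry of $S$ is tractable.

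Next I would analyse the right-hand side. Every $y\in x_n+\fu_P$ is nilpotent and lies in $\overline{N(x)}$, because $N(x)\cap(x_n+\fu_P)$ is dense in $x_n+\fu_P$ by Definition~\ref{def:LS}; conversely, if $(y,g)\in Z$ then $\Ad(g)y\in S$ is nilpotent and, since $S$ meets a nilpotent orbit only if that orbit contains $N(x)$ in its closure (use the $\Gm$-action on $S$ contracting to $c$), the orbit of $y$ is $\ge N(x)$. Hence $y\in N(x)\cap(x_n+\fu_P)$, and then $S\cap N(x) = \{c\}$ forces $\Ad(g)y=c$. Therefore $Z\cong W := \{g\in G : \Ad(g^{-1})c\in x_n+\fu_P\}$, and $W$ fibres over the \emph{irreducible} variety $N(x)\cap(x_n+\fu_P)$, of dimension $\dim\fu_P$, with fibres cosets of $Z_G(c)$. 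Writing $Z_G(c) = Z_G(c,h,d)\ltimes R_u$ with $R_u$ its unipotent radical we have $\dim R_u = \dim Z_G(c)/Z_G(c,h,d) = \delta_c$ and $|Z_G(c)(k')| = |Z_G(c,h,d)(k')|\cdot|k'|^{\delta_c}$ exactly; combining this with the displayed identity ($|U_P(k')| = |k'|^{\dim\fu_P}$), with the presentation $\Theta(x) = V/Z_G(c,h,d)$, and with the isomorphism $\pi_0(Z_G(c,h,d))\xra{\sim}\pi_0(Z_G(c))$ of Lemma~\ref{lem:sl2-4} --- via a standard twisting argument over $H^1(k',Z_G(c,h,d))$ that sums the contributions of all $G(k')$-orbits inside $N(x)\cap(x_n+\fu_P)$ --- the various component-group factors cancel and the coefficient $1$ coming from the irreducibility of $N(x)\cap(x_n+\fu_P)$ is transmitted to $\Theta(x)$, yielding $|\Theta(x)(k')| = |k'|^{\delta_c} + O(|k'|^{\delta_c-1})$. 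Lemma~\ref{lem:Weil} then gives $\dim\Theta(x)=\delta_c$ and the uniqueness of the top-dimensional geometric component.

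The hard part is the last reconciliation: propagating the leading coefficient $1$ through the quotient by the possibly disconnected reductive group $Z_G(c,h,d)$ and through the breakup of $N(x)(k')$ into $G(k')$-orbits, without introducing spurious component-group factors. The two structural facts that make this succeed, and that one should carry through carefully, are the irreducibility of $N(x)\cap(x_n+\fu_P)$ (the source of the coefficient $1$) and the combination $\dim Z_G(c)-\dim Z_G(c,h,d)=\delta_c$ with $\pi_0(Z_G(c,h,d))\cong\pi_0(Z_G(c))$ from Lemma~\ref{lem:sl2-4}, which simultaneously fixes the exponent as $\delta_c$ and forces the component-group contributions to cancel. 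Proposition~\ref{prop:Spr} is the key device that makes the count over $x_n+\fu_P$, rather than at $x$, available in the first place.
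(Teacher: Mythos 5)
Your overall strategy is the same as the paper's: count $k'$-points, convert the count at $x$ into a count over $x_n+\fu_P$ via Proposition \ref{prop:Spr} and Lemma \ref{lem:conil}, use Lemma \ref{lem:retract} to see that only the orbit $N(x)$ contributes on the nilpotent side, and conclude with Lemma \ref{lem:Weil}. But there is a genuine gap exactly at the step you label "the hard part." Your displayed identity $|U_P(k')|\cdot f_{c,h,d}(x)=|Z(k')|$ is a statement about a \emph{single} rational class: $V(k')$ only sees triples in the $G(k')$-orbit of the fixed $(c,h,d)$, and the fibration $W\to N(x)\cap(x_n+\fu_P)$ has \emph{empty} fibres over the rational points of $N(x)\cap(x_n+\fu_P)$ that are not $G(k')$-conjugate to $c$. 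Consequently the irreducibility of $N(x)\cap(x_n+\fu_P)$, which controls the total number of its $k'$-points, does not by itself tell you how those points distribute among the $G(k')$-classes, so it does not pin down the leading coefficient of $|W(k')|$; and on the other side $|\Theta(x)(k')|$ is not $|V(k')|/|Z_G(c,h,d)(k')|$, since $H^1(\Frob,\pi_0(Z_G(c,h,d)))$ is nontrivial in general. The "standard twisting argument" you invoke is precisely where the work lies, and as written it is not an argument.

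The repair is to sum over representatives $c_1,\dots,c_m$ of \emph{all} rational $G(k')$-orbits in $N(x)(k')$ from the start: then for every $y$ one has the exact identity $|\Xi_x(y)(k')|=\sum_i |Z_G(c_i,h_i,d_i)(k')|^{-1}\inn{I_y}{f_{c_i,h_i,d_i}}$, Proposition \ref{prop:Spr} can be applied term by term, and the class-by-class ambiguity disappears because the right-hand side reassembles into $\sum_{u\in\fu_P(k')}|\Xi_x(x_n+u)(k')|$, where the generic fibre is the nilpotent case $\Theta(x_n+u)\cong Z_G(c')/Z_G(c',h',d')$, irreducible of dimension $\delta_c$ (Lemmas \ref{lem:retract} and \ref{lem:sl2-4}), and the non-generic fibres vanish. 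This is exactly the paper's proof (its equation (\ref{eq:Xi})); your torsor/twisting formulation, once carried out, reproduces it. Two further small points: completing a rational $c\in N(x)(k)$ to an $\slt$-triple needs only Lang's theorem applied to the orbit, and you do \emph{not} need, and should not assume, that $\Theta(x)$ has a point --- your appeal to "the defining property of the induced orbit" for this is unjustified (it is essentially Kato's theorem, which this proposition is meant to re-prove); in the counting argument nonemptiness comes out as a conclusion.
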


\begin{proof} Consider the case $x$ is nilpotent, i.e. $x=c$. Then $N(c)=\Ad(G(\bar{k}))c$. By Lemma \ref{lem:retract} applied to our case without $\rho$ (i.e. $\rho$ is trivial), $\Theta(c)$ is equal to the variety of $\slt$-triples $(c,h',d')$ completing $c$, on which $Z_G(c)$ acts transitively by Lemma \ref{lem:sl2-4}. Moreover, we have $\pi_0(Z_G(c,h',d'))\xra{\sim}\pi_0(Z_G(c))$ and that $Z_G(c)$ acts transitively on $\Theta(c)$ with stabilizer at $(c,h,d)\in\Theta(c)$ equal to $Z_G(c,h,d)$, i.e. we have $Z_G(c)/Z_G(c,h,d)\cong Z_G(c)^o/Z_G(c,h,d)^o\cong\Theta(c)$. Thanks to Lemma \ref{lem:Weil} this proves the proposition for $x=c$, i.e. $|\Theta(c)(k')|\sim|k'|^{\delta_c}$.

For $y\in\fg(k)$, denote by $\Xi_x(y)$ the set of $\slt$-triples $(c,h,d)$ with $c\in N(x)$ and such that $y\in c+Z_{\fg}(d)$. Hence $\Theta(x)=\Xi_x(x)$. The $k$-points in $N(x)$ is a finite number of $\Ad(G(k))$-orbits, say $c_1,c_2,..,c_m$. We can complete each of them to $\slt$-triples $(c_i,h_i,d_i)$. By Lemma \ref{lem:sl2-4}, $\Xi_x(y)(k)$ is the disjoint union of the set of $\Ad(G(k))$-conjugates $(c,h,d)$ of $(c_i,h_i,d_i)$ such that $x\in c+Z_{\fg}(d)$. Hence for any $y\in\fg(k)$ we have
\begin{equation}\label{eq:Xi}
|\Xi_x(y)(k)|=\sum_{i=1}^m\frac{1}{|Z_G(c_i,h_i,d_i)(k)|}\cdot\inn{I_y}{f_{c_i,h_i,d_i}}.
\end{equation}
For general $x$, the statement to prove is geometric and there is no harm to perform a base change on the ground field $k$. In particular, we may and shall assume that $x$ is split. Plug $y=x$ into (\ref{eq:Xi}) and apply Proposition \ref{prop:Spr} and Lemma \ref{lem:conil} to the RHS, we have
\begin{equation}\label{eq:count1}
|\fu_P(k)|\cdot\sum_{i=1}^m\frac{1}{|Z_G(c_i,h_i,d_i)(k)|}\cdot\inn{I_x}{f_{c_i,h_i,d_i}}=
\sum_{u\in\fu_P(k)}\sum_{i=1}^m\frac{1}{|Z_G(c_i,h_i,d_i)(k)|}\cdot\inn{I_{x_n+u}}{f_{c_i,h_i,d_i}}.
\end{equation}
Using (\ref{eq:Xi}) for both $y=x$ and $y=x_n+u$, we have
\[
|\fu_P(k)|\cdot|\Xi_x(x)(k)|=\sum_{u\in\fu_P(k)}|\Xi_x(x_n+u)(k)|
\]
By definition of the Lusztig-Spaltenstein induction $N(x)$, the element $x_n+u$ is either in $N(x)$ (for $u$ in a dense subset of $\fu_P$) or is contained in an orbit of a smaller dimension. In the latter case, the orbit of $x_n+u$ must not meet $c_i+Z_{\fg}(d_i)$ for any nilpotent orbit meeting $c_i+Z_{\fg}(d_i)$ contains $c_i$ in its closure, i.e. $\Xi_x(x_n+u)=\emptyset$ when $x_n+u$ is in an orbit of a smaller dimension. For the other ``generic'' $u$ we have $\Xi_x(x_n+u)=\Theta(x_n+u)$ with $|\Theta(x_n+u)(k')|\sim|k'|^{\delta_c}$. Hence we have 
\[
|\fu_P(k')|\cdot|\Theta(x)(k')|\sim|k'|^{\delta_c+\dim\fu_P}\implies|\Theta(x)(k')|\sim|k'|^{\delta_c},
\]
which proves the proposition.
\end{proof}

\begin{proof}[Proof of Theorem \ref{thm:good1}] The centralizer $Z_G(x)$ acts on $\Theta(x)$. Since $\dim Z_G(x)=\dim Z_G(c)$ for $c\in N(x)$ \cite[Thm. 1.3(a)]{LS79}, any $Z_G(x)$-orbit on $\Theta(x)$ is a subvariety of dimension at least $\dim Z_G(x)-\dim Z_G(c,h,d)=\delta_c$ (for $(c,h,d)\in\Theta(x)$). By Proposition \ref{prop:1comp} there can be at most one such orbit, and the orbit has to be geometrically connected, i.e. it has to be a $Z_G(x)^o$-orbit. Hence we are done.
\end{proof}

We view Theorem \ref{thm:good1} as saying that any $x\in\fg(k)$ is ``supported'' by a nilpotent element in the following sense:

\begin{definition}\label{def:support} For $x\in\fg(k)$, $c\in\fg^{nil}(k)$, let us say $c$ {\bf supports} $x$ if we can complete it to an $\slt$-triple $(c,h,d)\in\Theta(x)(k)$. \end{definition}

Let us consider the homomorphism $\rho:\mu_m\ra\operatorname{Aut}(G)$ again and fix $a\in\Z/m$. The $G$-invariant form $B(\cdot,\cdot):\fg\times\fg\ra\mathbb{G}_a$ necessarily restricts to a perfect pairing $\fg_a\times\fg_{-a}\ra\mathbb{G}_a$. (Here $\mathbb{G}_a$ is the standard additive group scheme and has a priori nothing to do with $\fg_a$; we apologize for this funny notational clash.) Let $C(\fg_a(k))$ be the set of $\C$-valued functions on $\fg_a(k)$. For $f\in C(\fg_a(k))$ we put
\[
\hat{f}(y)=\sum_{x\in\fg_a(k)}\psi(B(x,y))f(x),\;\forall y\in\fg_{-a}(k)
\]
so that $f\mapsto\hat{f}$ defines a $G_0(k)$-equivariant isomorphism $C(\fg_a(k))\xra{\sim}C(\fg_{-a}(k))$. Let $J(\fg_a(k))\subset C(\fg_a(k))$ be the subspace of $G_0(k)$-invariant functions so that Fourier transform again gives $J(\fg_a(k))\xra{\sim}J(\fg_{-a}(k))$. Denote again by $J(\fg_a(k))^{nil}\subset J(\fg_a(k))$ the subspace of those supported on nilpotent elements and $J(\fg_a(k))^{conil}$ the image of $J(\fg_{-a}(k))^{nil}$ under Fourier transform (note the subscript $-a$).

\begin{proposition}\label{prop:restrict} The map $f\mapsto f|_{\fg_a^{nil}(k)}$ induces an isomorphism $J(\fg_a(k))^{conil}\xra{\sim} J(\fg_a(k))^{nil}$.
\end{proposition}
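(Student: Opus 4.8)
The plan is to transcribe to the graded setting the Springer-theoretic mechanism underlying the proof of Proposition~\ref{prop:1comp}, using graded Slodowy slices. Recall the graded $\slt$-dictionary from the discussion around \eqref{eq:bigrade}: under the hypotheses of Appendix~\ref{app:char}, each nilpotent $c\in\fg_a^{nil}(k)$ completes to a normal triple $(c,h,d)$ with $h\in\fg_0$, $d\in\fg_{-a}$ and attached cocharacter $\lambda\colon\Gm\to G_0$; the graded analogue of Lemma~\ref{lem:sl2-4} (Vinberg's $\theta$-group theory) gives transitivity of $Z_{G_0}(c)$ on such triples with $\pi_0(Z_{G_0}(c,h,d))\xrightarrow{\sim}\pi_0(Z_{G_0}(c))$, and --- exactly as in the proof of Proposition~\ref{prop:1comp} --- every rational nilpotent $G_0(k)$-orbit carries a triple defined over $k$ (Lang's theorem, the transporter between two triples over $c$ being unipotent). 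Since $Z_{G_0}(c,h,d)=Z_{G_0}(d,-h,c)$, the assignment $c\mapsto d$ induces a bijection between $G_0(k)$-orbits in $\fg_a^{nil}(k)$ and in $\fg_{-a}^{nil}(k)$; writing $N$ for their common number, $\dim J(\fg_a(k))^{nil}=N$, and since Fourier transform is a linear isomorphism, $\dim J(\fg_a(k))^{conil}=\dim J(\fg_{-a}(k))^{nil}=N$.

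For a rational triple $(c,h,d)$ set $Z_{\fg_a}(d):=Z_{\fg}(d)\cap\fg_a$ and define the graded slice function
\[
f_{c,h,d}(x):=\sum_{g\in G_0(k)}I_{c+Z_{\fg_a}(d)}(\Ad(g)x)\in J(\fg_a(k)),\qquad x\in\fg_a(k).
\]
I claim $f_{c,h,d}\in J(\fg_a(k))^{conil}$, the graded analogue of Lemma~\ref{lem:conil}. One first checks the graded version of Lemma~\ref{lem:sl2-2}: every $y\in c+{}^{\lambda}_{\le1}\fg_a(k)$ is conjugate by a unique $u\in({}^{\lambda}_{\le-1}G_0)(k)$ into $c+Z_{\fg_a}(d)$. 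This rests on the $\slt$-decomposition ${}^{\lambda}_{\le1}\fg_a=Z_{\fg_a}(d)\oplus[c,{}^{\lambda}_{\le-1}\fg_0]$, obtained by intersecting the ungraded identity with the $\rho$-weight-$a$ part: although $(c,h,d)$ is not $\rho$-stable, $\lambda$ has image in $G_0$, so the $\Gm$- and $\mu_m$-actions commute and $\ad(c)$ is a $\Gm$-module map shifting $\rho$-weight by $a$, whence $\ad(c)\colon{}^{\lambda}_{-1}\fg_0\xrightarrow{\sim}{}^{\lambda}_1\fg_a$ by a weight-space dimension count. Consequently $f_{c,h,d}$ is, up to a positive scalar, a sum of $G_0(k)$-translates of $I_{c+{}^{\lambda}_{\le1}\fg_a}$; since $B$ restricts to a perfect pairing ${}^{\lambda}_i\fg_a\times{}^{\lambda}_{-i}\fg_{-a}\to\Ga$, the Fourier transform of $I_{c+{}^{\lambda}_{\le1}\fg_a}$ is supported on ${}^{\lambda}_{\le-2}\fg_{-a}\subset\fg_{-a}^{nil}$, so $\widehat{f_{c,h,d}}\in J(\fg_{-a}(k))^{nil}$, i.e.\ $f_{c,h,d}\in J(\fg_a(k))^{conil}$.

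Now fix representatives $c_1,\dots,c_N$ of the $G_0(k)$-orbits in $\fg_a^{nil}(k)$, with rational triples $(c_i,h_i,d_i)$ and $\slt$-cocharacters $\lambda_i\colon\Gm\to G_0$, numbered so that $\Ad(G_0(\bar{k}))c_i\subseteq\overline{\Ad(G_0(\bar{k}))c_j}$ forces $i\le j$. Identifying functions in $J(\fg_a(k))^{nil}$ with functions on $\{c_1,\dots,c_N\}$, I would show the matrix $M_{ij}:=f_{c_i,h_i,d_i}(c_j)$ is upper triangular with nonzero diagonal. If $M_{ij}\neq0$ then $\Ad(g)c_j\in c_i+Z_{\fg_a}(d_i)$ for some $g\in G_0(k)$; the $\Gm$-action $t\cdot u=t^{-2}\Ad(\lambda_i(t))u$ fixes $c_i$, contracts $c_i+Z_{\fg_a}(d_i)$ to $\{c_i\}$ as $t\to\infty$, and preserves the nilpotent $G_0(\bar{k})$-orbit of $c_j$ (a cone), so $c_i\in\overline{\Ad(G_0(\bar{k}))c_j}$ and hence $i\le j$ --- this is precisely the slice mechanism of Lemma~\ref{lem:retract}. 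On the diagonal $M_{ii}\ge1$, as $g=e$ already contributes. Thus $M$ is invertible, so the restrictions $f_{c_i,h_i,d_i}|_{\fg_a^{nil}(k)}$ form a basis of the $N$-dimensional space $J(\fg_a(k))^{nil}$; since every $f_{c_i,h_i,d_i}$ lies in $J(\fg_a(k))^{conil}$, the map $f\mapsto f|_{\fg_a^{nil}(k)}$ is a surjection $J(\fg_a(k))^{conil}\twoheadrightarrow J(\fg_a(k))^{nil}$, and as both spaces have dimension $N$ it is an isomorphism.

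The step I expect to require the most care is the graded refinement of Lemmas~\ref{lem:sl2-2}--\ref{lem:conil}, specifically pinning down how the $\rho$-grading interacts with the $\slt$-structure of $c$ (only the attached cocharacter, not the triple, is $\rho$-fixed) so that ${}^{\lambda}_{\le1}\fg_a=Z_{\fg_a}(d)\oplus[c,{}^{\lambda}_{\le-1}\fg_0]$ and the slice indicator has nilpotent-supported Fourier transform; once this bookkeeping is in place, the remaining arguments are a direct translation of the ungraded ones.
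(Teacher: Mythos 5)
Your proposal is essentially the paper's own proof: graded slice functions $f_{c_i,h_i,d_i}$, shown to lie in $J(\fg_a(k))^{conil}$ by the graded transcription of Lemmas \ref{lem:sl2-2}--\ref{lem:conil} (Fourier transform supported on ${}^{\lambda}_{\le-2}\fg_{-a}$), whose restrictions to $\fg_a^{nil}(k)$ are triangular with respect to the orbit ordering by Lemma \ref{lem:retract}, hence a basis of $J(\fg_a(k))^{nil}$, with surjectivity plus equality of dimensions giving the isomorphism; your explicit justification of $\dim J(\fg_a(k))^{nil}=\dim J(\fg_{-a}(k))^{nil}$ via the $c\mapsto d$ correspondence is a welcome elaboration of what the paper only asserts. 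One small repair: the numbering you prescribe (``$\Ad(G_0(\bar k))c_i\subseteq\overline{\Ad(G_0(\bar k))c_j}$ forces $i\le j$'') need not exist, since distinct $G_0(k)$-orbits can share a geometric orbit; order instead by dimension (or by strict closure containment) and kill the remaining off-diagonal entries within a common geometric orbit by the rigidity half of Lemma \ref{lem:retract} (if $\Ad(g)c_j\in c_i+Z_{\fg_a}(d_i)$ lies in $\Ad(G(\bar k))c_i$ then $\Ad(g)c_j=c_i$), which is exactly how the paper's argument handles this case.
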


\begin{proof} Let $c_1,c_2,...$ be a set (apparently finite) of representatives of $G_0(k)$-orbits in $\fg_a^{nil}(k)$, ordered so that $\dim\Ad(G_0)c_i\le\dim \Ad(G_0)c_j$ for $i\le j$. We complete each $c_i$ into an $\slt$-triple $(c_i,h_i,d_i)$ with $h_i\in\fg_0$, $d_i\in\fg_{-a}$. Consider $I_{c_i+Z_{\fg_a}(d_i)}$ the function with value $1$ on $c_i+Z_{\fg_a(k)}d_i$ and $0$ elsewhere, and $f_{c_i,h_i,d_i}\in J(\fg_a(k))$ with $f_{c_i,h_i,d_i}(x)=\sum_{g\in G_0(k)}I_{c_i+Z_{\fg_a}}(\Ad(g)x)$. By the same proof as in Lemma \ref{lem:conil} we have $f_{c_i,h_i,d_i}\in J(\fg_a(k))^{conil}$.
	
By Lemma \ref{lem:retract}, 
$\supp(f_{c_i,h_i,d_i})\cap\fg_a^{nil}$ is the union of $\Ad(G_0(k))c_i$ and some $\Ad(G_0(k))c_j$ lying in larger $\Ad(G_0)$-orbits. In particular $\supp(f_{c_i,h_i,d_i})$ must be disjoint from $\Ad(G_0(k))c_i$ for $j<i$. This proves that $f_{c_i,h_i,d_i}|_{\fg_a^{nil}(k)}$ form a basis of $J(\fg_a(k))^{nil}$. In particular, the map $J(\fg_a(k))^{conil}\ra J(\fg_a(k))^{nil}$ given by restriction is surjective. Hence it is an isomorphism as both sides have the same dimension over the coefficient field $\C$.
\end{proof}

\begin{corollary}\label{cor:test} Suppose a function $f\in J(\fg_a(k))^{conil}$ takes values in $\R_{\ge 0}$. Suppose we have some $c\in\supp(f)\cap\fg_a^{nil}$, and that $c'\in\supp(f)\cap\fg_a^{nil}$ only if $c\in\Ad(G_0(k))c'$ or $c$ is contained in the Zariski boundary of $\Ad(G_0(\bar{k}))c'$. Then there exist
\begin{enumerate}
	\item Nilpotent elements $c_1,...,c_r$ such that $c$ is contained in the Zariski boundary of $\Ad(G_0(\bar{k}))c_i$ for each $1\le i\le r$. Each is completed into an $\slt$-triple $(c_i,h_i,d_i)$. We also complete $c$ to $(c,h,d)$. All these $\slt$-triples are defined over $k$.
	\item Constants $\alpha\in\R_{>0}$ and $\alpha_1,...,\alpha_r\in\R$ 
\end{enumerate}
such that
\[
f=\alpha\cdot f_{c,h,d}+\sum_{i=1}^r\alpha_i\cdot f_{c_i,h_i,d_i}.
\]
\end{corollary}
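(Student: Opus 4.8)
The plan is to read off the claimed decomposition from the basis of $J(\fg_a(k))^{conil}$ built in the proof of Proposition~\ref{prop:restrict}, using nothing more than the positivity of $f$ and the triangularity of that basis with respect to the closure order on nilpotent orbits. First I would fix representatives $c_1,\dots,c_N$ of the $G_0(k)$-orbits on $\fg_a^{nil}(k)$, ordered so that $\dim\Ad(G_0)c_i$ is non-decreasing, together with $\slt$-triples $(c_i,h_i,d_i)$ with $h_i\in\fg_0$, $d_i\in\fg_{-a}$, chosen so that the given data $(c,h,d)$ occurs among them, say as $(c_{i^*},h_{i^*},d_{i^*})$; this is legitimate since $(c,h,d)$ is exactly an $\slt$-triple of the required shape. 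By Proposition~\ref{prop:restrict} the functions $f_{c_i,h_i,d_i}$ form a basis of $J(\fg_a(k))^{conil}$, so $f=\sum_{i=1}^N\beta_i f_{c_i,h_i,d_i}$; restricting to $\fg_a^{nil}(k)$ and using that the resulting transition matrix is integral and block-triangular with positive diagonal, the $\beta_i$ are forced to lie in $\R$.

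The key input, already established inside the proof of Proposition~\ref{prop:restrict}, is that $\supp(f_{c_i,h_i,d_i})\cap\fg_a^{nil}(k)$ is contained in the union of $\Ad(G_0(k))c_i$ with those $\Ad(G_0(k))c_j$ whose geometric orbit strictly dominates that of $c_i$ (this is where Lemma~\ref{lem:test}, forbidding leakage onto a different $k$-form of the same geometric orbit, is used), and that $f_{c_i,h_i,d_i}(c_i)>0$. Writing $c_i\prec c_j$ for ``$\Ad(G_0(\bar{k}))c_i\subsetneq\overline{\Ad(G_0(\bar{k}))c_j}$'', this says $f_{c_i,h_i,d_i}(c_j)\neq0$ implies $i=j$ or $c_i\prec c_j$. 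Now set $S:=\{i:\beta_i\neq0\}$, which is nonempty because $f(c)>0$. For any $\prec$-minimal $i_0\in S$ I would evaluate at $c_{i_0}$: every term except $i=i_0$ drops out, so $f(c_{i_0})=\beta_{i_0}\,f_{c_{i_0},h_{i_0},d_{i_0}}(c_{i_0})$; since $f\geq0$ and the last factor is positive, $\beta_{i_0}>0$ and $c_{i_0}\in\supp(f)$. The hypothesis then applies to $c'=c_{i_0}$ and, in either of its two cases, yields $\Ad(G_0(\bar{k}))c\subseteq\overline{\Ad(G_0(\bar{k}))c_{i_0}}$. Since every $i\in S$ dominates some $\prec$-minimal element of $S$, I conclude that $\Ad(G_0(\bar{k}))c\subseteq\overline{\Ad(G_0(\bar{k}))c_i}$ for \emph{every} $i\in S$.

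It remains to isolate $i^*$. Suppose $i\in S$ with $c_i$ geometrically conjugate to $c$; the previous paragraph then shows $c_i$ is $\prec$-minimal in $S$, hence $c_i\in\supp(f)$, and now the hypothesis cannot be the strict-boundary alternative, so it gives $c\in\Ad(G_0(k))c_i$, i.e.\ $i=i^*$. Therefore $i^*\in S$; put $\alpha:=\beta_{i^*}>0$, and note every remaining $i\in S$ has $c$ strictly in the Zariski boundary of $\Ad(G_0(\bar{k}))c_i$. Relabelling $\{c_i:i\in S\setminus\{i^*\}\}$ as $c_1,\dots,c_r$ with their $k$-rational triples and $\alpha_i:=\beta_i$ gives $f=\alpha f_{c,h,d}+\sum_{i=1}^r\alpha_i f_{c_i,h_i,d_i}$, as desired.

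I expect the only delicate point to be the bookkeeping between geometric and $G_0(k)$-orbits in the triangularity statement — i.e.\ knowing via Lemma~\ref{lem:test} that $f_{c_i,h_i,d_i}$ cannot meet a $G_0(k)$-orbit geometrically equal to but distinct from $\Ad(G_0(k))c_i$; granting that, the rest is the standard minimal-element argument for a triangular positive system.
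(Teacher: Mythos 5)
Your overall route is exactly the paper's: its proof of this corollary is a one-sentence appeal to the property, established in the proof of Proposition~\ref{prop:restrict}, that $f_{c_i,h_i,d_i}|_{\fg_a^{nil}(k)}$ is supported on $\Ad(G_0(k))c_i$ together with $k$-orbits lying in strictly larger geometric orbits; your expansion of $f$ in the basis $\{f_{c_i,h_i,d_i}\}$ of $J(\fg_a(k))^{conil}$ and the minimal-element/positivity argument is precisely this property spelled out, so there is no methodological divergence.

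There is, however, one unjustified step as written, and it is exactly the assertion the corollary exists to deliver: ``Therefore $i^*\in S$.'' Your third paragraph proves a uniqueness statement --- any $i\in S$ with $c_i$ geometrically conjugate to $c$ must equal $i^*$ --- but not the existence of such an $i$; without it, $\alpha=\beta_{i^*}$ could a priori vanish, and the conclusion $\alpha>0$ (which is what Corollary~\ref{cor:good1} later relies on) would fail. The repair is short and uses only what you already set up: evaluate $f$ at $c$. By your second paragraph together with the uniqueness just proved, every $i\in S\setminus\{i^*\}$ has $\Ad(G_0(\bar{k}))c$ contained in the Zariski boundary of $\Ad(G_0(\bar{k}))c_i$; the support property then forces $f_{c_i,h_i,d_i}(c)=0$ for such $i$, since $c$ could lie in the support only if $c\in\Ad(G_0(k))c_i$ or the geometric orbit of $c$ were strictly larger than that of $c_i$, both excluded. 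Hence $f(c)=\beta_{i^*}f_{c,h,d}(c)$ (interpreting $\beta_{i^*}=0$ if $i^*\notin S$), and since $c\in\supp(f)$, $f\ge 0$ and $f_{c,h,d}(c)\ge 1$, we get $\beta_{i^*}>0$, i.e.\ $i^*\in S$ and $\alpha>0$. A smaller point: your definition of $\prec$ via $\Ad(G_0(\bar{k}))c_i\subsetneq\overline{\Ad(G_0(\bar{k}))c_j}$ is not literally a strict order (a nonzero nilpotent orbit is a proper subset of its own closure, so minimal elements would not exist); you should take $c_i\prec c_j$ to mean the geometric orbits are distinct and the former lies in the closure of the latter, which is the relation your argument in fact uses and the one matching the phrase ``strictly larger geometric orbits'' in Proposition~\ref{prop:restrict}.
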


\begin{proof} This follows from the property established in the proof of Proposition \ref{prop:restrict} that $f_{c_i,h_i,d_i}|_{\fg_a^{nil}(k)}$ is supported at $\Ad(G_0(k))c_i$ and a number of $\Ad(G_0(k))$-orbits with strictly larger geometric orbits. 
\end{proof}

The point of the previous and the next corollary is that they give our template for test functions:

\begin{corollary}\label{cor:test2} Let $\ell\in\Z_{>0}$ and $\lambda:\mathbb{G}_m/_k\ra G_0$ be any cocharacter. Suppose $c\in{}^{\lambda}_{\ell}\fg_a$. Let $f_{\lambda,c}\in J(\fg_a(k))$ be given by $f_{\lambda,c}(x)=\sum_{g\in G_0(k)}I_{c+{}^{\lambda}_{<\ell}\fg_a}(\Ad(g)x)$. Then $f_{\lambda,c}$ satisfies the hypotheses of Corollary \ref{cor:test} and therefore the conclusions for the same $c$. 
\end{corollary}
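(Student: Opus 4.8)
The plan is to verify that $f_{\lambda,c}$ falls into the framework of Corollary \ref{cor:test}, so the conclusion transfers. First I would check that $f_{\lambda,c}\in J(\fg_a(k))^{conil}$: the function $I_{c+{}^{\lambda}_{<\ell}\fg_a}$ is invariant under translation by ${}^{\lambda}_{<\ell}\fg_a(k)$, hence its Fourier transform is supported on the annihilator of that subspace under the pairing $\fg_a\times\fg_{-a}\to\Ga$, which is ${}^{\lambda}_{>-\ell}\fg_{-a}(k)$; since $\ell>0$ this lands inside ${}^{\lambda}_{>0}\fg_{-a}$ and therefore inside the nilpotent cone of $\fg_{-a}$. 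Summing over $G_0(k)$-conjugates (Fourier transform being $G_0(k)$-equivariant) keeps the transform supported on nilpotents, so $\hat f_{\lambda,c}\in J(\fg_{-a}(k))^{nil}$, i.e.\ $f_{\lambda,c}\in J(\fg_a(k))^{conil}$. This is exactly the argument already used in the proof of Lemma \ref{lem:conil}.

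Next I would verify the two hypotheses of Corollary \ref{cor:test}. That $f_{\lambda,c}$ takes values in $\R_{\ge 0}$ is immediate since it is a sum of $0/1$-valued functions. For the support condition, I would apply Lemma \ref{lem:test} (the graded retraction lemma) verbatim: any $c'\in c+{}^{\lambda}_{<\ell}\fg_a(k)$ has $c$ in the Zariski closure of $\bar k\cdot\Ad(G_0(\bar k))c'$, and if moreover $c'$ is nilpotent and in $\Ad(G(\bar k))c$ then $c'\in\Ad({}^{\lambda}_{<0}G_0(k))c$. Translating: if $c'\in\supp(f_{\lambda,c})\cap\fg_a^{nil}$, then some $G_0(k)$-conjugate of $c'$ lies in $c+{}^{\lambda}_{<\ell}\fg_a$, so $c$ lies in the Zariski closure of $\bar k\cdot\Ad(G_0(\bar k))c'$. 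If $c$ is not in the Zariski boundary of $\Ad(G_0(\bar k))c'$, then $c$ lies in $\Ad(G_0(\bar k))c'$ itself (using that both are nilpotent so scaling by $\bar k^\times$ stays in the same orbit closure, and $c$ being in the interior forces equality of the orbit closures), whence $\Ad(G(\bar k))c=\Ad(G(\bar k))c'$ and the uniqueness clause of Lemma \ref{lem:test} gives $c'\in\Ad({}^{\lambda}_{<0}G_0(k))c\subset\Ad(G_0(k))c$. Thus the dichotomy required by Corollary \ref{cor:test} holds: every $c'\in\supp(f_{\lambda,c})\cap\fg_a^{nil}$ is either $G_0(k)$-conjugate to $c$ or has $c$ in its Zariski boundary.

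Having checked the hypotheses, Corollary \ref{cor:test} applies with the chosen $c$ and yields the stated expansion $f_{\lambda,c}=\alpha\cdot f_{c,h,d}+\sum_i\alpha_i\cdot f_{c_i,h_i,d_i}$ with $\alpha>0$. I expect the only genuinely delicate point to be the careful handling of the interior-versus-boundary dichotomy for nilpotent orbit closures — specifically arguing that if $c$ sits in $\overline{\bar k\cdot\Ad(G_0(\bar k))c'}$ but not in its boundary, then $c$ and $c'$ have the same $G$-orbit so that the uniqueness half of Lemma \ref{lem:test} can be invoked. Everything else is a direct citation of the earlier lemmas (Lemma \ref{lem:conil}'s Fourier-support argument and Lemma \ref{lem:test}'s retraction), so no new computation is needed.
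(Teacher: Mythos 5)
Your route is the paper's own: show $f_{\lambda,c}\in J(\fg_a(k))^{conil}$ by the translation-invariance/Fourier-support argument of Lemma \ref{lem:conil}, then get the support dichotomy of Corollary \ref{cor:test} from Lemma \ref{lem:test} (the paper's proof consists of exactly these two steps). One computational slip needs fixing: the annihilator of ${}^{\lambda}_{<\ell}\fg_a$ under the pairing $\fg_a\times\fg_{-a}\ra\Ga$ is ${}^{\lambda}_{\le-\ell}\fg_{-a}$, not ${}^{\lambda}_{>-\ell}\fg_{-a}$ --- the $G$-invariant form pairs ${}^{\lambda}_i\fg_a$ nontrivially only with ${}^{\lambda}_{-i}\fg_{-a}$, so vanishing against all weights $i<\ell$ kills every component of weight $>-\ell$. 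Consequently your claimed containment in ${}^{\lambda}_{>0}\fg_{-a}$ is false as written (your set ${}^{\lambda}_{>-\ell}\fg_{-a}$ contains weight $0$, hence semisimple elements), but with the corrected annihilator the Fourier transform is supported in ${}^{\lambda}_{\le-\ell}\fg_{-a}$, i.e.\ in strictly negative weights since $\ell>0$, hence in the nilpotent cone, and the conclusion $f_{\lambda,c}\in J(\fg_a(k))^{conil}$ stands.

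Your handling of the interior-versus-boundary point is correct and is precisely what the paper leaves implicit in ``the rest follows from Lemma \ref{lem:test}'': for nilpotent $c'\in\fg_a$ the scaling action of $\bar{k}^{\times}$ preserves the geometric orbit $\Ad(G_0(\bar{k}))c'$ (via the cocharacter of $G_0$ attached to a graded $\slt$-triple through $c'$, available for our $p$), so Lemma \ref{lem:test} puts $c$ in the closure of $\Ad(G_0(\bar{k}))c'$; either $c$ lies in the Zariski boundary, or $c\in\Ad(G_0(\bar{k}))c'$, and in the latter case the uniqueness clause of Lemma \ref{lem:test}, applied to the $G_0(k)$-conjugate of $c'$ lying in $c+{}^{\lambda}_{<\ell}\fg_a(k)$, upgrades this to $c\in\Ad(G_0(k))c'$, which is the hypothesis of Corollary \ref{cor:test}. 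You should also record the trivial checks that $f_{\lambda,c}$ is $\R_{\ge0}$-valued and that $c\in\supp(f_{\lambda,c})\cap\fg_a^{nil}$ (the latter because $\ell>0$ forces $c$ to be nilpotent); with these and the corrected annihilator, the proof is complete and agrees with the paper's.
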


\begin{proof} The Fourier transform of $I_{c+{}^{\lambda}_{<\ell}\fg_a}$ is supported on ${}^{\lambda}_{\le -\ell}\fg_{-a}$ and thus $f_{\lambda,c}\in J(\fg_a(k))^{conil}$. The rest follows from Lemma \ref{lem:test}.
\end{proof}

In the next corollary we go back to the case when $m=1$ and $\rho$ is trivial.

\begin{corollary}\label{cor:good1} Let $\ell\in\Z_{>0}$, $\lambda:\mathbb{G}_m/_k\ra G_0$ be any cocharacter, and $c\in{}^{\lambda}_{\ell}\fg(k)$. Suppose $c$ supports $x$ (Definition \ref{def:support}). Then $\Ad(G(k))x\cap(c+{}^{\lambda}_{<\ell}\fg)\not=\emptyset$. 
\end{corollary}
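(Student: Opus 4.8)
The plan is to run this through the test-function formalism already prepared for it, namely Corollary \ref{cor:test2}. Since $m=1$ here, $\rho$ is trivial and $G_0=G$. Consider the function $f_{\lambda,c}\in J(\fg(k))^{conil}$ of Corollary \ref{cor:test2}; by its very definition,
\[
f_{\lambda,c}(y)=\#\{g\in G(k)\;|\;\Ad(g)y\in c+{}^{\lambda}_{<\ell}\fg\}\qquad\text{for }y\in\fg(k),
\]
so the assertion $\Ad(G(k))x\cap(c+{}^{\lambda}_{<\ell}\fg)\not=\emptyset$ is precisely the assertion $f_{\lambda,c}(x)>0$. Note that $c$ is nilpotent, since it supports $x$ in the sense of Definition \ref{def:support}. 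Applying Corollaries \ref{cor:test2} and \ref{cor:test} to $f_{\lambda,c}$ we obtain an expression
\[
f_{\lambda,c}=\alpha\cdot f_{c,h',d'}+\sum_{i=1}^{r}\alpha_i\cdot f_{c_i,h_i,d_i},
\]
with $\alpha\in\R_{>0}$, $\alpha_i\in\R$, where $(c,h',d')$ is an $\slt$-triple over $k$ completing $c$, and where each $c_i$ is a nilpotent element lying in a strictly larger $\Ad(G)$-orbit than $c$ (as $c$ is in the Zariski boundary of $\Ad(G(\bar k))c_i$). Thus it suffices to establish (a) $f_{c,h',d'}(x)\ge 1$ and (b) $f_{c_i,h_i,d_i}(x)=0$ for each $i$.

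For (a): because $c$ supports $x$, there is an $\slt$-triple $(c,h_0,d_0)\in\Theta(x)(k)$, so in particular $x\in c+Z_{\fg}(d_0)$ and $f_{c,h_0,d_0}(x)\ge 1$. By Lemma \ref{lem:sl2-4} together with Lang's theorem, $Z_G(c)(k)$ acts transitively on the (geometrically connected) variety of $\slt$-triples over $k$ completing $c$; hence $(c,h',d')=\Ad(z)(c,h_0,d_0)$ for some $z\in Z_G(c)(k)$, and then $\Ad(z)x\in c+Z_{\fg}(d')$, giving $f_{c,h',d'}(x)\ge 1$.

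For (b), which I expect to be the heart of the matter, suppose toward a contradiction that $\Ad(g)x\in c_i+Z_{\fg}(d_i)$ for some $g\in G(k)$. Let $\lambda_i$ be a cocharacter attached to the $\slt$-triple $(c_i,h_i,d_i)$, so that $c_i$ has $\lambda_i$-weight $2$ and $Z_{\fg}(d_i)\subseteq{}^{\lambda_i}_{\le 0}\fg$. The $\Gm$-action $s\cdot v:=s^{2}\Ad(\lambda_i(s)^{-1})v$ preserves the Slodowy slice $c_i+Z_{\fg}(d_i)$, contracts it to $c_i$ as $s\to 0$, and leaves $\dim Z_G(\cdot)$ constant along each of its orbits; by upper semicontinuity of $\dim Z_G(\cdot)$ this forces $\dim Z_G(\Ad(g)x)\le\dim Z_G(c_i)$. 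But $\dim Z_G(\Ad(g)x)=\dim Z_G(x)=\dim Z_G(c)$ by \cite[Thm. 1.3(a)]{LS79} (recall $c\in N(x)$), while $\dim Z_G(c)>\dim Z_G(c_i)$ since $c_i$ lies in a strictly larger orbit — a contradiction. Hence $f_{c_i,h_i,d_i}(x)=0$ for every $i$, and therefore $f_{\lambda,c}(x)=\alpha\cdot f_{c,h',d'}(x)\ge\alpha>0$, as desired. Step (b) is the only nontrivial one: it is exactly the contraction property of Slodowy slices already invoked (for nilpotent elements) inside the proof of Proposition \ref{prop:1comp}, here applied to the possibly non-nilpotent element $\Ad(g)x$ through the centralizer-dimension inequality for points of a Slodowy slice.
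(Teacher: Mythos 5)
Your proof is correct, and its skeleton is the paper's: apply Corollary \ref{cor:test2}/\ref{cor:test} to $f_{\lambda,c}$, then show the $f_{c_i,h_i,d_i}$-terms vanish at $x$ while the $f_{c,h,d}$-term is positive there. Two local points differ. For positivity, the paper simply asserts ``to say $c$ supports $x$ is to say $\inn{I_x}{f_{c,h,d}}>0$''; your step (a), conjugating the supporting triple to the chosen $k$-rational completion via Lemma \ref{lem:sl2-4} and Lang's theorem, is exactly the detail being suppressed, and it is welcome. For the vanishing step (b), the paper argues via Lemma \ref{lem:test}: if $\Ad(g)x\in c_i+Z_{\fg}(d_i)\subset c_i+{}^{\lambda_i}_{<2}\fg$, then $c_i$ lies in the closure of $\bar{k}\cdot\Ad(G(\bar{k}))x$, and one compares $\dim(\bar{k}\cdot\Ad(G)x)$ (splitting into the nilpotent and non-nilpotent cases for $x$) with $\dim\Ad(G)c_i>\dim\Ad(G)c=\dim\Ad(G)x$, the last equality again from \cite[Thm.\ 1.3(a)]{LS79}. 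You instead run the contracting $\Gm$-action on the Slodowy slice directly and use upper semicontinuity of $\dim Z_G(\cdot)$ to get $\dim Z_G(x)\le\dim Z_G(c_i)<\dim Z_G(c)=\dim Z_G(x)$. These are the same idea in different clothing --- Lemma \ref{lem:test} is itself proved by precisely such a retraction (Lemma \ref{lem:retract}) --- but your version has the small advantage of avoiding the nilpotent/non-nilpotent case split and of not needing Lemma \ref{lem:test} at this point, at the cost of re-deriving the contraction (including $Z_{\fg}(d_i)\subset{}^{\lambda_i}_{\le 0}\fg$ and constancy of $\dim Z_G$ along the $\Gm$-orbits, which you correctly justify). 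Either route is fine.
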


\begin{proof} Apply Corollary \ref{cor:test} to $\inn{I_x}{f_{\lambda,c}}$ where $f_{\lambda,c}$ is as in Corollary \ref{cor:test2} and get
\[
\inn{I_x}{f_{\lambda,c}}=\alpha\cdot\inn{I_x}{f_{c,h,d}}+\sum_{i=1}^r\alpha_i\cdot\inn{I_x}{f_{c_i,h_i,d_i}}.
\]
Since the orbit of $c_i$ is larger than that of $c$, we have $\dim\Ad(G)c_i>\dim\Ad(G)c=\dim\Ad(G)x$. Hence either $x$ is non-nilpotent and $\dim(\mathbb{G}_a\cdot\Ad(G)x)=\dim(\Ad(G)x)+1\le\dim\Ad(G)c_i$, or $x$ is nilpotent and $\dim(\mathbb{G}_a\cdot\Ad(G)x)=\dim\Ad(G)x\le\dim\Ad(G)c_i$. By Lemma \ref{lem:test} this gives $\inn{I_x}{f_{c_i,h_i,d_i}}=0$. At the same time to say $c$ supports $x$ is to say $\inn{I_x}{f_{c,h,d}}>0$. Hence $\inn{I_x}{f_{\lambda,c}}>0$ which proves the corollary.
\end{proof}

\p
\bibliographystyle{amsalpha}
\bibliography{biblio.bib}
	
\end{document}